\newcommand{\F}{\mathbb{F}}
\newcommand{\D}{\mathfrak{D}}
\newcommand{\Fbar}{{\overline{\F}}}
\newcommand{\Q}{\mathbb{Q}}
\newcommand{\Z}{\mathbb{Z}}
\renewcommand{\SS}{\operatorname{SS}}
\newcommand{\SScy}{{\operatorname{SS}}^{\ast}}
\newcommand{\SSpr}{{\operatorname{SS}^{*}_{\calO}(p)}}
\newcommand{\calO}{\mathcal{O}}
\renewcommand{\a}{\mathfrak{a}}
\newcommand{\frakO}{\mathfrak{O}}
\DeclareSymbolFont{matha}{OML}{txmi}{m}{it}
\DeclareMathSymbol{\varv}{\mathord}{matha}{118}
\DeclareMathSymbol{\varw}{\mathord}{matha}{119}
\DeclareMathOperator{\Aut}{Aut}
\DeclareMathOperator{\Br}{Br}
\DeclareMathOperator{\Cls}{Cls}
\DeclareMathOperator{\typee}{Typ}
\DeclareMathOperator{\disc}{disc}
\DeclareMathOperator{\discrd}{discrd}
\DeclareMathOperator{\End}{End}
\DeclareMathOperator{\Frac}{Frac}
\DeclareMathOperator{\Gal}{Gal}
\DeclareMathOperator{\Hom}{Hom}
\DeclareMathOperator{\im}{im}
\DeclareMathOperator{\lcm}{lcm}
\DeclareMathOperator{\Pic}{Pic}
\DeclareMathOperator{\tr}{tr}
\DeclareMathOperator{\trd}{trd}
\DeclareMathOperator{\nrd}{nrd}
\DeclareMathOperator{\cl}{Cl}
\DeclareMathOperator{\rad}{rad}
\DeclareMathOperator{\sep}{sep}
\DeclareMathOperator{\Idl}{Idl}
\DeclareMathOperator{\PIdl}{PIdl}
\newcommand{\M}{\text{M}}
\newcommand{\id}{\text{Id}}
\newcommand{\GL}{\operatorname{GL}}
\newcommand{\PGL}{\operatorname{PGL}}
\numberwithin{equation}{section}  
\newtheorem{theorem}{Theorem}[section]    
\newtheorem{lemma}[theorem]{Lemma}        
\newtheorem{corollary}[theorem]{Corollary}
\newtheorem{proposition}[theorem]{Proposition}
\newtheorem{example}[theorem]{Example}
\newtheorem{definition}{Definition}[section]  
\newtheorem{problem}[definition]{Problem}
\theoremstyle{remark}
\newtheorem{remark}{Remark}[section]  
\definecolor{darkgreen}{rgb}{0,0.5,0}
\newenvironment{tbox}{\begin{tcolorbox}[ colframe=red, colback= white, breakable]} {\end{tcolorbox}}
\begin{document}

\title{Double-orientations on supersingular isogeny graphs}

\author{Do Eon Cha}

\address{Department of Mathematics, University of Calgary\\
Calgary, AB T2N 1N4, Canada } \email{andrew.cha1@ucalgary.ca}

\author{Imin Chen}

\address{Department of Mathematics, Simon Fraser University\\
Burnaby, BC V5A 1S6, Canada } \email{ichen@sfu.ca}

\date{\today}

\keywords{supersingular isogeny graphs, orientations}
\subjclass[2020]{11G20, 14H52}

\begin{abstract} 
We recall and define various kinds of supersingular $\ell$-isogeny graphs and precise graph isomorphism with a corresponding quaternion $\ell$-ideal graph. In particular, we introduce the notion of double-orientations on supersingular elliptic curves and study the structure of double-oriented supersingular $\ell$-isogeny graphs.
\end{abstract}

\maketitle

\setcounter{tocdepth}{2}

\tableofcontents


\excludecomment{details}
\excludecomment{extra}

\section{Introduction}

Let $p$ be a prime. For primes $\ell \neq p$, the supersingular $\ell$-isogeny graph in characteristic $p$ is the multi-digraph $G(p,\ell)$ whose vertices are isomorphism classes of supersingular elliptic curves over $\Fbar_p$ and edges are the equivalence classes of $\ell$-isogenies between them.

Supersingular isogeny graphs have been an important object for applications in isogeny-based cryptography. With the recent break of SIKE (Supersingular Isogeny Diffie–Hellman Key Exchange) \cite{SIKE-1, SIKE-2, SIKE-3}, a natural question remains as to the hardness of the pure isogeny path problem, i.e., computing isogenies without any knowledge of extra structures such as torsion point information.

\begin{problem} \label{path finding} (Isogeny Path Finding Problem)
For a pair of elliptic curves $E, E'$ in the supersingular isogeny graph $G(p,\ell)$, find a path from $E$ to $E'$, which is represented by a chain of $m=O(\log p)$ isogenies of degree $\ell$.
\end{problem}

\begin{extra}
    Given a path of length $n$  in $G(p,\ell)$ 
\begin{equation*}
    E_0 \overset{\varphi_1} {\longrightarrow} E_1 \overset{\varphi_2} {\longrightarrow} E_2 \cdots E_{n-1} \overset{\varphi_n} {\longrightarrow} E_n,
\end{equation*}
where $j_i$ is the $j$-invariant of $E_i$ for $0\leq i \leq n$, we will denote the path by $(j_0j_1\cdots j_n)$.
\end{extra}

If $E/\overline{\F}_p$ is a supersingular elliptic curve, then its endomorphism algebra $\End^0(E):=\End(E) \otimes \Q \cong B_0$ where $B_0/\Q$ is a quaternion algebra ramified at $p$ and $\infty$. With this in mind, two other related problems are:

\begin{problem} \label{Endomorphism Computation} (Endomorphism Ring Problem) Given a prime $p$ and a supersingular elliptic curve $E/\Fbar_p$, output endomorphisms $\theta_1, \theta_2, \theta_3, \theta_4$ of $E$ that form a $\Z$-basis of the endomorphism ring $\End(E)$. In addition the output basis is required to have representation size polynomial in $\log p$.  
\end{problem}

\begin{extra}
\begin{theorem}
Given a set $S$ of an even number of places of $\Q$, there is one isomorphism class of quaternion algebras over $\Q$ ramified exactly at $S$.
\end{theorem}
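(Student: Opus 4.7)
The plan is to prove this via the local-global theory of central simple algebras, specifically the Albert--Brauer--Hasse--Noether exact sequence
\[
0 \longrightarrow \Br(\Q) \longrightarrow \bigoplus_{v} \Br(\Q_v) \xrightarrow{\;\sum \inv_v\;} \Q/\Z \longrightarrow 0,
\]
where $v$ ranges over all places of $\Q$. Quaternion algebras over a field $k$ correspond bijectively to $2$-torsion classes in $\Br(k)$, and at each place $v$ of $\Q$ the local invariant of a quaternion algebra is either $0$ (when the algebra splits at $v$) or $\tfrac{1}{2}$ (when it is ramified at $v$). So the ramification set of a quaternion algebra $B/\Q$ is literally the support of its image in $\bigoplus_v \Br(\Q_v)[2]$.

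First I would handle uniqueness: by the injectivity of the leftmost map, any two quaternion algebras with the same ramification set have the same class in $\Br(\Q)$, hence are isomorphic. This also explains the parity constraint: the image of $\Br(\Q)$ lies in the kernel of $\sum \inv_v$, and since each nonzero invariant contributes $\tfrac{1}{2} \in \Q/\Z$, the number of ramified places must be even.

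For existence, given $S$ of even cardinality, define $\xi = (\xi_v)_v \in \bigoplus_v \Br(\Q_v)[2]$ by setting $\xi_v = \tfrac{1}{2}$ for $v \in S$ and $\xi_v = 0$ otherwise; then $\sum \inv_v(\xi) = 0$, so by exactness $\xi$ lifts to a class in $\Br(\Q)[2]$, which by Merkurjev's theorem (or in this classical rank-two case simply by the structure of $\Br(\Q)[2]$) is represented by a quaternion algebra $B$ ramified exactly at $S$.

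The main obstacle is the exactness of the Albert--Brauer--Hasse--Noether sequence, which is a deep arithmetic input. If one prefers a more hands-on construction for existence, one can instead exhibit $B$ explicitly as $\left(\tfrac{a,b}{\Q}\right)$ by choosing $a, b \in \Q^\times$ so that the local Hilbert symbols $(a,b)_v$ match the prescribed ramification; producing such $a, b$ is a Dirichlet-theorem-on-primes-in-arithmetic-progressions exercise, which I would regard as the technical heart of an elementary proof. Either route yields the theorem, and the latter has the advantage of producing a concrete presentation of $B$ useful for the endomorphism-algebra computations in the sequel.
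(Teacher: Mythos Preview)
Your argument is correct and is the standard route to this classical fact: the Albert--Brauer--Hasse--Noether exact sequence gives both the parity constraint and uniqueness immediately, and for existence you correctly lift the prescribed local invariants to a global $2$-torsion Brauer class, which over a number field has index equal to its period and hence is represented by a quaternion algebra. The alternative hands-on construction via Hilbert symbols and Dirichlet's theorem that you sketch is also valid and is the approach taken, for instance, in Voight's book.

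However, there is nothing to compare against: in the paper this theorem appears only inside an \texttt{extra} environment that is excluded from compilation via \texttt{\textbackslash excludecomment\{extra\}}, and even there it is merely stated without proof, as background for the remark that $\End^0(E)\cong B_0$ for the unique quaternion algebra $B_0/\Q$ ramified at $p$ and $\infty$. The paper does not supply its own argument, so your proposal is not so much an alternative as the missing justification.
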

\end{extra}

\begin{problem} \label{Max Order} (Maximal Order Problem)
Given a prime $p$, the standard basis for $B_0$, and a supersingular elliptic curve $E/\overline{\F}_p$, output vectors $\beta_1, \beta_2, \beta_3, \beta_4 \in B_0$ that form a $\Z$-basis of a maximal order $\frakO$ in $B_0$ such that $\End(E) \cong \frakO$. In addition, the output basis is required to have representation size polynomial in $\log p$.
\end{problem}

Polynomial time (in $\log p$) reductions between all three problems are given in \cite{EHL+18} with heuristics and in \cite{Ben-equivalent} without heuristics under the validity of the generalized Riemann hypothesis (GRH). In fact, Problem~\ref{Endomorphism Computation} is equivalent to finding one non-scalar endomorphism \cite{one-endo} on the validity of GRH.

Col\`o and Kohel \cite{CK20} introduced an  isogeny-based cryptographic protocol called OSIDH (Oriented Supersingular Isogeny Diffie-Hellman) with the notion of orientations of supersingular elliptic curves. The oriented isogeny graphs have a structure called a volcano \cite{ACL+23, Onu21}; each connected component consists of a cycle called the rim and there are notions of ascending, horizontal, and descending isogenies, so we can walk toward or on the rim. The volcano structure enriches the isogeny path problem where we relate the curves and paths in oriented isogeny graphs to the ones in regular isogeny graphs. 

In this paper, we extend the notion of orientations of supersingular elliptic curves and investigate the properties of the associated graphs. Our main contributions can be summarized as follows:
\begin{extra}
Path finding for isogeny graphs of ordinary elliptic curves is straightforward as they have a special structure called a volcano; each connected component consists of a cycle called the rim and trees branching out from each vertex on the rim. There are notions of ascending, horizontal, and descending isogenies, so we can 'walk' toward or on the rim. These useful properties are due to the fact that the endomorphism ring of an ordinary curve is isomorphic to a quadratic order. To introduce such notions to supersingular isogeny graphs, one can consider an embedding $\iota: K \hookrightarrow \End^0(E)$ of a quadratic subfield $K \subseteq  B_0$ and the order $\calO \subseteq K$ such that $\iota(\calO)=\End(E) \cap \iota(K)$. Such an embedding is called an orientation. When equipped with orientations, oriented supersingular isogeny graphs admit a volcano structure. 

The first result of this paper shows the following relationship between the two problems.
\begin{theorem}
\label{main-reduce}
There is a probabilistic polynomial time reduction from the Isogeny Path Finding Problem to the Maximal Order Problem.
\end{theorem}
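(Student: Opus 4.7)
The plan is to give a probabilistic polynomial-time reduction that queries the Maximal Order oracle on both input curves, performs purely quaternionic manipulations to obtain a short connecting ideal of $\ell$-power norm, and finally translates that ideal into an isogeny path via the Deuring correspondence.

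First I would feed $E$ and $E'$ into the Maximal Order oracle to obtain $\Z$-bases of maximal orders $\frakO, \frakO' \subset B_0$ satisfying $\End(E)\cong\frakO$ and $\End(E')\cong\frakO'$, each of bit size $\poly(\log p)$. Using standard lattice algorithms in $B_0$, I would then compute a connecting ideal $I\subset B_0$ whose left order is $\frakO$ and whose right order is $\frakO'$. The reduced norm $\nrd(I)$ from this naive construction can be astronomical, so $I$ is not yet usable as an isogeny path.

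The next step is to replace $I$ by an equivalent ideal $J = I\alpha$ (for some $\alpha\in B_0^\times$) whose reduced norm is $\ell^m$ with $m = O(\log p)$. This is exactly what the Kohel--Lauter--Petit--Tignol (KLPT) algorithm accomplishes in probabilistic polynomial time, under heuristics that can be removed under GRH as in \cite{Ben-equivalent}. Finally I would translate $J$ into an actual chain $E = E_0 \to E_1 \to \cdots \to E_m \cong E'$ of $\ell$-isogenies via the Deuring correspondence: factor $J = J_1 J_2 \cdots J_m$ with each $J_i$ of norm $\ell$, and use the current identification $\End(E_{i-1})\cong\frakO_{i-1}$ to cut out the order-$\ell$ subgroup associated to $J_i$, which one then quotients by to obtain $E_i$ together with an updated identification $\End(E_i)\cong\frakO_i$.

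The main obstacle will be making the Deuring correspondence effective in the last step: propagating the isomorphism $\End(E_i)\cong\frakO_i$ along the path so that the subgroup specified by the next quaternionic factor is actually computable on the curve side. This requires carrying $\ell^m$-torsion information along the path and working in an extension of $\F_p$ of moderate degree; handling this carefully is the technical heart of both \cite{EHL+18} and \cite{Ben-equivalent}. The remaining ingredients -- lattice arithmetic in $B_0$ and KLPT -- are already known to run in probabilistic polynomial time, so once the effective translation is in place, the composite reduction does too.
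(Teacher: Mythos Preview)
Your proposal is correct and is essentially the reduction of \cite{EHL+18} (and its GRH-rigorous refinement in \cite{Ben-equivalent}): two oracle calls, a connecting ideal, KLPT to force $\ell$-power norm, then an effective Deuring translation back to an isogeny chain. The paper, however, takes a genuinely different route that exploits the double-orientation machinery developed here. Instead of KLPT, the paper queries the Maximal Order oracle at \emph{every} step of a walk: starting from $E$, it computes the maximal orders of all $\ell$-isogenous neighbours, uses Lemma~\ref{orientation-determination} to read off their $\calO_i,\calO_j$-orientations, and then invokes Proposition~\ref{joint-ascend-general} to select the unique neighbour that is simultaneously ascending (or horizontal) in both the $K_i$- and $K_j$-volcanoes. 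Iterating drives any curve to one of the at most two global roots $E_0,E_1$ of Corollary~\ref{quat-root}; a short random-walk-and-majority argument then handles the two-root case and connects any pair $E,E'$ through a common root. Your approach needs only two oracle calls but relies on KLPT and the delicate effective Deuring translation; the paper's approach trades many oracle calls for a conceptually simpler ``climb to the root'' that avoids KLPT entirely and showcases the structural payoff of double orientations.
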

Probabilistic means that running the algorithm with access to random bits succeeds with high probability after a few runs.

The above theorem was proven in \cite{EHL+18} under plausible heuristic assumptions, and in \cite{Ben-equivalent} under GRH using a similar but refined algorithm that is easier to analyze. 
\end{extra}

Classical results due to Deuring on the correspondence between objects on the supersingular elliptic curve side---such as curves, isogenies, and endomorphism rings---and those on the quaternion algebra side---such as maximal orders and their left ideals---are comprehensively reviewed in \cite[Chapter 42]{Voi21}. These results are also widely used in the literature on the path-finding problem in both classical and oriented supersingular isogeny graphs, as well as in related problems such as those studied in \cite{EHL+18, KLPT14, LB20, Ben22}. In Section \ref{Equivalence of categories in terms of quaternion algebras}, we provide refined versions of these results: we will discuss the correspondence between cyclic isogenies, primitive ideals, and maximal orders relative to the endomorphism algebra of a chosen reference curve $E_0$.

Our first main contribution is to extend the notion of orientations of supersingular elliptic curves in \cite{CK20, ACL+23, Ben22} to double-orientations. In Section \ref{Orientations and maximal orders}, we define a $K_1, K_2$-orientation of a supersingular elliptic curve over $\Fbar_p$ to be a pair of $K_1$-orientation and $K_2$-orientation for imaginary quadratic fields $K_1$ and $K_2$ such that they have simultaneous embeddings into $B_0$ and their images generate $B_0$. We then give a correspondence between double-oriented curves---curves together with double-orientations on them---and maximal orders in $B_0$. We also introduce new classes of maximal orders and their left ideals in $B_0$ relative to an embedding of an imaginary quadratic field $K$ into $B_0$, and give correspondences with $K$-oriented curves.
 
As our second main contribution, in Section \ref{Supersingular isogeny graphs and quaternion ideal graphs}, we introduce several variants of supersingular isogeny graphs and quaternion ideal graphs and give explicit graph isomorphisms between them. In particular, we define several new graphs: double-oriented isogeny graphs and single- and double-oriented quaternion ideal graphs. We also describe the connection between Bruhat-Tits trees and the connected components of single- and double-oriented isogeny graphs.

Double-oriented isogeny graphs admit a notion of roots, which are analogous to the curves on the rims of single-oriented isogeny graphs. In a double-oriented $\ell$-isogeny graph, the local roots are the curves primitively oriented by a pair of $\ell$-fundamental orders in the imaginary quadratic fields $K_1$ and $K_2$, while the global roots are the curves primitively oriented by the pair of maximal orders in $K_1$ and $K_2$. In Section \ref{embedding numbers of bass orders and counting roots}, we count the number of local and global roots of double-oriented isogeny graphs using Bass orders in the quaternion algebra and their embedding numbers, and we present an explicit example of global roots.

In Section \ref{Structure of double-oriented isogeny graphs}, we study the double-oriented isogeny graphs in the case where the associated imaginary quadratic fields $K_i$ and $K_j$ are perpendicular with respect to the reduced-trace pairing on $B_0$. We analyze their structure using the correspondence between quaternion ideals of reduced norm $\ell$ and proper ideals of $M_2(\F_\ell)$. Our last main contribution is the characterization of the volcanic ridge, i.e., volcano structure except that the rim is replaced by a ridge, of double-oriented isogeny graphs:

Each double-oriented isogeny graph has two underlying single-oriented isogeny graphs. In Proposition \ref{joint-ascend-general}, we show that when we are moving to a local root on the double-oriented isogeny graph, we can simultaneously ascend or move horizontally to the rims on the both of underlying single-oriented isogeny graphs unless we are already at a local root.   

Each connected component of $G_{K_i, K_j}(p,\ell)$ is described by Theorem~\ref{Formula for sturcture 1}.

\section{Acknowledgements}

We would like to thank Renate Scheidler for helpful comments and suggestions.

\section{Preliminaries}
\subsection{Supersingular isogeny graphs}

We now summarize some basic definitions and results from \cite{BCE+19} concerning supersingular isogeny graphs.

We say two isogenies $\varphi: E \rightarrow E', \varphi': E \rightarrow E''$ are equivalent if they have the same kernel. The equivalence relation on isogenies defines equivalence classes of isogenies and the class of an isogeny is invariant under post-composition by an automorphism.

If there exists an $\ell$-isogeny $\varphi : E \rightarrow E'$, aSch23nd we have isomorphisms $\lambda : E \cong E_1$ and $\lambda' : E' \cong E_1'$, then there exists an $\ell$-isogeny $\varphi_1 : E_1 \rightarrow E_1'$ given by $\varphi_1 = \lambda' \varphi \lambda^{-1}$. Furthermore, if $\varphi : E \rightarrow E'$ is equivalent to $\varphi' : E \rightarrow E''$, then $\varphi_1 : E_1 \rightarrow E_1'$ is equivalent to $\varphi_1' : E_1 \rightarrow E_1''$. Thus, the definition of the edges in $G(p,\ell)$ is well-defined.

The classical modular polynomial $\Phi_n(X,Y) \in \Z[X,Y]$ is initially constructed as a relation between the modular functions $j(\tau)$ and $j(n\tau)$ \cite{Cox} and has the following modular interpretation:
\begin{theorem}
\label{modular-equation}
Given a $j \in \overline{\F}_p$, the roots of $\Phi_n(j,Y)$ give the $j$-invariants of elliptic curves over $\overline{\F}_p$ which are $n$-isogenous over $\overline{\F}_p$ to an elliptic curve with $j$-invariant $j$.
\end{theorem}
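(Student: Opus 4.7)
The plan is to establish the modular interpretation of $\Phi_n$ over $\C$ from the construction in \cite{Cox}, and then transfer it to characteristic $p$ via Deuring's lifting theorem, using that $\Phi_n$ has integer coefficients.

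In characteristic zero, I would start from the product expansion
\[
\Phi_n(j(\tau),Y) \;=\; \prod_{\alpha\in\SL_2(\Z)\backslash M_n^*}\bigl(Y - j(\alpha\tau)\bigr),
\]
where $M_n^*$ denotes the set of primitive $2\times 2$ integer matrices of determinant $n$; this is immediate from the construction of $\Phi_n$ in \cite{Cox}. A standard lattice computation identifies the factor $Y-j(\alpha\tau)$ with a cyclic $n$-isogeny $\C/\Lambda_\tau \to \C/\alpha\Lambda_\tau$, so $\Phi_n(j_1,j_2)=0$ characterizes cyclic $n$-isogenies over $\C$, and by the Lefschetz principle over any algebraically closed field of characteristic zero.

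To transfer to characteristic $p$, I would fix $j\in\Fbar_p$ and a representative $E/\Fbar_p$, and invoke Deuring's lifting theorem to choose a lift $\widetilde{E}$ defined over the ring of integers of a number field. Since $\Phi_n(X,Y)$ is monic in $Y$ with integer coefficients and $p\nmid n$, the roots of $\Phi_n(j,Y)\in\Fbar_p[Y]$ are exactly the reductions of the roots of $\Phi_n(j(\widetilde{E}),Y)\in\overline{\Q}_p[Y]$. Each such characteristic-zero root corresponds to a cyclic $n$-isogeny $\widetilde{E}\to\widetilde{E}'$ whose kernel is étale (as $p\nmid n$) and therefore specializes to a cyclic order-$n$ subgroup of $E$, yielding an $n$-isogeny $E\to E'$ with the prescribed $j$-invariant. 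Conversely, the kernel of any $n$-isogeny in characteristic $p$ (with $p\nmid n$) is étale, so lifts canonically to a subgroup scheme of $\widetilde{E}$; the quotient gives a cyclic $n$-isogeny in characteristic zero whose $\Phi_n$-relation reduces to give $\Phi_n(j,j(E'))=0$.

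The main obstacle is justifying that reduction mod $p$ is compatible with the moduli interpretation of $\Phi_n$ — in particular, that étale order-$n$ subgroup schemes deform uniquely and bijectively between the two characteristics. This is guaranteed by $p\nmid n$ and is classical content of Deuring's correspondence, as summarized in \cite[Chapter 42]{Voi21} and used throughout the rest of the paper. A minor caveat concerns composite $n$, where non-cyclic $n$-isogenies factor through smaller cyclic isogenies and would strictly speaking require a separate factorization argument; for the applications here $n=\ell$ is prime, so every $\ell$-isogeny is automatically cyclic and the argument is clean.
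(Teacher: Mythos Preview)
Your proposal is essentially correct and follows a standard route to this result. The paper itself gives no argument at all: its proof consists solely of the citation ``See \cite[Theorem 5.5]{lang}.'' So there is no approach of the paper's to compare against; you have supplied strictly more than the paper does.

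Two small remarks on your sketch. First, invoking Deuring's lifting theorem is more than you need: that theorem lifts a pair $(E,\alpha)$ with $\alpha\in\End(E)$, whereas here you only need to lift $E$ itself, which is elementary (lift the $j$-invariant, or use smoothness of the $j$-line). The real content of the reduction step is that for $p\nmid n$ the $n$-torsion is \'etale, so finite subgroup schemes of order $n$ lift and specialize bijectively; you state this correctly. Second, your caveat about composite $n$ is well placed: the product over $\SL_2(\Z)\backslash M_n^*$ parameterizes \emph{cyclic} $n$-isogenies, so the statement as written should be read that way. Since the paper only applies the theorem with $n=\ell$ prime, this distinction is immaterial for its purposes, as you note.
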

\begin{proof}
See \cite[Theorem 5.5]{lang}.
\end{proof}
Thus, the modular polynomial $\Phi_\ell(X,Y) \in \Z[X,Y]$ parameterizes pairs of $\ell$-isogenous elliptic curves so that each equivalence class of $\ell$-isogeny from $E(j)$, a curve with the $j$-invariant $j$, corresponds to a root of $\Phi_\ell(j,Y)$. As a result, we have the more concrete description of $G(p,\ell)$ below.
\begin{definition}
\label{defn-j-graph}
Let $\ell \neq p$ be primes. The supersingular $\ell$-isogeny graph in characteristic $p$ is the multi-digraph $G(p,\ell)$ whose vertices are $j$-invariants of supersingular elliptic curves over $\mathbb{F}_{p^2}$ and the number of directed edges from $j$ to $j'$ is equal to the multiplicity of $j'$ as a root of $\Phi_\ell(j,Y)$.
\end{definition}

\begin{remark}
  When there are no extra automorphisms, dual isogenies can be identified to create an undirected graph.
\end{remark}

\begin{remark}
The graph $G(p, \ell)$ has $\ell+1$ outgoing edges from each vertex.
\end{remark}

From the following result, we see that any two supersingular elliptic curves over $\mathbb{F}_{p^2}$ are connected by an isogeny of degree $\ell^m$, where we can take $m$ to be polynomial size in $\log p$.
\begin{theorem} \label{Koh96 Theorem 79}
  The graph $G(p, \ell)$ of $\ell$-isogenies of supersingular elliptic curve is connected. The diameter of the graph is $O(\log p)$, where the constant in the bound is independent of $\ell$.
\end{theorem}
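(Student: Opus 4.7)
The plan is to prove both assertions via spectral analysis of the adjacency operator of $G(p,\ell)$. The crucial input is that $G(p,\ell)$ is a Ramanujan graph: it is $(\ell+1)$-regular (by the remark preceding the theorem), and every eigenvalue $\lambda$ of its adjacency matrix other than the trivial eigenvalue $\lambda=\ell+1$ satisfies $|\lambda|\le 2\sqrt{\ell}$.

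First I would identify the adjacency matrix of $G(p,\ell)$ with the Brandt matrix $B(\ell)$ attached to a maximal order $\frakO$ in the quaternion algebra $B_0$ ramified at $p$ and $\infty$. Via the Deuring correspondence (in the refined form developed later in Section~\ref{Equivalence of categories in terms of quaternion algebras}), the vertices of $G(p,\ell)$ correspond to left $\frakO$-ideal classes, and the directed edges out of a given vertex correspond to the left ideals of reduced norm $\ell$ inside a chosen representative; the incidence structure therefore matches $B(\ell)$. Next, the Jacquet-Langlands correspondence identifies the action of $B(\ell)$ on the orthogonal complement of the all-ones vector with the action of the Hecke operator $T_\ell$ on the space $S_2(\Gamma_0(p))$ of weight-$2$ cusp forms of level $p$. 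The bound $|a_\ell|\le 2\sqrt{\ell}$ on the Hecke eigenvalues---the weight-$2$ case of the Ramanujan-Petersson conjecture, due to Eichler-Shimura together with Deligne's proof of the Weil conjectures---then yields the Ramanujan bound on the non-trivial spectrum of $G(p,\ell)$.

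With the spectral gap in hand, connectedness is immediate: any partition of $G(p,\ell)$ into two nonempty components would give a second linearly independent eigenvector of eigenvalue $\ell+1$, namely the indicator of either component, contradicting the Ramanujan bound $|\lambda|\le 2\sqrt{\ell}<\ell+1$. For the diameter I would invoke the standard estimate that a connected $d$-regular graph on $n$ vertices with second-largest eigenvalue (in absolute value) at most $\lambda$ satisfies
\[
\operatorname{diam}(G)\;\le\;\Big\lceil\tfrac{\log(n-1)}{\log(d/\lambda)}\Big\rceil.
\]
Here $n\sim p/12$ by the Deuring-Eichler mass formula for the number of supersingular $j$-invariants in characteristic $p$, while $d/\lambda=(\ell+1)/(2\sqrt{\ell})\ge 1$ by AM-GM, with strict inequality for every $\ell\ge 2$ and $d/\lambda\to\infty$ as $\ell\to\infty$. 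Consequently $\log(d/\lambda)$ is bounded below by an absolute positive constant (e.g.\ its value at $\ell=2$), giving $\operatorname{diam}(G(p,\ell))=O(\log p)$ with implied constant independent of $\ell$.

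The main obstacle is the Ramanujan bound itself, which I would only cite; it rests on substantial input from the theory of automorphic forms and is not something to reprove here. Aside from that, the only point requiring care is the bookkeeping that matches edges of $G(p,\ell)$ with reduced-norm-$\ell$ ideals---in particular tracking the vertices $j\in\{0,1728\}$ with extra automorphisms---and this is already handled by the correspondences recalled later in the paper.
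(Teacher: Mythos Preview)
Your proposal is correct and outlines exactly the standard spectral/Ramanujan-graph argument. The paper does not give its own proof at all: it simply cites \cite[Corollary~78 and Theorem~79]{Koh96}, and Kohel's argument there is precisely the one you sketch (Brandt matrices, the Eichler--Shimura/Deligne bound on Hecke eigenvalues, and the resulting expander diameter estimate). So there is nothing to compare---you have supplied the content that the paper outsources to the reference.
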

\begin{proof}
See \cite[Corollary 78 and Theorem 79]{Koh96}.
\end{proof}

\subsection{Quadratic orders} \label{Quadratic orders} Here we briefly review and set up notation about quadratic orders.

Let $K$ be a number field of degree $[K:\Q] = n $ and $\calO_K$ its ring of integers. An order in $\calO_K$ is a subring of $\calO_K$ that is a free $\Z$-module of rank equal to $n$. The discriminant $d(\calO)$ of an order $\calO$ is the discriminant of any $\Z$-basis for $\calO$ and it is well-defined up to $\pm 1$.

Let $K$ now be a quadratic number field, that is, $[K:\Q] = 2$. A quadratic order is an order $\calO$ in $\calO_K$ where $K$ is a quadratic field. A non-square integer $d$ is the discriminant of a quadratic order if and only if $d \equiv 0, 1 \pmod 4$. For such $d$, consider the order in $K = \Q(\sqrt{d})$ given by
\begin{equation}
\label{quadratic-order-d}
  \calO_d = \Z[\omega_d] 
\end{equation}
where 
\begin{equation}
     \omega_d = \frac{d + \sqrt{d}}{2}.
\end{equation}
Every quadratic order $\calO$ is of the form $\calO = \calO_d$ for a unique discriminant $d$. The conductor $f(\calO)$ of $\calO = \calO_d$ is given by $f(\calO) = \sqrt{|d/d_K|}$, where $d_K$ is the fundamental discriminant of $\calO_K$. If $\ell \nmid f(\calO)$, then $\calO$ and its discriminant $d$ are called $\ell$-fundamental. 


For later applications to orientations by quadratic orders, it is useful to have a slightly different description of a quadratic order than \eqref{quadratic-order-d}.

\begin{lemma}
\label{order-gen}
The quadratic order $\calO_d$ with conductor $f = \sqrt{|d_K/d|}$ can be described as
\begin{equation} \label{quadratic order generator}
    \calO_d = \begin{cases}
        \Z[\frac{f \sqrt{d_K}}{2}]  & \text{ if } d_K \equiv 0 \pmod 4, \\
        \Z[\frac{1 + f \sqrt{d_K}}{2}] & \text{ if } d_K \equiv 1 \pmod 4 \text{ and } f  \text{ is odd,} \\
        \Z[\frac{f \sqrt{d_K}}{2}] & \text{ if } d_K \equiv 1 \pmod 4 \text{ and } f \text{ is even.}
        \end{cases}
\end{equation}
\end{lemma}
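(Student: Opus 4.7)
The plan is to start from the defining formula $\calO_d = \Z[\omega_d]$ with $\omega_d = (d+\sqrt{d})/2$ from \eqref{quadratic-order-d} and use the relation $d = f^2 d_K$ (fixing a square root so that $\sqrt{d} = f\sqrt{d_K}$) to rewrite $\omega_d$ in each of the three cases. In each case, the strategy is to show that $\omega_d$ differs from the proposed generator by a rational integer, so that the two elements generate the same subring of $K$ over $\Z$.

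First I would substitute $d = f^2 d_K$ into the formula for $\omega_d$ to obtain
\[
\omega_d \;=\; \frac{f^2 d_K + f\sqrt{d_K}}{2} \;=\; \frac{f^2 d_K}{2} + \frac{f\sqrt{d_K}}{2}.
\]
Since $\Z[\omega_d] = \Z[\omega_d - n]$ for any $n \in \Z$, it suffices in each case to identify an integer $n$ such that $\omega_d - n$ is the stated generator.

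For the first case $d_K \equiv 0 \pmod 4$, the quantity $f^2 d_K / 2$ is already an integer, so taking $n = f^2 d_K / 2$ gives $\omega_d - n = f\sqrt{d_K}/2$, matching the first line of \eqref{quadratic order generator}. For the third case $d_K \equiv 1 \pmod 4$ with $f$ even, writing $f = 2g$ shows $f^2 d_K/2 = 2g^2 d_K \in \Z$, and again subtracting this yields $f\sqrt{d_K}/2 = g\sqrt{d_K}$, matching the third line. For the middle case $d_K \equiv 1 \pmod 4$ with $f$ odd, I would observe that $f^2 d_K \equiv 1 \pmod 4$, so $(f^2 d_K - 1)/2$ is an even integer; subtracting it from $\omega_d$ gives
\[
\omega_d - \frac{f^2 d_K - 1}{2} \;=\; \frac{1 + f\sqrt{d_K}}{2},
\]
matching the second line.

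There is no real obstacle here; the argument is a straightforward case analysis on parities. The only thing to be careful about is verifying in each case that the integer being subtracted really is an integer (which requires the parity hypothesis on $f$ and the congruence class of $d_K$ modulo $4$), and checking that the resulting generator is itself an algebraic integer, which is automatic from the fact that $\omega_d \in \calO_d$ and that the difference lies in $\Z$.
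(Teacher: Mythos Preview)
Your proposal is correct and is exactly the computation the paper has in mind: the paper's proof is the single line ``This follows from the description of $\calO_d$ in \eqref{quadratic-order-d},'' and your case analysis spells out precisely that verification. There is nothing to add.
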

\begin{proof}
  This follows from the description of $\calO_d$ in \eqref{quadratic-order-d}.
\end{proof}

Given an integral $\calO$-ideal $\mathfrak{a}$, its absolute norm is defined to be 
$$N(\mathfrak{a})=\left|\calO / \mathfrak{a}\right|=\gcd(\{N(\alpha) : \alpha \in \a\}),$$
where $N(\alpha):=N_{K/\Q}(\alpha)$ is the usual field norm.

\subsection{Orientations}

We recall definitions of orientations from \cite{ACL+23, Onu21}.

Let $K$ be an imaginary quadratic field such that $K$ does not split at $p$. Then $K$ embeds into $B_0$ \cite[Proposition 14.6.7]{Voi21}. Throughout the paper, we assume that $p$ is not split in imaginary quadratic fields unless specified. 

\begin{definition}
A $K$-oriented elliptic curve is a pair $(E,\iota)$ of an elliptic curve $E/\overline{\F}_p$  and an embedding $\iota : K \hookrightarrow \End^0(E)$. The embedding $\iota$ is called a $K$-orientation on $E/\overline{\F}_p$.
\end{definition}

Given an isogeny $\varphi : E \rightarrow E'$ and a $K$-orientation $\iota$ on $E$, there is an induced $K$-orientation $\varphi_\ast(\iota)$ on $E'$ given by 
$$\varphi_*(\iota)(\alpha) = \frac{1}{\deg \varphi} \varphi \iota(\alpha)  \hat \varphi \ \forall \alpha \in K,$$ 
where $\hat{\varphi}$ is the dual isogeny of $\varphi$. An isogeny of $K$-oriented elliptic curves $\varphi : (E,\iota) \rightarrow (E',\iota')$ is then an isogeny $\varphi : E \rightarrow E'$ with $\iota'(a) = \varphi_*(\iota)$. A $K$-oriented isogeny is an isomorphism (resp. automorphism) if it is an isomorphism (resp. automorphism) of the underlying curves. Two $K$-oriented isogenies $\varphi: (E,\iota) \rightarrow (E', \iota'), \varphi': (E,\iota) \rightarrow (E'', \iota'')$ are equivalent if the underlying isogenies are equivalent.

\begin{remark}
    In the case of an ordinary elliptic curve $E/\overline{\F}_p$, $\End^0(E)$ is an imaginary quadratic field, and a $K$-orientation on $E$ is one of the two isomorphisms $K \cong \End^0(E)$. For a supersingular elliptic curve $E/\overline{\F}_p$, there are infinitely many embeddings $\iota : K \hookrightarrow \End^0(E)$ since $\End^0(E) \cong B_0$ and we can conjugate a subfield isomorphic to $K$ by elements in $B_0^\times$ to obtain different subfields isomorphic to $K$. By the Skolem-Noether theorem \cite[Theorem 7.7.1]{Voi21}, any two embeddings $\iota : K \hookrightarrow \End^0(E)  \cong B_0$ differ by conjugation by an element in $B_0^\times$.
\end{remark}

Let $\calO$ be an order in the quadratic field $K$. A $\calO$-orientation on an elliptic curve $E/\overline{\F}_p$ is a $K$-orientation $\iota$ such that $\iota(\calO) \subseteq \End(E)$. A $\calO$-orientation $\iota$ is primitive if in addition $\iota(\calO) = \iota(K) \cap \End(E)$, that is, $\calO$ is the largest order in $K$ for which $\iota(\calO) \subseteq \End(E)$. 

Let $\iota : K \hookrightarrow \End^0(E) $ be a $K$-orientation. Then there is a unique order $\calO$ in $K$ for which $\iota$ is a primitive $\calO$-orientation on $E/\overline{\F}_p$. For a prime $\ell$, we say a $\calO$-orientation $\iota$ is an $\ell$-primitive $\calO$-orientation if the index $[ \End(E) \cap \iota(K) : \iota(\calO)]$ is coprime to $\ell$.

\begin{definition}
Let $\ell \not= p$ be primes. The $K$-oriented supersingular $\ell$-isogeny graph $G_K(p,\ell)$ in characteristic $p$ is the graph whose vertices are $K$-isomorphism classes of $K$-oriented elliptic curves over $\Fbar_p$ and edges are the equivalence classes of $K$-oriented isogenies between them. 
\end{definition}
Inherited from the supersingular $\ell$-isogeny graphs $G(p,\ell)$, the graph $G_K(p, \ell)$ has $\ell+1$ outgoing edges from each vertex.

An advantage of equipping elliptic curves with orientations is that they admit notions of ascending, horizontal, and descending isogenies analogous to ordinary elliptic curves; given a $K$-oriented isogeny $\varphi:(E, \iota) \rightarrow (E', \iota')$ with a primitive $\calO$-orientation $\iota$ and a primitive $\calO'$-orientation $\iota'$, $\varphi$ is either ascending, horizontal, or descending depending on the following relationship between $\calO$ and $\calO'$: \\
(1) $\varphi$ is ascending if $\calO \subsetneq \calO'$ and $[\calO' : \calO]=\ell$,\\ (2) $\varphi$ is horizontal if $\calO = \calO'$,\\
(3) $\varphi$ is descending if $\calO \supsetneq \calO'$ and $[\calO : \calO']=\ell$.



Furthermore, the volcano structure of $G_K(p,\ell)$ is described by the following properties from \cite[Proposition 4.1]{Onu21}.

\begin{proposition} \label{Onu21 Proposition 4.1}
    Let $(E, \iota)/\cong$ be a vertex in $G_K(p,\ell)$, where $\iota$ is a primitive $\calO$-orientation on $E$. Let $d_K$ be the fundamental discriminant of $K$. If $\ell$ does not divide the conductor of $\calO$, then
    \begin{enumerate}
        \item[(1)] There are no ascending edges from $(E,\iota)$,
        \item[(2)] There are $\left( \frac{d_K}{\ell}\right)+1$ horizontal edges from $(E,\iota)$,
        \item[(3)] The remaining edges from $(E,\iota)$ are descending.
    \end{enumerate}
        If $\ell$ divides the conductor of $\calO$, then
        \begin{enumerate}
            \item[(1)] There is exactly one ascending edge from $(E,\iota)$,
            \item[(2)] The remaining edges from $(E,\iota)$ are descending. 
        \end{enumerate}
Note that $\left(\frac{\cdot}{\ell}\right)$ is the Legendre symbol.
\end{proposition}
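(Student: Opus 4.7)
The plan is to parametrize the $\ell+1$ outgoing $\ell$-isogenies from $(E,\iota)$ by the cyclic subgroups $H \subset E[\ell]$ of order $\ell$, and to classify each according to the primitive order $\calO_H$ of the pushed-forward orientation $\varphi_{H*}(\iota)$ on $E/H$. A preliminary trichotomy is needed: the resulting edge must be horizontal ($\calO_H = \calO$), ascending ($\calO \subsetneq \calO_H$ of index $\ell$), or descending ($\calO_H \subsetneq \calO$ of index $\ell$). This would follow from a local analysis at $\ell$: the conductors of $\calO$ and $\calO_H$ agree away from $\ell$, and since $\varphi_H$ induces an index-$\ell$ inclusion of Tate modules, the $\ell$-parts of the two conductors can differ by at most a factor of $\ell$.

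The key calculation is the following. Using the formula $\varphi_{H*}(\iota)(\alpha) = \frac{1}{\ell}\varphi_H\iota(\alpha)\hat\varphi_H$ together with the identity $\hat\varphi_H((E/H)[\ell]) = H$, one checks that for $\alpha \in \calO$, the quantity $\varphi_{H*}(\iota)(\alpha)$ lies in $\End(E/H)$ if and only if $\iota(\alpha)(H) \subseteq H$. Consequently, horizontal and ascending edges correspond precisely to the lines stabilized by $\iota(\calO)$, while descending edges correspond to the non-stabilized lines. The count then reduces to analyzing the action of the $\F_\ell$-algebra $\calO/\ell\calO$ on $E[\ell] \cong \F_\ell^2$.

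In the case $\ell \nmid f(\calO)$, we have $\calO/\ell\calO \cong \calO_K/\ell\calO_K$, which is isomorphic to $\F_\ell \times \F_\ell$, $\F_{\ell^2}$, or $\F_\ell[t]/(t^2)$ depending on whether $\ell$ splits, is inert, or ramifies in $K$, yielding $2$, $0$, or $1$ invariant lines respectively, totalling $\left(\frac{d_K}{\ell}\right)+1$. Since no order in $K$ properly contains $\calO$ with index $\ell$ in this case (such an order would have non-integral conductor $f(\calO)/\ell$), no ascending edges can exist, and the trichotomy forces the remaining $\ell - \left(\frac{d_K}{\ell}\right)$ edges to be descending.

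In the case $\ell \mid f(\calO)$, set $f' = f(\calO)/\ell$, let $\omega$ generate $\calO_K$ over $\Z$, and let $\calO_0 = \Z + f'\calO_K$ be the unique order containing $\calO$ with index $\ell$. Then $f(\calO)\omega \bmod \ell$ is a nonzero nilpotent in $\calO/\ell\calO \cong \F_\ell[t]/(t^2)$, and its action on $E[\ell]$ is nonzero by primitivity of $\iota$ (otherwise $\iota(f'\omega)$ would be an honest endomorphism of $E$, contradicting $f'\omega \notin \calO$). Hence the action is rank-one nilpotent, producing a unique invariant line $H_0 \subset E[\ell]$. The main obstacle is to verify that $H_0$ yields the ascending edge rather than a (nonexistent) horizontal one, i.e., that $(E/H_0, \varphi_{H_0*}(\iota))$ is primitively oriented by $\calO_0$. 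This I would establish by factoring $\iota(f(\calO)\omega) = \psi \circ \varphi_{H_0}$ (possible because $H_0 \subseteq \ker\iota(f(\calO)\omega)$) and checking that $\varphi_{H_0*}(\iota)(f'\omega)$ coincides with $\psi/\ell$ and defines a genuine endomorphism of $E/H_0$. All remaining $H \ne H_0$ are not $\calO$-invariant, so the trichotomy gives descending edges.
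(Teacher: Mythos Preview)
The paper does not prove this proposition at all: it is quoted directly from \cite[Proposition 4.1]{Onu21} as background, so there is no proof in the paper to compare against. Your sketch is essentially the standard argument (the one used by Onuki, and before that by Kohel in the ordinary case): parametrize the $\ell+1$ outgoing edges by lines $H\subset E[\ell]$, observe that $\varphi_{H*}(\iota)(\alpha)\in\End(E/H)$ iff $\iota(\alpha)(H)\subseteq H$, and count $\calO$-stable lines via the ring structure of $\calO/\ell\calO$ acting faithfully on $E[\ell]$. That part is clean and correct.

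The one place that deserves tightening is the verification, in the case $\ell\mid f(\calO)$, that the unique invariant line $H_0$ yields an \emph{ascending} rather than a horizontal edge. Your factoring $\iota(f\omega)=\psi\circ\varphi_{H_0}$ gives $\varphi_{H_0*}(\iota)(f'\omega)=\tfrac{1}{\ell}\varphi_{H_0}\psi$, not ``$\psi/\ell$'' (which is not even an endomorphism of $E/H_0$), and showing that $\varphi_{H_0}\psi$ is divisible by $\ell$ in $\End(E/H_0)$ is exactly the point at issue, so the argument as written is circular. Two clean ways to close this: (i) argue on Tate modules, where $T_\ell(E/H_0)$ is the unique index-$\ell$ overlattice of $T_\ell(E)$ on which the nonzero nilpotent $f\omega$ becomes divisible by $\ell$; or (ii) note that when $\ell\mid f(\calO)$ there are no invertible $\calO$-ideals of norm $\ell$, hence no horizontal $\ell$-isogenies at all (cf.\ \cite[Proposition 4.8]{ACL+23}), so the unique stable line is forced to be ascending by the trichotomy.
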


\begin{definition}
   For an order $\calO \subseteq K$, we define $\SSpr$ to be the set of isomorphism classes of $K$-oriented supersingular elliptic curves over $\Fbar_p$ which are primitively $\calO$-oriented.
\end{definition}

\begin{remark}
    Recall that we assumed that $p$ is not split in $K$. If we drop that assumption, $\SSpr$ is non-empty if and only if $p$ is not split in $K$ and $p$ does not divide the conductor of $\calO$ \cite[Proposition 3.2]{Onu21}. 
\end{remark}

\begin{details}
    \begin{tbox}
        We do not make assumption that $p$ does not divide the conductor of $\calO$ like $\cite{ACL+23}$ does. 
    \end{tbox}
\end{details}

For a primitively $\calO$-oriented curve $(E, \iota)$, we define a subgroup $E[\a] \subseteq E$ given by an integral ideal $\a \subseteq \calO$ prime to $p$
\begin{equation}
    E[\a]:=\bigcap_{\alpha \in \a}\ker \iota(\alpha),
\end{equation}
which is called $\a$-torsion subgroup of $E$. Accordingly, there exists a (separable) $K$-oriented isogeny $\varphi_\a : (E,\iota) \rightarrow (E_\a, \iota_\a)$ where $E_\a 
\cong E/E[\a]$ and $\iota_\a ={(\varphi_\a)}_\ast(\iota)$. Let $\a\ast(E,\iota)$ denote the $K$-isomorphism class of $(E_\a. \iota_\a)$.   
This type of kernels yield either ascending or horizontal $K$-oriented isogenies:
\begin{proposition} \label{Onu21 Proposition 3.5}
    Let $(E,\iota)$ be a primitively $\calO$-oriented elliptic curve, $\a \subseteq \calO$ an integral ideal prime to $p$. Then a $K$-oriented isogeny with kernel $E[\a]$
    \begin{equation}
        \varphi: (E, \iota) \rightarrow (E_\a, \iota_\a)
    \end{equation}
is horizontal or ascending. Furthermore, if $\a$ is invertible then $\varphi$ is horizontal \cite[Proposition 3.5]{Onu21}. 
\end{proposition}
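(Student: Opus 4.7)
The strategy is to show that $\iota$ descends through $\varphi_\a$ to an $\calO$-orientation on $E_\a$---which immediately forces $\varphi_\a$ to be horizontal or ascending---and then to rule out the ascending case when $\a$ is invertible by identifying the multiplier ring of $\a$.

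For the first part, I would prove that $\iota(\alpha)$ preserves $E[\a] = \bigcap_{\beta\in\a}\ker\iota(\beta)$ for every $\alpha\in\calO$: if $P\in E[\a]$ and $\beta\in\a$, then $\iota(\beta)\iota(\alpha)P = \iota(\beta\alpha)P = 0$ since $\beta\alpha\in\a$. Hence $\iota(\alpha)$ descends to a unique $\bar\iota(\alpha)\in\End(E_\a)$ characterised by $\bar\iota(\alpha)\varphi_\a = \varphi_\a\iota(\alpha)$. To identify $\bar\iota(\alpha)$ with $\iota_\a(\alpha) = \tfrac{1}{\deg\varphi_\a}\varphi_\a\iota(\alpha)\hat\varphi_\a$, I would post-compose the latter with $\varphi_\a$ and apply $\hat\varphi_\a\varphi_\a = [\deg\varphi_\a]$, obtaining $\iota_\a(\alpha)\varphi_\a = \varphi_\a\iota(\alpha) = \bar\iota(\alpha)\varphi_\a$, and then cancel the epimorphism $\varphi_\a$. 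Thus $\iota_\a(\calO)\subseteq\End(E_\a)$, so if $\calO'$ is the unique order for which $\iota_\a$ is primitive, then $\calO\subseteq\calO'$ and $\varphi_\a$ is horizontal ($\calO = \calO'$) or ascending ($\calO\subsetneq\calO'$), by the trichotomy recalled just before Proposition~\ref{Onu21 Proposition 4.1}.

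For the \emph{furthermore} clause, suppose $\a$ is $\calO$-invertible and, for contradiction, $\calO\subsetneq\calO'$; pick $\alpha\in\calO'\setminus\calO$. The plan is to deduce $\alpha\a\subseteq\a$, i.e., that $\alpha$ lies in the multiplier ring $(\a:\a) := \{x\in K : x\a\subseteq\a\}$, which for an invertible $\calO$-ideal is exactly $\calO$---contradicting $\alpha\notin\calO$. To extract $\alpha\a\subseteq\a$ from the endomorphism-level statement $\iota_\a(\alpha)\in\End(E_\a)$, I would pass to the Tate module at each prime $\ell\ne p$: under the $\calO_\ell := \calO\otimes\Z_\ell$-module structure induced by $\iota$, the ideal $\a_\ell$ acts, $T_\ell E_\a$ identifies under $V_\ell\varphi_\a$ with the enlarged lattice $\a_\ell^{-1}T_\ell E\subseteq V_\ell E$, and the preservation of this lattice by $\iota(\alpha)$ translates into $\alpha\,\a_\ell\subseteq\a_\ell$ locally at every $\ell$, hence globally $\alpha\a\subseteq\a$.

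The main obstacle is precisely this last translation: the descent step giving $\iota_\a(\calO)\subseteq\End(E_\a)$ is elementary, but deriving $\alpha\a\subseteq\a$ from the fact that $\iota_\a(\alpha)$ is an actual (rather than merely quasi-) endomorphism of $E_\a$ requires the local Tate-module analysis, together with the classical identification of the multiplier ring of an invertible quadratic-order ideal with its ring of coefficients. Once those two ingredients are in hand, the contradiction $\alpha\in\calO$ is immediate, completing the horizontality claim.
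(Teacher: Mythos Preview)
The paper does not supply its own proof of this proposition; it simply cites \cite[Proposition 3.5]{Onu21}. So there is no argument in the paper to compare against, and the question is only whether your outline is sound.

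Your first part is correct and is exactly the standard argument: since $\a$ is an $\calO$-ideal, $\iota(\calO)$ stabilises $E[\a]$, the descended maps agree with the pushforward orientation, and hence $\calO\subseteq\calO'$.

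Your second part is also correct, but two points are worth flagging. First, the passage from ``$\alpha\,\a_\ell\subseteq\a_\ell$ for every $\ell\ne p$'' to the global statement $\alpha\a\subseteq\a$ silently uses that nothing happens at $p$. This is fine because the existence of a primitively $\calO$-oriented supersingular curve already forces $p\nmid f(\calO)$ (cf.\ the remark after the definition of $\SSpr$), so $\calO_p=(\calO_K)_p$ and $\alpha\in\calO'\subseteq\calO_K$ gives $\alpha\in\calO_p$ for free; alternatively, $N(\a)\alpha\in\calO$ with $p\nmid N(\a)$ does the job. You should say one of these explicitly. Second, your Tate-module step actually proves something slightly stronger than you use: invertibility of $\a_\ell$ lets you multiply $\alpha\,\a_\ell^{-1}T_\ell E\subseteq\a_\ell^{-1}T_\ell E$ by $\a_\ell$ to obtain $\alpha\,T_\ell E\subseteq T_\ell E$ directly, i.e.\ $\alpha\in\calO_\ell$, so the detour through $(\a:\a)=\calO$ is optional.

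A shorter alternative for the invertible case avoids Tate modules entirely: apply the first part to the conjugate ideal $\bar\a$ (viewed as an ideal of $\calO'$ via $\calO\subseteq\calO'$) to see that $\varphi_{\bar\a}$ is also horizontal-or-ascending, and then observe that $\varphi_{\bar\a}\circ\varphi_\a$ has kernel $E[\a\bar\a]=E[N(\a)]$, hence is multiplication by $N(\a)$ and is horizontal; this forces both factors to be horizontal.
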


There is an action of the class group $\cl(\calO)$ of $\calO$ on $\SSpr$, given by
\begin{equation}
    \begin{split}
        \cl(\calO) \times \SSpr & \rightarrow \SSpr \\ 
        (\a, (E,\iota)) & \mapsto \a \ast (E,\iota),
    \end{split}
\end{equation}
which is free, but not necessarily transitive \cite[Proposition 3.3]{Onu21}.

Let $\theta \in \End(E)\setminus \Z$ be an endomorphism. There is an imaginary quadratic field $K \simeq \Q(\theta)$ and hence we have an associated $K$-orientation $\iota: K \hookrightarrow \End^0(E)$ such that $\iota(K)=\Q(\theta)$. 

Let $\calO = \Z[\alpha]$ be an order in $K$ such that $\alpha=\iota^{-1}(\theta)$. Then $\theta$ is called $\ell$-primitive if $\iota$ is $\ell$-primitive $\calO$-orientation. For an integer $N$, $\theta$ is called $N$-suitable if $\alpha$ is of the form $f\omega_K+kN$ for some $k$, where $f$ is the conductor of $\calO$ and $\omega_K$ is the generator for $\calO_K$ given by 
\begin{equation}
    \omega_K = \begin{cases}
        \frac{\sqrt{d_K}}{2} & \text{if} \ d_K \equiv 0\pmod 4, \\
        \frac{ 1 + \sqrt{d_K}}{2} & \text{if} \ d_K\equiv 1\pmod 4.
    \end{cases}
\end{equation}


\subsection{Quaternion Algebras}

Let $B_0$ be a quaternion algebra over $\Q$ ramified at $p$ and $\infty$. A lattice $I \subseteq B_0$ is a finitely generated $\Z$-module such that $I \Q = B_0$. For a lattice $I \subseteq B_0$, denote by
\begin{align}
    \calO_{R}(I) & = \left\{ \alpha \in B_0 : I \alpha \subseteq I \right\} \\
    \calO_{L}(I) & = \left\{ \alpha \in B_0 : \alpha I \subseteq I \right\}
\end{align}  
the right and left orders of $I$, respectively, and 
\begin{equation}
  \text{nrd}(I) = \text{gcd} \left( \left\{ \text{nrd}(\alpha) : \alpha \in I \right\} \right)
\end{equation}
the reduced norm of the lattice $I$. We also take the reduced norm of $I$ to be the fractional ideal of $\Z$ generated by the integer $\nrd(I)$. 

\begin{details}
    \begin{tbox}
\begin{remark}
        Let $R$ be a noetherian domain. Let $M$ be an $R$-lattice in $V$, a vector space over $F$. Then
    \begin{equation} \label{lattice local to global} M=\bigcap_{\mathfrak{p}}M_{(\mathfrak{p})},\end{equation}
    where $M_{(\mathfrak{p})}:=M R_{(\mathfrak{p})} \cong M \otimes_R R_{(\mathfrak{p})}$ for prime ideals $\mathfrak{p}$ in $R$ \cite[Lemma 9.4.6]{Voi21}. As a corollary, 
    \begin{center}
        $M \subseteq N \Leftrightarrow M_{(\mathfrak{p})} \subseteq N_{(\mathfrak{p})}$ for all prime (or maximal) ideals $\mathfrak{p}$ of $R$.
    \end{center}

    Suppose $R$ is a DVR with a unique maximal ideal $\mathfrak{p}$. Then
\begin{equation} \label{bijection between localization and completion}
    \begin{split}
        M & \mapsto M_{\mathfrak{p}}:=M \otimes_{R}R_{\mathfrak{p}} \\
        M_\mathfrak{p}  \cap V &\mapsfrom M_\mathfrak{p}
    \end{split}
\end{equation}
are mutually inverse bijections between the set of $R$-lattices in $V$ and the set of $R_\mathfrak{p}$-lattices in $V_\mathfrak{p}$ \cite[Lemma 9.5.3]{Voi21}.\\

    Now suppose $R$ is a Dedekind domain, e.g., $R=\Z$. Then $R_{(\mathfrak{p})}$ is DVR with a unique maximal ideal $\mathfrak{P}:=\mathfrak{p} R_{(\mathfrak{p})}$. Since $R_{(\mathfrak{p})}$ is already local, the localization of $M_{(\mathfrak{p})}$ at $\mathfrak{P}$ is itself. Also, let $\overline{\mathfrak{P}}:=\mathfrak{p}R_\mathfrak{p}$. Since $V$ is a vector space, $V_{(\mathfrak{p})} = V \otimes_{R} R_{(\mathfrak{p})}=V$.
    
    Since $R$ is Dedekind, every prime ideal $\mathfrak{p}$ is maximal. Hence, \href{https://mathoverflow.net/questions/64399/does-completion-commute-with-localization?_gl=1*naside*_ga*NDM3MDE2MDUyLjE3MjY1NTAyNDY.*_ga_S812YQPLT2*MTc0NDc1MDgxNi40NjEuMC4xNzQ0NzUwODE2LjAuMC4w}{completion commutes with localization}, i.e., $(R_{(\mathfrak{p})})_{\mathfrak{P}}=(R_{\mathfrak{p}})_{(\overline{\mathfrak{P}})}=R_{\mathfrak{p}}$.
    
    (\ref{bijection between localization and completion}) implies 
    \begin{equation}
        \begin{split}
            M_{(\mathfrak{p})} &=(M_{(\mathfrak{p})})_{\mathfrak{P}} \cap V\\
            &= \left(M_{(\mathfrak{p})}\otimes_{R_{(\mathfrak{p})}} (R_{(\mathfrak{p})})_{\mathfrak{P}}\right) \cap V\\
            &=\left(M_{(\mathfrak{p})}\otimes_{R_{(\mathfrak{p})}} R_{\mathfrak{p}}\right) \cap V\\
            & = M_{\mathfrak{p}} \cap V.
        \end{split}
    \end{equation}
    Hence,    
    \begin{equation} \label{lattice completion to global}
    M=\bigcap_{\mathfrak{p}}M_{(\mathfrak{p})}=\bigcap_\mathfrak{p}\left(M_\mathfrak{p}\cap V\right)=\left( \bigcap_{\mathfrak{p}} M_\mathfrak{p}\right) \cap V.
    \end{equation}

It follows that
    \begin{center}
        $M \subseteq N \Leftrightarrow M_{\mathfrak{p}} \subseteq N_{\mathfrak{p}}$ for all prime (or maximal) ideals $\mathfrak{p}$ of $R$.
    \end{center}
\end{remark}
\end{tbox}
\end{details}

\begin{extra}
    \begin{tbox}
Next, we write a formula for the reduced norm of a lattice. Recall that the reduced norm of a lattice $I$ in a semisimple algebra $B$ is the fractional ideal of $F$ generated by $\{\nrd(\alpha) : \alpha \in I\}$. 

Suppose $R=\Z$, $V=F=\Q$. Then $\nrd(I)$ is a lattice in $\Q$. By (\ref{lattice local to global}),
\begin{equation}
    \nrd(I)= \bigcap_{\ell \text{ prime}} \nrd(I)_{(\ell)}=\bigcap_{\ell \text{ prime}} \nrd(I_{(\ell)}).
\end{equation}
as
\begin{equation}
    \begin{split}
        \nrd(I)_{(\ell)} & = \{\ell^{\nu_{\ell}(\nrd(\alpha))}: \alpha \in I\} \Z_{(\ell)} \\
        & = \{\nrd(\alpha_{(\ell)}) : \alpha_{(\ell)} \in I\otimes \Z_{(\ell)}\}\Z_{(\ell)} \\
        & = \nrd(I_{(\ell)}).
    \end{split}
\end{equation}
(cf. \cite[16.3.3]{Voi21}). From (\ref{lattice completion to global}),
\begin{equation}
    \nrd(I)= \left(\bigcap_{\ell} \nrd(I)_\ell\right) \cap \Q=\left(\bigcap_{\ell} \nrd(I_\ell)\right) \cap \Q.
\end{equation}
as $\nrd(I)_\ell$ is the $\Z_\ell$-submodule of $V_\ell=F_\ell= \Q_\ell$ such that
\begin{equation}
    \begin{split}
        \nrd(I)_{\ell}&=\{\ell^{\nu(\nrd(\alpha))} : \alpha \in I\}\Z_{\ell} \\
        & = \{\nrd(\alpha_{\ell}) : \alpha_{\ell} \in I\otimes \Z_{\ell}\}\Z_{\el}\\
        & = \nrd(I_{\ell}).
    \end{split}
\end{equation}

If we take $\nrd(I)=\gcd(\{\nrd(\alpha): \alpha \in I\})$ and $\nrd(I)_\ell$ be a positive generator of the subgroup of $\Q_\ell$ generated by $\nrd(\alpha_\ell)$ for $\alpha_\ell \in I_\ell$, then
\begin{equation}
  \nrd(I_\ell)=\ell^{\nu_\ell(\nrd(I))}
\end{equation}
and
\begin{equation}
    \nrd(I)=\prod_{\ell \text{\ prime}} \nrd(I_{\ell}).
\end{equation}
    
        Accordingly, we take the reduced norm of $I_{(\ell)}$ (resp. $I_{\ell}$) to be a positive generator of $\nrd(I)_{(\ell)}$ (resp. $\nrd(I)_{\ell}$), i.e., $\ell^{\nu_{\ell}(\nrd(I))}$. Hence,
\begin{equation}
    \nrd(I) = \prod_{\ell} \nrd(I_\ell)
\end{equation}
(cf. \cite[16.3.3]{Voi21}).
    \end{tbox}
\end{extra}

The localization of $I$ is the $\Z_{(\ell)}$-module $I_{(\ell)} := I\Z_{(\ell)} \cong I \otimes_\Z \Z_{(\ell)}$ and the completion of $I$ is the $I_\ell := \Z_\ell$-module $I \otimes_\Z \Z_\ell$. We have
\begin{equation}
I=\bigcap_{\mathfrak{\ell}}I_{(\ell)}=\bigcap_\ell\left(I_\ell\cap B_0\right).
\end{equation}
where $\ell$ runs over all primes. Also, 
\begin{equation}
    \overline{I}=\{\overline{\alpha}: \alpha \in I\}
\end{equation}
is a lattice in $B_0$ where the map 
\begin{equation}
    \begin{split}
        \overline{\phantom{A}}: B_0 & \rightarrow B_0 \\
      \alpha=  a+bi+cj+dij & \mapsto \overline{\alpha}=a-bi-cj-dij
    \end{split}
\end{equation}
is a standard involution on $B_0$ for a given standard basis $1, i, j, ij$ for $B_0$. We have the following properties:
\begin{align}
            \overline{IJ}&=\bar{J}\bar{I}\\
        \overline{\mathfrak{O}}&=\mathfrak{O} \\
        \calO_L(\overline{I})&=\calO_R(I) \\
        \calO_R(\overline{I})&=\calO_L(I) 
\end{align}
for lattices $I, J \subseteq B_0$ and an order $\mathfrak{O}\subseteq B_0$.

\begin{details}
    \begin{tbox}
     This is \cite[16.6.6 and Lemma 16.6.7]{Voi21}.
    \end{tbox}
\end{details}

A lattice $I \subseteq B_0$ is integral if $I^2 \subseteq I$ and invertible if there is a lattice $I'$ such that
\begin{align}
    I I' = \calO_L(I) = \calO_R(I') \\
    I' I = \calO_L(I') = \calO_R(I).
\end{align}
If $I$ is integral, then $I \subseteq \calO_L(I) \cap \calO_R(I)$ by \cite[Lemma 16.2.8]{Voi21}. By \cite[16.6.14]{Voi21}, if $I$ is invertible, the (two-sided) inverse $I^{-1}$ of $I$ is given by
     \begin{equation}
         I^{-1}=\frac{1}{\nrd(I)}\overline{I}.
     \end{equation}

For an integral invertible lattice $I$, we have that 
\begin{equation}
  \nrd(I)^2 = [\calO_L(I):I] = [\calO_R(I):I]
\end{equation}
by \cite[Main Theorem 16.1.3]{Voi21}. 

A lattice $I \subseteq B_0$ is principal if
\begin{equation}
    I = \calO_L(I) \alpha = \alpha \calO_R(I)
\end{equation}
for some $\alpha \in B_0^\times$. If this is the case, then $I$ is invertible with inverse given by
\begin{equation}
    I^{-1} = \alpha^{-1} \calO_L(I) = \calO_R(I) \alpha^{-1}.
\end{equation}

The lattice $I$ is locally principal at $\ell$ if $I \otimes_\Z \Z_{(\ell)}$ is principal in $B_0 \otimes \Q_{(\ell)}$.

Let $I, J \subseteq B_0$ be lattices. The left and right colon ideals are defined as
\begin{align}
  (I:J)_L & = \left\{ \alpha \in B_0 : \alpha J \subseteq I \right\}, \\
  (I:J)_R & = \left\{ \alpha \in B_0 : J \alpha \subseteq I \right\}.
\end{align}
If $J$ is invertible, then by \cite[Exercise 16.8.11]{Voi21}
\begin{align}
    (I:J)_L & = I J^{-1}, \\
    (I:J)_R & = J^{-1} I. 
\end{align}

Let $\frakO$ be an order in $B_0$. A left fractional $\frakO$-ideal $I \subseteq B_0$ is a lattice $I \subseteq B_0$ such that $\frakO \subseteq \calO_L(I)$; a similar definition holds for right fractional $\frakO$-ideal $I$.

If $\mathfrak{O}$ is maximal, $I$ is locally principal at $\ell$ for every prime $\ell$ and hence invertible by \cite[Theorem 16.1.3]{Voi21}.

\begin{lemma} \label{induced isomorphism of quaternion algebras}
  Let $\varphi : E \rightarrow E'$ be an isogeny of supersingular elliptic curves over $\overline{\F}_p$, $B = \End^0(E)$, and $B' = \End^0(E')$. Then $\varphi$ induces an isomorphism of $\Q$-algebras
  \begin{align} \label{induced isomorphism of quaternion algebras equation}
      \kappa_\varphi : B' & \rightarrow B \\
     \notag  \phi & \mapsto \frac{1}{\deg \varphi} \hat \varphi \phi \varphi.
  \end{align}
\end{lemma}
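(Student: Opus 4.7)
The plan is to verify in order: well-definedness, $\Q$-linearity, multiplicativity, and bijectivity. The key identity throughout is $\hat\varphi \varphi = [\deg \varphi]_E$ and $\varphi \hat\varphi = [\deg \varphi]_{E'}$, which follows from the standard properties of the dual isogeny.

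First I would check that $\kappa_\varphi$ actually lands in $B = \End^0(E)$. For any $\phi \in \End(E')$, the composition $\hat\varphi \phi \varphi \colon E \to E$ is an endomorphism of $E$, hence lies in $\End(E) \subseteq B$. Extending $\Q$-linearly from $\End(E') \subseteq B'$ to all of $B' = \End(E')\otimes \Q$ gives a well-defined $\Q$-linear map $\kappa_\varphi \colon B' \to B$; the factor $\tfrac{1}{\deg\varphi}$ is a scalar and causes no issue because we have already tensored with $\Q$. Additivity is immediate from the distributivity of composition over addition in the ring $\End(E)$.

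Next I would verify that $\kappa_\varphi$ is a ring homomorphism. For the identity, $\kappa_\varphi(\operatorname{id}_{E'}) = \tfrac{1}{\deg\varphi}\hat\varphi \varphi = \tfrac{1}{\deg\varphi}[\deg\varphi]_E = \operatorname{id}_E$. For multiplicativity, given $\phi_1, \phi_2 \in B'$, one computes
\begin{equation*}
\kappa_\varphi(\phi_1)\kappa_\varphi(\phi_2) = \frac{1}{(\deg\varphi)^2}\hat\varphi\phi_1\varphi\hat\varphi\phi_2\varphi = \frac{1}{(\deg\varphi)^2}\hat\varphi\phi_1[\deg\varphi]_{E'}\phi_2\varphi = \frac{1}{\deg\varphi}\hat\varphi\phi_1\phi_2\varphi = \kappa_\varphi(\phi_1\phi_2),
\end{equation*}
where the middle step inserts $\varphi\hat\varphi = [\deg\varphi]_{E'}$ and uses that scalar multiplication by $[\deg\varphi]_{E'}$ commutes with $\phi_1, \phi_2$ in the $\Q$-algebra $\End^0(E')$.

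Finally, for the isomorphism claim: since $E, E'$ are supersingular, both $B$ and $B'$ are isomorphic to the quaternion algebra $B_0$ ramified at $p$ and $\infty$, which is a division algebra over $\Q$ of dimension $4$. A non-zero $\Q$-algebra homomorphism out of a simple $\Q$-algebra is injective, and $\kappa_\varphi$ is non-zero since it sends $1$ to $1$; hence it is injective, and by equality of $\Q$-dimensions it is surjective as well. The only mild subtlety is the bookkeeping of the $\deg\varphi$ factors in the multiplicativity computation, but once $\varphi\hat\varphi$ is correctly identified with the scalar $[\deg\varphi]_{E'}$ the identity drops out immediately.
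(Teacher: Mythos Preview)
Your proof is correct. The paper actually states this lemma without proof, treating it as a standard fact; your direct verification of well-definedness, $\Q$-linearity, multiplicativity via $\varphi\hat\varphi=[\deg\varphi]_{E'}$, and bijectivity from simplicity plus dimension count is exactly the expected elementary argument.
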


We recall the following contravariant equivalence of categories.
\begin{theorem}
\label{ss-correspondence}
Let $E_0/\overline{\F}_p$ be a supersingular elliptic curve with $\End(E_0) \cong \mathfrak{O}_0$ where $\mathfrak{O}_0 \subseteq B_0 = \End(E_0) \otimes \Q$ is a maximal order. Then there is a contravariant equivalence of categories:
\begin{align}
     \operatorname{SS}(p) & = \left\{ \text{supersingular elliptic curves } E/{\overline \F}_p, \text{under isogenies} \right\} \\
 \notag & \longleftrightarrow \left\{ \text{left } \frakO_0 \text{-modules } I, \text{under non-zero } \frakO_0 \text{-module homomorphisms} \right\} 
\end{align}
In the equivalence above, $E$ is sent to $I = \Hom(E,E_0)$, $E'$ to $I' = \Hom(E',E_0)$, and the following hold:
\begin{enumerate}
\item[(a)] up to isomorphism, $I$ may be taken to be an invertible left $\mathfrak{O}_0$-ideal,
\item[(b)] $\Hom(E',E) \longleftrightarrow (I':I)_R = I^{-1} I' = \left\{ \alpha \in B_0 : I \alpha \subseteq I' \right\}$ for integral $I, I'$,
\item[(c)] $\Hom(E,E_0) \longleftrightarrow (I:\mathfrak{O}_0)_R = \mathfrak{O}_0^{-1} I = \left\{ \alpha \in B_0 : \mathfrak{O}_0 \alpha \subseteq I \right\}$ for integral $I$,
\item[(d)] $\End(E) \cong \calO_R(I)$ for the left fractional $\frakO_0$-ideal $I$ corresponding to $E$.
\end{enumerate}
\end{theorem}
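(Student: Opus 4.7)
The plan is to construct the contravariant functor $E \mapsto I(E) := \Hom(E,E_0)$, equip its image with the natural left $\frakO_0$-module structure coming from post-composition by endomorphisms of $E_0$, and then verify full faithfulness and essential surjectivity onto the category of invertible left $\frakO_0$-ideals. First I would check that $I(E)$ is a lattice in $B_0$: because $E$ and $E_0$ are supersingular and hence isogenous over $\Fbar_p$, picking any isogeny $\psi : E \to E_0$ gives an isomorphism $\Hom(E,E_0) \otimes \Q \cong \End^0(E_0) = B_0$ via $\varphi \mapsto \varphi \psi^{-1}$ (applied after clearing denominators), and $\Hom(E,E_0)$ is a finitely generated torsion-free $\Z$-module since $\End(E_0)$ is. This simultaneously gives (a) up to scaling by a nonzero element of $B_0$: $I(E)$ is a left $\frakO_0$-lattice whose left order contains the maximal order $\frakO_0$ and hence equals it, and local principality at each prime $\ell$ follows from \cite[Theorem 16.1.3]{Voi21} applied to a maximal order, yielding invertibility.

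Next I would address functoriality and the identification in (b). Given $\varphi \in \Hom(E',E)$, pre-composition $\psi \mapsto \psi \circ \varphi$ sends $I(E) = \Hom(E,E_0)$ into $I(E') = \Hom(E',E_0)$, so $\varphi$ induces an element $\alpha_\varphi \in (I':I)_R = \{\alpha \in B_0 : I\alpha \subseteq I'\}$. For the reverse direction, I would use the $\Q$-linear identification of $I(E) \otimes \Q$ and $I(E') \otimes \Q$ with $B_0$ to show that any $\alpha \in (I':I)_R$ comes from a unique isogeny, obtained by reconstructing the morphism on a pair of bases. Given that $I$ is invertible, the colon ideal rewrites as $I^{-1}I'$ by \cite[Exercise 16.8.11]{Voi21}. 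Specializing to $E' = E_0$ and $I' = \frakO_0 = I(E_0)$ yields (c), and specializing to $E' = E$ yields (d), since the ring of $B_0$-elements $\alpha$ with $I\alpha \subseteq I$ is exactly $\calO_R(I)$, and this ring structure matches the endomorphism ring structure via the functor.

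The part I expect to be the main obstacle is essential surjectivity: given any invertible left $\frakO_0$-ideal $I$, produce a supersingular elliptic curve $E/\Fbar_p$ together with an isomorphism of left $\frakO_0$-modules $\Hom(E,E_0) \cong I$. The strategy is to clear denominators so that $I$ is an integral left $\frakO_0$-ideal, form the finite subgroup scheme
\begin{equation}
    E_0[I] = \bigcap_{\alpha \in I} \ker(\alpha) \subseteq E_0,
\end{equation}
and take $E := E_0 / E_0[I]$, with the natural quotient isogeny $\pi : E_0 \to E$. One then checks that $\Hom(E,E_0)$, viewed inside $B_0$ via $\psi \mapsto \psi \pi / \deg \pi$, recovers $I$, using that $I$ is locally principal (so the construction is compatible prime-by-prime via $\ell^n$-torsion, including at $p$ via Dieudonné modules or the Deuring-style argument in \cite[Chapter 42]{Voi21}). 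The supersingularity of $E$ is preserved under isogeny. Two different integral representatives of the same ideal class give isomorphic quotients, so the construction descends to isomorphism classes and provides a two-sided inverse to $E \mapsto I(E)$ up to natural isomorphism, completing the equivalence.
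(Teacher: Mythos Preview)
Your sketch is correct and follows the standard argument for the Deuring correspondence as presented in \cite[Chapter 42]{Voi21}. The paper itself does not give an independent proof: its entire proof is a citation to \cite[Theorem 42.3.2, Lemma 42.2.22, Lemma 42.2.9]{Voi21}, and your outline is essentially a reconstruction of that material---the functor $E\mapsto\Hom(E,E_0)$, invertibility via maximality of $\frakO_0$, the colon-ideal description of $\Hom$-sets, and essential surjectivity via $E_0[I]$ are exactly what those references provide.
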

\begin{proof}
This is \cite[Theorem 42.3.2]{Voi21}, \cite[Lemma 42.2.22]{Voi21}, and \cite[Lemma 42.2.9]{Voi21}.
\end{proof}

\begin{details}
    \begin{tbox} 
    Let $I, I'$ be non-zero integral left $\End(E_0)$-ideals. The bijection in \cite[Lemma 42.2.22]{Voi21} is
\begin{equation}
    \begin{split}
        \Hom(E_{I'}, E_I) &\rightarrow I^{-1}I' \subseteq \End(E_0) \\
        \psi &\mapsto \frac{1}{\deg(\varphi_I)}\hat{\varphi_{I}}\psi \varphi_{I'}.
    \end{split}
\end{equation}
\end{tbox}
\end{details}

\begin{definition}
  Let $\mathfrak{O}$ be a maximal order in $B_0$ and $I$ be an integral non-zero left $\mathfrak{O}$-ideal. We define $E[I] \subseteq E$ to be the scheme theoretic intersection
  \begin{equation}
      E[I] = \bigcap_{\alpha \in I} E[\alpha]
  \end{equation}
  where $E[\alpha] = \ker \alpha$ as a group scheme over $\overline{\F}_p$. There is an associated isogeny $\varphi_I : E \rightarrow E_I := E/E[I]$.
\end{definition}

\begin{remark}
If $I$ is integral, then the isogeny $\phi_I : E_0 \rightarrow E_0/E_0[I] \cong E_I$ has degree $\deg \phi_I = \text{nrd}(I)$ by \cite[Proposition 42.2.16]{Voi21}. Since we are only considering separable isogenies, accordingly we will only consider the integral left $\mathfrak{O}$-ideals of reduced norm coprime to $p$ unless specified.
\end{remark}

Consider the separable isogenies
\begin{equation}
    \begin{split}
        \varphi_I:E_0 & \rightarrow E_I=E_0/E_0[I]\\
        \varphi_{I'}:E_0 & \rightarrow E_{I'}=E_0/E_0[I'].
    \end{split}
\end{equation}
Then $E_0[I] \subseteq E[I']$ and hence $\varphi_{I'}$ factors as $\varphi_{I'}=\psi \varphi_{I}$ 
\[
\begin{tikzcd}
E_0 \arrow[d, "\varphi_I"'] \arrow[dr, swap, "\varphi_{I'}"']  &  \\
E_I=E_0/E_0[I]  \arrow[r, "\tilde{J}"']& E_{I'}=E_0/E_0[I']
\end{tikzcd}
\]
for some isogeny $\psi: E_I \rightarrow E_{I'}$ such that $\psi = \phi_{\tilde{J}}$ and where the kernel of $\psi$ given by  $E_I[\tilde{J}]$ for some left ideal $\tilde{J}$ of $\End(E_I)$. 

\begin{proposition}\label{EHL+18 Proposition 12} Let $I' \subseteq I$ be left $\End(E_0)$-ideals whose norms are coprime to $p$. Then there exists a separable isogeny $\psi : E_I \rightarrow E_{I'}$ such that $\varphi_{I'}=\psi\varphi_{I}$, and a left ideal $\tilde{J}$ of $\End(E_I)$ such that $E_{I}[\tilde{J}]=\ker(\psi)$ and
\begin{equation}
    \tilde{J} =\{\alpha \in \End(E_I) : \alpha(P)=0 \ \forall P \in \ker\psi\}
\end{equation}
where
\begin{equation}
    J :=\kappa_{\varphi_I}(\tilde{J})=I^{-1}I' \subseteq \End(E_0).
\end{equation}

\end{proposition}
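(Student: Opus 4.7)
The plan is to deduce the factorization from the inclusion $I' \subseteq I$, to construct $\tilde{J}$ as a left annihilator ideal in $\End(E_I)$, and then to identify $J := \kappa_{\varphi_I}(\tilde{J})$ with $I^{-1}I'$ by tracking how points of $E_0$ are killed by $\psi \circ \varphi_I$.

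First, the inclusion $I' \subseteq I$ immediately gives $E_0[I] = \bigcap_{\alpha \in I}\ker\alpha \subseteq \bigcap_{\alpha \in I'}\ker\alpha = E_0[I']$, so $\ker\varphi_I \subseteq \ker\varphi_{I'}$. The universal property of quotients of elliptic curves by finite subgroup schemes then yields a unique isogeny $\psi: E_I \to E_{I'}$ with $\varphi_{I'} = \psi \circ \varphi_I$. Since $\deg\varphi_{I'} = \nrd(I')$ is coprime to $p$ by hypothesis, $\varphi_{I'}$ is separable, and hence so is $\psi$. Now set $\tilde{J} := \{\alpha \in \End(E_I) : \alpha(P) = 0 \text{ for all } P \in \ker\psi\}$; this is patently a left ideal of $\End(E_I)$, and $\ker\psi \subseteq E_I[\tilde{J}]$ is immediate. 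For the reverse inclusion, I would invoke Theorem~\ref{ss-correspondence} applied with $E_I$ in place of $E_0$, noting that $\End(E_I) \cong \calO_R(I)$ is maximal since $\End(E_0)$ is maximal and $I$ is invertible: the correspondence identifies the separable isogeny $\psi$ with the integral left $\End(E_I)$-ideal whose $E_I$-torsion is $\ker\psi$, and this ideal is precisely $\tilde{J}$. Hence $E_I[\tilde{J}] = \ker\psi$.

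It remains to show $J = I^{-1}I'$. The identity $\varphi_I \hat{\varphi}_I = [\deg\varphi_I]$ on $E_I$ yields the intertwining relation $\tilde{\alpha} \circ \varphi_I = \varphi_I \circ \kappa_{\varphi_I}(\tilde{\alpha})$ for every $\tilde{\alpha} \in \End(E_I)$. Consequently, for $P \in E_0$ and $\tilde{\alpha} \in \tilde{J}$, the condition $\tilde{\alpha}(\varphi_I(P)) = 0$ is equivalent to $\kappa_{\varphi_I}(\tilde{\alpha})(P) \in \ker\varphi_I = E_0[I]$. Quantifying over $\tilde{\alpha} \in \tilde{J}$ then gives
\begin{equation*}
\ker(\psi \circ \varphi_I) = \varphi_I^{-1}(E_I[\tilde{J}]) = \{P \in E_0 : J(P) \subseteq E_0[I]\} = E_0[IJ].
\end{equation*}
On the other hand, $\psi \circ \varphi_I = \varphi_{I'}$, so $\ker(\psi \circ \varphi_I) = E_0[I']$, and therefore $E_0[IJ] = E_0[I']$. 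Since $I$ is invertible and $J \subseteq \calO_R(I)$, the product $IJ$ is an integral left $\End(E_0)$-ideal of reduced norm $\nrd(I)\nrd(J)$ coprime to $p$, so the Deuring bijection of Theorem~\ref{ss-correspondence} forces $IJ = I'$, whence $J = I^{-1}I'$.

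The main technical hurdle will be the implication $E_0[IJ] = E_0[I'] \Rightarrow IJ = I'$: it relies on the bijectivity of the subgroup-scheme-to-ideal correspondence for integral left ideals of the maximal order $\End(E_0)$ with reduced norm coprime to $p$, and on verifying that $IJ$ actually lies in this class. The latter follows from $\calO_L(IJ) \supseteq \calO_L(I) = \End(E_0)$, integrality via $IJ \subseteq I \cdot \calO_R(I) = I \subseteq \End(E_0)$, and multiplicativity of the reduced norm.
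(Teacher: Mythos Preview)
The paper does not prove this proposition; it simply cites \cite[Proposition~12]{EHL+18}. Your write-up is therefore a genuine self-contained alternative, and the overall strategy---factor through the kernel inclusion, realise $\tilde J$ as the annihilator of $\ker\psi$, then identify $IJ$ with $I'$---is correct.

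There is, however, a real gap in your middle display. You form the set $\{P\in E_0 : J(P)\subseteq E_0[I]\}$ and read the intertwining $\tilde\alpha\circ\varphi_I=\varphi_I\circ\kappa_{\varphi_I}(\tilde\alpha)$ as an identity of maps on points; but $J=\kappa_{\varphi_I}(\tilde J)$ lies in $\calO_R(I)\subseteq\End^0(E_0)$, not in $\End(E_0)$ in general, so ``$\beta(P)$'' for $\beta\in J$ has no meaning on $E_0$. The cleanest repair is to bypass that set entirely. Using $I=\Hom(E_I,E_0)\varphi_I$ and $\tilde J=\Hom(E_{I'},E_I)\psi$, your intertwining (which \emph{is} valid as an identity in $\Hom(E_0,E_I)$) gives
\[
IJ \;=\; \Hom(E_I,E_0)\,\varphi_I\cdot\tfrac{1}{\deg\varphi_I}\hat\varphi_I\,\tilde J\,\varphi_I \;=\; \Hom(E_I,E_0)\,\tilde J\,\varphi_I \;=\; \Hom(E_I,E_0)\Hom(E_{I'},E_I)\,\varphi_{I'}\;\subseteq\; I'.
\]
Since $\nrd(J)=\nrd(\tilde J)=\deg\psi=\nrd(I')/\nrd(I)$, both $IJ$ and $I'$ are integral left $\End(E_0)$-ideals of the same reduced norm, hence of the same index in $\End(E_0)$, so $IJ=I'$ and $J=I^{-1}I'$. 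Your invocation of the ideal--subgroup bijection at the end is then unnecessary, though if you keep it, cite Proposition~\ref{Curves to Ideals} rather than Theorem~\ref{ss-correspondence}.
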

\begin{proof}
  This is \cite[Proposition 12]{EHL+18}.
\end{proof}

If $I, J \subseteq B_0$ are two lattices, we define $[I]_L$ and $[I]_R$ to be the equivalence classes under the relations
\begin{align}
   I & \sim_L J \iff \exists \alpha \in B_0^\times \text{ such that } J = I \alpha, \\
   I & \sim_R J \iff \exists \alpha \in B_0^\times \text{ such that } J = \alpha I,
\end{align}
respectively.
\begin{remark}
An isomorphism class of supersingular elliptic curves $E/\overline{\F}_p$ corresponds to $[I]_L$ for a left fractional $\mathfrak{O}_0$-ideal $I$ in $B_0$. 
\end{remark}

The left and right class sets of an order $\frakO$ in $B_0$ are defined as
\begin{align}
  \text{Cls}_L(\frakO) & = \left\{ [I]_L : \text{invertible left fractional } \frakO \text{-ideals $I$ with } \calO_L(I) = \frakO \right\}, \\
  \text{Cls}_R(\frakO) & = \left\{ [I]_R : \text{invertible right fractional } \frakO \text{-ideals $I$ with } \calO_R(I) = \frakO \right\}.
\end{align}
\begin{remark} 
As $\frakO_0$ is a maximal order in $B_0$, it follows that the set of isomorphism classes of supersingular elliptic curves $E/\overline{\F}_p$ corresponds to the left $\mathfrak{O}_0$-class set $\text{Cls}_{L}(\frakO_0)$. Theorem~\ref{ss-correspondence} gives a functorial version of the classical Deuring correspondence \cite[Corollary 42.3.7]{Voi21}. For a statement that is more closely aligned with the original correspondence in terms of supersingular $j$-invariants in $\F_{p^2}$ and conjugacy classes of maximal orders in $B_0^\times$, see \eqref{classical-deuring}. 
\end{remark}

\begin{remark}
 In \cite[Proposition 42.4.8]{Voi21}, there is another functorial equivalence from the category of supersingular elliptic curves $E/\overline{\F}_p$ under reduced isomorphisms to oriented maximal orders in the quaternion algebra $\End(E) \otimes \Q$ under isomorphisms. In particular, the maximal orders are viewed in the varying quaternion algebra $\End(E) \otimes \Q$ rather than a fixed quaternion algebra $\End(E_0) \otimes \Q$.
\end{remark}

A fractional $\frakO,\frakO'$-ideal is a lattice $I \subseteq B_0$ which is a fractional left $\frakO$-ideal and a fractional right $\frakO'$-ideal.
\begin{definition}
Let $\mathfrak{O}, \mathfrak{O'}$ be orders in $B_0$. An invertible lattice $I$ with $\calO_L(I) = \mathfrak{O}$ and $\calO_R(I) = \mathfrak{O}'$ is called a connecting $\mathfrak{O},\mathfrak{O}'$-ideal.
\end{definition}

For the rest of the paper, we omit the term ``fractional'' and use the term integral when we deal with ideals contained in the relevant subring, e.g.\ integral $\mathfrak{O}$-ideal will mean an ideal of the ring $\mathfrak{O}$ in the usual sense; a $\mathfrak{O}$-ideal will mean a fractional $\mathfrak{O}$-ideal.

\section{Equivalence of categories in terms of quaternion algebras} \label{Equivalence of categories in terms of quaternion algebras}
In this section, we introduce an equivalence relation on supersingular elliptic curves over $\Fbar_p$, whose equivalence classes correspond bijectively to the set of maximal orders in $B_0$. To establish this, we review Deuring’s classical correspondence and refine some of the results.

Throughout this chapter, let $\ell$ be any prime and $\frakO_0 = \End(E_0)$ unless specified. Recall that $\frakO_0$ is a maximal order in $\End^0(E_0)\cong B_0$.

\subsection{Correspondence between curves and ideals}
Fix an initial supersingular elliptic curve $E_0/\Fbar_p$, and consider pairs $(E, \varphi)$ of a supersingular elliptic curve $E$ isogenous to $E_0$ and a separable isogeny $\varphi: E_0 \rightarrow E$. We define an equivalence relation on such pairs by $(E, \varphi) \sim (E', \varphi')$ if $\varphi \sim \varphi'$. Let 
\begin{equation}
  \SS(E_0):= \left\{(E,\varphi)  : \varphi: E_0 \rightarrow E \text{ is separable}\right\}/\sim. 
\end{equation}
\begin{details}
    \begin{tbox}
    zero map is not separable, as the corresponding pullback map does not give a separable extension. So separable means non-zero automatically
\end{tbox}
\end{details}
$\SS(E_0)$ can be interpreted as the set of equivalence classes of separable isogenies from $E_0$, or equivalently, as the set of finite subgroups of $E_0$ corresponding to their kernels. Hence, we have natural bijections
\begin{equation}
\begin{array}{c@{\;}c@{\;}c}
 \SS(E_0)
& \longleftrightarrow & 
\left\{\text{equivalence classes of separable isogenies from } E_0\right\} \\[1ex]
(E,\varphi)/\sim 
& \mapsto & 
\varphi:E_0 \rightarrow E /\sim \\[1ex]
& \longleftrightarrow & \left\{\text{finite subgroups $G\subseteq E_0$ of order coprime to $p$} \right\}\\[1ex]
& \mapsto & \ker\varphi.
\end{array}
\end{equation}

\begin{proposition} \label{Curves to Ideals}
     There is a bijection
    \begin{equation}
\begin{array}{c@{\;}c@{\;}c}
 \SS(E_0) 
& \longleftrightarrow & 
\left\{\text{non-zero left integral} \ \mathfrak{O}_0\text{-ideals of reduced norm coprime to } p \right\} \\[1ex]
(E,\varphi)/\sim 
& \mapsto & 
I_{\varphi}:=\Hom(E, E_0)\varphi.
\end{array}
\end{equation}    
\end{proposition}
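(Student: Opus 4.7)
The plan is to leverage Deuring's correspondence (Theorem~\ref{ss-correspondence}) and rigidify it: the theorem gives a bijection between isomorphism classes of supersingular curves and left classes of invertible left $\frakO_0$-ideals, and choosing an actual isogeny $\varphi:E_0\to E$ refines this to a bijection with actual left $\frakO_0$-ideals in $B_0$, not just their classes. My candidate inverse sends an integral left $\frakO_0$-ideal $I$ with $p\nmid\nrd(I)$ to $(E_I,\varphi_I)/{\sim}$, where $E_I=E_0/E_0[I]$ and $\varphi_I:E_0\twoheadrightarrow E_I$ is the quotient isogeny; the remark following Theorem~\ref{ss-correspondence} guarantees that $\varphi_I$ is separable of degree $\nrd(I)$, so this lands in $\SS(E_0)$.

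First I would check well-definedness of $(E,\varphi)\mapsto I_\varphi$. If $(E,\varphi)\sim(E',\varphi')$, then $\varphi$ and $\varphi'$ share a kernel, so the universal property of quotients yields an isomorphism $\lambda:E\isomto E'$ with $\varphi'=\lambda\varphi$. Pre-composition by $\lambda$ is an isomorphism $\Hom(E',E_0)\to\Hom(E,E_0)$, $\psi\mapsto\psi\lambda$, so
\[
I_{\varphi'}=\Hom(E',E_0)\varphi'=\bigl(\Hom(E',E_0)\lambda\bigr)\varphi=\Hom(E,E_0)\varphi=I_\varphi.
\]
That $I_\varphi$ is an integral left $\frakO_0$-ideal is then routine: it is contained in $\End(E_0)=\frakO_0$, stable under left multiplication by $\frakO_0=\End(E_0)$ (which acts by post-composition on $\Hom(E,E_0)$), and full-rank in $B_0$ since $\psi\mapsto\psi\varphi$ is an injective $\Z$-linear map out of the rank-four lattice $\Hom(E,E_0)$. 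Its reduced norm equals $\deg\varphi$, hence is coprime to $p$ by separability.

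To see that the two maps are mutually inverse, I would verify $I_{\varphi_I}=I$ by a norm comparison. Any $\alpha\in I\subseteq\End(E_0)$ satisfies $\ker\alpha\supseteq E_0[I]=\ker\varphi_I$, so $\alpha$ factors as $\alpha=\psi\varphi_I$ for some $\psi\in\Hom(E_I,E_0)$, giving $I\subseteq I_{\varphi_I}$. Moreover $\nrd(I)=\deg\varphi_I=\nrd(I_{\varphi_I})$ by \cite[Proposition~42.2.16]{Voi21} applied to both $I$ and $I_{\varphi_I}$, so an inclusion of invertible integral left $\frakO_0$-ideals with equal reduced norms forces equality. Conversely, starting from $(E,\varphi)$, to see $(E,\varphi)\sim(E_{I_\varphi},\varphi_{I_\varphi})$ it suffices to show $\ker\varphi=E_0[I_\varphi]$: the inclusion $\ker\varphi\subseteq E_0[I_\varphi]$ is immediate since each $\alpha=\psi\varphi\in I_\varphi$ vanishes on $\ker\varphi$, and equality follows by comparing orders, both being étale group schemes of order $\deg\varphi=\nrd(I_\varphi)$.

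The main obstacle I anticipate is the reduced-norm identity $\nrd(I_\varphi)=\deg\varphi$, on which both cardinality arguments above rest. This is the essential link between the geometric datum (degree of the isogeny $\varphi$) and the algebraic datum (reduced norm of the associated quaternion ideal $I_\varphi$), and I would isolate it as the one nontrivial input borrowed from the quaternion-algebra machinery, citing \cite[Proposition~42.2.16]{Voi21} together with the norm--index formula $\nrd(I)^2=[\frakO_0:I]$ for integral invertible left $\frakO_0$-ideals.
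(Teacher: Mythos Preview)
Your strategy matches the paper's: construct the inverse $I\mapsto(E_I,\varphi_I)$ and verify both compositions. The paper short-circuits one direction by citing \cite[Lemma~42.2.7]{Voi21}, which states that the pullback $\varphi_I^*:\Hom(E_I,E_0)\to I$ is an isomorphism of left $\frakO_0$-modules, giving $I_{\varphi_I}=I$ immediately; it leaves the other composition implicit via the categorical equivalence.

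There is a small circularity in your use of Proposition~42.2.16. That result says $\deg\varphi_J=\nrd(J)$ for an integral ideal $J$; it does not directly give $\nrd(I_\varphi)=\deg\varphi$ for an arbitrary separable $\varphi$. Applying it to $J=I_{\varphi_I}$ yields $\nrd(I_{\varphi_I})=\deg\varphi_{I_{\varphi_I}}$, not $\deg\varphi_I$, and in the second composition your cardinality match ``both of order $\deg\varphi=\nrd(I_\varphi)$'' is exactly the identity you are trying to conclude. The norm--index formula alone does not close the gap: it reduces to computing $[\frakO_0:\Hom(E,E_0)\varphi]$, which is no more elementary.

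The repair is short. For $I_{\varphi_I}=I$: the inclusion $I\subseteq I_{\varphi_I}$ gives $\nrd(I_{\varphi_I})\mid\nrd(I)=\deg\varphi_I$, while $\ker\varphi_I\subseteq E_0[I_{\varphi_I}]$ together with 42.2.16 gives $\deg\varphi_I\le|E_0[I_{\varphi_I}]|=\nrd(I_{\varphi_I})$; hence equality and $I=I_{\varphi_I}$. For the other direction, apply this to $I=I_\varphi$ to get $I_{\varphi_{I_\varphi}}=I_\varphi$; factor $\varphi_{I_\varphi}=\chi\varphi$ (legitimate since $\ker\varphi\subseteq E_0[I_\varphi]$), and cancel the injective post-composition by $\varphi$ from the equality $\Hom(E_{I_\varphi},E_0)\chi\varphi=\Hom(E,E_0)\varphi$ to obtain $\Hom(E_{I_\varphi},E_0)\chi=\Hom(E,E_0)$, which forces the rank-four inclusion $\chi^*$ to be surjective and hence $\chi$ an isomorphism. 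Alternatively, simply cite \cite[Lemma~42.2.7]{Voi21} as the paper does.
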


\begin{proof}
    Under the equivalence of categories in Theorem \ref{ss-correspondence},
the functor $\Hom(-, E_0)$ sends a supersingular elliptic curve $E$ to an invertible left $\frakO_0$-module
\begin{equation}
    E \mapsto \Hom(E, E_0)
\end{equation}
and an isogeny $\psi: E \rightarrow E'$ to the pullback map 
\begin{equation}
\begin{split}
        \psi: E \rightarrow E' \quad  \mapsto \quad \psi^\ast: \Hom(E', E_0) & \rightarrow \Hom(E, E_0) \\
    \phi & \mapsto \phi \psi,
\end{split}
\end{equation}
 which is a homomorphism of left $\frakO_0$-modules. Hence, a non-zero isogeny $\varphi: E_0 \rightarrow E$ is sent to the homomorphism of left $\frakO_0$-modules
\begin{equation}
    \begin{split}
        \varphi^\ast : \Hom(E, E_0) 
 & \rightarrow \mathfrak{O}_0\\
 \phi &\mapsto \phi \varphi.
    \end{split}
\end{equation}
 This is an isomorphism between left $\frakO_0$-modules $\Hom(E, E_0) \cong I_{\varphi}$, where the latter is a non-zero left integral $\mathfrak{O}_0$-ideal.

\begin{details}
    \begin{tbox}
    Claim 1. $\varphi^\ast$ is an embedding of $\Hom(E, E_0)$ into $\End(E_0)$.

    \begin{proof}[proof of Claim 1]
        We only need to show that $\varphi^\ast$ is injective.      Suppose $\theta\varphi=0$ where $\theta \in \Hom(E,E_0)$. Then $\im(\varphi) \subseteq \ker(\theta)$. Since every non-zero isogeny is surjective, $\im(\varphi)=E$. Hence, $\theta=0$.        
    \end{proof}

    Claim 2. $I_\varphi=\Hom(E,E_0)\varphi$ is a left integral $\mathfrak{O}_0$-ideal.

    \begin{proof}[proof of Claim 2]
        $I_\varphi$ is a left $\mathfrak{O}_0$-ideal in the usual sense in the ring theory; if we take an element of $I_\varphi$ and post-compose it by an endomorphism of $E_0$, then it's still in $I_\varphi$. 

        Then by \cite[Remark 16.2.10]{Voi21}, since $I_\varphi$ is a full $\Z$-lattice, i.e., $I_\varphi\Q=B_0$, $I_\varphi$ is an integral left $\mathfrak{O}_0$-ideal in the sense of \cite[Definition 16.2.9]{Voi21}. 
    \end{proof}
\end{tbox}
\end{details}

Now suppose $(E, \varphi) \sim (E', \varphi')$ for separable isogenies $\varphi: E_0 \rightarrow E$ and $\varphi' : E_0 \rightarrow E'$. There exists an isomorphism $\lambda: E \rightarrow E'$ such that $\varphi' = \lambda \varphi$. It follows that
\begin{align}
     \Hom(E', E_0)\varphi' = \Hom(E',E_0)\lambda  \varphi = \Hom(E, E_0)\varphi
\end{align}
as
\begin{align}
\label{hom-a}
    \Hom(E',E_0)\lambda = \Hom(E,E_0).
\end{align}

\begin{details}
    \begin{tbox}
        The map
        \begin{equation}
            \begin{split}
                \lambda^\ast : \Hom(E', E_0) & \rightarrow \Hom(E, E_0) \\
                \varphi' & \mapsto \varphi' \lambda
            \end{split}
        \end{equation}
        is an isomorphism since $\lambda$ is invertible.
        \begin{enumerate}
            \item it is surjective since given $\varphi: E \rightarrow E_0$, $\varphi\lambda^{-1} \in \Hom(E', E_0)$.
            \item it is injective since $\varphi' \lambda=0$ implies $\varphi'=0$.
        \end{enumerate}
    \end{tbox}
\end{details}

Conversely, given a non-zero integral left $\mathfrak{O}_0$-ideal $I$ of reduced norm coprime to $p$, there exists a separable isogeny $\varphi_I: E_0 \rightarrow E_I$ with $\ker(\varphi_I)=E_0[I]$ such that the pullback map $\varphi_I^\ast$ is an isomorphism of left $\mathfrak{O}_0$-modules $\Hom(E_I, E_0)\cong I$ \cite[Lemma 42.2.7]{Voi21}. Then the inverse map in the bijection is given by
\begin{equation} \label{Curves to Ideals inverse}
    I \mapsto (E_I, \varphi_I).
\end{equation}
\end{proof}

\begin{details}
   \begin{tbox}
        \begin{remark} We may extend the bijection in Proposition \ref{Curves to Ideals} to inseparable isogenies and integral ideals of reduced norm divisible by $p$.

Recall that for an elliptic curve $E/\Fbar_p$ and $q=p^r$, $E^{(q)}$ is defined to be the curve whose homogeneous ideal is $I(E^{(q)})$, i.e., ideal generated by $\{f^{(q)}: f \in I(E)\}$, where $f^{(q)}$ is the polynomial obatained from $f$ by raising each coefficients to the $q$th power. Any isogenies $\varphi:E \rightarrow E'$ of inseparable degree $q$ factor through the $q$th power Frobenius morphism $\pi_q: E \rightarrow E^{(q)}$ of $E$ as
\begin{equation}
    \varphi = \varphi_{\sep}\pi_{q},
\end{equation}
where $\varphi_{\sep}$ is the separable part of $\varphi$ \cite[Corollary II.2.12]{Sil09}. We say two inseparable isogenies  $\varphi: E \rightarrow E', \varphi': E \rightarrow E''$ are equivalent if their separable parts are equivalent and their inseparable degrees are the same.

Let $\varphi: E_0 \rightarrow E$ and $ \varphi': E_0 \rightarrow E'$ be equivalent inseparable isogenies and $q>1$ be their inseparable degree. It follows that $\varphi = \varphi_{\sep} \pi_{q}$ and $\varphi'= \varphi'_{\sep} \pi_{q}$. Then we have
\begin{equation}
    \Hom(E', E_0) \varphi' =\Hom(E', E_0)\varphi'_{\sep} \pi_{q}=\Hom(E,E_0)\varphi_{\sep}\pi_{q}=\Hom(E, E_0)\varphi.
\end{equation}

Conversely, given a non-zero integral left $\mathfrak{O}_0$-ideal $I$ of reduced norm $qn$ with $q=p^r$, $r\geq 1$, and $p \nmid n$, we can write uniquely as $I=\mathfrak{P}^rI'$ where $\mathfrak{P}$ is the unique two-sided $\mathfrak{O}_0$-ideal of reduced norm $p$ and $\nrd(I')=n$ (cf. \cite[42.2.4.]{Voi21}). Then there exists an inseparable isogeny $\varphi_I: E_0 \rightarrow E_I$ with $\ker(\varphi_I)=E_0[I]$ such that the pullback map $\varphi_I^\ast$ is an isomorphism of left $\mathfrak{O}_0$-modules $\Hom(E_I, E_0)\cong I$ \cite[Lemma 42.2.7]{Voi21}. Note that $\varphi_I$ factors as $\varphi_I=\varphi_{I'}\pi_q$ where $\varphi_{I'}$ is the separable part of $\varphi_I$. 
\end{remark}
   \end{tbox}
\end{details}

\subsection{Cyclic isogenies and primitive ideals} We show that the correspondence between separable isogenies and integral ideals in Proposition \ref{Curves to Ideals} restricts to the one between cyclic isogenies and primitive ideals; in the later part of the section, we relate them to maximal orders in $B_0$ through an explicit bijection. 

\begin{definition} Let $\varphi: E \rightarrow E'$ be a separable isogeny of elliptic curves. Then $\varphi$ is called cyclic if its kernel is a cyclic subgroup of $E$.
\end{definition}

By a trivial cyclic isogeny $\varphi: E\rightarrow E'$, we mean $\varphi$ is an isomorphism, i.e., $\deg \varphi=1$, whose kernel $\{0\}$ is trivially cyclic.

\begin{theorem} \label{Gal12 Theorem 25.1.2}
    Any separable isogeny $\varphi: E \rightarrow E'$ of elliptic curves can be written as a chain of isogenies $\varphi= \varphi_1 \cdots\varphi_k[n]$ for some isogenies $\varphi_1, \cdots, \varphi_k$ of prime degree and the largest integer $n$ such that $E[n] \subseteq  \ker\varphi$ where
    \begin{equation}
        \deg \varphi = n^2 \prod_{i=1}^k \deg(\varphi_i).
    \end{equation}
\end{theorem}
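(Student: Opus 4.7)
The plan is to extract a maximal $[n]$ factor using the module structure of $\ker\varphi$, show that the remaining isogeny is cyclic, and then peel off prime-degree quotients one at a time. Throughout I work with $\ker\varphi$ as an étale group scheme, identified with its group of geometric points, which is legitimate because $\varphi$ is separable.

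First I would analyze the structure of $\ker\varphi$. Let $N=\deg\varphi$. Then $\ker\varphi\subseteq E[N]\cong(\Z/N\Z)^2$, where the isomorphism uses that $\varphi$ is separable so $\gcd(N,p)=1$. By the elementary divisor theorem applied to submodules of $(\Z/N\Z)^2$ over the PID $\Z$, there exist unique $n_1\mid n_2$ with $\ker\varphi\cong\Z/n_1\Z\oplus\Z/n_2\Z$. By uniqueness, $n_1$ is precisely the largest integer $n$ with $E[n]\subseteq\ker\varphi$, so $n_1=n$ in the notation of the theorem.

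Next I would factor out $[n]$. Because $E[n]\subseteq\ker\varphi$, the universal property of the quotient (identifying $E/E[n]\cong E$ via $[n]$) yields a unique isogeny $\psi\colon E\to E'$ with $\varphi=\psi\circ[n]$. Its kernel is $\ker\varphi/E[n]\cong\Z/(n_2/n)\Z$, which is cyclic of order $m:=n_2/n$, so $\psi$ is a cyclic isogeny with $\deg\psi=m$ and $\deg\varphi=n^2\,m$.

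Then I would decompose the cyclic isogeny $\psi$ into prime pieces. Factor $m=p_1\cdots p_k$ into (not necessarily distinct) primes. Since $\ker\psi$ is cyclic, it has a unique subgroup $G_1$ of order $p_1$; let $\varphi_k\colon E\to E/G_1$ be the corresponding $p_1$-isogeny and observe that $\psi$ factors as $\psi=\psi'\circ\varphi_k$ where $\psi'\colon E/G_1\to E'$ has kernel $\ker\psi/G_1$, still cyclic of order $m/p_1$. Iterating this construction produces a chain $\varphi_1,\dots,\varphi_k$ of prime-degree isogenies with $\psi=\varphi_1\circ\cdots\circ\varphi_k$. Combining with the first step gives $\varphi=\varphi_1\cdots\varphi_k\,[n]$, and multiplicativity of degree yields $\deg\varphi=n^2\prod_{i=1}^k\deg\varphi_i$.

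The main obstacle is the structural claim that $\ker\varphi$ has at most two invariant factors, which is essential in order for the decomposition to involve a single power of $[n]$ rather than several independent ``thick'' directions. This is a genuine input from the theory of elliptic curves (namely $E[N]\cong(\Z/N\Z)^2$ for $N$ coprime to $p$), without which the cyclic quotient at the end of the first step would not exist. Everything else, including the prime-by-prime peeling of the cyclic part, is elementary group theory applied to finite subgroups of $E$ together with the standard correspondence between finite subgroups and separable isogenies.
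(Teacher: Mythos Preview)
The paper does not actually prove this statement; it simply cites \cite[Theorem~25.1.2]{Gal12}. Your argument is the standard one and is essentially what Galbraith does: read off the invariant factors $n_1\mid n_2$ of the finite abelian group $\ker\varphi$, identify $n_1$ with the maximal $n$ such that $E[n]\subseteq\ker\varphi$, factor out $[n]$ to leave a cyclic isogeny, and then peel off prime-degree quotients.

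One small correction: the implication ``$\varphi$ separable $\Rightarrow$ $\gcd(\deg\varphi,p)=1$'' is false for ordinary elliptic curves in characteristic $p$, which do admit separable $p$-isogenies. This does not break your proof, however, because in the ordinary case $E[p^a](\overline{\F}_p)$ is cyclic, so the $p$-part of $\ker\varphi$ contributes only to $n_2$ and never to $n_1$; hence $p\nmid n_1$ and $E[n_1]\cong(\Z/n_1\Z)^2$ as you need. In the supersingular setting of this paper the issue does not arise at all, since then a separable isogeny automatically has degree prime to $p$. With that caveat, your proof is correct and matches the cited reference.
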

\begin{proof}
    This is \cite[Theorem 25.1.2]{Gal12}.
\end{proof}

Let $\varphi: E \rightarrow E'$ be a separable isogeny. Write $\varphi=\varphi_1 \cdots\varphi_k[n]$ for some isogenies $\varphi_1, \cdots, \varphi_k$ of prime degree and the largest integer $n\geq 1$ such that $E[n] \subseteq \ker\varphi$. Then $\varphi$ is cyclic if $n=1$ and $\deg\varphi_i=\ell_i$, $1\leq i\leq k$ are pairwise distinct primes, in which case
    \begin{equation}
        \ker\varphi \cong (\Z/\ell_1\Z) \times \cdots \times (\Z/\ell_k\Z) \cong \Z/(\ell_1\cdots \ell_k)\Z
    \end{equation}
  by the Chinese remainder theorem.

If $\varphi$ is cyclic, then $n=1$ as otherwise $n>1$ and 
 \begin{equation}
     E[n]\cong (\Z/n\Z)^2\subseteq \ker \varphi,
 \end{equation}
which is a contradiction.

\begin{details}
    \begin{tbox}
         $\varphi$ is called primitive if it does not factr through $[n]: E \rightarrow E$ for any integer $n >1$. 

   See \cite{BCE+19} for primitive isogenies and cycles with no backtracking in $G(p,\ell)$
         
It follows that if $\varphi$ is cyclic, then $\varphi$ is primitive. If $\ell_i=\ell$ for all $ 1\leq i \leq k$, then two notions coincide \cite[Section 4]{BCE+19}.
    \end{tbox}
\end{details}

\begin{definition}
  Let $\mathfrak{O}$ be a maximal order in $B_0$ or $B_\ell$. We say an integral left (resp.\ right) $\mathfrak{O}$-ideal $I$ is primitive if for any integer $n \neq \pm1$, we cannot write $I = nI'$ for an integral left (resp.\ right) $\mathfrak{O}$-ideal $I'$.
\end{definition}

By a trivial primitive left (resp.\ right) $\mathfrak{O}$-ideal, we mean integral left (resp.\ right) $\mathfrak{O}$-ideal $I$ of reduced norm $1$, i.e., $I=\mathfrak{O}$.

\begin{details}
    \begin{tbox}
        If $\nrd(I)=1$, then $I=\mathfrak{O}\nrd(I)+\mathfrak{O}\alpha=\mathfrak{O}+\mathfrak{O\alpha}=\mathfrak{O}$ for some $\alpha \in B^\times_0$ by \cite[Exercise 16.6]{Voi21} where the last equality follows since $\mathfrak{O}\alpha \subseteq \mathfrak{O}$.
    \end{tbox}
\end{details}

\begin{lemma} \label{equivalent definition of primitive ideal}
   Let $\mathfrak{O}\subseteq B_0$ be a maximal order and $I$ be an integral left $\mathfrak{O}$-ideal. The following are equivalent:
   \begin{enumerate}
       \item[(i)] $I$ is primitive.
       \item[(ii)] $n^{-1}I$ is not an integral left $\mathfrak{O}$-ideal for any integer $n\neq \pm1$.
       \item[(iii)] $I$ is not contained in any non-trivial two-sided ideal of the form $n\mathfrak{O}$ for any integer $n \neq \pm 1$.
       \item[(iv)] $I$ is primitive as an integral right $\mathcal{O}_R(I)$-ideal.
   \end{enumerate}
   Also, if $\nrd(I)$ is square-free, then $I$ is primitive.
\end{lemma}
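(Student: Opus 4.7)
The plan is to prove the chain (i) $\Leftrightarrow$ (ii) $\Leftrightarrow$ (iii) and then (i) $\Leftrightarrow$ (iv) by direct manipulation of the definitions, exploiting the fact that the integer $n$ is central in $B_0$. The centrality of $n$ guarantees that $n^{-1}I$ is automatically a lattice with the same left and right orders as $I$, so all of the orders in sight are untouched by dividing by $n$, and the only nontrivial property is integrality (i.e.\ containment in the relevant maximal order).

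For (i) $\Leftrightarrow$ (ii), I would observe that writing $I=nI'$ with $I'$ an integral left $\mathfrak{O}$-ideal is the same as saying that $I'\colonequals n^{-1}I$ is contained in $\mathfrak{O}$: one direction sets $I'=n^{-1}I$, the other sets $n^{-1}I=I'$. Since $n$ is central, $\calO_L(n^{-1}I)=\calO_L(I)=\mathfrak{O}$ and $\calO_R(n^{-1}I)=\calO_R(I)$, so $n^{-1}I$ is always a left $\mathfrak{O}$-ideal, and the only remaining condition is integrality. For (ii) $\Leftrightarrow$ (iii), note that integrality of $n^{-1}I$ means $n^{-1}I\subseteq\mathfrak{O}$, which is equivalent to $I\subseteq n\mathfrak{O}$; and $n\mathfrak{O}$ is a two-sided ideal precisely because $n$ is central.

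For (i) $\Leftrightarrow$ (iv), I would use the same centrality to argue that if $I=nI'$ with $I'$ an integral ideal, then $I'$ is simultaneously a left $\mathfrak{O}$-ideal and a right $\calO_R(I)$-ideal, because its left and right orders agree with those of $I$. Since $\mathfrak{O}$ is maximal and $I$ is locally principal at every prime (by \cite[Theorem 16.1.3]{Voi21}), the right order $\calO_R(I)$ is also a maximal order in $B_0$, so primitivity on the right is well-defined in the sense of the definition. The same factorization $I=nI'$ witnesses non-primitivity on either side, giving the equivalence.

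For the final assertion, suppose $I=nI'$ with $I'$ an integral left $\mathfrak{O}$-ideal and $n\neq\pm1$. The multiplicativity of the reduced norm for central scalars gives $\nrd(nI')=n^2\,\nrd(I')$, so $n^2\mid\nrd(I)$, contradicting squarefreeness of $\nrd(I)$. The proof is essentially bookkeeping; the only place one needs to be careful is in (iv), where one must invoke the fact that $\calO_R(I)$ is itself maximal in order for the word ``primitive'' to make sense on the right, and this is the one step that genuinely relies on a nontrivial input from the theory.
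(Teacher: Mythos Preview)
Your proposal is correct and follows essentially the same approach as the paper's proof: both establish (i)$\Leftrightarrow$(ii)$\Leftrightarrow$(iii) directly from the equivalence $n^{-1}I\subseteq\mathfrak{O}\iff I\subseteq n\mathfrak{O}$, handle (i)$\Leftrightarrow$(iv) by observing that a factorization $I=nI'$ witnesses non-primitivity on both sides since $\calO_L(I')=\calO_L(I)$ and $\calO_R(I')=\calO_R(I)$, and derive the last assertion from $\nrd(nI')=n^2\nrd(I')$. The only cosmetic difference is that the paper cites \cite[Lemma~16.2.8]{Voi21} to pass from integrality on one side to the other, whereas you instead invoke maximality of $\calO_R(I)$ via local principality; both routes are equally short.
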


\begin{details}
        \begin{tbox}
    (i) is from \cite[26.4.3 and 41.3 p.771]{Voi21}. \\
    (ii) is from \href{https://eprint.iacr.org/2025/042.pdf}{2.5 Integral ideals}: a left (integral) $\mathfrak{O}$-ideal $I$ is cyclic (a.k.a. primitive) if for any $q \in \Z$ the set $\{\frac{x}{q}: x \in I\}$ is not a left (integral) $\mathfrak{O}$-ideal.  \\
    (iii) is from \href{https://arxiv.org/pdf/0808.3833}{Algorithmic Enumeration of Ideal Classes for Quaternion Orders}.
\end{tbox}
\end{details}

\begin{proof}
   The implication $(i) \Leftrightarrow (ii)$ is trivial.

  The implication $(ii) \Leftrightarrow (iii)$ follows from $I \subseteq n \mathfrak{O}$ iff $n^{-1} I \subseteq \mathfrak{O}$: if $n^{-1} I$ is an integral left $\mathfrak{O}$-ideal, then $n^{-1} I \subseteq \mathfrak{O}$; conversely if $I \subseteq n \mathfrak{O}$, then $n^{-1} I \subseteq \mathfrak{O}$ is an integral left $\mathfrak{O}$-ideal.

The implication $(i) \Leftrightarrow (iv)$: let $\mathfrak{O}' = \mathcal{O}_R(I)$.  
Suppose $I$ is a primitive left $\mathfrak{O}$-ideal but not a primitive right $\mathfrak{O}'$-ideal.  
Then, by definition, there exists an integer $n \neq \pm 1$ and an integral right $\mathfrak{O}'$-ideal $I'$ such that $I = n I'$. Since $\mathcal{O}_L(I') = \mathcal{O}_L(I) = \mathfrak{O}$, $I'$ is also an integral left $\mathfrak{O}$-ideal by \cite[Lemma 16.2.8]{Voi21}, contradicting the assumption that $I$ is primitive as a left $\mathfrak{O}$-ideal.  
The converse follows by the same argument, interchanging the roles of $\mathfrak{O}$ and $\mathfrak{O}'$.

  For the last statement, suppose $I$ is not primitive. There is an integer $n>1$ and an integral left $\mathfrak{O}$-ideal $I'$ such that $I=nI'$. Then
      \begin{equation}
        \nrd(I)=\nrd(nI')=n^2 \nrd(I')
    \end{equation}
    is not square-free.
\end{proof}

\begin{remark}
    Let $I$ be a non-primitive integral left $\mathfrak{O}$-ideal and $n>1$ is the largest integer such that $I\subseteq n\mathfrak{O}$. Then $n^{-1}I \subseteq \mathfrak{O}$ is integral and primitive. To see this, suppose $n^{-1}I$ is not primitive. Then there exists an integer $m>1$ such that $n^{-1}I \subseteq m \mathfrak{O}$. This implies $I \subseteq mn \mathfrak{O}$, contradicting the maximality of $n$.
\end{remark}

We may likewise define primitivity for elements of lattices. The notions of the primitivity of elements and ideals are closely related: the primitivity of an integral ideal can be characterized in terms of the existence of primitive elements among its generators (cf. \cite[p.25]{Sch23}). 

\begin{definition}
Let $I$ be a lattice in $B_0$. We say a non-zero element $\alpha \in I$ is primitive if for any integer $n \neq \pm 1$, we cannot write $\alpha=n\beta$ for any $\beta \in I$.
\end{definition}

\begin{remark}
If $n$ is the largest integer such that $n^{-1}\alpha \in I$ for an element $\alpha \in I$, then $n^{-1}\alpha$ is a primitive element of $I$.
\end{remark}

\begin{definition}
Let $\mathfrak{O}$ be a maximal order in $B_0$ and $I$ be a left (resp. right) $\mathfrak{O}$-ideal. We say $\alpha \in B_0^\times$ is a generator of $I$ if $I=\mathfrak{O}\nrd(I)+\mathfrak{O}\alpha$ (resp. $I=\nrd(I)\mathfrak{O}+\alpha\mathfrak{O}$).
\end{definition}

\begin{lemma}
Let $\mathfrak{O}$ be a maximal order in $B_0$. A left (resp. right) $\mathfrak{O}$-ideal $I$ is primitive if and only if there is a generator of $I$ which is a primitive element of $\mathfrak{O}$.
\end{lemma}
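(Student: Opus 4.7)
The plan is to treat the two implications separately, using the characterization of primitivity from Lemma~\ref{equivalent definition of primitive ideal} (in particular statement (iii): $I$ is primitive iff $I$ is not contained in $n\mathfrak{O}$ for any $n \neq \pm 1$). I will write out the argument for left $\mathfrak{O}$-ideals; the right-ideal case is symmetric by Lemma~\ref{equivalent definition of primitive ideal}(iv).

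For the easier ($\Leftarrow$) direction, suppose $\alpha$ is a generator of $I$ that is primitive in $\mathfrak{O}$. If $I$ were non-primitive, Lemma~\ref{equivalent definition of primitive ideal}(iii) would give an integer $n \neq \pm 1$ with $I \subseteq n\mathfrak{O}$, whence $\alpha \in n\mathfrak{O}$, i.e.\ $\alpha = n\gamma$ for some $\gamma \in \mathfrak{O}$, contradicting the primitivity of $\alpha$.

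For the ($\Rightarrow$) direction, start from any generator $\alpha$ of $I$ (existence is guaranteed by the standard two-generator description of an ideal in a maximal quaternion order, \cite[Exercise 16.6]{Voi21}). Let $m \geq 1$ be the largest integer with $\alpha \in m\mathfrak{O}$, and write $\alpha = m\beta$ so that $\beta \in \mathfrak{O}$ is primitive. The crux is to show $\gcd(m, \nrd(I)) = 1$: if a prime $d$ divided both, then both $\mathfrak{O}\,\nrd(I)$ and $\mathfrak{O}\alpha$ would lie in $d\mathfrak{O}$, giving $I \subseteq d\mathfrak{O}$ and contradicting primitivity of $I$. With coprimality in hand, Bezout yields integers $s,t$ with $sm + t\,\nrd(I) = 1$, so
\[
\beta = sm\beta + t\,\nrd(I)\beta = s\alpha + (t\beta)\,\nrd(I).
\]
Here $s\alpha \in I$ since $\alpha \in I$, and $(t\beta)\,\nrd(I) \in \mathfrak{O}\,\nrd(I) \subseteq I$; thus $\beta \in I$. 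Finally, the identity $I = \mathfrak{O}\,\nrd(I) + \mathfrak{O}\beta$ is verified directly: the inclusion $\supseteq$ holds since $\nrd(I), \beta \in I$, while the inclusion $\subseteq$ reduces to $\alpha \in \mathfrak{O}\beta$, which is immediate from $\alpha = m\beta$. Therefore $\beta$ is a generator of $I$ that is primitive in $\mathfrak{O}$.

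The main subtlety is the coprimality step: one must ensure that pulling out the "primitive part" $\beta = \alpha/m$ does not take us outside of $I$, and the only way to control this is through the interaction between the scalar $m$ hidden in $\alpha$ and the scalar $\nrd(I)$ appearing in the generator presentation. Once one reformulates non-primitivity as containment in $n\mathfrak{O}$ via Lemma~\ref{equivalent definition of primitive ideal}(iii), this reduces to an elementary $\gcd$ argument and a single application of Bezout.
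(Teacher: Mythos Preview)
Your proof is correct. For the ($\Leftarrow$) direction your argument and the paper's coincide. For the ($\Rightarrow$) direction the approaches genuinely diverge. The paper takes a generator $\alpha$ furnished by \cite[Exercise~16.6]{Voi21} and argues by contradiction that $\alpha$ itself must already be primitive: assuming $\alpha = n\beta$ with $n>1$, it sets $I' = \mathfrak{O}\,n\,\nrd(I) + \mathfrak{O}\beta$ and asserts $I = nI'$. You instead allow the initial generator $\alpha$ to be non-primitive, pull out its primitive part $\beta = \alpha/m$, and show via the coprimality of $m$ and $\nrd(I)$ (forced by primitivity of $I$) together with B\'ezout that $\beta \in I$ and that $\beta$ is again a generator. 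Your route is more robust: the paper's step $I = nI'$ is delicate, since $nI' = \mathfrak{O}\,n^2\nrd(I) + \mathfrak{O}\alpha$ need not contain $\nrd(I)$ when $\gcd(n,\nrd(I))=1$ (indeed, one can manufacture non-primitive generators of a primitive ideal in exactly this way). Your $\gcd$/B\'ezout argument handles precisely this situation and shows constructively how to pass from an arbitrary generator to a primitive one, which is what the statement actually demands.
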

\begin{proof}
    We prove this for left $\mathfrak{O}$-ideal $I$.

    Suppose $I$ is primitive. By \cite[Exercise 16.6]{Voi21}, we can write $I=\mathfrak{O}\nrd(I)+\mathfrak{O}\alpha$ for some $\alpha \in B_0^\times$. Since $I \subseteq \mathfrak{O}$, we have $\alpha=0\cdot \nrd(I)+1\cdot \alpha \in I$. Suppose $\alpha$ is not a primitive element of $\mathfrak{O}$. Then there exists $\beta \in \mathfrak{O}$ such that $\alpha=n\beta$ for some integer $n>1$. It follows that we have an integral left $\mathfrak{O}$-ideal $I'=\mathfrak{O}n\nrd(I)+\mathfrak{O}\beta$ such that $I=nI'$. This is a contradiction to our assumption that $I$ is primitive. Therefore, $\alpha$ is a generator of $I$ which is a primitive element of $\mathfrak{O}$.

\begin{details}
    \begin{tbox}
        $I'$ is integral left $\mathfrak{O}$-ideal since the sum of left ideals of a ring is again a left ideal and sum of lattices of an $F$-algebra is again lattice. 
    \end{tbox}
\end{details}

    Conversely, suppose $I=\mathfrak{O}\nrd(I)+\mathfrak{O}\alpha$ for some primitive element $\alpha$ of $\mathfrak{O}$. Suppose $I$ is not primitive. Then there is an integral left $\mathfrak{O}$-ideal $I'$ such that $I=nI'$ for some integer $n>1$. In particular, $\nrd(I)=n^2\nrd(I')$. Since $I'=\mathfrak{O}n^{-1}\nrd(I)+\mathfrak{O}n^{-1}\alpha \subseteq \mathfrak{O}$, we have $n^{-1}\alpha \in \mathfrak{O}$. This is a contradiction to our assumption that $\alpha$ is a primitive element of $\mathfrak{O}$. Therefore, $I$ is primitive.
\end{proof}

Being a primitive ideal is a local property.

\begin{proposition} \label{Primitivity local property} Let $\mathfrak{O}$ be a maximal order in $B_0$ and $I$ be an integral left $\mathfrak{O}$-ideal. $I$ is a primitive left $\mathfrak{O}$-ideal if and only if $I_\ell$ is a primitive left $\mathfrak{O}_\ell$-ideal for any prime $\ell$.
\end{proposition}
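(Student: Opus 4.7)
The plan is to reduce both sides to a simple inclusion condition and then apply the usual local–global principle for lattices that was set up in the earlier remarks (every lattice $I$ in $B_0$ satisfies $I = \bigcap_\ell I_{(\ell)} = \bigcap_\ell (I_\ell \cap B_0)$, and inclusion of lattices may be checked prime-by-prime after completion).

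First, I would translate primitivity into a containment condition. By Lemma~\ref{equivalent definition of primitive ideal}\,(iii), $I$ is primitive as an integral left $\mathfrak{O}$-ideal if and only if $I \not\subseteq n\mathfrak{O}$ for every integer $n\neq\pm1$, and since the two-sided ideals $n\mathfrak{O}$ are multiplicative in $n$, this is equivalent to $I\not\subseteq q\mathfrak{O}$ for every prime $q$. The analogous local characterization reads: $I_\ell$ is primitive as an integral left $\mathfrak{O}_\ell$-ideal iff $I_\ell \not\subseteq q\mathfrak{O}_\ell$ for every prime $q$. Here the key observation is that when $q\neq\ell$, the integer $q$ is a unit in $\Z_\ell$, hence $q\mathfrak{O}_\ell = \mathfrak{O}_\ell$ and the condition $I_\ell\subseteq q\mathfrak{O}_\ell$ is automatic. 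Thus $I_\ell$ is primitive iff $I_\ell\not\subseteq \ell\mathfrak{O}_\ell$.

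Next, I would apply the local–global principle for lattice inclusions. For each prime $q$ and each prime $\ell$, if $\ell\neq q$ then $q\mathfrak{O}_\ell = \mathfrak{O}_\ell$ and so $I_\ell\subseteq q\mathfrak{O}_\ell$ automatically, while for $\ell=q$ the condition $I\subseteq q\mathfrak{O}$ localizes to $I_q\subseteq q\mathfrak{O}_q$. Combining these:
\begin{equation*}
I\subseteq q\mathfrak{O} \iff I_q \subseteq q\mathfrak{O}_q.
\end{equation*}
Negating and quantifying over all primes $q$, $I\not\subseteq q\mathfrak{O}$ for all primes $q$ iff $I_q\not\subseteq q\mathfrak{O}_q$ for all primes $q$, which by the two characterizations above is exactly the statement that $I$ is primitive iff $I_\ell$ is primitive for every prime $\ell$.

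The only delicate step is the local characterization of primitivity, i.e.\ confirming that the definition of primitive ideal carries over to the $\Z_\ell$-setting with the simpler criterion $I_\ell\not\subseteq\ell\mathfrak{O}_\ell$. This is not really an obstacle but deserves an explicit remark, since one must check that the would-be factoring integer $n$ can always be taken to be a power of $\ell$ (units in $\Z_\ell$ absorb into $I'_\ell$) and that the completion of an integral left $\mathfrak{O}$-ideal remains an integral left $\mathfrak{O}_\ell$-ideal, both of which follow from the local theory of lattices recalled earlier. Once this is in place, the equivalence falls out of the local–global principle with no further calculation.
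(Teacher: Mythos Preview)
Your argument is correct, and it is a genuinely different (and somewhat cleaner) route than the paper's. The paper argues both directions by contrapositive: if $I$ is not primitive it writes $I=nI'$ and localizes at a prime dividing $n$; for the converse it assumes $I_\ell=\ell^eJ$ is not primitive and then invokes \cite[Lemma~27.6.8]{Voi21} to produce a global integral left $\mathfrak{O}$-ideal $I'$ with $I'_\ell=J$ and $I'_{\ell'}=I_{\ell'}$ for $\ell'\neq\ell$, whence $I=\ell^e I'$. Your approach instead reformulates primitivity on both sides via the containment criterion of Lemma~\ref{equivalent definition of primitive ideal}\,(iii), reduces the local condition to $I_\ell\not\subseteq\ell\mathfrak{O}_\ell$, and then appeals only to the local--global principle for lattice inclusions already recorded in the preliminaries. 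This avoids the external lifting lemma entirely, at the cost of the small bookkeeping remark you flag (that in $\Z_\ell$ any would-be factor $n$ may be replaced by $\ell^{\nu_\ell(n)}$ since the coprime part is a unit). Both arguments are short; yours is more self-contained relative to what the paper has set up.
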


\begin{proof}
    Suppose $I$ is not primitive. Then there exists an integer $n \neq \pm1$ such that $I=nI'$ for an integral left $\mathfrak{O}$-ideal $I'$. Let $\ell$ be a prime factor of $n$. Then
    \begin{equation}
        I_\ell = \ell^{\nu_\ell(n)}I_\ell'
    \end{equation}
    where $\nu_\ell(n) \geq 1$ and $I_\ell'$ is an integral left $\mathfrak{O}_\ell$-ideal. Hence, $I_\ell$ is not primitive.

    Conversely, suppose $I_\ell$ is not a primitive $\mathfrak{O}_\ell$-ideal for a prime $\ell$. Then there exist an integer $e \geq 1$ and an integral left $\mathfrak{O}_\ell$-ideal $J$ such that $I_\ell=\ell^e J$. By \cite[Lemma 27.6.8]{Voi21}, there exists an integral left $\mathfrak{O}$-ideal $I'$ such that $I'_\ell=J$ and $I'_{\ell'}=I_{\ell'}$ for any prime $\ell' \neq \ell$. It follows that $I=\ell^eI'$. Hence, $I$ is not primitive.
\end{proof}

Next, we show that there is a correspondence between cyclic isogenies and primitive ideals.

Let $E/\Fbar_p$ be a supersingular elliptic curve, $\ell \neq p$ be a prime, and $\mathfrak{O}=\End(E)$. We have an isomorphism of rings (cf. \cite[Theorem 42.1.9]{Voi21})
\begin{equation}
    \begin{split}
        \mathfrak{O}/\ell\mathfrak{O} & \rightarrow \End (E[\ell]) \\
        \overline{\alpha} & \mapsto \alpha_\ell
    \end{split}
\end{equation}
where $\overline{\alpha}$ denotes the coset of an endomorphism $\alpha \in \mathfrak{O}$ modulo $\ell\mathfrak{O}$ and $\alpha_\ell$ denotes the restriction of $\alpha$ to $E[\ell] \cong (\Z/\ell \Z)^2$. After fixing a basis for $E[\ell]$, $\alpha_\ell$ has a matrix representation in $\GL_2(\F_\ell)$ by the action on the basis;  let $P_1, P_2 \in E[\ell]$ be a basis for $E[\ell]$, and
    \begin{equation}
    \begin{split}
        \alpha(P_1) &= a P_1 + b P_2, \\
        \alpha(P_2) & = cP_1 + dP_2
    \end{split}
\end{equation}
for some $a, b, c, d \in \mathbb{F}_\ell$. We have an isomorphism of rings
\begin{equation} \label{Isomorphism of quotient and matrix ring}
    \begin{split}
        \mathfrak{O}/\ell\mathfrak{O} & \rightarrow M_2(\F_\ell) \\
        \overline{\alpha} & \mapsto  
        M_\alpha:=\begin{pmatrix}
            a & b \\ c & d
        \end{pmatrix}.
    \end{split}
\end{equation}

The characteristic polynomial $c_\alpha(x)$ of $\alpha_\ell$ (or $M_\alpha$) coincides with the polynomial
\begin{equation}
    x^2 - \trd(\alpha) x + \nrd(\alpha)
\end{equation}
modulo $\ell$.

\begin{details}
\begin{tbox}   
$\mathfrak{O}/\ell\mathfrak{O} \rightarrow M_2(\F_\ell)$ preserves addition and multiplication: Suppose $\beta$ is another endomorphism with the action on $E[\ell]$ given by
    \begin{equation*}
    \begin{split}
        \beta(P_1) &= a' P_1 + b' P_2 \\
        \beta(P_2) & = c'P_1 + d'P_2,
    \end{split}
    \end{equation*}
\begin{itemize}
    \item Addition:\\
    The action of $\alpha+\beta$ is
    \begin{equation*}
        \begin{split}
            (\alpha+\beta)(P_1) = (a+a')P_1+(b+b')P_2\\
            (\alpha+\beta)(P_2) = (c+c')P_1+(c+c')P_2.
        \end{split}
    \end{equation*}
    \item Multiplication:\\
    The action of $\beta\circ \alpha$ is 
    \begin{equation*}
        \begin{split}
        (\beta\circ \alpha)(P_1)&=\beta(aP_1+bP_2)=(aa'+bc')P_1+(ab'+bd')P_2\\
        (\beta\circ \alpha)(P_2)&=\beta(cP_1+dP_2)=(a'c+c'd)P_1+(b'c+dd')P_2.
        \end{split}
    \end{equation*}
    and
    \begin{equation*}
        \begin{pmatrix}
            a & b\\
            c & d
        \end{pmatrix}
        \begin{pmatrix}
            a' & b'\\
            c' & d'
        \end{pmatrix}
        =
        \begin{pmatrix}
            aa'+bc' & ab'+bd' \\
            a'c+c'd & b'c+dd'
        \end{pmatrix}.
    \end{equation*}
    \item $1_\ell =\begin{pmatrix}
        1 & 0\\
        0 & 1
    \end{pmatrix}$.
\end{itemize}
\end{tbox}
\end{details}

\begin{proposition} \label{cyclic isogeny and primitive ideal}
    Let $\varphi: E_0 \rightarrow E$ be a separable isogeny. Then $\varphi$ is cyclic if and only if
    $I_{\varphi}$ is primitive.
\end{proposition}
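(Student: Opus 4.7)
The plan is to prove both directions by translating the condition ``$\varphi$ factors as $\psi \circ [n]$ for some integer $n > 1$'' into the ideal-theoretic condition ``$I_\varphi = n J$ for some integral left $\mathfrak{O}_0$-ideal $J$''. By Theorem~\ref{Gal12 Theorem 25.1.2}, a separable isogeny $\varphi$ is cyclic if and only if it admits no such factorization (equivalently, $E_0[n] \not\subseteq \ker\varphi$ for every $n > 1$), so the goal reduces to showing that the bijection of Proposition~\ref{Curves to Ideals} interchanges these two conditions.

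For the forward direction, argue by contrapositive: if $\varphi$ is not cyclic, then $\varphi = \psi \circ [n]$ for some $\psi : E_0 \to E$ and some $n > 1$. Using that $[n]$ lies in the center of $\End(E_0)$,
\begin{equation*}
I_\varphi = \Hom(E, E_0) \, \psi \circ [n] = n \cdot \Hom(E, E_0) \, \psi = n I_\psi,
\end{equation*}
and since $I_\psi$ is itself an integral left $\mathfrak{O}_0$-ideal by Proposition~\ref{Curves to Ideals}, this exhibits $I_\varphi$ as non-primitive.

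For the converse, suppose $I_\varphi = n J$ with $n > 1$ and $J$ an integral left $\mathfrak{O}_0$-ideal. From $\nrd(I_\varphi) = n^2 \nrd(J)$ and the separability of $\varphi$ we get $\gcd(n,p) = 1$, so Proposition~\ref{Curves to Ideals} applies to $J$ and yields a separable $\psi : E_0 \to E_J$ with $I_\psi = J$. The same centrality computation gives $I_{\psi \circ [n]} = n I_\psi = I_\varphi$, and injectivity of the bijection in Proposition~\ref{Curves to Ideals} forces $(E, \varphi) \sim (E_J, \psi \circ [n])$. Thus there is an isomorphism $\lambda : E \to E_J$ with $\lambda \varphi = \psi \circ [n]$, so $E_0[n] \subseteq \ker \varphi$. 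Since $\gcd(n,p) = 1$ and $n > 1$, $E_0[n] \cong (\Z/n\Z)^2$ contains a non-cyclic subgroup $(\Z/q\Z)^2$ for any prime $q \mid n$, so $\ker\varphi$ is non-cyclic and $\varphi$ is not cyclic.

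The only real obstacle is the identity $I_{\psi \circ [n]} = n I_\psi$, which is immediate once we note that $[n]$ is central in $\End(E_0)$; once this is in place, the whole argument is a formal dictionary between factorizations of isogenies through multiplication maps and containments of the form $I = nJ$ for left $\mathfrak{O}_0$-ideals, mediated by the contravariant functor $\Hom(-, E_0)$ from Theorem~\ref{ss-correspondence}.
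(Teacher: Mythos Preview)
Your proof is correct. Both directions are handled cleanly via the identity $I_{\psi \circ [n]} = n I_\psi$, which follows immediately from centrality of $[n]$, together with the bijection of Proposition~\ref{Curves to Ideals}. One small point worth making explicit in the forward direction: when $\varphi = \psi \circ [n]$ with $\varphi$ separable, the factor $\psi$ is automatically separable (since a composition is separable only if each factor is), so Proposition~\ref{Curves to Ideals} indeed applies to $\psi$.

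Your route differs somewhat from the paper's. The paper argues each contrapositive through the torsion action rather than through the bijection: for one direction it uses that $I_\varphi \subseteq n\mathfrak{O}_0$ forces $E_0[n] \subseteq E_0[I_\varphi]$; for the other it observes that if $E_0[\ell] \subseteq \ker\varphi$ then every $\alpha \in I_\varphi$ annihilates $E_0[\ell]$, whence $I_\varphi \subseteq \ker\bigl(\mathfrak{O}_0 \to \End(E_0[\ell])\bigr) = \ell\mathfrak{O}_0$. Your argument instead treats Proposition~\ref{Curves to Ideals} as a dictionary and uses it symmetrically in both directions, never touching the action on $E_0[\ell]$ directly. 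This buys you a more uniform proof with a single computational identity doing all the work; the paper's version is slightly more hands-on and makes the mechanism (endomorphisms acting on torsion) visible, which is the viewpoint reused later in the isomorphism $\mathfrak{O}/\ell\mathfrak{O} \cong M_2(\F_\ell)$.
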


\begin{details}
    \begin{tbox}
      If $\varphi$ is cyclic, by \cite[Proposition 2.1.2]{Ler22}, $I_\varphi$ is primitive.

    See also \cite[Algorithm 19, Algorithm 20]{Ler22} for computing $E_0[I]$ for primitive ideal $I$ and $I(\ker\varphi)$ for cyclic isogeny $\varphi$.

    Given in \href{https://eprint.iacr.org/2025/042.pdf}{2.5 Deuring Correspondence} without proof.
\end{tbox}
\end{details}

\begin{proof}
    Firstly note that $\varphi$ is a trivial cyclic isogeny if and only if $I_{\varphi}$ is a trivial primitive left $\mathfrak{O}$-ideal. Assume now that $\deg\varphi=\nrd(I_{\varphi})\neq 1$.
    
    Suppose $\varphi$ is cyclic. Assume for contradiction that $I_{\varphi}$ is not primitive. Then there is an integer $n>1$ such that $I_{\varphi} \subseteq n\mathfrak{O}_0$ by Lemma \ref{equivalent definition of primitive ideal}. Hence, $E[n] \subseteq E[I_{\varphi}]=\ker\varphi$ and $\varphi$ factors through $[n]$. Therefore, $\varphi$ is not cyclic.

    Conversely, suppose $I_\varphi$ is primitive. Assume for contradiction that $E_0[I_{\varphi}]$ is not cyclic. Then $E_0[I_\varphi]$ contains a subgroup isomorphic to $E_0[\ell] \cong(\Z/\ell\Z)^2$ for some prime $\ell \neq p$. The action of $\mathfrak{O}_0$ on $E_0[\ell]$ defines a map
    \begin{equation}
       \rho_\ell: \mathfrak{O}_0 \rightarrow    \End(E_0[\ell]) \cong \mathfrak{O}_0/\ell\mathfrak{O}_0, 
    \end{equation}
where $\ker\rho_\ell=\ell\mathfrak{O}_0$. Since
    \begin{equation}
        E_0[I_\varphi]= \bigcap_{\alpha \in I_\varphi} \ker \alpha
    \end{equation}
   and we assumed $E_0[\ell] \subseteq E_0[I_{\varphi}]$, any $\alpha \in I_\varphi$ acts trivially on $E_0[\ell]$, i.e., $I_\varphi \subseteq \ker \rho_\ell = \ell \mathfrak{O}_0$.  By Lemma~\ref{equivalent definition of primitive ideal}, $I_\varphi$ is not primitive, a contradiction.
\end{proof}

\begin{details}  
\begin{tbox}
        "Since $I$ is not divisible by any integer $n$, $E[I]$ is cyclic" \href{https://hal.science/hal-04056062v1/file/SQISign_dim4.pdf}{Lemma B.1.1}
\end{tbox}
\end{details}

It follows that by writing a cyclic isogeny $\varphi$ as a chain of isogenies of prime degree, we get a decomposition of $I_{\varphi}$ into a product of primitive ideals of prime reduced norm.

\begin{corollary}
\label{decompose-primitive}
   Let $\varphi$ be a non-trivial cyclic isogeny from $E_0$. Then $I_{\varphi}$ can be decomposed into a product $I_{\varphi}=I_k \cdots I_1$ of primitive ideals $I_1, \cdots, I_k$ of prime reduced norm. 
\end{corollary}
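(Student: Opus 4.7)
The plan is to lift the prime-degree decomposition of $\varphi$ provided by Theorem~\ref{Gal12 Theorem 25.1.2} to an ideal-theoretic factorization, using Proposition~\ref{EHL+18 Proposition 12} at each step to translate isogeny composition into ideal multiplication on the Deuring side.

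Since $\varphi$ is cyclic, the integer $n$ in Theorem~\ref{Gal12 Theorem 25.1.2} equals $1$, so we may write $\varphi = \varphi_k \circ \cdots \circ \varphi_1$ with each $\varphi_i \colon E_{i-1} \to E_i$ of prime degree $\ell_i$. Set $\psi_i := \varphi_i \circ \cdots \circ \varphi_1$ (with $\psi_0 = \mathrm{id}_{E_0}$) and $J_i := I_{\psi_i}$, so that $J_0 = \mathfrak{O}_0$ and $J_k = I_\varphi$. Each partial composition $\psi_i$ is cyclic because $\ker\psi_i$ is a subgroup of the cyclic group $\ker\varphi$; hence by Proposition~\ref{cyclic isogeny and primitive ideal}, every $J_i$ is a primitive integral left $\mathfrak{O}_0$-ideal. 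Since $\ker\psi_{i-1}\subseteq \ker\psi_i$, we also have $E_0[J_{i-1}]\subseteq E_0[J_i]$, hence $J_i \subseteq J_{i-1}$.

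Applying Proposition~\ref{EHL+18 Proposition 12} with $I=J_{i-1}$ and $I'=J_i$ gives a separable isogeny whose composition with $\psi_{i-1}$ is $\psi_i$ --- hence this isogeny must be $\varphi_i$ --- together with the identification
\begin{equation*}
    I_i := J_{i-1}^{-1}J_i,
\end{equation*}
an integral left $\calO_R(J_{i-1})$-ideal satisfying $J_i = J_{i-1}\cdot I_i$. Multiplicativity of the reduced norm yields $\nrd(I_i) = \nrd(J_i)/\nrd(J_{i-1}) = \deg\psi_i/\deg\psi_{i-1} = \ell_i$, which is prime and hence square-free, so $I_i$ is primitive by the last assertion of Lemma~\ref{equivalent definition of primitive ideal}. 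Telescoping the relations $J_i = J_{i-1}\cdot I_i$ gives $I_\varphi = J_k = I_1 I_2 \cdots I_k$; reversing the labeling yields the decomposition in the statement.

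The main point to verify carefully is that the formal ideal identity $J_i = J_{i-1}\cdot I_i$ really encodes the composition $\psi_i = \varphi_i \circ \psi_{i-1}$ and that the factors have compatible left and right orders along the chain. This is precisely the content of the formula $J=I^{-1}I'$ in Proposition~\ref{EHL+18 Proposition 12}, together with the invertibility of each $J_i$ as a left ideal over the maximal order $\mathfrak{O}_0$; once this matching is in place, the corollary follows by induction on $k$.
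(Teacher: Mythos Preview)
Your proof is correct and follows the same strategy as the paper: factor $\varphi$ into prime-degree isogenies via Theorem~\ref{Gal12 Theorem 25.1.2}, then translate each step to an ideal factor. The paper outsources the ideal-theoretic translation to external references (\cite[Propositions 2.1.4, 2.1.5]{Ler22Thesis} and \cite[Lemma 24]{Cle25}), whereas you carry it out self-contained using Proposition~\ref{EHL+18 Proposition 12} and Lemma~\ref{colon ideals}; this is essentially what those references do, and in fact your telescoping argument is already implicit in Lemma~\ref{composition by endomorphism}(a).

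One small remark: the implication ``$E_0[J_{i-1}] \subseteq E_0[J_i]$, hence $J_i \subseteq J_{i-1}$'' is not immediate from the definition of $E_0[I]$ alone. The cleaner justification is direct from the definition $I_{\psi_i} = \Hom(E_i,E_0)\psi_i = \Hom(E_i,E_0)\varphi_i\,\psi_{i-1} \subseteq \Hom(E_{i-1},E_0)\psi_{i-1} = I_{\psi_{i-1}}$, which is exactly the containment used in the proof of Lemma~\ref{composition by endomorphism}(a).
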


\begin{proof}
    By Theorem \ref{Gal12 Theorem 25.1.2}, we can write $\varphi$ as a chain $\varphi=\varphi_1 \cdots \varphi_k$ of isogenies of prime degree. This corresponds to the product $I_{\varphi}=I_k\cdots I_1$ where $I_1, \cdots, I_k$ are primitive ideals of prime reduced norm such that $E[I_i]=\ker\varphi_{i}$ for $1\leq i \leq k$ by \cite[Propositions 2.1.4, 2.1.5]{Ler22Thesis}. For a direct proof of decomposition into primitive ideals, see \cite[Lemma 24]{Cle25}.
\end{proof}

\subsection{Correspondences between curves, primitive ideals, and maximal orders}
For any separable isogeny $\varphi : E_0 \rightarrow E$, let $\mathfrak{O}_{\varphi}:=\calO_R(I_{\varphi})$. Let $\SScy(E_0) \subseteq \SS(E_0)$ be the set of equivalence classes of pairs $(E,\varphi)$ in $\SS(E_0)$ such that $\varphi$ is cyclic
\begin{equation}
  \SScy(E_0):=  \left\{(E,\varphi) : \varphi: E_0 \rightarrow E \text{ is cyclic}\right\}/\sim. 
\end{equation}
For an equivalence class $(E,\varphi)/\sim$ in $\SScy(E_0)$, we identify $\End(E)$ with the maximal order $\mathfrak{O}_{\varphi}$ in $\End^0(E_0)$. This identification is well-defined up to the equivalence relation on $(E, \varphi)$. Conversely, we will show that every maximal order in $\End^0(E_0)$ arises as $\mathfrak{O}_\varphi$ for a unique equivalence class of such pairs. 

It is also important to note that primitive ideals play a role analogous to that of isogenies from the fixed curve $E_0$: just as each curve $E$ is marked by an isogeny $\varphi: E_0 \rightarrow E$, the ideal $I_\varphi$ connects maximal orders $\mathfrak{O}_0$ and $\mathfrak{O}_{\varphi}$. We will show that $I_\varphi$ is the unique integral connecting $\mathfrak{O}_{0}, \mathfrak{O}_{\varphi}$-ideal of minimal reduced norm if $\varphi$ is cyclic. 

To this end, we begin by reviewing integral connecting ideals of minimal reduced norm between maximal orders in $B_0$.  

\begin{definition}
Let $\mathfrak{O}, \mathfrak{O}'$ be maximal orders of $B_0$ or $B_\ell$. We say an integral connecting $\mathfrak{O}, \mathfrak{O}'$-ideal $I$ has minimal reduced norm if its reduced norm is minimal among integral connecting $\mathfrak{O}, \mathfrak{O}'$-ideals.
\end{definition}
 
\begin{lemma} \label{minimal reduced norm local}
   Let $\ell \neq p$ be a prime. Let $\mathfrak{O},\mathfrak{O}'$ be local maximal orders in $B_\ell$. There is $\alpha \in \mathfrak{O}\setminus \ell \mathfrak{O}$ unique up to left multiplication by $\mathfrak{O}^\times$ such that 
   \begin{equation}
    \mathfrak{O}'=\alpha^{-1}\mathfrak{O}\alpha.
   \end{equation}
   Furthermore, $I=\mathfrak{O}\alpha$ is the unique integral connecting $\mathfrak{O},\mathfrak{O}'$-ideal of minimal reduced norm and
   \begin{equation}
      \nrd(\alpha) = [\mathfrak{O} : \mathfrak{O}\cap \mathfrak{O}'].
   \end{equation}
\end{lemma}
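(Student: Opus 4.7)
The plan is to work in $B_\ell \cong M_2(\Q_\ell)$, which holds since $\ell \neq p$ and $B_0$ is ramified only at $p$ and $\infty$. Every maximal order in $M_2(\Q_\ell)$ has the form $\End_{\Z_\ell}(L)$ for a $\Z_\ell$-lattice $L \subseteq \Q_\ell^2$, well-defined up to homothety, and any two maximal orders are conjugate under $B_\ell^\times = \GL_2(\Q_\ell)$. Writing $\mathfrak{O} = \End(L)$ and $\mathfrak{O}' = \End(L')$, I would rescale $L'$ within its homothety class and invoke the elementary divisor theorem to obtain a basis $(e_1,e_2)$ of $L$ with $L' = \ell^n \Z_\ell e_1 \oplus \Z_\ell e_2$ for some $n \geq 0$. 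In this basis $\mathfrak{O} = M_2(\Z_\ell)$, and the element $\alpha := \begin{pmatrix} 1 & 0 \\ 0 & \ell^n \end{pmatrix} \in \mathfrak{O} \setminus \ell\mathfrak{O}$ satisfies $\alpha^{-1}\mathfrak{O}\alpha = \mathfrak{O}'$ with $\nrd(\alpha) = \ell^n$ by direct matrix calculation; computing $\mathfrak{O} \cap \mathfrak{O}'$ explicitly yields $[\mathfrak{O}:\mathfrak{O}\cap\mathfrak{O}'] = \ell^n$.

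For uniqueness of $\alpha$ up to left multiplication by $\mathfrak{O}^\times$, suppose $\alpha' \in \mathfrak{O} \setminus \ell\mathfrak{O}$ also satisfies $(\alpha')^{-1}\mathfrak{O}\alpha' = \mathfrak{O}'$. Then $\alpha(\alpha')^{-1}$ normalizes $\mathfrak{O}$, and since the normalizer of $M_2(\Z_\ell)$ in $\GL_2(\Q_\ell)$ equals $\Q_\ell^\times \cdot \mathfrak{O}^\times$, we obtain $\alpha = c u \alpha'$ for some $c \in \Q_\ell^\times$ and $u \in \mathfrak{O}^\times$. The conditions $\alpha, \alpha' \in \mathfrak{O} \setminus \ell\mathfrak{O}$ then force $v_\ell(c) = 0$ (otherwise one of $\alpha, \alpha'$ falls into $\ell\mathfrak{O}$), so $cu \in \mathfrak{O}^\times$ and $\alpha \in \mathfrak{O}^\times \alpha'$.

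Finally, $I := \mathfrak{O}\alpha$ satisfies $\calO_R(I) = \alpha^{-1}\mathfrak{O}\alpha = \mathfrak{O}'$, and $I \subseteq \mathfrak{O} = \calO_L(I)$ confirms integrality. For minimality, any integral connecting $\mathfrak{O},\mathfrak{O}'$-ideal $J$ is principal since $\mathfrak{O}$ is a local maximal order, so $J = \mathfrak{O}\beta$ with $\beta^{-1}\mathfrak{O}\beta = \mathfrak{O}'$; by the normalizer argument $\beta = c' u' \alpha$, so $J = c'I$ for some $c' \in \Q_\ell^\times$. Integrality $J \subseteq \mathfrak{O}$ forces $v_\ell(c') \geq 0$, and $\nrd(J) = (c')^2 \nrd(I)$ is minimized precisely when $c' \in \Z_\ell^\times$, in which case $J = I$. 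The main subtlety is tracking the normalizer's scalar factor and the integrality constraint simultaneously to rule out all ``scaled'' representatives; once the lattice model is fixed, the remaining computations reduce to linear algebra over $\Z_\ell$.
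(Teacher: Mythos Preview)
Your proof is correct and complete. The paper does not actually prove this lemma; it simply cites \cite[Exercise 17.4(c),(d)]{Voi21} and the proof of \cite[Lemma 4.2]{LB20}. Your argument---passing to the lattice model $\mathfrak{O}=\End_{\Z_\ell}(L)$ in $M_2(\Q_\ell)$, applying elementary divisors to put $L'$ in standard position relative to $L$, and then using that the normalizer of $M_2(\Z_\ell)$ in $\GL_2(\Q_\ell)$ is $\Q_\ell^\times\cdot\GL_2(\Z_\ell)$---is exactly the standard route behind those references, so there is no substantive difference to report.
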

\begin{proof}
    This is \cite[Exercise 17.4(c), (d)]{Voi21}. Also, see the proof of \cite[Lemma 4.2]{LB20}.
\end{proof}

\begin{lemma} \label{minimal reduced norm global}
    Let $\mathfrak{O},\mathfrak{O}'$ be maximal orders in $B_0$. There exists a unique integral connecting $\mathfrak{O},\mathfrak{O}'$-ideal $I$ of minimal reduced norm. Moreover,
    \begin{enumerate}
        \item[(a)] $\nrd(I)=[\mathfrak{O}: \mathfrak{O}\cap \mathfrak{O}']=[\mathfrak{O}': \mathfrak{O}\cap \mathfrak{O}']$.
        \item[(b)] $I=\{\alpha \in B_0: \alpha\mathfrak{O}' \overline{\alpha} \subseteq [\mathfrak{O}:\mathfrak{O} \cap \mathfrak{O}']\mathfrak{O}\}.$
    \end{enumerate}
\end{lemma}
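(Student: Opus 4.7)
The approach is a local--global gluing argument that reduces everything to the local case already handled in Lemma~\ref{minimal reduced norm local}. For each finite prime $\ell$ set $B_\ell = B_0 \otimes \Q_\ell$. If $\ell \neq p$, then $B_\ell \cong M_2(\Q_\ell)$ and Lemma~\ref{minimal reduced norm local} supplies a unique integral connecting $\mathfrak{O}_\ell, \mathfrak{O}'_\ell$-ideal $I_\ell = \mathfrak{O}_\ell \alpha_\ell$ of minimal reduced norm, with $\nrd(I_\ell) = \nrd(\alpha_\ell) = [\mathfrak{O}_\ell : \mathfrak{O}_\ell \cap \mathfrak{O}'_\ell]$. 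If $\ell = p$, then $B_p$ is the unique quaternion division algebra over $\Q_p$, which has a unique maximal order; hence $\mathfrak{O}_p = \mathfrak{O}'_p$, and the unique integral connecting ideal of minimal reduced norm is $I_p := \mathfrak{O}_p$, with $\nrd(I_p) = 1 = [\mathfrak{O}_p : \mathfrak{O}_p \cap \mathfrak{O}'_p]$. Since $\mathfrak{O}$ and $\mathfrak{O}'$ agree outside a finite set of primes, $I_\ell = \mathfrak{O}_\ell$ for almost all $\ell$, so the local data glues to a global lattice $I := \bigcap_\ell (I_\ell \cap B_0)$, which is an integral connecting $\mathfrak{O}, \mathfrak{O}'$-ideal with the prescribed localizations.

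For part (a), the local-to-global formulas $\nrd(I) = \prod_\ell \nrd(I_\ell)$ and $[\mathfrak{O} : \mathfrak{O} \cap \mathfrak{O}'] = \prod_\ell [\mathfrak{O}_\ell : \mathfrak{O}_\ell \cap \mathfrak{O}'_\ell]$ combine to give
\[
  \nrd(I) \;=\; \prod_\ell [\mathfrak{O}_\ell : \mathfrak{O}_\ell \cap \mathfrak{O}'_\ell] \;=\; [\mathfrak{O} : \mathfrak{O} \cap \mathfrak{O}'].
\]
The equality $[\mathfrak{O} : \mathfrak{O}\cap \mathfrak{O}'] = [\mathfrak{O}' : \mathfrak{O}\cap \mathfrak{O}']$ then follows since $\mathfrak{O}$ and $\mathfrak{O}'$, being maximal orders in $B_0$, share the same reduced discriminant and hence have equal covolume as $\Z$-lattices in $B_0$.

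Minimality and uniqueness of $I$ follow prime-by-prime: any integral connecting $\mathfrak{O}, \mathfrak{O}'$-ideal $J$ has $J_\ell$ integral connecting in $B_\ell$, so $\nrd(J_\ell) \geq \nrd(I_\ell)$ by local minimality, giving $\nrd(J) \geq \nrd(I)$; and equality forces $J_\ell = I_\ell$ at every $\ell$ by local uniqueness, hence $J = I$.

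For part (b), write $N = [\mathfrak{O} : \mathfrak{O} \cap \mathfrak{O}']$ and let $S$ denote the right-hand side. The inclusion $I \subseteq S$ is clean: $I$ is invertible since $\mathfrak{O}$ is maximal, and by \cite[16.6.14]{Voi21} we have $I \overline{I} = \nrd(I) \mathfrak{O} = N \mathfrak{O}$, so for $\alpha \in I$,
\[
  \alpha \mathfrak{O}' \overline{\alpha} \;\subseteq\; I \overline{\alpha} \;\subseteq\; I \overline{I} \;=\; N \mathfrak{O}.
\]
For the reverse inclusion, reduce to a local statement: given $\alpha \in B_\ell^\times$ with $\alpha \mathfrak{O}'_\ell \overline{\alpha} \subseteq N_\ell \mathfrak{O}_\ell$, write $\alpha = \delta \alpha_\ell$ so that the hypothesis becomes $\delta \mathfrak{O}_\ell \overline{\delta} \subseteq \mathfrak{O}_\ell$, and verify that this forces $\delta \in \mathfrak{O}_\ell$ (equivalently $\alpha \in I_\ell$) using the fact that $\mathfrak{O}_\ell$ is stable under $\delta \bullet \overline{\delta}$ only when $\delta$ normalizes the vertex of the Bruhat--Tits tree corresponding to $\mathfrak{O}_\ell$, together with a valuation argument at $\ell = p$. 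I expect this last local verification for part (b) to be the main obstacle; the remainder is a routine assembly of Lemma~\ref{minimal reduced norm local} and standard local--global principles for lattices.
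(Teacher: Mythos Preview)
Your local--global strategy for existence, uniqueness, and part (a) is correct and is exactly the approach behind the references the paper cites (\cite[Exercise 17.4]{Voi21}, \cite[Lemma 4.2]{LB20}); the paper itself gives no argument beyond these citations. So on that front there is nothing to add.

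For part (b) the paper again only cites \cite[Lemma 8]{KLPT14}, whereas you actually attempt a proof. Your reduction is correct: writing $\alpha = \delta \alpha_\ell$ with $I_\ell = \mathfrak{O}_\ell \alpha_\ell$ and using $\mathfrak{O}'_\ell = \alpha_\ell^{-1}\mathfrak{O}_\ell\alpha_\ell$, $\alpha_\ell\overline{\alpha_\ell} = N_\ell$, the condition $\alpha\mathfrak{O}'_\ell\overline{\alpha}\subseteq N_\ell\mathfrak{O}_\ell$ does become $\delta\mathfrak{O}_\ell\overline{\delta}\subseteq\mathfrak{O}_\ell$. The remaining local claim is not the obstacle you fear, and your Bruhat--Tits phrasing is unnecessarily vague. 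At $\ell=p$ the maximal order is the valuation ring, so $\delta\mathfrak{O}_p\overline{\delta}=\nrd(\delta)\mathfrak{O}_p$ and the condition is exactly $\nrd(\delta)\in\Z_p$, i.e.\ $\delta\in\mathfrak{O}_p$. At $\ell\neq p$, conjugate so $\mathfrak{O}_\ell=M_2(\Z_\ell)$ and use the Cartan decomposition $\delta = k_1\,\mathrm{diag}(\ell^a,\ell^b)\,k_2$ with $k_i\in\GL_2(\Z_\ell)$ and $a\geq b$; since $k\,M_2(\Z_\ell)\,\overline{k}=\det(k)M_2(\Z_\ell)=M_2(\Z_\ell)$ for $k\in\GL_2(\Z_\ell)$, the condition reduces to
\[
\ell^{a+b}\begin{pmatrix}\Z_\ell & \ell^{a-b}\Z_\ell\\ \ell^{b-a}\Z_\ell & \Z_\ell\end{pmatrix}\subseteq M_2(\Z_\ell),
\]
which forces $2b\geq 0$, hence $b\geq 0$ and $\delta\in M_2(\Z_\ell)$. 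So your outline for (b) is complete once you replace the tree heuristic with this two-line computation.
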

\begin{proof}
    (a) This is \cite[Exercise 17.4]{Voi21} or \cite[Lemma 4.2]{LB20}.
    
    (b) This is \cite[Lemma 8]{KLPT14}.
\end{proof}

An Eichler order is the intersection of two maximal orders and its level coincides with the common index of this intersection inside either maximal order.

\begin{proposition} \label{Index of Eichler order} Let $\varphi:E_0 \rightarrow E$ be a cyclic isogeny. Consider the ring of endomorphisms of $E_0$ preserving $\ker \varphi$
            \begin{equation}
                \End(E_0, \ker \varphi):= \{\alpha \in \End(E_0) : \alpha(\ker\varphi) \subseteq \ker \varphi\}.
            \end{equation}
We have $\End(E_0, \ker \varphi)=\mathfrak{O}_0 \cap \mathfrak{O}_\varphi$ and $[\mathfrak{O}_0 : \mathfrak{O}_0 \cap \mathfrak{O}_\varphi]=\deg \varphi$ where $\mathfrak{O}_0 = \End(E_0)$.
\end{proposition}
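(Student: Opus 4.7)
The plan is to prove the two claims in sequence: first identifying $\End(E_0, \ker\varphi)$ with $\mathfrak{O}_0 \cap \mathfrak{O}_\varphi$ directly, and then computing the index by sandwiching $I_\varphi$ between $\mathfrak{O}_0$ and $\mathfrak{O}_0 \cap \mathfrak{O}_\varphi$ and using two very different ways of measuring these indices.

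For the first equality, recall that $\mathfrak{O}_\varphi = \calO_R(I_\varphi)$ is identified with $\End(E)$ via the isomorphism $\kappa_\varphi$ of Lemma \ref{induced isomorphism of quaternion algebras}, so $\mathfrak{O}_\varphi = \{\tfrac{1}{\deg\varphi} \hat\varphi \beta \varphi : \beta \in \End(E)\}$ inside $B_0$. An element $\alpha \in \End(E_0)$ preserves $\ker\varphi$ if and only if it descends through $\varphi$ to an endomorphism $\beta$ of $E$ with $\beta\varphi = \varphi\alpha$; composing on the left by $\hat\varphi$ and dividing by $\deg\varphi$ gives $\alpha = \tfrac{1}{\deg\varphi}\hat\varphi\beta\varphi = \kappa_\varphi(\beta) \in \mathfrak{O}_\varphi$, and the converse is immediate. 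Hence $\End(E_0, \ker\varphi) = \End(E_0) \cap \mathfrak{O}_\varphi = \mathfrak{O}_0 \cap \mathfrak{O}_\varphi$.

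For the index, set $n = \deg\varphi$ and consider the natural ring homomorphism
\begin{equation*}
   \rho : \mathfrak{O}_0 \cap \mathfrak{O}_\varphi = \End(E_0,\ker\varphi) \longrightarrow \End(\ker\varphi),
\end{equation*}
given by restriction to $\ker\varphi$. An endomorphism $\alpha \in \End(E_0)$ vanishes on $\ker\varphi$ exactly when it factors as $\beta \circ \varphi$ for some $\beta \in \Hom(E, E_0)$, i.e.\ when $\alpha \in \Hom(E,E_0)\varphi = I_\varphi$; thus $\ker \rho = I_\varphi$. Because $\varphi$ is cyclic, $\ker\varphi$ is a cyclic group of order $n$, so $\End(\ker\varphi) \cong \Z/n\Z$, and the image of $\rho$ is already the full $\Z/n\Z$ since the multiplication-by-$m$ maps $[m] \in \mathfrak{O}_0$ certainly preserve $\ker\varphi$ and restrict to multiplication by $m$ on it. Therefore $[\mathfrak{O}_0\cap\mathfrak{O}_\varphi : I_\varphi] = n$.

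Finally I would combine this with Voight's index formula: since $\mathfrak{O}_0$ is maximal, $I_\varphi$ is an invertible left $\mathfrak{O}_0$-ideal, and $[\mathfrak{O}_0 : I_\varphi] = \nrd(I_\varphi)^2 = n^2$ by \cite[Main Theorem 16.1.3]{Voi21}. Multiplicativity of the index in the tower $I_\varphi \subseteq \mathfrak{O}_0 \cap \mathfrak{O}_\varphi \subseteq \mathfrak{O}_0$ then gives
\begin{equation*}
   [\mathfrak{O}_0 : \mathfrak{O}_0 \cap \mathfrak{O}_\varphi] = \frac{[\mathfrak{O}_0 : I_\varphi]}{[\mathfrak{O}_0 \cap \mathfrak{O}_\varphi : I_\varphi]} = \frac{n^2}{n} = n = \deg\varphi,
\end{equation*}
as required. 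The only subtle step is the surjectivity and the identification of the image of $\rho$ with all of $\Z/n\Z$, which is precisely where the cyclicity hypothesis on $\varphi$ is essential: without it, $\End(\ker\varphi)$ is strictly larger than $\Z/n\Z$ and the index $[\mathfrak{O}_0 \cap \mathfrak{O}_\varphi : I_\varphi]$ will exceed $n$, correctly reflecting the failure of $I_\varphi$ to be primitive (Proposition \ref{cyclic isogeny and primitive ideal}).
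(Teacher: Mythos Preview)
Your argument is correct and is genuinely different from the paper's. The paper simply cites \cite[Proposition 3.4 and Theorem 3.7]{Arp24} for the equality $\End(E_0,\ker\varphi)=\mathfrak{O}_0\cap\mathfrak{O}_\varphi$, and then invokes the general theory of Eichler orders: the intersection of two maximal orders is an Eichler order, its level equals $\#\ker\varphi=\deg\varphi$, and the level of an Eichler order is precisely its index in either containing maximal order. Your proof, by contrast, is self-contained: you establish the first equality directly via the descent criterion and $\kappa_\varphi$, and you compute the index by sandwiching $I_\varphi\subseteq\mathfrak{O}_0\cap\mathfrak{O}_\varphi\subseteq\mathfrak{O}_0$, using the restriction map $\rho$ to $\End(\ker\varphi)\cong\Z/n\Z$ to get $[\mathfrak{O}_0\cap\mathfrak{O}_\varphi:I_\varphi]=n$ and the reduced-norm formula to get $[\mathfrak{O}_0:I_\varphi]=n^2$. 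The payoff of your route is that it avoids the external reference and the Eichler-order machinery entirely, and it makes transparent exactly where cyclicity enters (in identifying $\End(\ker\varphi)$ with $\Z/n\Z$ and hence in the surjectivity of $\rho$). The paper's route is terser but presupposes the reader knows that the level of an Eichler order coincides with its index in a maximal superorder.
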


\begin{details}
    \begin{tbox}
        $(E,G)$ for $G \subseteq E[N]$ was defined for integer $N$ coprime to $p$ in \cite{Arp24}. Hence, we need the condition that $\varphi$ is separable.
    
        Observe that both $\End(E_0, \ker \varphi)$ and $\mathfrak{O}_0 \cap \mathfrak{O}_\varphi$ are invariant under replacing $\varphi$ by an equivalent isogeny by construction.
    \end{tbox}
\end{details}

\begin{proof}
This is \cite[Proposition 3.4 and Theorem 3.7]{Arp24}. Note that $\mathfrak{O}_0 \cap \mathfrak{O}_\varphi$ is an Eichler order of level $\#\ker\varphi=\deg \varphi$. Hence, its index in $\mathfrak{O}_0$ is $\deg\varphi$.
\end{proof}

\begin{details}
    \begin{tbox}
        Let $B$ be a quaternion algebra over $\Q$ of discriminant $D$. An order in $B$ is maximal if and only if its reduced discriminant is $D$ \cite[Theorem 15.5.5]{Voi21}. An Eichler order $\mathfrak{O}$ in $B$ of level $M$ has $\discrd(\mathfrak{O})=DM$ with $\gcd(D,M)=1$ \cite[Chapter 23.1]{Voi21}. Since for orders $\mathfrak{O}\subseteq \mathfrak{O}'$ in $B$ satisfy
        $$\discrd \mathfrak{O}=[\mathfrak{O}':\mathfrak{O}]\discrd(\mathfrak{O}')$$
        by \cite[Lemma 15.5.1]{Voi21}, if $\mathfrak{O}'$ is maximal and $\mathfrak{O}$ is an Eichler order of level $M$, then $[\mathfrak{O}':\mathfrak{O}]=M$.
    \end{tbox}
\end{details}

\begin{proposition} \label{maximal order to cyclic isogeny}
 For any maximal order $\mathfrak{O}$ in $\End^0(E_0)$, there is a cyclic isogeny $\varphi: E_0 \rightarrow E$ such that $\mathfrak{O}=\mathfrak{O}_{\varphi}$. 
\end{proposition}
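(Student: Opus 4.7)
The plan is to construct the desired cyclic isogeny from the unique minimal integral connecting ideal between the two maximal orders, and then invoke the correspondences established earlier in the section.

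First, I would apply Lemma \ref{minimal reduced norm global} to the pair $\mathfrak{O}_0, \mathfrak{O}$ to obtain the unique integral connecting $\mathfrak{O}_0, \mathfrak{O}$-ideal $J$ of minimal reduced norm, with
\[
\nrd(J) = [\mathfrak{O}_0 : \mathfrak{O}_0 \cap \mathfrak{O}].
\]
To apply Proposition \ref{Curves to Ideals} I need $\nrd(J)$ to be coprime to $p$. Since $B_0$ is ramified at $p$, the local algebra $B_0 \otimes \Q_p$ is a division algebra and hence admits a unique maximal order; in particular $(\mathfrak{O}_0)_p = \mathfrak{O}_p$, so the local index at $p$ is $1$ and $p \nmid \nrd(J)$.

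Next, I would feed $J$ into the bijection of Proposition \ref{Curves to Ideals} (specifically the inverse \eqref{Curves to Ideals inverse}) to obtain a separable isogeny $\varphi_J : E_0 \rightarrow E_J$ satisfying $I_{\varphi_J} = J$. By construction
\[
\mathfrak{O}_{\varphi_J} = \calO_R(I_{\varphi_J}) = \calO_R(J) = \mathfrak{O},
\]
so the only remaining point is to verify that $\varphi_J$ is cyclic. By Proposition \ref{cyclic isogeny and primitive ideal}, this is equivalent to showing that $J$ is primitive as a left $\mathfrak{O}_0$-ideal.

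For primitivity, suppose for contradiction that $J = nJ'$ for some integer $n > 1$ and some integral left $\mathfrak{O}_0$-ideal $J'$. Then $\calO_R(J') = \calO_R(nJ') = \calO_R(J) = \mathfrak{O}$, so $J'$ is again an integral connecting $\mathfrak{O}_0, \mathfrak{O}$-ideal. However, $\nrd(J') = \nrd(J)/n^2 < \nrd(J)$, contradicting the minimality of $\nrd(J)$. Hence $J$ is primitive and $\varphi_J$ is cyclic.

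The only real subtlety—more bookkeeping than technical difficulty—is making sure $\nrd(J)$ is coprime to $p$ so that Proposition \ref{Curves to Ideals} actually applies; this follows cleanly from the ramification of $B_0$ at $p$, which forces the two maximal orders to agree locally at $p$. Everything else is a direct application of Lemma \ref{minimal reduced norm global}, Proposition \ref{Curves to Ideals}, and Proposition \ref{cyclic isogeny and primitive ideal}.
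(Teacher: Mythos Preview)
Your proof is correct and takes a genuinely different route from the paper. The paper invokes an external result \cite[Proposition 3.8]{Arp24} to produce a cyclic isogeny $\varphi$ with $\End(E_0,\ker\varphi)=\mathfrak{O}_0\cap\mathfrak{O}$, and then runs a local-global argument via \cite[Proposition 23.4.3]{Voi21} to upgrade the equality of intersections $\mathfrak{O}_0\cap\mathfrak{O}=\mathfrak{O}_0\cap\mathfrak{O}_\varphi$ to $\mathfrak{O}=\mathfrak{O}_\varphi$. You instead stay entirely inside the machinery already built in the section: take the minimal-norm connecting ideal from Lemma~\ref{minimal reduced norm global}, check it has norm coprime to $p$ (using uniqueness of the local maximal order at the ramified place), push it through the bijection of Proposition~\ref{Curves to Ideals}, and verify primitivity directly from minimality so that Proposition~\ref{cyclic isogeny and primitive ideal} applies. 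Your argument is more self-contained and in fact anticipates Proposition~\ref{primitive and minimal reduced norm ideal}; the paper's route, by contrast, imports the existence of the cyclic isogeny as a black box and pays for it with an extra localization step.
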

\begin{proof}
    By \cite[Proposition 3.8]{Arp24}, there is a cyclic isogeny $\varphi : E_0 \rightarrow E$ such that
    \begin{equation}
        \End(E_0, \ker\varphi)=\mathfrak{O}_0 \cap \mathfrak{O}.
    \end{equation}
    By Proposition \ref{Index of Eichler order}, $\mathfrak{O}_0\cap\mathfrak{O}=\mathfrak{O}_0\cap \mathfrak{O}_{\varphi}$. Write $\mathfrak{O}_{\varphi, \ell}:=\mathfrak{O}_{\varphi}\otimes\mathbb{Z}_\ell$. For each prime $\ell$, 
    \begin{equation}
        \mathfrak{O}_\ell=\mathfrak{O}_{\varphi,\ell}
    \end{equation}
    by \cite[Proposition 23.4.3]{Voi21}. It follows that
    \begin{equation}
        \mathfrak{O}= \bigcap_{\ell}\left(\mathfrak{O}_\ell \cap B_0 \right)=\bigcap_{\ell}\left(\mathfrak{O}_{\varphi,\ell} \cap B_0\right)=\mathfrak{O}_{\varphi}.
    \end{equation}
\end{proof}

\begin{theorem} \label{bjection between curves, ideals, maximal orders}
There are bijections
\begin{equation}
\begin{array}{c@{\;}c@{\;}c}
 \SScy(E_0)  
& \longleftrightarrow & 
\left\{\text{primitive left} \ \mathfrak{O}_0\text{-ideals} \right\} \\[1ex]
(E,\varphi)
& \mapsto & I_{\varphi} \\[1ex]
(E_I, \varphi_I) & \mapsfrom & \substack{ \text{the unique integral connecting} \ \mathfrak{O}_0,\mathfrak{O}\text{-ideal} \\ I \text{of minimal reduced norm}}   \\[1ex]
& \longleftrightarrow & \{\text{maximal orders in } \End^0(E_0)\}  \\[1ex]
& \mapsto & \mathfrak{O}_{\varphi} \\[1ex]
& \mapsfrom & \mathfrak{O}.
\end{array}
\end{equation}
Moreover, for any separable isogeny $\varphi: E_0 \rightarrow E$, the map $\kappa_{\varphi}$ in \eqref{induced isomorphism of quaternion algebras equation} restricts to an isomorphism $\End(E) \cong \mathfrak{O}_\varphi$.
\end{theorem}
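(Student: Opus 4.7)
The plan is to assemble the claim from the structural results already established. First, restrict the bijection in Proposition \ref{Curves to Ideals} between $\SS(E_0)$ and integral left $\frakO_0$-ideals of reduced norm coprime to $p$: by Proposition \ref{cyclic isogeny and primitive ideal}, equivalence classes $(E,\varphi) \in \SScy(E_0)$ correspond bijectively to the primitive left $\frakO_0$-ideals, with inverse $I \mapsto (E_I,\varphi_I)$ as in \eqref{Curves to Ideals inverse}. This yields the first bijection.

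For the second bijection, the forward map is $I \mapsto \calO_R(I)$. Since $\frakO_0$ is maximal, $I$ is invertible and $\calO_R(I)$ is itself a maximal order in $\End^0(E_0)$; when $I = I_\varphi$ this right order is $\frakO_\varphi$ by definition. For the backward map, send a maximal order $\frakO$ to the unique integral connecting $\frakO_0,\frakO$-ideal $I_\frakO$ of minimal reduced norm, guaranteed by Lemma \ref{minimal reduced norm global}. To verify $I_\frakO$ lies among the primitive left $\frakO_0$-ideals, use Proposition \ref{maximal order to cyclic isogeny} to realize $\frakO = \frakO_\varphi = \calO_R(I_\varphi)$ for some cyclic isogeny $\varphi: E_0 \to E$; then $I_\varphi$ is primitive by Proposition \ref{cyclic isogeny and primitive ideal}, and the identities
\[
\nrd(I_\varphi) = \deg\varphi = [\frakO_0 : \frakO_0 \cap \frakO_\varphi]
\]
from Proposition \ref{Index of Eichler order} match the minimal-norm formula in Lemma \ref{minimal reduced norm global}(a), so the uniqueness clause of that lemma forces $I_\frakO = I_\varphi$, which is primitive.

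To check that the two maps are mutually inverse, note that $\frakO \mapsto I_\frakO \mapsto \calO_R(I_\frakO)$ returns $\frakO$ via the identification $I_\frakO = I_\varphi$ and $\calO_R(I_\varphi) = \frakO_\varphi = \frakO$. Conversely, given a primitive left $\frakO_0$-ideal $I = I_\varphi$ with $\varphi = \varphi_I$ cyclic, the same reduced-norm comparison shows that $I$ is itself the unique minimal-norm integral connecting $\frakO_0, \calO_R(I)$-ideal, so the round trip returns $I$. The remaining claim that $\kappa_\varphi$ restricts to an isomorphism $\End(E) \cong \frakO_\varphi$ is part of Theorem \ref{ss-correspondence}(d); alternatively one checks directly that for every $\phi \in \End(E)$ and every $\psi \in \Hom(E,E_0)$ one has $\psi\varphi\cdot \kappa_\varphi(\phi) = \psi\phi\varphi \in \Hom(E,E_0)\varphi = I_\varphi$, so $\kappa_\varphi(\End(E)) \subseteq \calO_R(I_\varphi) = \frakO_\varphi$, with equality because both are maximal orders in $\End^0(E_0)$.

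I expect the main obstacle to be the uniqueness half of the second bijection: one must carefully match the reduced norm of $I_\varphi$, controlled by $\deg\varphi$ and the Eichler order index in Proposition \ref{Index of Eichler order}, with the minimal reduced norm characterized in Lemma \ref{minimal reduced norm global}(a), and then invoke that lemma's uniqueness assertion to collapse any two primitive left $\frakO_0$-ideals sharing the same right order to a single ideal.
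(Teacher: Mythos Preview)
Your proposal is correct and follows essentially the same route as the paper: restrict Proposition~\ref{Curves to Ideals} via Proposition~\ref{cyclic isogeny and primitive ideal} for the first bijection, then combine Lemma~\ref{minimal reduced norm global}, Proposition~\ref{maximal order to cyclic isogeny}, and the reduced-norm identity from Proposition~\ref{Index of Eichler order} for the second. Your direct verification that $\kappa_\varphi(\End(E)) \subseteq \frakO_\varphi$ with equality by maximality is exactly the argument underlying the reference the paper cites for the final claim.
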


\begin{proof}
Given a cyclic isogeny $\varphi:E_0 \rightarrow E$, $I_\varphi$ is a primitive left $\mathfrak{O}_0$-ideal by Proposition \ref{cyclic isogeny and primitive ideal}. Also, since $\nrd(I_{\varphi})=\deg \varphi$, $I_{\varphi}$ is the unique integral connecting $\mathfrak{O}_0, \mathfrak{O}_\varphi$-ideal of minimal reduced norm by Proposition \ref{Index of Eichler order}.

Given a maximal order $\mathfrak{O}$ in $\End^0(E_0)$, there is a unique integral connecting $\mathfrak{O}_0, \mathfrak{O}$-ideal $I$ of reduced norm $[\mathfrak{O}_0:\mathfrak{O}_0 \cap \mathfrak{O}]$ by Lemma \ref{minimal reduced norm global}. By Proposition \ref{maximal order to cyclic isogeny}, there is a cyclic isogeny $\varphi: E_0 \rightarrow E$ such that $\mathfrak{O}=\mathfrak{O}_{\varphi}$. It follows that $I=I_{\varphi}$ is the primitive connecting $\mathfrak{O}_0, \mathfrak{O}_{\varphi}$-ideal by the previous argument. Furthermore, $E=E_I$ and $\varphi=\varphi_I$ for a pair $(E_I, \varphi_I)$ given by the inverse map in \eqref{Curves to Ideals inverse}.

The last statement follows from \cite[Lemma 42.2.9]{Voi21}. 
\end{proof}
\begin{details}
    \begin{tbox} 
    Proof of \cite[Lemma 42.2.9]{Voi21}
         Let $\beta \in \End(E)$. Then $\beta$ acts on $\Hom(E, E_0)$ by pre-composition
             \begin{equation} 
\begin{array}{c@{\;}c@{\;}c}
\End(E) \times \Hom(E, E_0)
& \rightarrow & 
\Hom(E, E_0) \\[1ex]
(\beta, \psi) 
& \mapsto & 
\psi \beta.
\end{array}
\end{equation}
Then the isomorphism $\varphi^\ast: \Hom(E, E_0) \rightarrow I_\varphi$ of left $\mathfrak{O}_0$-modules induces an action of $\beta$ on $I_\varphi$ given by
 \begin{equation} 
\begin{array}{c@{\;}c@{\;}c}
\End(E) \times I_\varphi
& \rightarrow & 
I_\varphi\\[1ex]
(\beta, \psi\varphi) 
& \mapsto & 
\psi \beta \varphi,
\end{array}
\end{equation}
which is the right multiplication by $\kappa_\varphi(\beta)$
as
$$\psi \beta \varphi = (\psi \varphi) (\frac{1}{\deg\varphi}\hat{\varphi}\beta \varphi)=(\psi\varphi)\kappa_\varphi(\beta).$$
Hence, $\kappa_\varphi(\beta) \in \calO_R(I_\varphi)=\frakO_\varphi$.

Next, we show that $\kappa_\varphi$ is injective; suppose $\kappa_\varphi(\beta)=0$. Then 
\begin{equation}
    \hat{\varphi}\beta \varphi=0
\end{equation}
Applying $\hat{\varphi}$ on the left on the both sides of the equation, we get $\beta \varphi=0$. Since $\varphi$ is a non-zero isogeny, which is surjective, this implies $\beta=0$.  

Lastly, since $\kappa_\varphi(\End(E)) \subseteq \mathfrak{O}_\varphi$ and $\End(E)$ is a maximal order, the equality holds.
    \end{tbox}
\end{details}

\begin{proposition}
 \label{primitive and minimal reduced norm ideal}
Let $\mathfrak{O},\mathfrak{O}'$ be maximal orders in $B_0$ and $I$ be an integral connecting $\mathfrak{O},\mathfrak{O}'$-ideal. The following are equivalent:
    \begin{enumerate}
        \item[(i)] $I$ is primitive.
        \item[(ii)] $I$ is the unique integral connecting $\mathfrak{O}, \mathfrak{O}'$-ideal of minimal reduced norm.
    \end{enumerate}
\end{proposition}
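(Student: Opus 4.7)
The plan is to handle the two implications separately, treating the easier direction first.

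For direction (ii) $\Rightarrow$ (i), I would argue by contrapositive. If $I$ is not primitive, then $I = nJ$ for some integer $n > 1$ and some integral left $\mathfrak{O}$-ideal $J$. Scalar multiplication by a rational integer does not affect the left or right orders, so $\calO_L(J) = \mathfrak{O}$ and $\calO_R(J) = \mathfrak{O}'$, making $J$ itself an integral connecting $\mathfrak{O},\mathfrak{O}'$-ideal with $\nrd(J) = \nrd(I)/n^2 < \nrd(I)$. This contradicts the minimality of $\nrd(I)$.

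For direction (i) $\Rightarrow$ (ii), Lemma~\ref{minimal reduced norm global} supplies the unique integral connecting ideal $I_{\min}$ of minimal reduced norm, so it suffices to prove $I = I_{\min}$ whenever $I$ is primitive. I would localize via Proposition~\ref{Primitivity local property}: it is enough to verify $I_\ell = (I_{\min})_\ell$ at every prime $\ell$. At a prime $\ell \neq p$, Lemma~\ref{minimal reduced norm local} gives $(I_{\min})_\ell = \mathfrak{O}_\ell \alpha_\ell$ with $\alpha_\ell \in \mathfrak{O}_\ell \setminus \ell\mathfrak{O}_\ell$ and $\mathfrak{O}'_\ell = \alpha_\ell^{-1}\mathfrak{O}_\ell \alpha_\ell$. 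Since $B_\ell \cong M_2(\Q_\ell)$ and $\mathfrak{O}_\ell$ is a local PID, I can write $I_\ell = \mathfrak{O}_\ell \beta$ for some $\beta \in B_\ell^\times$. The requirement $\calO_R(I_\ell) = \mathfrak{O}'_\ell$ forces $\beta \alpha_\ell^{-1}$ to lie in the normalizer $N_{B_\ell^\times}(\mathfrak{O}_\ell) = \Q_\ell^\times \mathfrak{O}_\ell^\times$, so $\beta = u c \alpha_\ell$ for some $u \in \mathfrak{O}_\ell^\times$ and $c = \ell^k u' \in \Q_\ell^\times$ with $u' \in \Z_\ell^\times$. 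Integrality of $I_\ell$ forces $k \geq 0$, giving $I_\ell = \ell^k (I_{\min})_\ell$, and primitivity of $I_\ell$ then forces $k = 0$.

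At $\ell = p$, the division algebra $B_p$ has a unique local maximal order, so $\mathfrak{O}_p = \mathfrak{O}'_p$ and $[\mathfrak{O}:\mathfrak{O}\cap\mathfrak{O}']$ is coprime to $p$; under the standing assumption that $\nrd(I)$ is coprime to $p$, both $I_p$ and $(I_{\min})_p$ must equal this unique local maximal order. Combining these local equalities via $I = \bigcap_\ell (I_\ell \cap B_0)$ yields $I = I_{\min}$. The main obstacle is the local classification at primes $\ell \neq p$: one must invoke the explicit normalizer computation $N_{B_\ell^\times}(\mathfrak{O}_\ell) = \Q_\ell^\times \mathfrak{O}_\ell^\times$ to force every integral connecting local ideal into the form $\ell^k (I_{\min})_\ell$, after which primitivity collapses $k$ to zero.
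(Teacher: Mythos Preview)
Your argument is correct, and it takes a genuinely different route from the paper. The paper does not argue locally at all: it deduces the equivalence from the machinery built in Theorem~\ref{bjection between curves, ideals, maximal orders}, passing through the elliptic-curve side. Concretely, a primitive left $\mathfrak{O}_0$-ideal corresponds to a cyclic isogeny $\varphi$ (Proposition~\ref{cyclic isogeny and primitive ideal}), and for a cyclic $\varphi$ one has $\nrd(I_\varphi)=\deg\varphi=[\mathfrak{O}_0:\mathfrak{O}_0\cap\mathfrak{O}_\varphi]$ via the Eichler-order index computation of Proposition~\ref{Index of Eichler order}; since Lemma~\ref{minimal reduced norm global} identifies this index as the minimal reduced norm, primitivity and minimality coincide. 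The paper also points to \cite[Lemma~8]{KLPT14} for a direct proof, which is closer in spirit to what you do.

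Your approach is purely quaternion-algebraic and self-contained: you never touch the isogeny side, instead reducing to the local structure of $M_2(\Q_\ell)$ and the normalizer $N_{B_\ell^\times}(\mathfrak{O}_\ell)=\Q_\ell^\times\mathfrak{O}_\ell^\times$ to force $I_\ell=\ell^k(I_{\min})_\ell$, then using Proposition~\ref{Primitivity local property} to kill $k$. This is cleaner if one wants the statement independently of the Deuring correspondence, and it makes transparent exactly where the standing coprime-to-$p$ hypothesis is used (at $\ell=p$, primitivity alone would still allow $I_p=\mathfrak{P}$). The paper's route, by contrast, gets the result essentially for free once the cyclic-isogeny/primitive-ideal dictionary is in place, which is the point of that section.
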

\begin{proof}
   This is shown in the proof of Theorem \ref{bjection between curves, ideals, maximal orders}. See also \cite[Lemma 8]{KLPT14} for a direct proof. 
\end{proof}

\begin{lemma} \label{index for isomorphic maximal orders}
Let $\mathfrak{O}$ be a maximal order in $B_0$ and $\mathfrak{O}'=\alpha^{-1}\mathfrak{O}\alpha$ for some $\alpha \in B_0^\times$. Let $I=\mathfrak{O}\alpha$. We have the following:
    \begin{enumerate}
        \item[(a)] $I$ is integral if and only if $\alpha \in \mathfrak{O}$.
        \item[(b)] $I$ is the primitive connecting $\mathfrak{O},\mathfrak{O}'$-ideal if and only if $\alpha \in \mathfrak{O}$ is primitive. In this case, $[\mathfrak{O} : \mathfrak{O} \cap \mathfrak{O}']=\nrd(\alpha)$.
    \end{enumerate}
\end{lemma}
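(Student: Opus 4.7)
The plan is to dispatch both parts by direct verification from the definitions, leveraging the previously established structure theorems.

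For part (a), I would first observe that since $\mathfrak{O}$ is maximal and $I = \mathfrak{O}\alpha$ is a principal left $\mathfrak{O}$-ideal, we have $\calO_L(I) = \mathfrak{O}$ and $\calO_R(I) = \alpha^{-1}\mathfrak{O}\alpha = \mathfrak{O}'$. By the paper's definition of integrality together with \cite[Lemma 16.2.8]{Voi21}, $I$ is integral if and only if $I \subseteq \calO_L(I) = \mathfrak{O}$. Since $1 \in \mathfrak{O}$, the inclusion $\mathfrak{O}\alpha \subseteq \mathfrak{O}$ holds precisely when $\alpha \in \mathfrak{O}$, giving the equivalence.

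For the primitivity equivalence in part (b), I would argue by contrapositive in both directions. If $\alpha$ is not primitive in $\mathfrak{O}$, write $\alpha = n\beta$ with $\beta \in \mathfrak{O}$ and $n \neq \pm 1$; then $I = \mathfrak{O}\alpha = n(\mathfrak{O}\beta)$, and $\mathfrak{O}\beta$ is an integral left $\mathfrak{O}$-ideal by part (a), so $I$ is not primitive. Conversely, if $I$ is not primitive, then by Lemma~\ref{equivalent definition of primitive ideal}(iii) we have $I \subseteq n\mathfrak{O}$ for some integer $n \neq \pm 1$; in particular $\alpha = 1 \cdot \alpha \in I \subseteq n\mathfrak{O}$, so $\alpha = n\beta$ for some $\beta \in \mathfrak{O}$, showing $\alpha$ is not primitive.

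For the index formula, I would combine Proposition~\ref{primitive and minimal reduced norm ideal}---which identifies the primitive connecting $\mathfrak{O},\mathfrak{O}'$-ideal with the unique integral connecting ideal of minimal reduced norm---with Lemma~\ref{minimal reduced norm global}(a), which gives $\nrd(I) = [\mathfrak{O}:\mathfrak{O}\cap\mathfrak{O}']$. It then suffices to compute $\nrd(I) = \nrd(\mathfrak{O}\alpha) = \nrd(\alpha)$, which holds for a principal left ideal over a maximal order (since $\nrd(\mathfrak{O}) = \Z$ and reduced norms of principal ideals are generated by the reduced norm of a generator, cf.\ \cite[16.3.4]{Voi21}).

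The argument is essentially bookkeeping; there is no real obstacle. The only place one needs to be careful is in the norm computation at the end, to ensure that the identification $\nrd(\mathfrak{O}\alpha) = \nrd(\alpha)$ (as positive integers) is correctly invoked, which uses the fact that $\mathfrak{O}$ is maximal and $B_0$ is definite so that $\nrd(\alpha) > 0$ for all $\alpha \in B_0^\times$.
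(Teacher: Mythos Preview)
Your proposal is correct and follows essentially the same approach as the paper: both parts are handled by direct verification from the definitions, with the contrapositive arguments for primitivity in (b) matching the paper's almost verbatim, and the index formula obtained via Proposition~\ref{primitive and minimal reduced norm ideal} and Lemma~\ref{minimal reduced norm global}(a) exactly as the paper does (though the paper states this last step more tersely).
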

\begin{proof}
    (a) If $I$ is integral, then $\alpha \in I \subseteq  \mathfrak{O}$. Conversely, if $\alpha \in \mathfrak{O}$, then $\mathfrak{O}\alpha \subseteq \mathfrak{O}$.

    (b) Suppose $I$ is primitive. If $\alpha \in \mathfrak{O}$ is not primitive, then there exists $\beta \in \mathfrak{O}$ such that $\alpha=n\beta$ for some integer $n>1$. Let $I'=\mathfrak{O}\beta$. $I'$ is an integral left $\mathfrak{O}$-ideal such that $I=nI'$. This is a contradiction to our assumption that $I$ is primitive. 
    
    Conversely, suppose $\alpha \in \mathfrak{O}$ is primitive. If $I$ is not primitive, then there is an integral left $\mathfrak{O}$-ideal $I'$ such that $I=nI'$ for some integer $n>1$. Hence, $n^{-1}\alpha \in I' \subseteq \mathfrak{O}$. This is a contradiction to our assumption that $\alpha \in \mathfrak{O}$ is primitive. 

    In case $I$ is primitive,
    \begin{equation}
        [\mathfrak{O}: \mathfrak{O}\cap \mathfrak{O}'] = \nrd(I)=\nrd(\alpha).
    \end{equation}
\end{proof}

\begin{extra}
    possible wrong proof:
     By Lemma \ref{minimal reduced norm local}, for each prime $\ell \neq p$, there is $\beta_\ell \in \mathfrak{O}_{\ell} \setminus \ell \mathfrak{O}_{\ell}$ unique up to left multiplication by $\mathfrak{O}_{\ell}^\times$ such that $\mathfrak{O}'_{\ell}=\beta_\ell^{-1}\mathfrak{O}_{\ell}\beta_{\ell}$. Consider $\alpha$ as an element of $B_\ell$, which we denote by $\alpha_\ell$. We also have
    \begin{equation}
\mathfrak{O}'_{\ell}=\alpha_\ell^{-1}\mathfrak{O}_{\ell}\alpha_\ell.
    \end{equation}
    By the uniqueness of $\beta_\ell$, we have $\alpha_\ell=\mu\beta_\ell$ for some $\mu \in \mathfrak{O}_{\ell}^\times$, i.e., $\alpha_\ell \in \mathfrak{O}_{\ell}$. By Lemma \ref{minimal reduced norm local},
    \begin{equation}
       \ell^{\nu_\ell(\nrd(\alpha))}=\nrd(\alpha_\ell)  = \nrd(\beta_\ell) =[\mathfrak{O}_\ell:\mathfrak{O}_\ell \cap \mathfrak{O}'_\ell],
    \end{equation}
    where $\nu_\ell(n)$ denotes the $\ell$-adic valuation of an integer $n$.
    
    Also, since $\mathfrak{O}_p$ is the unique maximal order in $B_p$, $\alpha_p \in \mathfrak{O}_p^\times$ and
    \begin{equation}
        p^{\nu_p(\nrd(\alpha))} = [\mathfrak{O}_p : \mathfrak{O}_p \cap \mathfrak{O}_p']=1.
    \end{equation}
   It follows that  $\alpha \in \mathfrak{O}$ and $I$ is integral where    
    \begin{equation}
        \nrd(I)=\nrd(\alpha) = [\mathfrak{O}: \mathfrak{O}\cap \mathfrak{O'}].
    \end{equation}
        
    Therefore, $I$ is the unique integral connecting $\mathfrak{O},\mathfrak{O}'$-ideal of minimal reduced norm.
\end{extra}

\begin{extra}
     The index $[\mathfrak{O} : \mathfrak{O}\cap \mathfrak{O'}]$ is the size of the finite abelian group $\mathfrak{O}/(\mathfrak{O}\cap\mathfrak{O}')$, and locally we have $\mathfrak{O}_\ell/(\mathfrak{O}_\ell \cap \mathfrak{O}_\ell')$ and the corresponding index $[\mathfrak{O}_\ell : \mathfrak{O}_\ell \cap \mathfrak{O}_\ell']$. Since
    \begin{equation}
        \mathfrak{O}/(\mathfrak{O} \cap \mathfrak{O}') \cong \bigoplus_\ell (\mathfrak{O}_\ell/(\mathfrak{O}_\ell \cap \mathfrak{O}_\ell'))
    \end{equation}
    via the decomposition $\hat{\Z}=\prod_\ell \Z_\ell $.
\end{extra}

\begin{lemma} \label{primitivity of inverse ideal}
Let $I$ be the primitive connecting $\mathfrak{O},\mathfrak{O}'$-ideal of maximal orders $\mathfrak{O}, \mathfrak{O}'$ in $B_0$. Then $\bar{I}$ is the primitive connecting $\mathfrak{O}',\mathfrak{O}$-ideal.
\end{lemma}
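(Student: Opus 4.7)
The plan is to reduce primitivity of $\overline{I}$ to the characterization of primitive connecting ideals as the unique integral connecting ideals of minimal reduced norm (Proposition~\ref{primitive and minimal reduced norm ideal}, together with Lemma~\ref{minimal reduced norm global}), rather than unfolding the definition directly.

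First I would verify that $\overline{I}$ is a connecting $\mathfrak{O}', \mathfrak{O}$-ideal. This is immediate from the already-recorded identities
\begin{equation*}
\calO_L(\overline{I}) = \calO_R(I) = \mathfrak{O}', \qquad \calO_R(\overline{I}) = \calO_L(I) = \mathfrak{O},
\end{equation*}
and from integrality of $I$: since $I \subseteq \mathfrak{O} \cap \mathfrak{O}'$ and $\overline{\mathfrak{O}} = \mathfrak{O}$, $\overline{\mathfrak{O}'} = \mathfrak{O}'$, applying the standard involution gives $\overline{I} \subseteq \mathfrak{O} \cap \mathfrak{O}'$, so $\overline{I}$ is also integral. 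Invertibility of $\overline{I}$ is automatic because $\mathfrak{O}$ and $\mathfrak{O}'$ are maximal.

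Next I would compute $\nrd(\overline{I})$. Because the standard involution preserves reduced norms of elements ($\nrd(\overline{\alpha}) = \nrd(\alpha)$), the ideal $\overline{I}$ is generated (as a $\Z$-module, via the conjugates of any generating set of $I$) by elements with the same reduced norms as those of $I$, so $\nrd(\overline{I}) = \nrd(I)$.

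Finally I would apply Proposition~\ref{primitive and minimal reduced norm ideal}: since $I$ is primitive, it is the unique integral connecting $\mathfrak{O}, \mathfrak{O}'$-ideal of minimal reduced norm, so by Lemma~\ref{minimal reduced norm global}(a),
\begin{equation*}
\nrd(\overline{I}) = \nrd(I) = [\mathfrak{O} : \mathfrak{O} \cap \mathfrak{O}'] = [\mathfrak{O}' : \mathfrak{O} \cap \mathfrak{O}'].
\end{equation*}
The right-hand side is exactly the reduced norm of the unique integral connecting $\mathfrak{O}', \mathfrak{O}$-ideal of minimal reduced norm given by Lemma~\ref{minimal reduced norm global} (with the roles of $\mathfrak{O}$ and $\mathfrak{O}'$ swapped). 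Hence $\overline{I}$ attains this minimum and, by uniqueness in that lemma, must coincide with it; Proposition~\ref{primitive and minimal reduced norm ideal} then gives that $\overline{I}$ is primitive.

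There is no real obstacle here: the whole argument is a bookkeeping exercise using identities about the standard involution and the already-proven correspondence between primitivity and minimal reduced norm. The only thing to be careful about is to invoke Lemma~\ref{minimal reduced norm global}(a) so that the minimal reduced norm for connecting ideals in either direction equals the common index $[\mathfrak{O} : \mathfrak{O} \cap \mathfrak{O}']$, which makes the matching of $\nrd(\overline{I})$ with the minimum work in both directions. Alternatively, one could use part (iv) of Lemma~\ref{equivalent definition of primitive ideal} directly: a factorization $\overline{I} = n J'$ with $J'$ an integral left $\mathfrak{O}'$-ideal yields $I = n \overline{J'}$ with $\overline{J'}$ an integral right $\mathfrak{O}'$-ideal, contradicting primitivity of $I$ on the right.
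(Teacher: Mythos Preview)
Your proposal is correct and follows essentially the same approach as the paper: the paper's proof simply notes that $\bar{I}$ is an integral connecting $\mathfrak{O}',\mathfrak{O}$-ideal with $\nrd(\bar{I})=\nrd(I)=[\mathfrak{O}:\mathfrak{O}\cap\mathfrak{O}']$, leaving implicit the appeal to Lemma~\ref{minimal reduced norm global} and Proposition~\ref{primitive and minimal reduced norm ideal} that you spell out. Your version is a more detailed elaboration of the same one-line argument, and your alternative via Lemma~\ref{equivalent definition of primitive ideal}(iv) is a valid shortcut the paper does not mention.
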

\begin{proof}
   If $I$ is the primitive connecting $\mathfrak{O},\mathfrak{O}'$-ideal, then $\bar{I}$ is an integral connecting $\mathfrak{O}',\mathfrak{O}$-ideal of reduced norm
   \begin{equation}
       \nrd(\bar{I})=\nrd(I)=[\mathfrak{O}: \mathfrak{O}\cap\mathfrak{O}'].
   \end{equation}
\end{proof}

\begin{remark} \label{unique cyclic isogeny}
    Let $(E,\varphi) \in \SS(E_0)$ where $\varphi$ is not necessarily cyclic. For the maximal order $\mathfrak{O}=\mathfrak{O}_{\varphi}$, there exists a cyclic isogeny $\varphi': E_0 \rightarrow E'$ such that $\mathfrak{O}=\mathfrak{O}_{\varphi'}$. Since $I_{\varphi}, I_{\varphi'}$ are both integral connecting $\mathfrak{O}_0,\mathfrak{O}$-ideals and $I_{\varphi'}$ is primitive, $I_{\varphi}=nI_{\varphi'}$ for some integer $n\geq 1$. This implies $\varphi \sim \varphi'[n]$. Up to post-composition by an isomorphism, we may assume $E=E'$ and $\varphi=\varphi'[n]$. Then $\varphi$ and $\varphi'$ induce the same orientation: for a given $K$-orientation $\iota_0$ on $E_0$,
    \begin{equation}
        \varphi_\ast(\iota_0)(\alpha) = \frac{1}{\deg \varphi}\varphi \iota_0(\alpha)\hat{\varphi} =  \frac{1}{n^2\deg \varphi'}[n]\varphi' \iota_0(\alpha)\hat{\varphi'}[n]=\varphi'_\ast(\iota_0)(\alpha)
    \end{equation}
    for all $\alpha \in K$.
\end{remark}

\begin{details}
    \begin{tbox}
 It was stated without proof in \href{https://eprint.iacr.org/2025/042.pdf}{2.4 Integral ideals} that any connecting ideal is scalar multiple of connecting ideal of minimal reduced norm.\\

 $\ker(\varphi)= E[I_{\varphi}]=E[nI_{\varphi'}]=\ker(\varphi'[n])$. It was shown in \cite[proof of Lemma 42.2.13]{Voi21} that for $\beta \in B^\times_0$, $\beta E[I\beta]=E[I]$ and $\varphi_{I\beta}=\varphi_I\beta$
\end{tbox}
\end{details}

\begin{extra}
\begin{tbox}
    \textbf{Discussion on inseparable isogenies}

     We want to extend the bijections to inseparable isogenies
    \begin{equation}
\begin{array}{c@{\;}c@{\;}c}
\left\{ \text{equivalence classes of } (E,\varphi) \right\} 
& \longleftrightarrow & 
\left\{\text{non-zero left integral} \ \mathfrak{O}_0\text{-ideals} \right\} \\[1ex]
(E,\varphi)/\sim 
& \mapsto & 
I_{\varphi}:=\Hom(E, E_0)\varphi.
\end{array}
\end{equation}    
 \begin{equation} 
\begin{array}{c@{\;}c@{\;}c}
\{\text{non-zero left integral} \ \mathfrak{O}_0\text{-ideals}\} 
& \longleftrightarrow & 
\{\text{maximal orders in} \End^0(E_0)\} \\[1ex]
I_\varphi 
& \mapsto & 
\mathfrak{O}_\varphi \\
\substack{ \text{the unique integral connecting} \ \mathfrak{O}_0,\mathfrak{O}\text{-ideal} \\ \text{of reduced norm} \ [\mathfrak{O}: \mathfrak{O}_0 \cap \mathfrak{O}]}  &  \mapsfrom & \mathfrak{O}.
\end{array}
\end{equation}

   Suppose $\varphi$ is inseparable and write 
\begin{equation}
    \varphi=\varphi_{\sep}\pi_q
\end{equation}
     where $q=\deg_i(\varphi)=p^r$. Then
     \begin{equation}
         I_{\varphi}=\mathfrak{P}^rI_{\varphi_{\sep}}
     \end{equation}
   It follows that $\mathfrak{O}_{\varphi}=\mathfrak{O}_{\varphi_{\sep}}$.

     Then we have another issue that $I_{\varphi}$ is not the integral connecting ideal of minimal reduced norm as 
     \begin{equation}
         [\mathfrak{O}_0 : \mathfrak{O}_0 \cap \mathfrak{O}_{\varphi}]=[\mathfrak{O}_0 : \mathfrak{O}_0 \cap \mathfrak{O}_{\varphi_{\sep}}]=\deg_s \varphi
     \end{equation}
     and
     \begin{equation}
         \nrd(I_\varphi)=p^r + \deg \varphi_{\sep}=\deg_i \varphi+\deg_s \varphi = \deg\varphi.
     \end{equation}
\end{tbox}
\end{extra}

\subsection{Connecting ideals}
We further investigate the properties of primitive connecting ideals between maximal orders in $B_0$. 

\begin{lemma} \label{colon ideals} Let $\mathfrak{O}_0,\mathfrak{O}, \mathfrak{O}'$ be maximal ideals in $B_0$. Let $I$ be a connecting $\mathfrak{O}_0, \mathfrak{O}$-ideal and $I'$ be a connecting $\mathfrak{O}_0,\mathfrak{O}'$-ideal. Let
    \begin{equation}
        J=(I':I)_R= I^{-1}I'.
    \end{equation}
We have the following:
    \begin{enumerate}
        \item[(a)] $J$ is a connecting $\mathfrak{O}, \mathfrak{O}'$-ideal.
        \item[(b)] $\nrd(J) = \nrd(I')/\nrd(I)$.
        \item[(c)] $I' \subseteq I$ if and only if $J$ is integral.
        \item[(d)] Suppose $I' \subseteq I$. If $I'$ is primitive, then $J$ is primitive.
    \end{enumerate}  
\end{lemma}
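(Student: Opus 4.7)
The plan is to handle parts (a)--(c) directly from the standard properties of invertible lattices and to save the main work for (d). Since $I$ and $I'$ are connecting ideals between maximal orders, both are locally principal and therefore invertible by \cite[Theorem 16.1.3]{Voi21}, so $I^{-1} = \tfrac{1}{\nrd(I)}\bar{I}$ makes sense and $J = I^{-1}I'$ is well-defined. For (a), I will use the Skolem-type identities $\calO_L(I^{-1}) = \calO_R(I) = \mathfrak{O}$ and $\calO_R(I^{-1}) = \calO_L(I) = \mathfrak{O}_0 = \calO_L(I')$, so that the product $I^{-1}I'$ is compatible and inherits $\calO_L(J) = \mathfrak{O}$ and $\calO_R(J) = \calO_R(I') = \mathfrak{O}'$. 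For (b), I apply multiplicativity of the reduced norm on compatible products of invertible lattices, giving $\nrd(J) = \nrd(I^{-1})\nrd(I') = \nrd(I')/\nrd(I)$.

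For (c), the forward direction is immediate: if $I' \subseteq I$, then $J = I^{-1}I' \subseteq I^{-1}I = \calO_R(I) = \mathfrak{O}$, and since $\calO_L(J) = \mathfrak{O}$ this says $J$ is an integral left $\mathfrak{O}$-ideal. Conversely, if $J \subseteq \mathfrak{O}$, then $I' = IJ \subseteq I\,\mathfrak{O} = I$ using $\calO_R(I) = \mathfrak{O}$, so $I' \subseteq I$.

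For (d), the approach is by contradiction against the primitivity of $I'$. Assume $I' \subseteq I$, that $I'$ is primitive, but that $J$ fails to be primitive, so there exists an integer $n > 1$ and an integral left $\mathfrak{O}$-ideal $J'$ with $J = nJ'$. Then
\begin{equation}
I' = IJ = I(nJ') = n(IJ').
\end{equation}
I now need to argue that $IJ'$ is itself an integral left $\mathfrak{O}_0$-ideal, which will violate primitivity of $I'$ and finish the proof. Compatibility of the product is clear from $\calO_R(I) = \mathfrak{O} = \calO_L(J')$, and the resulting lattice satisfies $\calO_L(IJ') = \calO_L(I) = \mathfrak{O}_0$. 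Integrality follows because $J' \subseteq \mathfrak{O} = \calO_R(I)$ gives $IJ' \subseteq I\,\mathfrak{O} = I \subseteq \mathfrak{O}_0$.

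The only mildly delicate point is the last observation in (d): I must convert the integrality of $J'$ as a left $\mathfrak{O}$-ideal into integrality of $IJ'$ as a left $\mathfrak{O}_0$-ideal, and the key step is precisely $IJ' \subseteq I \subseteq \mathfrak{O}_0$. Everything else reduces to formal manipulation with colon ideals and invertibility, so (d) is where the content lies, and it hinges on this compatibility between one-sided integrality and the sandwiching $IJ' \subseteq I$.
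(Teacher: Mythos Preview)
Your proof is correct and follows essentially the same route as the paper: parts (a)--(c) are handled via the identities $\calO_L(I^{-1})=\calO_R(I)$, $\calO_R(I^{-1})=\calO_L(I)$, multiplicativity of $\nrd$ on compatible products, and the inclusions $I^{-1}I'\subseteq I^{-1}I=\mathfrak{O}$ and $I'=IJ\subseteq I\mathfrak{O}=I$; part (d) is argued by the same contradiction, writing $J=nJ'$ and observing $IJ'\subseteq I\mathfrak{O}=I\subseteq\mathfrak{O}_0$ so that $I'=n(IJ')$ violates primitivity of $I'$. The paper cites \cite[Lemma 16.5.11]{Voi21} and \cite[Lemma 16.3.7]{Voi21} for the compatibility and norm multiplicativity steps, but otherwise the arguments are identical.
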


\begin{details}
    \begin{tbox}
        In part $(c)$, $I$ and $I'$ integral doesn't necessarily imply $J$ is integral.
    \end{tbox}
\end{details}
   
\begin{proof}          
    (a) We have
    \begin{equation}
        \calO_L(I^{-1})= \calO_{L}((1/\nrd(I))\overline{I})= \calO_L(\overline{I})=\mathcal{O}_R(I)= \mathfrak{O}
    \end{equation}
    and similarly, $\calO_R(I^{-1})=\calO_L(I)=\mathfrak{O}_0$. Hence, 
    \begin{equation}
        \calO_R(I^{-1})=\calO_L(I').
    \end{equation}
   By \cite[Lemma 16.5.11]{Voi21}, 
    \begin{equation}
 \calO_L(I^{-1}I')=\calO_L(I^{-1})=\mathfrak{O}
    \end{equation}
    and similarly, $\calO_{R}(I^{-1}I')=\calO_R(I')=\mathfrak{O}'$. Therefore, $I^{-1}I'$ is a connecting $\mathfrak{O},\mathfrak{O}'$-ideal.

\begin{details}
\begin{tbox}
    Being a connecting ideal requires being an invertible lattice by definition, so $I$ is invertible. By definition, for any $a \in \Q$, 
    \begin{equation}
        \calO_L(aI)=\{\alpha \in B_0: \alpha aI \subseteq aI \}=\{\alpha \in B_0 : \alpha I \subseteq I\}=\calO_L(I).
    \end{equation}
    Similarly, $\calO_R(aI)=\calO_R(I)$. 
\end{tbox}
\end{details}

    (b) By \cite[Lemma 16.3.7]{Voi21}, 
    \begin{equation}
        \nrd(I^{-1}I')=\nrd(I^{-1})\nrd(I').
    \end{equation}
    Also,
    \begin{equation}
        \nrd(I^{-1})= \nrd((1/\nrd(I)) \overline{I})= \frac{1}{(\nrd(I))^2}\nrd(\overline{I})=\frac{1}{\nrd(I)}.
    \end{equation}
    Hence, 
    \begin{equation}
        \nrd(I^{-1}I')=\frac{\nrd(I')}{\nrd(I)}.
    \end{equation}

(c) Suppose $I' \subseteq I$. 
\begin{equation}
    J= I^{-1}I' \subseteq I^{-1}I=\mathfrak{O}.
\end{equation}

Conversely, suppose $J$ is integral. Then $J \subseteq \mathfrak{O}$ and 
\begin{equation}
    I'=\mathfrak{O}_0I'=(II^{-1})I'=IJ \subseteq I\mathfrak{O}=I.
\end{equation}

\begin{details}
    \begin{tbox}
        Use definitions:
        $\calO_R(I)=\{\alpha \in B_0 : I\alpha \subseteq I\}, \calO_L(I)=\{\alpha \in B_0 : \alpha I \subseteq I\}, I^{-1}I=\calO_R(I), II^{-1}=\calO_L(I)$.
    \end{tbox}
\end{details}
    
(d) Suppose $I'\subseteq I$ and $I'$ is primitive. Assume for contradiction that $J$ is a non-primitive integral ideal. Then there is an integral connecting $\mathfrak{O}, \mathfrak{O}'$-ideal $J'$ such that $J=nJ'$ for an integer $n>1$. Since $\calO_R(I)=\calO_L(J')=\mathfrak{O}$ and
\begin{equation}
    IJ' \subseteq I\mathfrak{O}=I \subseteq \mathfrak{O}_0,
\end{equation}
$IJ'$ is an integral connecting $\mathfrak{O}_0, \mathfrak{O}'$-ideal. We have
\begin{equation}
    I'=IJ=n(IJ'),
\end{equation}
which is a contradiction to our assumption that $I'$ is primitive. Therefore, $J$ is primitive.
\end{proof}

\begin{extra}
    \begin{tbox}
        old wrong proof:

Let $J'$ be the primitive $\mathfrak{O},\mathfrak{O}'$-ideal. We show that $J=J'$ locally.

Let $\ell \neq p$ be a prime and write $\mathfrak{O}_{0,\ell}:=\mathfrak{O}_0\otimes \Z_\ell$.

 Since $I$ and $I'$ are locally principal,
    \begin{align} 
        I_\ell & = \mathfrak{O}_{0,\ell}\alpha_\ell \\
    I_\ell'&=\mathfrak{O}_{0,\ell}\beta_\ell
    \end{align}
for some $\alpha_{\ell}, \beta_{\ell} \in B_{\ell}^\times$. It follows that
   \begin{align} \label{local-conjugate}
       \mathfrak{O}_\ell& = \alpha_{\ell}^{-1}\mathfrak{O}_{0,\ell} \alpha_{\ell} \\\mathfrak{O}'_{\ell}&=\beta_{\ell}^{-1} \mathfrak{O}_{0,\ell} \beta_{\ell}.
   \end{align}
We have
\begin{equation}
    J_\ell=I_\ell^{-1}I_{\ell}' =(\alpha_\ell^{-1}\mathfrak{O}_{0,\ell} )( \mathfrak{O}_{0,\ell}\beta_\ell)=\alpha_\ell^{-1}\mathfrak{O}_{0,\ell}\beta_\ell=\mathfrak{O}_\ell\alpha_{\ell}^{-1}\beta_\ell,
\end{equation}
where the second last equality follows from \eqref{local-conjugate}. In particular,
\begin{equation}
    J_\ell = \alpha_{\ell}^{-1}\mathfrak{O}_{0,\ell}\beta_\ell=\alpha_\ell^{-1} I_\ell' \subseteq \alpha_\ell^{-1}  \mathfrak{O}_{0, \ell}=\mathfrak{O}_\ell
\end{equation}

Scaling $\alpha_\ell, \beta_\ell$ by integers, we may assume that $\alpha_\ell, \beta_\ell \in  \mathfrak{O}_{0,\ell} \setminus \ell \mathfrak{O}_{0,\ell}$. By Lemma \ref{minimal reduced norm local}, $I_\ell = \mathfrak{O}_{0,\ell}\alpha_\ell$ is the primitive connecting $\mathfrak{O}_{0,\ell}, \mathfrak{O}_\ell$-ideal and $I_\ell'=\mathfrak{O}_{0,\ell}\beta_\ell$ is the primitive connecting $\mathfrak{O}_{0,\ell}, \mathfrak{O}_\ell'$-ideal. 

Also, $J_\ell$ is a connecting $\mathfrak{O}_\ell, \mathfrak{O}_\ell'$-ideal by part (a). Suppose $J_\ell$ is not primitive. Since $J'_\ell$ is the primitive connecting $\mathfrak{O}_{\ell}, \mathfrak{O}_\ell'$-ideal, $J_\ell=nJ_\ell'$ for some integer $n >1$. It follows that

Since there is a unique local maximal order in $B_p$, we also have 
\begin{equation}
    J_p = J'_p.
\end{equation}
Therefore, we conclude that $J= J'$.
    \end{tbox}
\end{extra}

We carefully describe how the cyclic isogeny $\varphi : E_0 \rightarrow E$ changes the associated $I_\varphi$ and $\mathfrak{O}_\varphi$ since in all subsequent proofs we require a marking $\varphi$ to obtain a left $\mathfrak{O}_0$-ideal or maximal order in $B_0$, rather than just an isomorphic copy.

\begin{lemma} \label{composition by endomorphism}
 Let $\varphi \in \Hom(E_0, E)$, $\delta \in \End(E_0)$, and $\epsilon \in \End(E)$ be separable isogenies. We have the following:
 \begin{enumerate}
   \item[(a)] $I_{\epsilon \varphi} = I_{\varphi}J$,  where 
   \begin{equation*}
    J=(I_{\epsilon\varphi}: I_{\varphi})_R=I^{-1}_{\varphi}I_{\epsilon\varphi}= \kappa_{\varphi}(\tilde{J})
   \end{equation*}
   and $\tilde{J}$ is a left $\End(E)$-ideal with $E[\tilde{J}]=\ker \epsilon$.
   \item[(b)] $\mathfrak{O}_{\epsilon \varphi} = \kappa_\varphi(\epsilon^{-1} \End(E) \epsilon)$,
   \item[(c)] $I_{\varphi \delta} = I_\varphi \delta$,
   \item[(d)] $\mathfrak{O}_{\varphi \delta} = \delta^{-1} \mathfrak{O}_{\varphi} \delta$.  
 \end{enumerate}
\end{lemma}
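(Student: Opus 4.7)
The plan is to dispatch (c) and (d) by direct manipulation of the defining formulas $I_\varphi = \Hom(E, E_0)\varphi$ and $\mathfrak{O}_\varphi = \calO_R(I_\varphi)$, then derive (a) by combining Proposition~\ref{EHL+18 Proposition 12} with Lemma~\ref{colon ideals}, and finally deduce (b) from (a) using that $\kappa_\varphi$ is a $\Q$-algebra isomorphism.

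For (c), I simply observe that right-multiplication by $\delta \in \End(E_0)$ commutes with the definition:
\begin{equation*}
I_{\varphi\delta} = \Hom(E, E_0)(\varphi\delta) = \bigl(\Hom(E, E_0)\varphi\bigr)\delta = I_\varphi \delta.
\end{equation*}
For (d), I would use the general identity $\calO_R(M\gamma) = \gamma^{-1}\calO_R(M)\gamma$ for any lattice $M \subseteq B_0$ and $\gamma \in B_0^\times$: $\alpha \in \calO_R(M\gamma)$ iff $M\gamma\alpha\gamma^{-1} \subseteq M$ iff $\gamma\alpha\gamma^{-1} \in \calO_R(M)$. Since $\delta$ is a non-zero isogeny, $\delta \in B_0^\times$, so (c) gives $\mathfrak{O}_{\varphi\delta} = \calO_R(I_\varphi \delta) = \delta^{-1}\mathfrak{O}_\varphi\delta$.

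For (a), the key input is the inclusion $\ker\varphi \subseteq \ker(\epsilon\varphi)$, which yields $I_{\epsilon\varphi} = \Hom(E,E_0)\epsilon\varphi \subseteq \Hom(E,E_0)\varphi = I_\varphi$. By Lemma~\ref{colon ideals}(c), $J := I_\varphi^{-1}I_{\epsilon\varphi} = (I_{\epsilon\varphi} : I_\varphi)_R$ is then integral with $I_\varphi J = I_{\epsilon\varphi}$. Applying Proposition~\ref{EHL+18 Proposition 12} to $I = I_\varphi$ and $I' = I_{\epsilon\varphi}$, the identifications $E_I \cong E$ via $\varphi$ and $E_{I'} \cong E$ via $\epsilon\varphi$ make the factoring isogeny $\psi : E_I \to E_{I'}$ equal to $\epsilon$ up to equivalence, and the proposition supplies the left $\End(E)$-ideal $\tilde J$ with $E[\tilde J] = \ker\epsilon$ and $J = \kappa_\varphi(\tilde J)$.

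For (b), by (a) together with the compatibility $\calO_R(I_\varphi) = \calO_L(J) = \mathfrak{O}_\varphi$ and \cite[Lemma 16.5.11]{Voi21},
\begin{equation*}
\mathfrak{O}_{\epsilon\varphi} = \calO_R(I_\varphi J) = \calO_R(J).
\end{equation*}
Because $\kappa_\varphi : \End^0(E) \to \End^0(E_0)$ is a $\Q$-algebra isomorphism (Lemma~\ref{induced isomorphism of quaternion algebras}), it transports right orders, giving $\calO_R(J) = \kappa_\varphi(\calO_R(\tilde J))$. Applying Proposition~\ref{Curves to Ideals} with $E$ as the reference curve identifies $\tilde J$ with $\End(E)\epsilon$ (the unique left $\End(E)$-ideal whose associated kernel is $\ker\epsilon$), and then a direct calculation inside $\End^0(E)$ yields $\calO_R(\End(E)\epsilon) = \{\alpha : \epsilon\alpha\epsilon^{-1} \in \End(E)\} = \epsilon^{-1}\End(E)\epsilon$, which combined with the previous displayed equation gives the formula in (b). The main subtlety is the canonical identification in (a): one must check that the factoring isogeny produced by Proposition~\ref{EHL+18 Proposition 12} really is $\epsilon$ (up to equivalence) rather than just an isogeny with the same kernel, but this is automatic once the identifications of $E_I$ and $E_{I'}$ with $E$ are fixed via $\varphi$ and $\epsilon\varphi$.
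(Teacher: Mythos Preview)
Your proof is correct and, for parts (a) and (c), essentially identical to the paper's argument. For (b) and (d) you take a slightly different route: you compute right orders directly (using $\calO_R(M\gamma)=\gamma^{-1}\calO_R(M)\gamma$ for (d), and for (b) you go through $\calO_R(J)=\kappa_\varphi(\calO_R(\tilde J))$ together with the explicit identification $\tilde J=\End(E)\epsilon$). The paper instead handles (b) and (d) uniformly in one line via the functoriality $\kappa_{\epsilon\varphi}=\kappa_\varphi\circ\kappa_\epsilon$ and $\kappa_{\varphi\delta}=\kappa_\delta\circ\kappa_\varphi$, invoking the last statement of Theorem~\ref{bjection between curves, ideals, maximal orders} that $\mathfrak{O}_\psi=\kappa_\psi(\End(\text{target}))$ for any separable $\psi$; this immediately gives $\mathfrak{O}_{\epsilon\varphi}=\kappa_\varphi(\kappa_\epsilon(\End(E)))=\kappa_\varphi(\epsilon^{-1}\End(E)\epsilon)$ and $\mathfrak{O}_{\varphi\delta}=\kappa_\delta(\mathfrak{O}_\varphi)=\delta^{-1}\mathfrak{O}_\varphi\delta$ without ever needing to identify $\tilde J$ or invoke \cite[Lemma 16.5.11]{Voi21}. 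Your approach is perfectly valid and more self-contained, but the paper's is shorter and avoids the (correct but slightly delicate) step of pinning down $\tilde J$ as $\End(E)\epsilon$ via the uniqueness in Proposition~\ref{Curves to Ideals}.
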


\begin{details}
\begin{tbox}
    \href{https://www.math.auckland.ac.nz/~sgal018/crypto-book/main.pdf}{[Gal18 Exercise 25.1.1.]} Let $\varphi : E_0 \rightarrow E$ be a separable isogeny.
    \begin{enumerate}
        \item $\lambda \circ \varphi \sim \varphi$ for $\lambda \in \Aut(E)$
        \item $\varphi \circ \lambda$ is not necessarily equivalent to $\varphi$ for $\lambda \in \Aut(E_0)$.
    \end{enumerate}
\end{tbox}
\end{details}
\begin{proof}

(a) Since $\ker \varphi \subseteq \ker \epsilon \varphi$, $I_{\epsilon\varphi} \subseteq I_{\varphi}$. Then by Proposition \ref{EHL+18 Proposition 12} and Lemma~\ref{colon ideals}(a), $J=I_{\varphi}^{-1}I_{\epsilon \varphi}$ is a connecting $\mathfrak{O}_\varphi, \mathfrak{O}_{\epsilon\varphi}$-ideal such that $I_{\epsilon \varphi}=I_{\varphi}J$ where $J=\kappa_{\varphi}(\tilde{J})$ for a left $\End(E)$-ideal $\tilde{J}$ with $E[\tilde{J}]=\ker\epsilon$. \\
(b) By Theorem \ref{bjection between curves, ideals, maximal orders}, we have that $\mathfrak{O}_{\epsilon \varphi} = \kappa_\varphi(\kappa_\epsilon(\End(E)))$.\\
(c) By the definition of $I_\varphi$ in Proposition \ref{Curves to Ideals}, we obtain (c).\\
(d) By Theorem \ref{bjection between curves, ideals, maximal orders}, we have that $\mathfrak{O}_{\varphi \delta} = \kappa_\delta(\kappa_\varphi (\End(E)))$.
\end{proof}

\begin{remark}
Note that $\mathfrak{O}_{\epsilon \varphi}, \mathfrak{O}_{\varphi}$ are not necessarily the same unless $\ker(\epsilon\varphi) = \ker \varphi$. If $\epsilon =\lambda \in \Aut(E)$, then $\mathfrak{O}_{\lambda \varphi}=\mathfrak{O}_{\varphi}$.
\end{remark}

\begin{proposition} \label{cyclic factor through} 
    Let $\varphi: E_0 \rightarrow E$ and $\varphi': E_0 \rightarrow E'$ be cyclic isogenies. If $\ker\varphi \subseteq \ker\varphi'$, then there is a cyclic isogeny $\psi:E\rightarrow E'$ such that $\varphi'=\psi\varphi$.
\end{proposition}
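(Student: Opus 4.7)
The plan is to first get existence of $\psi$ by the universal property for quotients of isogenies, then translate cyclicity into primitivity of the associated quaternion ideal. Since $\varphi$ is separable and $\ker\varphi\subseteq\ker\varphi'$, standard factorization of isogenies (e.g.\ \cite[III.4.11]{Sil09}) gives a unique isogeny $\psi:E\to E'$ with $\varphi'=\psi\varphi$. Multiplicativity of the inseparable degree across the factorization $\deg\varphi'=\deg\psi\cdot\deg\varphi$ forces $\psi$ to be separable as well, since $\varphi,\varphi'$ are.

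For the cyclicity of $\psi$, I would run the argument through the ideal dictionary of Sections~3.1--3.2. The containment $\ker\varphi\subseteq\ker\varphi'$ translates to $E_0[I_\varphi]\subseteq E_0[I_{\varphi'}]$, whence $I_{\varphi'}\subseteq I_\varphi$. Set $J:=I_\varphi^{-1}I_{\varphi'}=(I_{\varphi'}:I_\varphi)_R$. By Lemma~\ref{colon ideals}(a)--(c), $J$ is an integral connecting $\mathfrak{O}_\varphi,\mathfrak{O}_{\varphi'}$-ideal. Since $\varphi'$ is cyclic, $I_{\varphi'}$ is primitive by Proposition~\ref{cyclic isogeny and primitive ideal}, and then Lemma~\ref{colon ideals}(d) gives that $J$ is primitive as well.

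Next, I would invoke Proposition~\ref{EHL+18 Proposition 12} together with Lemma~\ref{composition by endomorphism}(a) to identify $J=\kappa_\varphi(\tilde J)$ with $\tilde J$ a left $\End(E)$-ideal satisfying $E[\tilde J]=\ker\psi$. Because $\kappa_\varphi:\End^0(E)\to\End^0(E_0)$ is a $\Q$-algebra isomorphism carrying $\End(E)$ onto $\mathfrak{O}_\varphi$ and fixing $\Z$ pointwise, it preserves integrality, containment, and divisibility by integers of ideals; hence primitivity of $J$ as a left $\mathfrak{O}_\varphi$-ideal pulls back to primitivity of $\tilde J$ as a left $\End(E)$-ideal. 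Applying the cyclic--primitive correspondence of Proposition~\ref{cyclic isogeny and primitive ideal} with base curve $E$ (the proof there uses no special feature of $E_0$, only that the base is supersingular with a maximal endomorphism ring), $\psi$ is cyclic.

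The only subtle point is the last step: Proposition~\ref{cyclic isogeny and primitive ideal} is formulated relative to the distinguished curve $E_0$, so one must observe that the argument is intrinsic, or equivalently reprove it on the spot. As a sanity check, one can also give a direct argument: if $\psi$ were not cyclic then $E[\ell]\subseteq\ker\psi$ for some prime $\ell\ne p$, so for $P\in E_0[\ell]$ we have $\varphi(P)\in E[\ell]\subseteq\ker\psi$, hence $P\in\ker(\psi\varphi)=\ker\varphi'$; this gives $E_0[\ell]\subseteq\ker\varphi'$, contradicting cyclicity of $\varphi'$. This direct check confirms the ideal-theoretic conclusion and isolates the one genuine content of the statement, which is the descent of the ``$E_0[\ell]$ in the kernel'' obstruction from $\varphi'$ to $\psi$.
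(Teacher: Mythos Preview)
Your proof is correct and follows essentially the same route as the paper: translate $\ker\varphi\subseteq\ker\varphi'$ into $I_{\varphi'}\subseteq I_\varphi$, use Lemma~\ref{colon ideals}(d) to get primitivity of $J=I_\varphi^{-1}I_{\varphi'}$, pull this back through $\kappa_\varphi$ to $\tilde J$, and conclude via Proposition~\ref{cyclic isogeny and primitive ideal}. Your additional direct argument at the end (pulling $E[\ell]\subseteq\ker\psi$ back to $E_0[\ell]\subseteq\ker\varphi'$) is a nice elementary alternative that bypasses the ideal machinery entirely; the paper does not include this.
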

\begin{proof}
   Since $\varphi, \varphi'$ are cylic, $I_\varphi, I_{\varphi'}$ are primitive by Proposition \ref{cyclic isogeny and primitive ideal}. Also, $\ker \varphi \subseteq \ker \varphi'$ implies $I_{\varphi'} \subseteq I_{\varphi}$. It follows that $J=I_{\varphi}^{-1}I_{\varphi'}$ is primitive by Lemma \ref{colon ideals}(d).  Also, there is a separable isogeny $\psi: E \rightarrow E'$ such that $\varphi'=\psi \varphi$, and a left $\End(E)$-ideal $\tilde{J}$ such that $E[\tilde{J}]=\ker \psi$ and $J=\kappa_{\varphi}(\tilde{J})$ by Proposition \ref{EHL+18 Proposition 12}. Since $\kappa_{\varphi}: \End^0(E)\rightarrow \End^0(E_0)$ is an isomorphism of $\Q$-algebras and $J$ is primitive, $\tilde{J}$ is primitive. Therefore, $\psi$ is cyclic by Proposition \ref{cyclic isogeny and primitive ideal}.
\end{proof}

\begin{extra}
    \begin{tbox}
    {\color{red} Proposition 2.20 may not be true? An isogeny $\phi: E \rightarrow E'$ of degree $\ell \not= p$ factors as:
\begin{equation}
  E \rightarrow_\phi E' \rightarrow_{\hat \phi} E
\end{equation}
where $\hat \phi$ is the dual of $\phi$ and both are cyclic of degree $\ell$ but the composition has kernel $E[\ell]$ which is not cyclic.
}

{\color{blue} should be true for non-backtracking. I don't think we need this anyway}

\begin{proposition} \label{cyclic composite} {\color{red}
    Let $\varphi: E_0 \rightarrow E$ and $\psi: E \rightarrow E'$ be cyclic isogenies where $\psi \neq \hat{\varphi}$. Then $\psi \varphi$ is cyclic.}
\end{proposition}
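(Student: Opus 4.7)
The plan is to combine the factorization of isogenies coming from containment of kernels with the ideal-theoretic dictionary developed in this section, so that cyclicity of $\psi$ follows from primitivity of an associated colon ideal.

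First, from $\ker\varphi \subseteq \ker\varphi'$ I would produce a separable isogeny $\psi: E \rightarrow E'$ with $\varphi' = \psi \varphi$ by the standard quotient/factoring property of isogenies (Silverman III.4.11), and observe that this same containment yields $I_{\varphi'} \subseteq I_\varphi$ on the ideal side, because $I_{\varphi'} = \Hom(E',E_0)\varphi' = \Hom(E',E_0)\psi\varphi \subseteq \Hom(E,E_0)\varphi = I_\varphi$.

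Next, I would form the colon ideal $J = I_\varphi^{-1} I_{\varphi'}$. By Lemma \ref{colon ideals}(a) and (c), $J$ is an integral connecting $\mathfrak{O}_\varphi, \mathfrak{O}_{\varphi'}$-ideal. Since $\varphi'$ is cyclic, Proposition \ref{cyclic isogeny and primitive ideal} gives that $I_{\varphi'}$ is primitive, and then Lemma \ref{colon ideals}(d) forces $J$ to be primitive. By Proposition \ref{EHL+18 Proposition 12} applied to the pair $I_{\varphi'} \subseteq I_\varphi$, the ideal $J$ equals $\kappa_\varphi(\tilde J)$ for the left $\End(E)$-ideal $\tilde J = \{\alpha \in \End(E) : \alpha(P) = 0 \ \forall P \in \ker\psi\}$ satisfying $E[\tilde J] = \ker\psi$. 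Since $\kappa_\varphi : \End^0(E) \rightarrow \End^0(E_0)$ is a $\Q$-algebra isomorphism carrying $\End(E)$ onto $\mathfrak{O}_\varphi$ and preserving integer scalars, the primitivity of $J$ transfers to primitivity of $\tilde J$. Applying Proposition \ref{cyclic isogeny and primitive ideal} again in the reverse direction, primitivity of $\tilde J$ forces $\ker\psi = E[\tilde J]$ to be cyclic, so $\psi$ is cyclic.

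I do not anticipate a serious obstacle. The only subtlety is confirming that primitivity is preserved by $\kappa_\varphi$, which is immediate because $\kappa_\varphi$ is a $\Q$-algebra isomorphism sending $\End(E)$ onto $\mathfrak{O}_\varphi$ and commuting with multiplication by rational integers, so any non-trivial factorization $\tilde J = n \tilde J'$ on one side would give $J = n \kappa_\varphi(\tilde J')$ on the other. A purely elementary alternative would be to argue directly that $\ker\psi = \varphi(\ker\varphi') \cong \ker\varphi' / \ker\varphi$, and a quotient of the cyclic group $\ker\varphi'$ is cyclic; but the ideal-theoretic route is preferable here because it uses the refined correspondence between cyclic isogenies and primitive ideals that is the main technical contribution of this section.
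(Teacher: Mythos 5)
Your argument does not address the proposition you were asked to prove. The claim here starts with two cyclic isogenies $\varphi: E_0 \to E$ and $\psi: E \to E'$ and asserts that the \emph{composite} $\psi\varphi$ is cyclic; but your write-up instead assumes, as a hypothesis, two cyclic isogenies $\varphi$ and $\varphi'$ with $\ker\varphi \subseteq \ker\varphi'$ and deduces that the factoring isogeny $\psi$ is cyclic. That is exactly Proposition~\ref{cyclic factor through}, a logically different statement running in the opposite direction: there you are \emph{handed} cyclicity of the composite $\varphi' = \psi\varphi$, so $I_{\varphi'}$ is primitive by hypothesis, and Lemma~\ref{colon ideals}(d) transfers primitivity to the colon ideal $J = I_\varphi^{-1}I_{\varphi'}$. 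In the present statement, primitivity of $I_{\psi\varphi}$ is precisely the thing to be shown, not something you are given, so that lemma yields nothing: you would need to know $J$ is the primitive connecting $\mathfrak{O}_\varphi,\mathfrak{O}_{\psi\varphi}$-ideal \emph{and} that $I_{\psi\varphi}\subseteq I_\varphi$ is a primitive multiple of $I_\varphi J$, which is circular.

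Beyond the mix-up, the statement as worded is actually false, which is why the authors relegated it to an excluded scratch block with a marginal note that it ``may not be true.'' If $\deg\varphi = \ell$ and $\psi$ is \emph{any} degree-$\ell$ isogeny with $\ker\psi = \ker\hat\varphi$, then $\ker(\psi\varphi) = \varphi^{-1}(\ker\psi) = \varphi^{-1}(\ker\hat\varphi) = E_0[\ell] \cong (\Z/\ell\Z)^2$, which is not cyclic; the hypothesis $\psi \neq \hat\varphi$ excludes a single map but not its whole equivalence class. The failure is worse across primes: if $\deg\varphi = \ell m$ with $\ell \nmid m$ and $\psi$ has degree $\ell$ with $\ker\psi$ equal to the order-$\ell$ subgroup of $\ker\hat\varphi$, the composite again contains $E_0[\ell]$ in its kernel although $\psi$ and $\hat\varphi$ do not even share a degree. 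A correct version requires a genuine non-backtracking hypothesis such as $\ker\psi \cap \ker\hat\varphi = \{0\}$; with that, a direct local argument at each prime dividing $\deg\psi$ is the straightforward route, and the ideal-theoretic machinery is not the natural tool. The right response to this exercise was to exhibit a counterexample and propose a repaired hypothesis, not to produce a proof.
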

\begin{proof}
    Write $\varphi'=\psi\varphi$. Since $\ker \varphi \subseteq \ker \varphi'$, $I_{\varphi'} \subseteq I_{\varphi}$. By Proposition \ref{EHL+18 Proposition 12}, $J=I_{\varphi}^{-1}I_{\varphi'}$ is a left $\mathfrak{O}_\varphi$-ideal such that $J=\kappa_{\varphi}(\tilde{J})$ for a left $\End(E)$-ideal $\tilde{J}$ with $E[\tilde{J}]=\ker\psi$. Since $\psi$ is cyclic, $J$ is primitive by Proposition \ref{cyclic isogeny and primitive ideal}. Assume for contradiction that $I_{\varphi'}$ is not primitive. Let $n>1$ be the minimal integer such that $I_{\varphi'} \subseteq n \mathfrak{O}_0$. Then $I'=n^{-1}I_{\varphi'}$ is primitive. Let $\mathfrak{O}'=\calO_R(I_{\varphi'})=\calO_R(I')$. By Lemma \ref{colon ideals}(d), $J'=I_\varphi^{-1}I'$ is the primitive connecting $\mathfrak{O}_{\varphi}, \mathfrak{O}'$-ideal. Since $J$ is also the primitive $\mathfrak{O}_{\varphi}, \mathfrak{O}'$-ideal, 
    \begin{equation}
        J=J',
    \end{equation}
    which is a contradiction as it implies $I_{\varphi'}=I'$. Therefore, $I_{\varphi'}$ is primitive and $\varphi'$ is cyclic.
\end{proof}
\end{tbox}
\end{extra}

Given two pairs $(E,\varphi), (E',\varphi') \in \SS(E_0)$, we will call $\psi: (E, \varphi) \rightarrow (E', \varphi')$ a compatible isogeny if $\psi: E \rightarrow E'$ is an isogeny such that $m\varphi' \sim n\psi\varphi$ for some non-zero integers $m,n$. We say a compatible isogeny is cyclic if the underlying isogeny is cyclic. Two compatible isogenies $\psi: (E,\varphi) \rightarrow (E', \varphi'), \psi' : (E, \varphi) \rightarrow (E'',\varphi'')$ are equivalent if the underlying isogenies are equivalent.

\begin{details}
\begin{tbox}
    Initially defined a compatible isogeny to be an isogeny such that $\varphi' \sim \psi\varphi$. But this definition is not well-defined for dual as $\psi^{-1}$ is not necessarily an isogeny. Note that $\varphi' \sim \psi\varphi$ implies $\hat{\psi}\varphi' \sim (\deg \psi)\varphi$.
\end{tbox}
\end{details}

\begin{definition}
    Let $\mathfrak{O}, \mathfrak{O}'$ be maximal orders in $B_0$. Given a connecting $\mathfrak{O},\mathfrak{O}'$-ideal $I$, the primitive part of $I$, denoted $\operatorname{pr}(I)$, means the primitive connecting $\mathfrak{O},\mathfrak{O}'$-ideal.
\end{definition}

\begin{proposition}\label{bijection of edges} There is a bijection
 \begin{equation} 
\begin{array}{c@{\;}c@{\;}c}
\left\{ \substack{\text{equivalence classes of cyclic isogenies between} \\ \text{pairs in } \SScy(E_0)} \right\}
& \longleftrightarrow & 
\left\{ \substack{\text{primitive connecting ideals between} \\ \text{maximal orders in } \End^0(E_0) }\right\} \\[1ex]
\psi:(E,\varphi) \rightarrow (E',\varphi') /\sim
& \mapsto & 
\operatorname{pr}(I_{\varphi}^{-1}I_{\varphi}),
\end{array}
\end{equation}
where $\varphi : E_0 \rightarrow E$ and $\varphi': E_0 \rightarrow E$ are cyclic isogenies.
\end{proposition}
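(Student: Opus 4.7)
The plan is to translate both sides of the bijection through Theorem~\ref{bjection between curves, ideals, maximal orders}, so that each pair $(E,\varphi)\in\SScy(E_0)$ is encoded by its primitive left $\mathfrak{O}_0$-ideal $I_\varphi$ and right order $\mathfrak{O}_\varphi$, and a cyclic compatible isogeny becomes a primitive connecting ideal between the corresponding maximal orders. The central computation will use Lemma~\ref{composition by endomorphism}(a) to convert composition of isogenies into products of ideals, together with Lemma~\ref{colon ideals}(a) to identify $I_\varphi^{-1}I_{\varphi'}$ as a connecting $\mathfrak{O}_\varphi,\mathfrak{O}_{\varphi'}$-ideal.

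First I would verify well-definedness of the forward map. Given a cyclic compatible isogeny $\psi$ with $m\varphi'\sim n\psi\varphi$, Lemma~\ref{composition by endomorphism}(a) gives $I_{\psi\varphi}=I_\varphi K$, where $K=\kappa_\varphi(\tilde K)$ and $\tilde K$ is the left $\End(E)$-ideal with $E[\tilde K]=\ker\psi$. Proposition~\ref{cyclic isogeny and primitive ideal} applied to $\psi$ makes $\tilde K$ primitive, and since $\kappa_\varphi$ is a $\Q$-algebra isomorphism, $K$ is likewise a primitive connecting $\mathfrak{O}_\varphi,\mathfrak{O}_{\varphi'}$-ideal. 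Using $I_{[k]\rho}=kI_\rho$, also from Lemma~\ref{composition by endomorphism}(a), the compatibility relation translates into $mI_{\varphi'}=nI_\varphi K$, whence $I_\varphi^{-1}I_{\varphi'}=(n/m)K$; by the uniqueness in Proposition~\ref{primitive and minimal reduced norm ideal} this gives $\operatorname{pr}(I_\varphi^{-1}I_{\varphi'})=K$. Since the output depends only on $\ker\psi$ and on the classes of $(E,\varphi),(E',\varphi')$ in $\SScy(E_0)$, the map descends to equivalence classes. Injectivity is then immediate: two cyclic compatible $\psi,\psi':(E,\varphi)\to(E',\varphi')$ hitting the same $K$ satisfy $\ker\psi=\ker\psi'=E[\kappa_\varphi^{-1}(K)]$, so $\psi\sim\psi'$.

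For surjectivity I would start from a primitive connecting $\mathfrak{O},\mathfrak{O}'$-ideal $K$, apply Theorem~\ref{bjection between curves, ideals, maximal orders} to produce cyclic $\varphi:E_0\to E$ and $\varphi':E_0\to E'$ with $\mathfrak{O}_\varphi=\mathfrak{O}$ and $\mathfrak{O}_{\varphi'}=\mathfrak{O}'$, set $\tilde K=\kappa_\varphi^{-1}(K)$ (primitive by the same ring-isomorphism argument), and invoke Proposition~\ref{cyclic isogeny and primitive ideal} to obtain a cyclic $\psi$ with $\ker\psi=E[\tilde K]$. The main obstacle will be to verify compatibility of $\psi$ with the chosen markings: $I_{\psi\varphi}=I_\varphi K$ is a connecting $\mathfrak{O}_0,\mathfrak{O}'$-ideal, and by Theorem~\ref{bjection between curves, ideals, maximal orders} both $I_{\varphi'}$ and $I_{\psi\varphi}$ share the same primitive part, so the target of $\psi\varphi$ lies in the same class of $\SScy(E_0)$ as $(E',\varphi')$. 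After transporting $\psi$ across the isomorphism identifying these targets, the equality of primitive parts forces $mI_{\varphi'}=nI_{\psi\varphi}$ for positive integers $m,n$ (obtained by clearing the denominator in the rational scalar relating the two ideals), which gives $m\varphi'\sim n\psi\varphi$ as required. The delicate bookkeeping around this integer rescaling is precisely the reason $\operatorname{pr}$ must appear in the definition of the map: the compatibility condition only determines $I_\varphi^{-1}I_{\varphi'}$ up to a rational scalar, whereas the unique primitive connecting ideal is what canonically encodes the equivalence class of $\psi$.
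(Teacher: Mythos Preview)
Your approach is essentially the paper's: both identify the source and target pairs with maximal orders via Theorem~\ref{bjection between curves, ideals, maximal orders}, and both realize the primitive connecting ideal as $K=\kappa_\varphi(\tilde K)$ where $E[\tilde K]=\ker\psi$. Two small technical points deserve attention. First, Lemma~\ref{composition by endomorphism}(a) is stated only for $\epsilon\in\End(E)$, not for a general isogeny $\psi:E\to E'$; the identity $I_{\psi\varphi}=I_\varphi K$ you want follows instead from Proposition~\ref{EHL+18 Proposition 12} applied to $I_{\psi\varphi}\subseteq I_\varphi$ (this is exactly how the paper proves Lemma~\ref{composition by endomorphism}(a), so the content is the same). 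Second, your injectivity argument presupposes that both isogenies share the same source pair $(E,\varphi)$ and target pair $(E',\varphi')$; the paper makes explicit the prior step that the left and right orders of $K$ recover $\mathfrak{O}_\varphi$ and $\mathfrak{O}_{\varphi'}$, hence the classes of $(E,\varphi)$ and $(E',\varphi')$ in $\SScy(E_0)$, before comparing kernels. The paper's surjectivity argument differs cosmetically in that it invokes the bijection $\Hom(E',E)\to I_\varphi^{-1}I_{\varphi'}$ from \cite[Lemma 42.2.22]{Voi21} directly rather than going through $I_{\psi\varphi}=I_\varphi K$, but the two routes are interchangeable.
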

\begin{proof}
Consider two equivalence classes $(E,\varphi)/\sim$ and $(E',\varphi')/\sim$ of pairs in $ \SScy(E_0)$. The first bijection in Theorem \ref{bjection between curves, ideals, maximal orders} sends $\varphi: E_0 \rightarrow E /\sim$ and $\varphi': E_0 \rightarrow E'/ \sim$ to $I_\varphi$ and $I_{\varphi'}$ respectively, and the second bijection sends $(E,\varphi)/\sim$ and $(E',\varphi')/\sim$ to $\mathfrak{O}_{\varphi}$ and $\mathfrak{O}_{\varphi'}$ respectively.
\[
\begin{tikzcd}
(E, \varphi)/\sim \arrow[rr, "\psi/\sim"] && (E', \varphi')/\sim  \\
& E_0  \arrow[ul, "\varphi/\sim"] \arrow[ur, "\varphi'/\sim"']&
\end{tikzcd}
\qquad
\begin{tikzcd}
\mathfrak{O}_{\varphi} \arrow[rr, "J=\operatorname{pr}(I_{\varphi}^{-1} I_{\varphi'})"] && \mathfrak{O}_{\varphi'}  \\
& \mathfrak{O}_0 \arrow[ul, "I_{\varphi}"]  \arrow[ur, "I_{\varphi'}"']  &
\end{tikzcd}
\]

Given a cyclic compatible isogeny $\psi : (E, \varphi) \rightarrow (E',\varphi')$, $I_{\varphi}^{-1}I_{\varphi'}$ is a  connecting $\mathfrak{O}_{\varphi}, \mathfrak{O}_{\varphi'}$-ideal by Lemma \ref{colon ideals}(a). We show that the map
\begin{equation}
    \psi : (E, \varphi) \rightarrow (E', \varphi')/\sim \ \mapsto \operatorname{pr}(I_{\varphi}^{-1}I_{\varphi})
\end{equation}
gives a bijection between the two sets. 

For the subjectivity of the map, let $J$ be the primitive connecting $\mathfrak{O}_{\varphi}, \mathfrak{O}_{\varphi'}$-ideal. Then $I_{\varphi}^{-1}I_{\varphi}=r J$ for some $r=\frac{m}{n} \in \Q^\times$ where $m,n \in \Z$. Let $\tilde{J}$ be the primitive left $\End(E)$-ideal such that $\kappa_{\varphi}(\tilde{J})=J$. By Theorem \ref{bjection between curves, ideals, maximal orders}, there is a bijection between $\SScy(E)$ and the set of non-zero primitive left $\End(E)$-ideals. Hence, there is a cyclic isogeny $\psi: E \rightarrow E'$ such that $\tilde{J}=\Hom(E', E)\psi$. By \cite[Lemma 42.2.22]{Voi21}, there is a bijection
\begin{equation}
    \begin{split}
        \Hom(E', E) & \rightarrow I_{\varphi}^{-1}I_{\varphi'} \\
        \phi & \mapsto \varphi^{-1}\phi \varphi'
    \end{split}
\end{equation}
as $\ker\varphi=E_0[I_{\varphi}]$ and $\ker\varphi'=E_0[I_{\varphi'}]$. Hence,
\begin{equation}
    J=rI_{\varphi}^{-1}I_{\varphi'}=\varphi^{-1}\Hom(E',E)r\varphi'
\end{equation}
as sets. Also, 
\begin{equation}
    \kappa_{\varphi}(\tilde{J})= \varphi^{-1}\Hom(E', E)\psi \varphi.
\end{equation}
Hence, $\kappa_{\varphi}(\tilde{J})=J$ implies $m\varphi'\sim n\psi\varphi$. Hence, $\psi: (E, \varphi) \rightarrow (E', \varphi')$ is a cyclic compatible isogeny.

For the injectivity of the map, let $\psi: (E, \varphi)\rightarrow (E', \varphi'), \psi':(E_1, \varphi_1) \rightarrow (E'_1, \varphi_1')$ be cyclic compatible isogenies between pairs in $\SS^\ast(E_0)$ such that $J=\operatorname{pr}(I_{\varphi}^{-1}I_{\varphi'})=\operatorname{pr}(I_{\varphi_1}^{-1}I_{\varphi_1'})$. Then we have $\mathfrak{O}_{\varphi}=\mathfrak{O}_{\varphi_1}=\calO_L(J)$ and $\mathfrak{O}_{\varphi'}=\mathfrak{O}_{\varphi_1'}=\calO_R(J)$. This implies
\begin{equation} \label{bijection of edges eq1}
    \varphi \sim \varphi_1 \quad \text{and} \quad \varphi' \sim \varphi_1'
\end{equation}
by Theorem \ref{bjection between curves, ideals, maximal orders}. Let $J=r I_{\varphi}^{-1}I_{\varphi'}=r_1I_{\varphi_1}^{-1}I_{\varphi'_1}$ where $r, r_1 \in \Q^\times$. By Lemma \ref{colon ideals}(b),
\begin{equation}
    \nrd(J)=r^2\frac{\nrd(I_{\varphi'})}{\nrd(I_{\varphi})}=r_1^2 \frac{\nrd(I_{\varphi_1'})}{\nrd(I_{\varphi_1})}.
\end{equation}
By \eqref{bijection of edges eq1}, $I_{\varphi}=I_{\varphi_1}$ and $I_{\varphi'}=I_{\varphi_1'}$. Hence, $r=r_1$. Let $r=\frac{m}{n}$ for $m, n \in \Z$. Then $m\varphi' \sim  n\psi \varphi$ and $m\varphi_1' \sim n \psi' \varphi_1$ by the same argument in the proof of surjectivity. It follows that
\begin{equation}
    n\psi \sim m\varphi'\varphi^{-1}\sim m\varphi_1'\varphi_1^{-1} \sim n\psi'.
\end{equation}
Therefore, $\psi \sim \psi'$.
\end{proof}

\section{Orientations and maximal orders} \label{Orientations and maximal orders}
In this section, we introduce useful notions of orientations in the quaternionic
context. In particular, we extend the notion of orientations to what we call double-orientations, and establish results analogous to those for single-orientations.

\begin{extra}
    the usual $K$-orientations correspond to embeddings of imaginary quadratic fields into the endomorphism algebras of supersingular elliptic curves. Our goal is to generalize this by considering embeddings of two such fields simultaneously. 
\end{extra}

\subsection{Single-orientations with maximal orders} Let $K$ be an imaginary quadratic field. Consider a $K$-orientation $\iota_0$ on $E_0$. Each pair $(E,\varphi)$ in $\SS(E_0)$ admits an induced $K$-orientation $\iota=\varphi_\ast(\iota_0)$, and $\varphi:(E_0, \iota_0) \rightarrow (E, \iota)$ is a $K$-oriented isogeny. We say a $K$-oriented isogeny is cyclic if the underlying isogeny is cyclic. By Remark \ref{unique cyclic isogeny}, there is a cyclic $K$-oriented isogeny $\varphi':(E_0, \iota_0) \rightarrow (E,\iota)$ and $\varphi \sim \varphi'[n]$ for some integer $n \geq 1$.

Since the isomorphism of $\Q$-algebras $\kappa_\varphi: \End^0(E) \rightarrow \End^0(E_0)$ restricts to an isomorphism of $\Q$-subalgebras $\iota(K) \rightarrow \iota_0(K)$, by definition, $\iota_0$ factors through $\kappa_\varphi$ as follows:
\[
\begin{tikzcd}
K \arrow[r, "\iota_0"] \arrow[dr, swap, "\iota"] & \iota_0(K) &   \\
& \iota(K) \arrow[u, "\kappa_\varphi"'] & 
\end{tikzcd}
\]
\begin{equation} \label{embedding single orientation}
    \kappa_{\varphi}(\iota(\alpha)) = \iota_0(\alpha) \ \forall \alpha \in K.
\end{equation}

\begin{details}
        \begin{tbox}
    Since $\iota(\alpha)=\frac{1}{\deg\varphi}\varphi\iota_0(\alpha)\hat{\varphi}$ for $\alpha \in K$,
        \begin{equation}
            \kappa_\varphi(\iota(\alpha))= \frac{1}{\deg \varphi} \hat\varphi \iota(\alpha) \varphi=\frac{1}{\deg \varphi} \hat\varphi \left(\frac{1}{\deg\varphi}\varphi\iota_0(\alpha)\hat{\varphi} \right) \varphi=\iota_0(\alpha).
        \end{equation}
    \end{tbox}
\end{details}

It follows that given an order $\calO$ in $K$,
    \begin{equation} \label{orientation to optimal embedding}
    \begin{split}
        \iota \ \text{is a primitive} \ \mathcal{O}\text{-orientation} \ \text{on} \ E & \Longleftrightarrow \iota(\calO)=\End(E) \cap \iota(K) \\
        & \Longleftrightarrow \kappa_\varphi(\iota(\calO))=\kappa_\varphi(\End(E)\cap \iota(K))\\
        & \Longleftrightarrow \iota_0(\calO)=\mathfrak{O}_\varphi\cap \iota_0(K).
    \end{split}
\end{equation}

Let $\varphi: (E_0, \iota_0) \rightarrow (E,\iota)$, $\varphi': (E_0, \iota_0) \rightarrow (E', \iota')$, and $\psi : (E, \iota) \rightarrow (E', \iota')$ be $K$-oriented isogenies such that $\varphi'=\psi\varphi$.
\[
\begin{tikzcd}
(E_0, \iota_0) \arrow[r, "\varphi" above] \arrow[rr, bend right=20, "\varphi'" below] 
    & (E, \iota) \arrow[r, "\psi" above] 
    & (E', \iota')
\end{tikzcd}
\]
Then we have the following commutative diagram of isomorphisms of $\Q$-algebras
    \[
\begin{tikzcd}
K \arrow[r, "\iota_0"] \arrow[dr, swap, "\iota"] \arrow[ddr, "\iota'"'] & \iota_0(K) &  \\
& \iota(K) \arrow[u, "\kappa_\varphi"'] &    \\
& \iota'(K)  \arrow[u, "\kappa_\psi"'] \arrow[uu, bend right=40, "\kappa_{\varphi'}"'] & 
\end{tikzcd}
\]
with
\begin{equation} \label{induced homomorphism of quadratic fields}
    \kappa_{\varphi'}(\End(E') \cap \iota'(K))=\kappa_\varphi(\kappa_\psi(\End(E') \cap \iota'(K))).
\end{equation}

For an embedding $\iota_0:K\rightarrow B_0$, we define $\iota_0$-isomorphism classes of maximal orders in $B_0$ as follows.
\begin{definition}
We say two maximal orders $\mathfrak{O},\mathfrak{O}'$ in $B_0$ are $\iota_0$-isomorphic, denoted by $\mathfrak{O}\cong_{\iota_0} \mathfrak{O}'$, if $\mathfrak{O}'=\alpha^{-1}\mathfrak{O}\alpha$ for some $\alpha \in \iota_0(K)^\times$. We will call the $\iota_0$-isomorphism class of $\mathfrak{O}$ the $\iota_0$-conjugacy class of $\mathfrak{O}$.  
\end{definition}

\begin{proposition} \label{single oriented isogeny}
    Let $\varphi: (E_0, \iota_0) \rightarrow (E, \iota)$ and $\varphi': (E_0, \iota_0) \rightarrow (E', \iota')$ be cyclic $K$-oriented isogenies. The following hold:
    \begin{enumerate}
        \item[(a)] If $(E, \iota) \cong (E', \iota')$, then $\mathfrak{O}_{\varphi}\cap \iota_0(K)=\mathfrak{O}_{\varphi'}\cap \iota_0(K)$.
        \item[(b)] $\mathfrak{O}_{\varphi}\cap \iota_0(K)=\mathfrak{O}_{\varphi'}\cap \iota_0(K)$ if and only if $\mathfrak{O}_{\varphi'}= \iota_0(\a)^{-1}\mathfrak{O}_{\varphi}\iota_0(\a)$ for an order $\calO \subseteq K$ and an invertible fractional $\mathcal{O}$-ideal $\a$. In particular, $\mathfrak{O}_{\varphi} \cong_{\iota_0} \mathfrak{O}_{\varphi'}$ implies $\mathfrak{O}_{\varphi}\cap \iota_0(K)=\mathfrak{O}_{\varphi'}\cap \iota_0(K)$.
        \item[(c)] $(E, \iota) \cong (E', \iota')$ if and only if $\frakO_\varphi \cong_{\iota_0}\frakO_{\varphi'}$. In particular, $(E, \iota) \cong (E', \iota')$ doesn't necessarily imply $\frakO_{\varphi} = \frakO_{\varphi'}$ unless $\varphi \sim  \varphi'$.
        \item[(d)] If $\ker \varphi \subseteq \ker \varphi'$, then there exists a cyclic $K$-oriented isogeny $\psi: (E,\iota) \rightarrow (E', \iota')$ such that $\varphi'=\psi \varphi$.
    \end{enumerate}

\end{proposition}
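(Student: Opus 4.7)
The plan is to treat the four parts in order, making systematic use of the identities \eqref{embedding single orientation}, \eqref{orientation to optimal embedding}, and \eqref{induced homomorphism of quadratic fields}, together with the Deuring-style bijection of Theorem \ref{bjection between curves, ideals, maximal orders}. Part (a) is quick: the primitive order $\calO \subseteq K$ for which $\iota$ is a primitive $\calO$-orientation is a $K$-oriented isomorphism invariant of $(E, \iota)$, and \eqref{orientation to optimal embedding} gives $\mathfrak{O}_\varphi \cap \iota_0(K) = \iota_0(\calO)$, so an isomorphism $(E, \iota) \cong (E', \iota')$ forces the intersections to coincide.

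For (b), the ``in particular'' clause is immediate: for $\alpha \in \iota_0(K)^\times$, commutativity of $\iota_0(K)$ gives $\alpha^{-1} \iota_0(K) \alpha = \iota_0(K)$, so the intersection is preserved under $\iota_0$-conjugation. For the main forward direction, set $\iota_0(\calO_1) = \mathfrak{O}_\varphi \cap \iota_0(K) = \mathfrak{O}_{\varphi'} \cap \iota_0(K)$ and consider the primitive connecting $\mathfrak{O}_\varphi, \mathfrak{O}_{\varphi'}$-ideal $J$. I would work locally at each prime $\ell$: Lemma \ref{minimal reduced norm local} gives $J_\ell = \mathfrak{O}_{\varphi, \ell} \beta_\ell$, and the optimality of the embedding of $\iota_0(\calO_{1,\ell})$ into both $\mathfrak{O}_{\varphi, \ell}$ and $\mathfrak{O}_{\varphi', \ell} = \beta_\ell^{-1} \mathfrak{O}_{\varphi, \ell} \beta_\ell$ allows a local Skolem--Noether adjustment of $\beta_\ell$ by a unit of $\mathfrak{O}_{\varphi, \ell}$ so that $\beta_\ell = \iota_0(x_\ell)$ for some $x_\ell \in K_\ell^\times$. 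Gluing these local generators produces an invertible fractional $\calO_1$-ideal $\a$ with $J = \mathfrak{O}_\varphi \iota_0(\a)$, whence $\mathfrak{O}_{\varphi'} = \calO_R(J) = \iota_0(\a)^{-1} \mathfrak{O}_\varphi \iota_0(\a)$ in the connecting-ideal sense. The reverse direction of the main equivalence reduces to the ``in particular'' clause by localising $\iota_0(\a)$ at each prime to a principal generator.

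For (c) forward, $\lambda \varphi$ is a cyclic $K$-oriented isogeny $(E_0, \iota_0) \rightarrow (E', \iota')$, and the chain rule gives $\mathfrak{O}_{\lambda\varphi} = \kappa_\varphi(\kappa_\lambda(\End(E'))) = \kappa_\varphi(\End(E)) = \mathfrak{O}_\varphi$. Both $\kappa_{\lambda\varphi}$ and $\kappa_{\varphi'}$ restrict on $\iota'(K)$ to $\iota_0 \circ (\iota')^{-1}$ by \eqref{embedding single orientation}, so $\kappa_{\lambda\varphi} \circ \kappa_{\varphi'}^{-1}$ is an automorphism of $\End^0(E_0)$ fixing $\iota_0(K)$ pointwise; since $\iota_0(K)$ is a maximal subfield of $B_0$, its centraliser is $\iota_0(K)$ itself, and Skolem--Noether yields $\alpha \in \iota_0(K)^\times$ with $\mathfrak{O}_\varphi = \alpha^{-1} \mathfrak{O}_{\varphi'} \alpha$. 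For the backward direction, given such $\alpha$, the $\Q$-algebra isomorphism $\sigma = \kappa_{\varphi'}^{-1} \circ \mathrm{conj}_\alpha \circ \kappa_\varphi \colon \End^0(E) \rightarrow \End^0(E')$ sends $\End(E)$ to $\End(E')$ and intertwines $\iota$ with $\iota'$ (since $\alpha$ centralises $\iota_0(K)$); the Deuring correspondence lifts $\sigma$, after absorbing any residual inner automorphism into $\Aut(E')$, to a $K$-oriented isomorphism $\lambda \colon (E, \iota) \rightarrow (E', \iota')$. The ``in particular'' clause follows because $\varphi \sim \varphi'$ forces $\mathfrak{O}_\varphi = \mathfrak{O}_{\varphi'}$.

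Part (d) is immediate from Proposition \ref{cyclic factor through}: one obtains a cyclic $\psi \colon E \rightarrow E'$ with $\varphi' = \psi \varphi$, and functoriality of pushforward gives $\psi_*(\iota) = \psi_*(\varphi_*(\iota_0)) = (\psi \varphi)_*(\iota_0) = \varphi'_*(\iota_0) = \iota'$, so $\psi$ is automatically $K$-oriented. The main obstacle will be the forward direction of (b): the local Skolem--Noether step must handle normaliser-type elements that can appear when the two optimal embeddings of $\iota_0(\calO_{1,\ell})$ differ by the nontrivial Galois involution of $K$ at $\ell$, and the freedom in the statement to choose any order $\calO \subseteq K$ (not only the primitive $\calO_1$) is what permits absorbing such obstructions by passing to a smaller order with a richer class structure.
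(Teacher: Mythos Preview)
Your arguments for (a) and (d) are essentially the paper's, and your forward direction of (c) via Skolem--Noether is a clean variant of the paper's explicit computation: the paper instead forms $\theta=\hat{\varphi_1}\varphi'$ with $\varphi_1=\lambda\varphi$, observes that $\theta_*(\iota_0)=\iota_0$ forces $\theta\in\iota_0(K)^\times$, and then reads off $I_{\varphi'}=I_\varphi\cdot(\theta/n)$ directly. Both routes land on the same conjugating element. For the forward direction of (b) the paper simply cites \cite[Exercise 30.10]{Voi21} (Eichler's result), so your local sketch is more ambitious; the normaliser obstruction you flag is exactly what that exercise handles, and you correctly identify that it is the real work.

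The genuine gap is in your backward direction of (c). Having a ring isomorphism $\sigma\colon\End(E)\xrightarrow{\sim}\End(E')$ does \emph{not} descend to an isomorphism $E\cong E'$ via ``the Deuring correspondence'': Frobenius conjugates $E$ and $E^{(p)}$ already have isomorphic endomorphism rings without being isomorphic, and the categorical equivalence of Theorem~\ref{ss-correspondence} tracks left $\mathfrak{O}_0$-modules, not bare rings. The paper avoids this by working with ideals and the class-group action: after scaling $\alpha\in\iota_0(K)^\times$ to be primitive in $\mathfrak{O}_\varphi$, it shows $I=\mathfrak{O}_\varphi\alpha$ is the primitive connecting $\mathfrak{O}_\varphi,\mathfrak{O}_{\varphi'}$-ideal, identifies $I\cap\iota_0(K)=\iota_0(\calO)\alpha=\iota_0(\a)$ as a \emph{principal} integral $\calO$-ideal, checks $E[I]=E[\a]$ by a cardinality count, and then invokes that principal ideals act trivially on $\SSpr$ to conclude $(E,\iota)\cong(E',\iota')$. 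If you want to salvage your approach, the right move is to show $I_{\varphi'}=I_\varphi\gamma$ for some $\gamma\in\iota_0(K)^\times$ (which does follow, since both are connecting $\mathfrak{O}_0,\mathfrak{O}_{\varphi'}$-ideals and any two such differ by a scalar in $\Q^\times$), and then argue at the level of $\mathfrak{O}_0$-modules rather than endomorphism rings; but as written, the ``lifting'' step is unjustified.
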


\begin{proof}
 (a) Suppose $(E,\iota) \cong (E',\iota')$, $(E,\iota)$ is primitively $\calO$-oriented for an order $\calO$ in $K$, and $(E',\iota')$ is primitively $\calO'$-oriented for an order $\calO'$ in $K$. Since $(E,\iota) \cong (E',\iota')$, there exists an isomorphism $\lambda : (E, \iota) \rightarrow (E', \iota')$. Hence, 
 \begin{equation} \label{isomorphism of endomorphism rings}
\End(E)=\hat{\lambda}\End(E')\lambda=\kappa_\lambda(\End(E'))
 \end{equation}
and $\iota'=\lambda_\ast(\iota)$. It follows that
\begin{equation}
    \begin{split}
        \iota(\calO) &= \End(E) \cap \iota(K)\\
        & = \kappa_\lambda (\End(E') \cap \iota'(K)) \\
        & = \kappa_{\lambda}(\iota'(\calO'))\\
        & = \iota(\calO').
    \end{split}
\end{equation}
This implies $\calO=\calO'$ as $\iota$ is injective.

\begin{extra} Original proof implicitly assuming $\varphi'=\lambda \varphi:$
     (a) Suppose $(E,\iota) \cong (E',\iota')$, $(E,\iota)$ is primitively $\calO$-oriented for an order $\calO$ in $K$, and $(E',\iota')$ is primitively $\calO'$-oriented for an order $\calO'$ in $K$. Since $(E,\iota) \cong (E',\iota')$, there exists an isomorphism $\lambda : (E, \iota) \rightarrow (E', \iota')$ of $K$-oriented curves. Hence, 
 \begin{equation} \label{isomorphism of endomorphism rings}
\End(E)=\hat{\lambda}\End(E')\lambda=\kappa_\lambda(\End(E'))
 \end{equation}
and $\iota'=\lambda_\ast(\iota)$. Then 
\begin{equation} \label{same orientation}
\begin{alignedat}{2}
 \mathfrak{O}_\varphi \cap \iota_0(K) &= \iota_0(\calO) && \quad \because (\ref{orientation to optimal embedding})\\ 
 &= \kappa_\varphi(\iota(\calO)) && \quad \because (\ref{embedding single orientation})\\ 
&= \kappa_\varphi(\End(E)\cap \iota(K)) && \quad \because (E,\iota) \text{ is primitively }\calO\text{-oriented} \\ 
&= \kappa_{\varphi} (\kappa_\lambda(\End(E')\cap \iota'(K))) && \quad \because (\ref{isomorphism of endomorphism rings}) \\ 
&= \kappa_{\varphi'}(\End(E')\cap \iota'(K)) && \quad \because (\ref{induced homomorphism of quadratic fields}) \\
&= \kappa_{\varphi'}(\iota'(\calO')) && \quad \because (E',\iota') \text{ is primitively }\calO'\text{-oriented}\\
&= \iota_0(\calO') && \quad \because (\ref{embedding single orientation}) \\
& = \mathfrak{O}_{\varphi'} \cap \iota_0(K)  && \quad \because (\ref{orientation to optimal embedding})
\end{alignedat}
\end{equation}
where $\iota_0(\calO)=\iota_0(\calO')$ implies $\calO=\calO'$ as $\iota_0$ is injective.
\end{extra}

(b) The forward direction is \cite[Exercise 30.10]{Voi21}. 

\begin{details}
    \begin{tbox} 
    Also, \cite[Satz 7]{Eic55}
     according to Love and Boneh [Theorem 5.2, \href{https://arxiv.org/pdf/1910.03180}{arxiv}] (note that this is removed in the published version).
\end{tbox}
\end{details}

 Conversely, suppose $\mathfrak{O}_{\varphi'}=\iota_0(\a)^{-1}\mathfrak{O}_{\varphi}\iota_0(\a)$ for an order $\calO \subseteq K$ and an invertible fractional $\mathcal{O}$-ideal $\a$. Since $\iota_0(\a)$ commutes with elements in $\iota_0(K)$, we have 
\begin{equation}
   \mathfrak{O}_{\varphi'} \cap\iota_0(K)= \iota_0(\a)^{-1}\mathfrak{O}_{\varphi}\iota_0(\a) \cap \iota_0(K)=\mathfrak{O}_{\varphi}\cap \iota_0(K).
\end{equation}

\begin{details}
    \begin{tbox}
        If $y \in \mathfrak{O}_{\varphi} \cap \iota_0(K)$, then $\alpha^{-1}y\alpha =y$. Hence, $y \in \alpha^{-1}\mathfrak{O}_{\varphi}\alpha \cap \iota_0(K)$. This shows $ \mathfrak{O}_{\varphi} \cap \iota_0(K) \subseteq  \alpha^{-1}\mathfrak{O}_{\varphi}\alpha \cap \iota_0(K)$

        Conversely, let $x \in \alpha^{-1}\mathfrak{O}_{\varphi}\alpha \cap \iota_0(K)$. Then $x=\alpha^{-1}\beta \alpha$ for some $\beta \in \mathfrak{O}_{\varphi}$. Since $x, \alpha \in \iota_0(K)^\times$, we have $\beta=\alpha x \alpha^{-1}=x \in \iota_0(K)$. Hence, $x \in \mathfrak{O}_{\varphi} \cap \iota_0(K)$. Therefore, $\alpha^{-1}\mathfrak{O}_{\varphi}\alpha \cap \iota_0(K) \subseteq \mathfrak{O}_{\varphi}\cap \iota_0(K)$.
    \end{tbox}
\end{details}

Lastly, by definition, $\mathfrak{O}_{\varphi}\cong_{\iota_0}  \mathfrak{O}_{\varphi'}$ implies $\mathfrak{O}_{\varphi'}=\alpha^{-1}\mathfrak{O}_{\varphi}\alpha$ for some $\alpha \in \iota_0(K)^\times$. Hence,
\begin{equation}
    \mathfrak{O}_{\varphi'} \cap \iota_0(K)=\alpha^{-1}\mathfrak{O}_{\varphi}\alpha \cap \iota_0(K)=\mathfrak{O}_{\varphi} \cap \iota_0(K).
\end{equation}
 
(c) Suppose $(E,\iota)\cong (E',\iota')$. Then there exists a $K$-oriented isomorphism $\lambda : (E,\iota) \rightarrow (E',\iota')$. The composite $\varphi_1=\lambda \varphi$ is a cyclic $K$-oriented isogeny from $(E_0, \iota_0)$ to $(E',\iota')$. If $\varphi' \sim\varphi_1$, then $\mathfrak{O}_{\varphi}=\mathfrak{O}_{\varphi'}$ by Theorem \ref{bjection between curves, ideals, maximal orders} and we are done. Suppose $\varphi' \not\sim \varphi_1$. Then we have two non-equivalent $K$-oriented isogenies from $(E_0, \iota_0)$ to $(E',\iota')$ and they give a $K$-oriented endomorphism $\theta=\hat{\varphi_1} \varphi'$ of $(E_0, \iota_0)$. By definition, we have
\begin{equation}
    \iota_0=\theta_\ast(\iota_0).
\end{equation}
This implies
\begin{equation}
    \iota_0(K)\theta=\theta \iota_0(K),
\end{equation}
 i.e., $\theta \in \iota_0(K)^\times$. Let $n=\deg\varphi$ and $\alpha=\varphi_1^{-1}\varphi'=\theta/n \in \iota_0(K)^\times$. Then 
 \begin{equation}
    nI_{\varphi'}= I_{n \varphi'}=I_{\varphi_1\theta}=I_{\varphi_1}\theta=I_{\varphi}\theta,
 \end{equation}
where the third equality follows from Lemma \ref{composition by endomorphism}(c). Therefore, $I_{\varphi'}=I_{\varphi}\alpha$ and
\begin{equation}
\mathfrak{O}_{\varphi'}=\alpha^{-1}\mathfrak{O}_{\varphi}\alpha.
\end{equation}
\begin{extra}
    \begin{tbox}
        By (\ref{isomorphism of endomorphism rings}), 
\begin{equation}
    \mathfrak{O}_{\varphi} \cong \End(E) \cong \End(E') \cong \mathfrak{O}_{\varphi'}. 
\end{equation}
Hence, there exists $\beta \in B_0^\times$ such that
\begin{equation}
    \mathfrak{O}_{\varphi'}= \beta^{-1}\mathfrak{O}_{\varphi}\beta.
\end{equation}
By part (a), 
\begin{equation} \label{common quadratic intersection}
    \mathfrak{O}_{\varphi}\cap \iota_0(K)=\mathfrak{O}_{\varphi'}\cap \iota_0(K)= \beta^{-1}\mathfrak{O}_{\varphi}\beta \cap \iota_0(K).
\end{equation}
Let $\calO \subseteq K$ be an order such that $\iota_0(\calO)=\mathfrak{O}_{\varphi} \cap \iota_0(K)$. \eqref{common quadratic intersection} implies 
\begin{equation} \label{}
    \iota_{\beta^{-1}}(\calO)= \mathfrak{O}_{\varphi} \cap \iota_{\beta^{-1}}(K)
\end{equation}
(cf. \cite[30.3.7]{Voi21}), where $\iota_{\beta^{-1}}:K \rightarrow B_0$ is an embedding such that $\iota_{\beta^{-1}}(\alpha)=\beta\iota_0(\alpha)\beta^{-1}$.
    \end{tbox}
\end{extra}

Conversely, suppose $\mathfrak{O}_{\varphi}\cong_{\iota_0} \mathfrak{O}_{\varphi'}$. Then
\begin{equation}
\mathfrak{O}_{\varphi'}=\alpha^{-1}\mathfrak{O}_{\varphi}\alpha
\end{equation}
for some $\alpha \in \iota_0(K)^\times$. Scaling $\alpha$ by an integer, we may assume that $\alpha \in \mathfrak{O}_{\varphi}$ is primitive. Then
\begin{equation}
    I=\mathfrak{O}_{\varphi}\alpha
\end{equation}
is the primitive connecting $\mathfrak{O}_{\varphi}, \mathfrak{O}_{\varphi'}$-ideal by Lemma \ref{index for isomorphic maximal orders}(b). By part (b),
\begin{equation}
    \iota_0(\calO)=\mathfrak{O}_\varphi \cap \iota_0(K)=\mathfrak{O}_{\varphi'}\cap \iota_0(K)
\end{equation}
for an order $\calO$ in $K$. Hence, there exists a principal integral $\calO$-ideal $\a$ such that
\begin{equation}
    \iota_0(\a) = I \cap \iota_0(K)=\iota_0(\calO)\alpha.
\end{equation}
 Since $E[I] \subseteq E[\a]$ and $\# E[\a]=\#E[I]=\nrd(\alpha)$, we have $E[I]=E[\a]$. Then $(E,\iota)\simeq (E',\iota')$ since the class of principal $\calO$-ideals in $\cl(\calO)$ acts trivially on $\SSpr$.

(d) If $\ker \varphi \subseteq \ker \varphi'$, then there exists a cyclic isogeny $\psi: E \rightarrow E'$ such that $\varphi' = \psi \varphi$ by Proposition \ref{cyclic factor through}. Also,
\begin{equation} \label{single oriented isogeny converse1}
    \kappa_{\varphi'}(\iota'(\alpha))=\kappa_{\varphi}(\kappa_{\psi}(\iota'(\alpha)))
\end{equation}
for all $\alpha \in K$ by \eqref{induced homomorphism of quadratic fields}, and
\begin{equation} \label{single oriented isogeny converse2}
    \kappa_{\varphi}(\iota(\alpha))=\iota_0(\alpha)=\kappa_{\varphi'}(\iota'(\alpha))
\end{equation}
for all $\alpha \in K$ by \eqref{embedding single orientation}. Together with (\ref{single oriented isogeny converse1}) and (\ref{single oriented isogeny converse2}), we have
\begin{equation} \label{single oriented isogeny converse3}
    \kappa_{\varphi}(\iota(\alpha))= \kappa_{\varphi}(\kappa_{\psi}(\iota'(\alpha)))
\end{equation}
for all $\alpha \in K$. Since the map $\kappa_{\varphi}: \iota(K) \rightarrow \iota_0(K)$ is injective, (\ref{single oriented isogeny converse3}) implies 
\begin{equation}
    \iota(\alpha)=\kappa_{\psi}(\iota'(\alpha)))= \frac{1}{\deg \psi}\hat{\psi}\iota'(\alpha)\psi
\end{equation}
for all $\alpha \in K$. This is equivalent to
\begin{equation}
    \iota'(\alpha)= \frac{1}{\deg \psi}\psi\iota(\alpha)\hat{\psi}
\end{equation}
for all $\alpha \in K$, i.e., $\iota'=\psi_\ast(\iota)$. Therefore, $\psi: (E,\iota) \rightarrow (E',\iota')$ is a $K$-oriented isogeny.
\end{proof}

\begin{definition}
    We say two lattices $I, I'$ in $B_0$ are in the same left $\iota_0$-class, denoted by $I \sim_{\iota_0} I'$, if there exists $\alpha \in \iota_0(K)^\times$ such that $I'=I\alpha$.
\end{definition}

\begin{corollary} \label{bijections in single orientation}
    Let $\iota_0$ be a $K$-orientation on $E_0$. There are bijections
\begin{equation}
\begin{array}{c@{\;}c@{\;}c}
\left\{ \substack{\text{isomorphism classes of } K\text{-oriented}\\ \text{ supersingular elliptic curves over $\Fbar_p$}} \right\} 
& \longleftrightarrow & 
\left\{\substack{\iota_0\text{-conjugacy classes of}\\
\text{maximal orders in} \End^0(E_0)}  \right\} \\[1ex]
(E,\iota)/\cong
& \mapsto & \mathfrak{O}_{\varphi}/\cong_{\iota_0} \\[1ex]
& \longleftrightarrow & \{ \substack{ \text{left }\iota_0\text{-classes of primitive} \\ \text{left  }\mathfrak{O}_0\text{-ideals}}  \} \\[1ex]
& \mapsto & I_\varphi /\sim_{\iota_0}
\end{array}
\end{equation}    
where $\varphi: (E_0, \iota_0) \rightarrow (E, \iota)$ is a cyclic $K$-oriented isogeny.
\end{corollary}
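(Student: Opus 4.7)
The plan is to descend the bijections in Theorem \ref{bjection between curves, ideals, maximal orders} to the quotients by the equivalence relations $\cong$ on $K$-oriented curves, $\cong_{\iota_0}$ on maximal orders, and $\sim_{\iota_0}$ on primitive left $\mathfrak{O}_0$-ideals, using Proposition \ref{single oriented isogeny} as the main input. Throughout, given a class $(E,\iota)/\cong$ I would pick a cyclic isogeny $\varphi:E_0 \to E$ with $\varphi_\ast(\iota_0) = \iota$, whose existence is guaranteed by Remark \ref{unique cyclic isogeny}.

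For the first bijection, I would verify that the composition $(E,\iota)/\cong \; \mapsto \; (E,\varphi)/\sim \; \mapsto \; \mathfrak{O}_\varphi/\cong_{\iota_0}$ is well-defined and bijective. Well-definedness: by Lemma \ref{composition by endomorphism}, equivalent cyclic isogenies yield the same maximal order $\mathfrak{O}_\varphi$, and different choices of cyclic $\varphi$ inducing $\iota$ on the same curve $E$ (or on $K$-isomorphic curves) yield maximal orders in the same $\iota_0$-conjugacy class by Proposition \ref{single oriented isogeny}(c). Injectivity on $\iota_0$-conjugacy classes is then exactly the content of Proposition \ref{single oriented isogeny}(c), and surjectivity is immediate since every maximal order in $B_0$ arises as some $\mathfrak{O}_\varphi$ by Theorem \ref{bjection between curves, ideals, maximal orders}, and equipping the corresponding $E$ with the induced orientation $\varphi_\ast(\iota_0)$ produces a $K$-oriented representative.

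For the second bijection, I would prove the equivalence $\mathfrak{O}_\varphi \cong_{\iota_0} \mathfrak{O}_{\varphi'} \iff I_\varphi \sim_{\iota_0} I_{\varphi'}$. Forward: if $I_{\varphi'} = I_\varphi \alpha$ with $\alpha \in \iota_0(K)^\times$, then taking right orders gives $\mathfrak{O}_{\varphi'} = \calO_R(I_\varphi \alpha) = \alpha^{-1}\mathfrak{O}_\varphi \alpha$. Backward: suppose $\mathfrak{O}_{\varphi'} = \alpha^{-1}\mathfrak{O}_\varphi\alpha$ with $\alpha \in \iota_0(K)^\times$. After rescaling by a suitable rational, I may assume $\alpha$ is a primitive element of $\mathfrak{O}_\varphi$, so by Lemma \ref{index for isomorphic maximal orders}(b), $\mathfrak{O}_\varphi \alpha$ is the primitive connecting $\mathfrak{O}_\varphi,\mathfrak{O}_{\varphi'}$-ideal. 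The product $I := I_\varphi \alpha$ is then an integral connecting $\mathfrak{O}_0,\mathfrak{O}_{\varphi'}$-ideal, and by Proposition \ref{primitive and minimal reduced norm ideal} its primitive part must coincide with $I_{\varphi'}$, so $I = m I_{\varphi'}$ for some positive integer $m$. Hence $I_{\varphi'} = I_\varphi (\alpha/m)$ with $\alpha/m \in \iota_0(K)^\times$, as required. Combining with the bijection in Theorem \ref{bjection between curves, ideals, maximal orders} between primitive ideals and maximal orders, this gives the second bijection.

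The main obstacle is the backward direction of the second equivalence: one must track how a conjugating scalar $\alpha \in \iota_0(K)^\times$ between maximal orders lifts to a right-multiplier between the corresponding primitive connecting ideals, which relies crucially on the uniqueness of the primitive connecting ideal (Proposition \ref{primitive and minimal reduced norm ideal}) and on controlling ideal primitivity under rescaling via Lemma \ref{index for isomorphic maximal orders}. Once this step is in place, the rest is a formal diagram chase through Theorem \ref{bjection between curves, ideals, maximal orders} and Proposition \ref{single oriented isogeny}(c).
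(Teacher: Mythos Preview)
Your proposal is correct and follows essentially the same route as the paper: the first bijection is reduced to Proposition~\ref{single oriented isogeny}(c), and the second is obtained by checking that $\mathfrak{O}_{\varphi}\cong_{\iota_0}\mathfrak{O}_{\varphi'}$ if and only if $I_{\varphi}\sim_{\iota_0}I_{\varphi'}$. The only noteworthy difference is in the harder direction of this last equivalence: the paper invokes Proposition~\ref{bijection of edges} to write $I_{\varphi}^{-1}I_{\varphi'}=r\,\mathfrak{O}_{\varphi}\alpha$ with $r\in\Q^{\times}$ and then multiplies through by $I_{\varphi}$, whereas you argue directly that $I_{\varphi}\alpha$ is an integral connecting $\mathfrak{O}_{0},\mathfrak{O}_{\varphi'}$-ideal and hence an integer multiple of the primitive one $I_{\varphi'}$. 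Your route is slightly more self-contained (it bypasses Proposition~\ref{bijection of edges}), but both rest on the same uniqueness of the primitive connecting ideal from Proposition~\ref{primitive and minimal reduced norm ideal} and Lemma~\ref{index for isomorphic maximal orders}(b). A cosmetic point: your ``Forward'' and ``Backward'' labels are swapped relative to the displayed $\iff$, though the content is fine.
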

\begin{proof}
    The first bijection is a direct result of Proposition \ref{single oriented isogeny}(c). 

   We show the second bijection; consider two $\iota_0$-isomorphic maximal orders $\mathfrak{O}_{\varphi}, \mathfrak{O}_{\varphi'}$ such that
    \begin{equation}
        \mathfrak{O}_{\varphi'}=\alpha^{-1}\mathfrak{O}_{\varphi}\alpha
    \end{equation}
    for $\alpha \in \iota_0(K)^\times$. Scaling $\alpha$ by an integer, we may assume that $\alpha \in \mathfrak{O}_{\varphi}$ is primitive. By Lemma \ref{index for isomorphic maximal orders}(b), $I=\mathfrak{O}_{\varphi}\alpha$ is the primitive connecting $\mathfrak{O}_{\varphi},\mathfrak{O}_{\varphi'}$-ideal. Also, by Proposition \ref{bijection of edges}, $I_{\varphi}^{-1}I_{\varphi'}=rI$ for some $r \in \Q^\times$. Hence,
    \begin{equation}
I_{\varphi'}=rI_{\varphi}I=rI_{\varphi}\mathfrak{O}_{\varphi}\alpha =I_{\varphi}(r\alpha)
    \end{equation}
    where $r\alpha \in \iota_0(K)^\times$.
    Therefore, $I_\varphi \sim_{\iota_0} I_{\varphi'}$.
    \begin{details}
    \begin{tbox}
      By definition, $\calO_{R}(I) & = \left\{ \alpha \in B_0 : I \alpha \subseteq I \right\}$, so $I\calO_R(I) \subseteq I$. Also, since $1 \in \calO_R(I)$, $I\calO_R(I) \supseteq I$.
    \end{tbox}
\end{details}
    Conversely, for two primitive left $\mathfrak{O}_0$-ideals $I_{\varphi}, I_{\varphi'}$ in the same left $\iota_0$-class, there exists $\alpha \in \iota_0(K)^\times$ such that $I_{\varphi'}=I_{\varphi}\alpha$. It follows that
    \begin{equation}
        \mathfrak{O}_{\varphi'}= \calO_R(I_{\varphi'})=\calO_R(I_{\varphi}\alpha)=\alpha^{-1}\calO_R(I_{\varphi})\alpha=\alpha^{-1}\mathfrak{O}_{\varphi}\alpha.
    \end{equation}
    Therefore, $\mathfrak{O}_{\varphi} \cong_{\iota_0}\mathfrak{O}_{\varphi'}$.
\end{proof}

Since an isomorphism class of $K$-oriented supersingular elliptic curves is represented by an $\iota_0$-conjugacy class of maximal orders in $\End^0(E_0)$, where the intersection of maximal orders in the $\iota_0$-conjugacy class with $\iota_0(K)$ is identical, we can use these maximal orders to determine whether a prime-degree $K$-oriented isogeny is ascending, horizontal, or descending. 

\begin{corollary}
Let $\varphi : (E_0, \iota_0) \rightarrow (E,\iota)$ and $\varphi' : (E_0, \iota_0) \rightarrow (E',\iota')$ be cyclic $K$-oriented isogenies. Suppose there is a cyclic $K$-oriented isogeny $\psi : (E, \iota) \rightarrow (E',\iota')$ of degree $\ell\neq p$ prime. If $\iota$ is a primitive $\calO$-orientation on $E$ for an order $\calO \subseteq K$ with $\iota(\calO)=\Z[\theta]$, then
\begin{enumerate}
    \item[(1)] $\psi$ is ascending if and only if $\theta/\ell \in \frakO_{\varphi'}$.
    \item[(2)] $\psi$ is horizontal if and only if $\theta \in \frakO_{\varphi'}$, but $\theta/\ell \not\in \frakO_{\varphi'}$.
    \item[(3)] $\psi$ is descending if and only if $\ell\theta \in \frakO_{\varphi'}$, but $\theta \not\in \frakO_{\varphi'}$.
\end{enumerate} 
\end{corollary}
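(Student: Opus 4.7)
My proof would reduce everything to a single statement about orders inside $\iota_0(K)$, namely:
\[
\iota_0(\calO') \;=\; \frakO_{\varphi'} \cap \iota_0(K),
\]
where $\calO'$ is the unique order of $K$ for which $\iota'$ is a primitive $\calO'$-orientation on $E'$. This identity follows from \eqref{orientation to optimal embedding} applied to $\varphi'$. Since cyclic $K$-oriented isogenies of prime degree $\ell$ fall into exactly one of the three classes (ascending, horizontal, descending), it is enough to show the ``only if'' direction in each case; the ``if'' direction then follows by exhaustion.

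\textbf{Step 1 (transfer to $B_0$).} I would use \eqref{embedding single orientation} to identify $\iota(K)$ with $\iota_0(K)$ via $\kappa_\varphi$; writing $\calO = \Z[\alpha]$ with $\alpha \in K$, I reinterpret $\theta$ as $\iota_0(\alpha) \in B_0$. Then for any $\gamma \in K$, membership $\iota_0(\gamma) \in \frakO_{\varphi'}$ is equivalent to $\gamma \in \calO'$ by Step 0. In particular, the three conditions $\theta/\ell$, $\theta$, $\ell\theta \in \frakO_{\varphi'}$ translate respectively to $\alpha/\ell$, $\alpha$, $\ell\alpha \in \calO'$.

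\textbf{Step 2 (the three cases).} Using that $\calO$ and $\calO'$ are quadratic orders in $K$ related by an index-$\ell$ inclusion:
\begin{itemize}
\item[] \emph{Horizontal} ($\calO = \calO'$): $\alpha \in \calO = \calO'$, so $\theta \in \frakO_{\varphi'}$; but $\alpha/\ell \notin \Z[\alpha]$ (by examining $\Z$-coordinates in the basis $1,\alpha$), hence $\theta/\ell \notin \frakO_{\varphi'}$.
\item[] \emph{Descending} ($\calO \supsetneq \calO'$, $[\calO:\calO']=\ell$): always $\ell\calO \subseteq \calO'$, giving $\ell\alpha \in \calO'$ and $\ell\theta \in \frakO_{\varphi'}$; and $\calO \not\subseteq \calO'$ combined with $1 \in \calO'$ forces $\alpha \notin \calO'$, so $\theta \notin \frakO_{\varphi'}$.
\item[] \emph{Ascending} ($\calO \subsetneq \calO'$, $[\calO':\calO]=\ell$): here the condition $\alpha/\ell \in \calO'$ is equivalent to $\calO'$ being the unique over-order $\Z[\alpha/\ell]$ of $\calO$, which I handle via the standard generator $\alpha = f\omega_K$ (or, more generally, by normalizing the generator of $\Z[\theta]$ so that its trace is divisible by $\ell$). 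Then $\ell \mid f(\calO)$, $\alpha/\ell = (f/\ell)\omega_K \in \calO_K$, and this element indeed lies in the over-order $\calO'$.
\end{itemize}
The converse directions then follow from exclusivity of the three possibilities for a prime-degree $K$-oriented isogeny.

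\textbf{Main obstacle.} The ascending case is the one requiring care: the condition ``$\theta/\ell \in \frakO_{\varphi'}$'' is sensitive to the choice of generator $\theta$ of $\iota(\calO)$, since for a generator of the form $\alpha = a + f\omega_K$ one would need $\ell \mid 2a + f\,\tr(\omega_K)$ for $\alpha/\ell$ to be integral in $K$. The cleanest remedy is to specify the canonical generator (for instance, take $\theta = \iota(f\omega_K)$, equivalently require $\tr(\theta) \equiv 0 \pmod{\ell}$ when $\ell\mid f$), as is consistent with the notion of $\ell$-suitable endomorphism introduced at the end of Section on orientations. With this convention the equivalence $\alpha/\ell \in \calO' \Leftrightarrow \psi\text{ ascending}$ is immediate from the structure theory of quadratic orders, and the remaining two cases then fall out by elimination.
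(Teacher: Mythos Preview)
Your approach is essentially the same as the paper's: both reduce to the identity $\iota_0(\calO')=\frakO_{\varphi'}\cap\iota_0(K)$ from \eqref{orientation to optimal embedding} and then read off the three cases from the index-$\ell$ containment between $\calO$ and $\calO'$; the paper simply asserts the three biconditionals directly, whereas you prove the forward implications and close by exhaustion. Your flag on the ascending case is a genuine subtlety the paper's terse proof does not address: for an arbitrary generator $\theta$ of $\iota(\calO)$ the condition $\theta/\ell\in\frakO_{\varphi'}$ can fail even when $\calO\subsetneq\calO'$, and your remedy (take $\theta$ to be $\ell$-suitable, e.g.\ $\theta=\iota(f\omega_K)$) is exactly the normalization the paper uses elsewhere.
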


 \begin{proof}
 Suppose $(E',\iota')$ is primitively $\calO'$-oriented for an order $\calO' \subseteq K$. Then
    \begin{equation}
        \iota_0(\calO')=\mathfrak{O}_{\varphi'}\cap \iota_0(K).
    \end{equation}
   Then the statements follow since
    \begin{enumerate}
        \item $\psi$ is ascending $\Longleftrightarrow$ $\calO \subsetneq \calO'$  $\Longleftrightarrow$ $\theta/\ell \in \calO'$.
        \item $\psi$ is horizontal  $\Longleftrightarrow$ $\calO=\calO'$  $\Longleftrightarrow$ $\theta \in \calO'$, but $\theta/\ell \not\in \calO'$.
        \item $\psi$ is descending  $\Longleftrightarrow$ $\calO\supsetneq \calO'$  $\Longleftrightarrow$ $\ell\theta \in \calO'$, but $\theta \notin \calO'$.
    \end{enumerate}
\end{proof}

\begin{extra}
We remark that the notion of embeddings of imaginary quadratic orders into maximal orders in a quaternion algebra which are equivalent to orientations of supersingular elliptic curves is not new and has a long history in the study of quaternion algebra (cf. \cite[Remark 30.6.18]{Voi21}). Such embeddings are referred to as optimal embeddings. We recall the basic definitions and a property of optimal embeddings; for a more detailed treatment, we refer the reader to \cite[\S30–\S31]{Voi21}.

Let $B_0/\Q$ be a quaternion algebra ramified at $p$ and $\infty$. Consider an imaginary quadratic field $K$ that embeds into $B_0$. Let $\calO \subseteq K$ be a quadratic order and $\mathfrak{O} \subseteq B_0$ be a quaternion order. Given a $\Z$-algebra embedding $\iota_0: \calO \rightarrow \mathfrak{O}$, we can $\Q$-linearly expand it and obtain a $\Q$-algebra embedding $\iota_0:K \rightarrow B_0$. Then the embedding $\iota_0: \calO \rightarrow \mathfrak{O}$ is called optimal if
\begin{equation}
    \iota_0(\calO)=\mathfrak{O} \cap \iota_0(K).
\end{equation}
Let
\begin{equation}
    \operatorname{Emd(\calO, \mathfrak{O})} = \left\{ \iota_0 : \calO \hookrightarrow \mathfrak{O} \text{ is an optimal embedding} \right\}.
\end{equation}

Let $\iota_0 :K \rightarrow B_0$ be an embedding and consider
\begin{align}
    \operatorname{Conj}(\calO, \mathfrak{O}, \iota_0) & :=\{\beta \in B_0^\times: \iota_0(\calO)=(\beta^{-1} \mathfrak{O} \beta) \cap \iota_0(K)\} \\
    & =\{\beta \in B_0^\times: \beta \iota_0(\calO) \beta^{-1} = \mathfrak{O} \cap \beta \iota_0(K) \beta^{-1} \}.
\end{align}

\begin{lemma} \label{Voi21 Lemma 30.3.8}
    The map
\begin{equation}
   \begin{split}
        \iota_0(K)^\times \backslash \operatorname{Conj}(\calO, \mathfrak{O}, \iota_0) & \overset{\sim}{\rightarrow} \operatorname{Emd}(\calO, \mathfrak{O}) \\
        \beta & \mapsto \iota_{\beta}
   \end{split}
\end{equation}
where $\iota_{\beta}(\alpha):=\beta^{-1}\alpha \beta$ for $\alpha \in \calO$ is a bijection.
\end{lemma}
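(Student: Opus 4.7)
The plan is to verify well-definedness on the quotient, then injectivity, then surjectivity, under the convention that conjugation by $\beta$ produces an embedding $\iota_\beta:K\to B_0$ via $\alpha\mapsto\beta^{-1}\iota_0(\alpha)\beta$. For well-definedness I will first note that $\iota_\beta$ is automatically a $\Q$-algebra embedding, being the composition of $\iota_0$ with an inner automorphism of $B_0$; the defining condition of $\operatorname{Conj}(\calO,\mathfrak{O},\iota_0)$ in its second displayed form $\beta\iota_0(\calO)\beta^{-1}=\mathfrak{O}\cap\beta\iota_0(K)\beta^{-1}$ then translates directly, upon conjugation, to $\iota_\beta(\calO)=\mathfrak{O}\cap\iota_\beta(K)$, i.e.\ to optimality of $\iota_\beta$ at $\mathfrak{O}$. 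To descend to the quotient, I will observe that for $u\in\iota_0(K)^\times$ the element $u$ centralizes $\iota_0(K)$, so a short computation gives $\iota_{u\beta}=\iota_\beta$ and the assignment factors through $\iota_0(K)^\times\backslash\operatorname{Conj}(\calO,\mathfrak{O},\iota_0)$.

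For injectivity I will show that $\iota_\beta=\iota_{\beta'}$ forces $\beta^{-1}\beta'\in Z_{B_0}(\iota_0(K))$, after which the key commutant identity $Z_{B_0}(\iota_0(K))=\iota_0(K)$ finishes the argument. This identity holds because $B_0$ is a central simple $\Q$-algebra of dimension $4$ and $\iota_0(K)$ is a simple subalgebra of dimension $2$, so the centralizer theorem for central simple algebras forces $[Z_{B_0}(\iota_0(K)):\Q]=2$; since $\iota_0(K)$ is itself dimension $2$ and sits inside its own centralizer, the two coincide. Consequently $\beta^{-1}\beta'\in\iota_0(K)^\times$, so $\beta$ and $\beta'$ represent the same coset.

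For surjectivity I will take $\iota\in\operatorname{Emd}(\calO,\mathfrak{O})$ and extend it $\Q$-linearly to an embedding $\iota:K\hookrightarrow B_0$. Two embeddings of the field $K$ into the central simple algebra $B_0$ are conjugate by the Skolem--Noether theorem (already invoked earlier in the paper), so $\iota=\iota_\beta$ for some $\beta\in B_0^\times$; translating the optimality $\iota(\calO)=\iota(K)\cap\mathfrak{O}$ back through the conjugation then shows $\beta\in\operatorname{Conj}(\calO,\mathfrak{O},\iota_0)$. The main obstacle is essentially organizational: keeping the direction of the conjugation and the handedness of the coset consistent. Once the centralizer identity $Z_{B_0}(\iota_0(K))=\iota_0(K)$ and Skolem--Noether are in hand, the rest is a formal chain of conjugations.
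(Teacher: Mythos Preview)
The paper does not give its own proof here; it simply records ``This is \cite[Lemma 30.3.8]{Voi21}.'' Your argument is the standard one and is correct in substance: the centralizer identity $Z_{B_0}(\iota_0(K))=\iota_0(K)$ (from the double-centralizer theorem, since $\dim_\Q B_0=4$ and $\dim_\Q\iota_0(K)=2$) handles both the descent to the left quotient and injectivity, and Skolem--Noether handles surjectivity.

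One small bookkeeping remark, of exactly the kind you anticipated: with your convention $\iota_\beta(\alpha)=\beta^{-1}\iota_0(\alpha)\beta$, optimality $\iota_\beta(\calO)=\mathfrak{O}\cap\iota_\beta(K)$ conjugates back to $\iota_0(\calO)=\beta\mathfrak{O}\beta^{-1}\cap\iota_0(K)$, which is the defining condition of $\operatorname{Conj}$ with $\beta$ replaced by $\beta^{-1}$ (so the second displayed form does not translate ``directly'' as you wrote). Likewise, in your injectivity step the element that lands in the centralizer is $\beta'\beta^{-1}$ rather than $\beta^{-1}\beta'$, which is what is needed for the \emph{left} cosets to agree. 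These are cosmetic sign fixes and do not affect the validity of the argument.
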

\begin{proof}
   This is \cite[Lemma 30.3.8]{Voi21}.  
\end{proof}

We now introduce some related concepts which appear in the trace formula for Hecke operators acting on spaces of modular forms \cite[\S 6.5]{Miyake}. Given a maximal order $\mathfrak{O}$ in $B_0$ and an element $\alpha \in B_0$ not in $\Q$, define
\begin{align}
   C(\alpha) & = \left\{ \beta \alpha \beta^{-1} : \beta \in B_0^\times \right\} \\
   C(\alpha, \calO) & = \left\{ \beta \alpha \beta^{-1} : \beta \in B_0^\times, \Q[\alpha] \cap \beta^{-1} \mathfrak{O} \beta = \calO \right\}.
\end{align}

We have that
\begin{equation}
    C(\alpha) = \coprod C(\alpha,\calO).
\end{equation}

According to the results in \cite[\S 6.5]{Miyake}, if $\calO$ is $p$-fundamental, then $C(\alpha,\calO) \not= \emptyset$ and
\begin{equation}
    |C(\alpha,\calO)| = h(\calO).
\end{equation}

Hence, for any $\calO$ which is $p$-fundamental, there is a conjugate $\mathfrak{O}' = \beta^{-1} \mathfrak{O} \beta$ such that
\begin{equation}
    \mathfrak{O}' \cap \Q[\alpha] = \calO.
\end{equation}

\todo[inline]{Proved earlier but maybe there is a different proof. This section is related to the trace formula for Brandt matrices given in Voight or Goren-Love.}

\begin{proposition} \label{optimal embedding and conjugation by quadratic field} Let $\iota_0 : K \rightarrow B_0$ be an embedding and let $\mathfrak{O}, \mathfrak{O}'$ be maximal orders in $B_0$. Then 
    $\frakO \cap \iota_0(K)=\frakO' \cap \iota_0(K)$ implies $\mathfrak{O}'=\a^{-1}\mathfrak{O}\a$ by an $\alpha \in \iota_0(K)^\times$.
\end{proposition}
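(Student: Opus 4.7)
The plan is to exploit the framework of optimal embeddings. Set $\iota_0(\calO) := \mathfrak{O} \cap \iota_0(K) = \mathfrak{O}' \cap \iota_0(K)$; by hypothesis, $\iota_0$ realizes an optimal embedding of $\calO$ into each of $\mathfrak{O}$ and $\mathfrak{O}'$ in the sense of \cite[\S 30.3]{Voi21}.

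First I would work locally at each prime $\ell$. All local maximal orders in $B_{0,\ell}$ lie in a single $B_{0,\ell}^\times$-conjugacy class, by Lemma~\ref{minimal reduced norm local} at $\ell \ne p$ and by uniqueness at $\ell = p$. Hence there exists $\gamma_\ell \in B_{0,\ell}^\times$ with $\mathfrak{O}'_\ell = \gamma_\ell^{-1}\mathfrak{O}_\ell \gamma_\ell$. The map $\alpha \mapsto \gamma_\ell \iota_0(\alpha)\gamma_\ell^{-1}$ is then a second optimal embedding of $\calO_\ell$ into $\mathfrak{O}_\ell$; the local analogue of \cite[Lemma 30.3.8]{Voi21} asserts that any two such optimal embeddings differ by $\iota_0(K_\ell)^\times$-conjugation modulo the normalizer of $\mathfrak{O}_\ell$ in $B_{0,\ell}^\times$. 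Absorbing the normalizer factor into $\gamma_\ell$ yields an element $\alpha_\ell \in \iota_0(K_\ell)^\times$ satisfying $\mathfrak{O}'_\ell = \alpha_\ell^{-1}\mathfrak{O}_\ell \alpha_\ell$.

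Next I would globalize. The classes $\alpha_\ell \bmod \iota_0(\calO_\ell)^\times$ assemble into an invertible fractional $\calO$-ideal $\a$, characterized locally by $\iota_0(\a)_\ell = \iota_0(\calO)_\ell \alpha_\ell$. Applying the local-global description $\mathfrak{O} = \bigcap_\ell (\mathfrak{O}_\ell \cap B_0)$ from the preliminaries then produces the ideal-conjugation identity $\mathfrak{O}' = \iota_0(\a)^{-1}\mathfrak{O}\iota_0(\a)$. When $\a$ is principal, write $\a = c\calO$ for some $c \in K^\times$ and set $\alpha := \iota_0(c) \in \iota_0(K)^\times$; this gives $\mathfrak{O}' = \alpha^{-1}\mathfrak{O}\alpha$, as required.

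The step I expect to be the main obstacle is the globalization: passing from the adelic datum $(\alpha_\ell)_\ell$ to a single element of $\iota_0(K)^\times$ is controlled by the class group $\cl(\calO)$. Without additional input one obtains only the ideal-conjugation form of the statement, which recovers \cite[Exercise 30.10]{Voi21}. Extracting conjugation by a single element $\alpha$ forces one to show that the class $[\a] \in \cl(\calO)$ is trivial; I would attempt this by comparing $\nrd(\iota_0(\a))$ with $[\mathfrak{O}:\mathfrak{O}\cap \mathfrak{O}']$, the reduced norm of the primitive connecting $\mathfrak{O},\mathfrak{O}'$-ideal from Lemma~\ref{minimal reduced norm global}, and using its uniqueness to force principality of $\a$.
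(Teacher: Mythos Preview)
Your local-to-global strategy is sound, and your identification of the obstacle is exactly right: passing from the ideal $\mathfrak{a}$ to a single element $\alpha \in \iota_0(K)^\times$ is obstructed by $\cl(\calO)$, and this obstruction is genuine. Your proposed final step---forcing $[\mathfrak{a}]=0$ via reduced norms and the uniqueness in Lemma~\ref{minimal reduced norm global}---cannot succeed, because the statement as written is in fact too strong. Whenever $\cl(\calO)$ is nontrivial there exist non-principal invertible $\mathfrak{a}$ for which $\mathfrak{O}' := \iota_0(\mathfrak{a})^{-1}\mathfrak{O}\,\iota_0(\mathfrak{a})$ satisfies $\mathfrak{O}' \cap \iota_0(K) = \mathfrak{O} \cap \iota_0(K)$ yet is not $\iota_0(K)^\times$-conjugate to $\mathfrak{O}$; this follows from Proposition~\ref{single oriented isogeny}(b),(c) combined with the freeness of the $\cl(\calO)$-action on $\SSpr$. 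The correct general conclusion is precisely the ideal-conjugation form you reached midway, and that is what the main text records as Proposition~\ref{single oriented isogeny}(b), citing \cite[Exercise 30.10]{Voi21}.

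The paper's own proof takes a different, purely global route: it appeals to the bijection of Lemma~\ref{Voi21 Lemma 30.3.8} between $\operatorname{Emd}(\calO,\mathfrak{O})$ and $\iota_0(K)^\times \backslash \operatorname{Conj}(\calO,\mathfrak{O},\iota_0)$ and then argues via centralizers that the conjugating element lies in $\iota_0(K)^\times$. That argument has a gap at the step ``that is, $\mathfrak{O}'=\beta^{-1}\mathfrak{O}\beta$'': the cited lemma parametrizes optimal embeddings into the \emph{fixed} order $\mathfrak{O}$ and does not by itself produce a $\beta \in B_0^\times$ conjugating $\mathfrak{O}$ to the given $\mathfrak{O}'$; the subsequent centralizer computation then presupposes what needs to be shown. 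Your diagnosis via the class group pinpoints why neither argument can close.
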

\begin{proof}
Suppose $\frakO \cap \iota_0(K) = \frakO' \cap \iota_0(K) := \calO$. This is equivalent to say we have underlying optimal embeddings $\iota: \calO \hookrightarrow \mathfrak{O}$ and $\iota: \calO \hookrightarrow \mathfrak{O}'$ for the same order $\calO \subseteq K$, i.e., $\iota$ lies in $\operatorname{Emd}(\calO, \mathfrak{O}) \cap\operatorname{Emd}(\calO, \mathfrak{O}')$. By Lemma~\ref{Voi21 Lemma 30.3.8}, there exists $\beta \in \operatorname{Conj}(\calO, \mathfrak{O}, \iota_0)$ such that 
        \begin{equation}
            \iota(\alpha) = \beta^{-1}\iota_0(\alpha)\beta
        \end{equation}
        for all $\alpha \in \calO$. 
        that is, $\mathfrak{O}'=\beta^{-1}\mathfrak{O}\beta$. Since $\iota, \iota'$ agree on $\calO$, and $K$ is commutative, conjugation by an element of $K^\times$ fixes the elements of $K$, i.e.,
        \begin{equation}
            \gamma^{-1} \alpha \gamma = \alpha
        \end{equation}
    for $\gamma \in K^\times$ and $\alpha \in K$. It follows that $\beta$ must satisfy
\begin{equation}
    \beta^{-1}\alpha \beta = \alpha
\end{equation}
for all $\alpha \in \calO$. This implies that $\beta$ is in the centralizer $\operatorname{Cent}_{B_0}(\calO)$ of $\iota(\calO)$ in $B_0$, which is $\iota(K)$. Hence,
    \begin{equation}
        \beta \in B_0^\times \cap \operatorname{Cent}_{B_0}(\calO)=\iota(K)^\times.
    \end{equation}
\end{proof}
\end{extra}

\subsection{Double-orientations with maximal orders}
Since the endomorphism rings of supersingular elliptic curves over $\Fbar_p$ are isomorphic to maximal orders in the quaternion algebra $B_0$, it is natural to consider orientations by quaternion algebras. Whereas the usual single-orientations stratify the vertices in the isogeny graphs by quadratic orders, such orientations stratify the vertices by quaternion orders, and they are equivalent to specifying a pair of single-orientations; we refer to them as double-orientations. 
 
Let $K_1=\Q(\sqrt{d_1})$ and $K_2=\Q(\sqrt{d_2})$ be imaginary quadratic fields, where $d_1, d_2<0$ are square-free integers. Suppose that $K_1$ and $K_2$ have simultaneous embeddings into $B_0$ and their images generate $B_0$. Then there exist $\beta_1, \beta_2 \in B_0$ such that
\begin{equation}
    \beta_1^2 = d_1, \beta_2^2 =d_2,  \beta_1\beta_2+\beta_2\beta_1=s \in \Q,
\end{equation}
and the quadratic subalgebras $\Q(\beta_1)$ and $\Q(\beta_2)$ generate $B_0$. In particular, $\{1, \beta_1, \beta_2, \beta_1\beta_2\}$ form a basis for $B_0$. The conditions for such fields are given as follows.
\begin{proposition} \label{condition for simultaneous embeddings - fields}
    $K_1$ and $K_2$ have simultaneous embeddings into $B_0$ and their images generate $B_0$ if and only if there is $s \in \Q$ such that $s^2<4d_1d_2$ and $d_1, d_2$ satisfy the following condition on the Hilbert symbol at prime $\ell$:
    \begin{equation}
        (d_1, s^2-4d_1d_2)_{\ell}=\begin{cases}
            -1 & \text{if } \ell=p \\
            1 & \text{otherwise.}
        \end{cases}
    \end{equation}
\end{proposition}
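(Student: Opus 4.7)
The plan is to realize $B_0$ as an explicit Hilbert-symbol algebra determined by $d_1, d_2, s$, and then invoke the classification of quaternion algebras over $\Q$ by their ramification set.

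For the forward direction, assume simultaneous embeddings whose images generate $B_0$, producing $\beta_1, \beta_2 \in B_0$ with $\beta_i^2 = d_i$. The first step is to show that $\beta_1 \beta_2 + \beta_2 \beta_1$ lies in $\Q$: a direct computation (using $\beta_i^2 \in \Q$) shows it commutes with both $\beta_1$ and $\beta_2$, so it lies in the centralizer of the subalgebra generated by $\beta_1, \beta_2$, which by hypothesis is all of $B_0$, forcing it to lie in the center $\Q$. This defines $s$. Next, set $\gamma := \beta_1 \beta_2 - s/2$; a short calculation yields $\gamma \beta_1 = -\beta_1 \gamma$ and $\gamma^2 = (s^2 - 4 d_1 d_2)/4$. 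In particular $\gamma^2 \neq 0$ (otherwise $B_0$ would not be a division algebra), so $\{1, \beta_1, \gamma, \beta_1\gamma\}$ is a $\Q$-basis identifying $B_0$ with the Hilbert-symbol algebra $\bigl(\tfrac{d_1,\,(s^2-4d_1d_2)/4}{\Q}\bigr)$, which equals $\bigl(\tfrac{d_1,\, s^2 - 4d_1 d_2}{\Q}\bigr)$ since Hilbert symbols are insensitive to square factors. The ramification hypothesis on $B_0$ then translates exactly into the Hilbert-symbol condition. At $\ell = \infty$ this reads $(d_1,\, s^2-4d_1d_2)_\infty = -1$, and since $d_1 < 0$ this is equivalent to $s^2 - 4d_1 d_2 < 0$, i.e.\ $s^2 < 4d_1d_2$.

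For the converse, assume the Hilbert-symbol condition together with $s^2 < 4d_1d_2$. The algebra $A := \bigl(\tfrac{d_1,\,s^2-4d_1d_2}{\Q}\bigr)$ is then ramified at exactly $\{p, \infty\}$, so by the classification of quaternion algebras over $\Q$ it is isomorphic to $B_0$. In $A$ we have standard generators $i, j$ satisfying $i^2 = d_1$, $j^2 = (s^2 - 4 d_1 d_2)/4$, and $ij = -ji$. The task is to build $\beta_2$ inside $\Q\langle i, j\rangle$ with $\beta_2^2 = d_2$ and $i\beta_2 + \beta_2 i = s$. Writing an ansatz $\beta_2 = a + b i + c j + d\,ij$, the anticommutation of $i$ and $j$ reduces $i\beta_2 + \beta_2 i$ to $2a\,i + 2b d_1$, so one is forced to take $a = 0$ and $b = s/(2d_1)$; and then choosing $c = 0$, $d = 1/d_1$ makes the cross terms vanish and produces $\beta_2^2 = d_2$ after a straightforward computation. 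Setting $\beta_1 := i$, the subfields $\Q(\beta_1), \Q(\beta_2)$ embed $K_1, K_2$ into $B_0 \cong A$, and they generate $B_0$ because $\{1, \beta_1, \beta_2, \beta_1 \beta_2\}$ is related to $\{1, i, j, ij\}$ by a triangular change of basis whose determinant is nonzero exactly because $s^2 - 4 d_1 d_2 \neq 0$.

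The main potential pitfall is bookkeeping with the Hilbert symbol under the change of second generator from $\gamma$ to a genuine square root of $s^2-4d_1d_2$, together with the sign analysis at $\infty$ given that both $d_1$ and $d_2$ are negative; these are routine once one remembers that $(a, bc^2)_\ell = (a,b)_\ell$. The reverse direction's explicit formula for $\beta_2$ could also be packaged more invariantly as $\beta_2 = \beta_1^{-1}(s + \gamma)/2$, but the coordinate version above is the cleanest to verify.
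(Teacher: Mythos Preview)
Your argument is correct and self-contained, whereas the paper does not prove this statement at all but simply cites \cite[Proposition~1(a)]{BE92}. Your approach is exactly the standard one behind that reference: rewrite $B_0$ as the Hilbert-symbol algebra $\bigl(\tfrac{d_1,\,s^2-4d_1d_2}{\Q}\bigr)$ via the element $\gamma=\beta_1\beta_2-s/2$, and then read off the ramification set. So there is no genuine methodological difference to discuss; you have supplied the proof that the paper outsources.

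Two small points of sloppiness worth cleaning up. First, there is a harmless mismatch in the converse: you define $A=\bigl(\tfrac{d_1,\,s^2-4d_1d_2}{\Q}\bigr)$ but then write $j^2=(s^2-4d_1d_2)/4$; either normalization works, but pick one. Second, the change of basis from $\{1,i,j,ij\}$ to $\{1,\beta_1,\beta_2,\beta_1\beta_2\}$ is not actually triangular with your choice $\beta_2=(s/2d_1)i+(1/d_1)ij$, and its determinant is $-1/d_1$, which is nonzero regardless of whether $s^2-4d_1d_2$ vanishes; the condition $s^2\neq 4d_1d_2$ is instead what guarantees $\gamma\neq 0$ in the forward direction. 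Neither issue affects the validity of the argument.
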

\begin{proof}
    This is \cite[Proposition 1(a)]{BE92}.
\end{proof}

We define a $K_1, K_2$-orientation on a supersingular elliptic curve $E$ to be simultaneous embeddings $\jmath: K_1, K_2 \rightarrow \End^0(E)$ such that images of $K_1, K_2$ generate $\End^0(E)$. A $K_1, K_2$-orientation $\jmath$ on $E$ induces a pair of $K_1$-orientation and $K_2$-orientation on $E$ obtained by restricting $\jmath$ to $K_1$ and $K_2$ respectively. Accordingly, we will denote by $\jmath=(\iota_1, \iota_2)$ if $\iota_1=\left.\jmath\right|_{K_1}$ and $\iota_2=\left.\jmath\right|_{K_2}$. We will call a pair $(E, \jmath)$ of a supersingular elliptic curve $E$ and a $K_1, K_2$-orientation $\jmath$ on $E$ a $K_1, K_2$-oriented curve. 

\begin{details}
    \begin{tbox}
        In literature, single orientations are defined for any elliptic curves over finite fields. For double-orientations, by definition, we can only consider supersingular elliptic curves. 
    \end{tbox}
\end{details}

Given an isogeny $\varphi: E \rightarrow E'$ and a $K_1, K_2$-orientation $\jmath$ on $E$, there is an induced $K_1, K_2$-orientation on $E'$ given by
\begin{equation}
    \varphi_\ast(\jmath):=(\varphi_\ast(\iota_1), \varphi_\ast(\iota_2)).
\end{equation}
We will call $\varphi: (E, \jmath) \rightarrow (E', \jmath')$ a $K_1, K_2$-oriented isogeny if $\jmath'=\varphi_\ast(\jmath)$. A $K_1, K_2$-oriented isogeny is called a $K_1, K_2$-isomorphism (resp. $K_1, K_2$-automorphism) if it is an isomorphism (resp. automorphism) of the underlying curves. Two $K_1, K_2$-oriented isogenies $\varphi: (E, \jmath) \rightarrow (E', \jmath'), \psi: (E, \jmath) \rightarrow (E'', \jmath'')$ are called equivalent if the underlying isogenies are equivalent.

Let $\mathcal{O}_{K_1}$ and $\mathcal{O}_{K_2}$ be maximal orders of $K_1$ and $K_2$ respectively. 

\begin{proposition} \label{condition for simultaneous embeddings - maximal orders}
    The maximal orders $\mathcal{O}_{K_1}$ and $\mathcal{O}_{K_2}$ have simultaneous embeddings into $B_0$ and their images generate an order in $B_0$ if and only if there is $s \in \Z$ such that in addition to the conditions on $s$ in Proposition \ref{condition for simultaneous embeddings - fields}, it satisfies $s \equiv 2 \pmod{\delta}$ where
    \begin{equation}
        \delta=\begin{cases}
            1 & \text{if } d_1, d_2 \not\equiv 1 \pmod{4}, \\
            4 & \text{if } d_1, d_2 \equiv 1  \pmod{4},\\
            2 & \text{otherwise}. 
            \text{}
        \end{cases}
    \end{equation}
\end{proposition}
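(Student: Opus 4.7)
The plan is to reduce to Proposition~\ref{condition for simultaneous embeddings - fields} and then characterize integrality. Given embeddings of $K_1, K_2$ into $B_0$, write $\beta_i \in B_0$ for the images of $\sqrt{d_i}$, so that $\beta_i^2 = d_i$ and $s := \beta_1\beta_2 + \beta_2\beta_1 \in \Q$. Let $\alpha_i \in B_0$ denote the image of the standard generator $\omega_{d_i}$ of $\mathcal{O}_{K_i}$ as described in Lemma~\ref{order-gen}, so $\alpha_i = \beta_i$ when $d_i \not\equiv 1 \pmod 4$ and $\alpha_i = (1+\beta_i)/2$ when $d_i \equiv 1 \pmod 4$. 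The subring $R \subseteq B_0$ generated by the two images is the $\Z$-algebra generated by $\alpha_1, \alpha_2$; using the relations $\alpha_i^2 = \trd(\alpha_i)\alpha_i - \nrd(\alpha_i)$ together with the trace identity $\alpha_2\alpha_1 = \trd(\alpha_1\alpha_2) - \trd(\alpha_1)\trd(\alpha_2) + \trd(\alpha_2)\alpha_1 + \trd(\alpha_1)\alpha_2 - \alpha_1\alpha_2$, one checks that $R = \Z\langle 1,\alpha_1,\alpha_2,\alpha_1\alpha_2\rangle$ as a $\Z$-module, provided the summands on the right make sense as integral linear combinations. Since $K_1, K_2$ generate $B_0$ as a $\Q$-algebra, this $\Z$-module spans $B_0$ over $\Q$ and has rank $4$.

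Thus $R$ is an order in $B_0$ precisely when every element of $R$ is integral over $\Z$, and by the generation just described this amounts to integrality of $\alpha_1\alpha_2$ (the other generators being integral by assumption). Since $\nrd(\alpha_1\alpha_2) = \nrd(\alpha_1)\nrd(\alpha_2) \in \Z$ automatically, the only new constraint is $\trd(\alpha_1\alpha_2) \in \Z$. Using $\trd(\beta_i) = 0$ and $\trd(\beta_1\beta_2) = \beta_1\beta_2 + \overline{\beta_1\beta_2} = \beta_1\beta_2 + \beta_2\beta_1 = s$, I would compute the trace in each of the three cases and read off the congruence: if neither $d_i \equiv 1 \pmod 4$, then $\trd(\alpha_1\alpha_2) = s$ and no extra condition beyond $s \in \Z$ is needed; if exactly one $d_i \equiv 1 \pmod 4$, then $\trd(\alpha_1\alpha_2) = s/2$, forcing $s$ even, i.e.\ $s \equiv 2 \pmod 2$; and if both $d_i \equiv 1 \pmod 4$, then $\trd(\alpha_1\alpha_2) = (2+s)/4$, forcing $s \equiv 2 \pmod 4$. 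These are exactly the conditions $s \equiv 2 \pmod \delta$ with $\delta$ as in the statement.

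For the converse, given $s \in \Z$ satisfying both the Hilbert-symbol constraints of Proposition~\ref{condition for simultaneous embeddings - fields} and $s \equiv 2 \pmod \delta$, that proposition produces field embeddings with $\beta_1\beta_2 + \beta_2\beta_1 = s$, and the integrality calculation just made shows that the subring generated by the images of $\mathcal{O}_{K_1}$ and $\mathcal{O}_{K_2}$ is an order. The main obstacle is merely bookkeeping: making sure that $R$ is really generated as a $\Z$-module by $\{1,\alpha_1,\alpha_2,\alpha_1\alpha_2\}$ (so that integrality of $\alpha_1\alpha_2$ suffices) and that the trace identity is applied correctly; the case analysis itself is routine once $\trd(\beta_1\beta_2) = s$ is recorded.
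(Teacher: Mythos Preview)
Your argument is correct. The paper does not actually prove this statement but simply cites \cite[Proposition~1(b)]{BE92}; your direct computation of $\trd(\alpha_1\alpha_2)$ in each of the three cases, together with the reduction of the order condition to integrality of $\alpha_1\alpha_2$, is the standard approach and presumably mirrors the proof in that reference.
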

\begin{proof}
   This is \cite[Proposition 1(b)]{BE92}. 
\end{proof}

For the rest of the paper, we will assume that $K_1, K_2$ satisfy the conditions in Proposition \ref{condition for simultaneous embeddings - fields} and Proposition \ref{condition for simultaneous embeddings - maximal orders}. Let $\calO_1 \subseteq K_1$ and $\calO_2 \subseteq K_2$ be orders such that $\iota_1$ is a primitive $\calO_1$-orientation on $E$ and $\iota_2$ is a primitive $\calO_2$-orientation on $E$. Then we call $\jmath=(\iota_1, \iota_2)$ a primitive $\mathcal{O}_1, \mathcal{O}_2$-orientation on $E$. 

\begin{remark}
 Since the images of $\mathcal{O}_{K_1}, \mathcal{O}_{K_2}$ generate an order in $B_0$ and 
 \begin{align}
     \calO_1&=\Z+f_1\mathcal{O}_{K_1},\\
     \calO_2&=\Z+f_2\mathcal{O}_{K_2}
 \end{align}
 for some $f_1, f_2 \in \Z$, the images of $\calO_1, \calO_2$ generate an order in $B_0$ by \cite[Exercise 10.5]{Voi21}.
\end{remark}

Note that we can view $K_1$ and $K_2$ as quadratic $\Q$-subalgebras of an abstract quaternion algebra $B_0$ by considering a pair of fixed embeddings of $K_1$ and $K_2$ into $B_0$ and identify them with their images. In this way, we can alternatively view a $K_1, K_2$-orientation $\jmath$ on $E$ as an isomorphism of $\Q$-algebras $\jmath: B_0 \rightarrow
\End^0(E)$. 

Now, we study the properties of double-orientations.

Consider a $K_1, K_2$-orientation $\jmath_0$ on $E_0$. Each pair $(E, \varphi)$ in $\SS(E_0)$ admits an induced $K_1,K_2$-orientation $\jmath=\varphi_\ast(\jmath_0)$, and $\varphi:(E, \jmath_0) \rightarrow (E,\jmath)$ is a $K_1, K_2$-oriented isogeny. We say a $K_1, K_2$-oriented isogeny is cyclic if the underlying isogeny is cyclic. By Remark \ref{unique cyclic isogeny}, there is a cyclic $K_1, K_2$-oriented isogeny $\varphi' : (E_0, \jmath_0) \rightarrow (E,\jmath)$ and $\varphi \sim \varphi'[n]$ for some integer $n \geq 1$. 

By definition, $\jmath_0$ factors through the isomorphism of $\Q$-algebras $\kappa_\varphi:\End^0(E) \rightarrow \End^0(E_0)$ as follows:
\[
\begin{tikzcd}
B_0 \arrow[r, "\jmath_0"] \arrow[dr, swap, "\jmath"] & \End^0(E_0)  \\
& \End^0(E) \arrow[u, "\kappa_\varphi"']
\end{tikzcd}
\]
\begin{equation} \label{embedding double orientation}
    \kappa_{\varphi}(\jmath(\beta)) = \jmath_0(\beta) \ \forall \beta \in B_0.
\end{equation}

Let $\varphi: (E_0, \jmath_0) \rightarrow (E,\jmath)$, $\varphi': (E_0, \jmath_0) \rightarrow (E', \jmath')$, and $\psi : (E, \jmath) \rightarrow (E', \jmath')$ be $K_1,K_2$-oriented isogenies such that their $\varphi'=\psi\varphi$.
\[
\begin{tikzcd}
(E_0, \jmath_0) \arrow[r, "\varphi" above] \arrow[rr, bend right=20, "\varphi'" below] 
    & (E, \jmath) \arrow[r, "\psi" above] 
    & (E', \jmath')
\end{tikzcd}
\]
Then we have the following commutative diagram of isomorphisms of $\Q$-algebras
 \[
\begin{tikzcd}
B_0 \arrow[r, "\jmath_0"] \arrow[dr, swap, "\jmath"] \arrow[ddr, "\jmath'"'] & \End^0(E_0)  \\
& \End^0(E) \arrow[u, "\kappa_\varphi"'] \\
& \End^0(E') \arrow[u, "\kappa_\psi"'] \arrow[uu, bend right=50, "\kappa_{\varphi'}"']
\end{tikzcd}
\]
with
\begin{equation} \label{induced homomorphism of quaternion algebras}
    \kappa_{\varphi'}(\End(E'))=\kappa_\varphi(\kappa_\psi(\End(E'))).
\end{equation}

\begin{proposition} \label{single and double conjugate}
    Let $\mathfrak{O}, \mathfrak{O}'$ be maximal orders in $B_0$. Let $\iota_1, \iota_2$ be embeddings of $K_1, K_2$ into $B_0$ such that their images generate $B_0$ and the images of their maximal orders generate an order in $B_0$. Then $\mathfrak{O}= \mathfrak{O}'$ if and only if $\mathfrak{O}\cong_{\iota_1}\mathfrak{O}'$ and $\mathfrak{O}\cong_{\iota_2}\mathfrak{O}'$.
\end{proposition}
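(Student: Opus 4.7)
The forward direction is immediate: taking $\alpha_1 = \alpha_2 = 1 \in \iota_k(K_k)^\times$ establishes both conjugacies when $\mathfrak{O} = \mathfrak{O}'$.

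For the backward direction, my plan is to exploit the uniqueness of primitive connecting ideals. Given conjugators $\alpha_1 \in \iota_1(K_1)^\times$ and $\alpha_2 \in \iota_2(K_2)^\times$ realizing $\mathfrak{O}' = \alpha_k^{-1}\mathfrak{O}\alpha_k$, after rescaling by rational integers---which does not affect the conjugation---I may assume each $\alpha_k \in \mathfrak{O}$ is primitive. Lemma~\ref{index for isomorphic maximal orders}(b) then guarantees that each $I_k := \mathfrak{O}\alpha_k$ is the primitive connecting $\mathfrak{O}, \mathfrak{O}'$-ideal, and uniqueness (Proposition~\ref{primitive and minimal reduced norm ideal}) forces $I_1 = I_2$. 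It follows that $\alpha_1 = u \alpha_2$ for some $u \in \mathfrak{O}^\times$.

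The heart of the proof is then extracting $\mathfrak{O}' = \mathfrak{O}$ from this identity. The key fact is $\iota_1(K_1) \cap \iota_2(K_2) = \Q$, since the two imaginary quadratic subfields are distinct (their images generate $B_0$) and each quadratic subalgebra of $B_0$ admits only $\Q$ as a proper $\Q$-subalgebra. In the generic regime $\mathfrak{O}^\times = \{\pm 1\}$, the unit $u = \pm 1$ is rational, so $\alpha_1 = \pm\alpha_2 \in \Q$; primitivity of $\alpha_1 \in \mathfrak{O}\cap\Q$ forces $\alpha_1 \in \{\pm 1\}$, which is central, and hence $\mathfrak{O}' = \mathfrak{O}$. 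At small $p$ where $\mathfrak{O}^\times$ contains non-trivial roots of unity of order $3$, $4$, or $6$, an analogous reduction resolves the subcases in which $\Q(u) \subseteq B_0$ coincides with $\iota_1(K_1)$ or $\iota_2(K_2)$, by forcing the opposite conjugator into $\Q$ and applying the same primitivity argument.

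The main obstacle is the residual case where $\Q(u)$ is a third imaginary quadratic subfield of $B_0$, disjoint from both $\iota_k(K_k)$. My plan there is to proceed by local--global analysis: at $p$ the maximal order of $B_0 \otimes \Q_p$ is unique, so $\mathfrak{O}_p = \mathfrak{O}'_p$ automatically; at each prime $\ell \neq p$ one uses the Bruhat--Tits tree of $M_2(\Q_\ell)$, where the $\iota_k$-orbit of a vertex under conjugation by $(K_k \otimes \Q_\ell)^\times$ is either a single vertex (when $\ell$ is non-split in $K_k$) or an apartment (when split). Combining the assumption that $\mathfrak{O}'_\ell$ lies simultaneously in the $\iota_1$- and $\iota_2$-orbits of $\mathfrak{O}_\ell$, together with Proposition~\ref{single oriented isogeny}(b) which identifies the corresponding local intersections $\mathfrak{O}_\ell \cap \iota_k(K_k)_\ell$, should pin down $\mathfrak{O}_\ell = \mathfrak{O}'_\ell$ at each such $\ell$, and the global equality $\mathfrak{O} = \mathfrak{O}'$ follows by local--global reconstruction of lattices.
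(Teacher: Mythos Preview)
Your argument has a genuine gap. The detour through the primitive connecting $\mathfrak{O},\mathfrak{O}'$-ideal correctly yields $\alpha_1 = u\alpha_2$ with $u \in \mathfrak{O}^\times$, and the case $u=\pm 1$ is fine; but the ``residual case'' where $u$ is a non-trivial root of unity generating a quadratic subfield distinct from both $\iota_k(K_k)$ is not proved. You offer only a plan (Bruhat--Tits apartments, local orbits, Proposition~\ref{single oriented isogeny}(b)), and it is not clear that the plan succeeds as stated: for instance, when $\ell$ splits in both $K_1$ and $K_2$, the two $\iota_k$-orbits of $\mathfrak{O}_\ell$ are apartments through the same vertex, and two such apartments can share an entire half-line, so ``lying in both orbits'' does not by itself force $\mathfrak{O}'_\ell = \mathfrak{O}_\ell$.

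The paper's proof is a two-line direct argument that avoids all of this machinery and all case-splitting on $\mathfrak{O}^\times$. From $\alpha^{-1}\mathfrak{O}\alpha = \beta^{-1}\mathfrak{O}\beta$ with $\alpha \in \iota_1(K_1)^\times$ and $\beta \in \iota_2(K_2)^\times$, the paper asserts that $\alpha\beta^{-1}$ centralizes $\mathfrak{O}$ and hence lies in the center $\Q$ of $B_0$; then $\alpha = r\beta$ for some $r\in\Q^\times$, which forces $\alpha \in \iota_1(K_1)\cap\iota_2(K_2)=\Q$, and conjugation by a rational scalar is trivial. The entire backward direction is this single implication chain --- no connecting ideals, no primitivity normalizations, and no local--global analysis.
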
 
\begin{proof}
    The forward direction is trivial. For the other direction, suppose
    \begin{equation} 
\mathfrak{O}'=\alpha^{-1}\mathfrak{O}\alpha=\beta^{-1}\mathfrak{O}\beta
    \end{equation}
    for some $\alpha \in \iota_1(K_1)^\times$ and $\beta \in \iota_2(K_2)^\times$. This implies $\alpha\beta^{-1}$ is in the centralizer of $\mathfrak{O}$ which is contained the centralizer $\Q$ of $B_0$. Hence, $\alpha=r\beta \in \iota_2(K)$ for some $r \in \Q$. Since $\iota_1(K_1) \cap \iota_2(K_2)=\Q$, we must have $\alpha, \beta \in \Q$, i.e., $\mathfrak{O}'=\mathfrak{O}$.
\end{proof}

\begin{details}
    \begin{tbox}
    $\alpha, \beta \in B_0$ commute if and only if their pure quaternion parts are scalar multiples of each other
\end{tbox}
\end{details}

\begin{proposition} \label{double orientaion and maximal order}
    Let $\varphi: (E_0, \jmath_0) \rightarrow (E, \jmath)$ and $\varphi': (E_0, \jmath_0) \rightarrow (E', \jmath')$ be cyclic $K_1,K_2$-oriented isogenies. The following hold:
    \begin{enumerate}
        \item[(a)] $(E,\jmath) \cong (E', \jmath')$ if and only if $\frakO_{\varphi}=\frakO_{\varphi'}$.
        \item[(b)] If $\ker\varphi \subseteq \ker\varphi'$, then there exists a cyclic $K_1, K_2$-oriented isogeny $\psi: (E, \jmath)\rightarrow (E', \jmath')$ such that $\varphi'=\psi\varphi$.
    \end{enumerate}
\end{proposition}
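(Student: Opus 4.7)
The plan is to reduce part (a) to the single-orientation analogue \ref{single oriented isogeny}(c) combined with the rigidity statement \ref{single and double conjugate}, and to obtain part (b) by upgrading the underlying isogeny factorization from \ref{cyclic factor through} via functoriality of pushforwards of orientations.

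For the forward direction of (a), I would start with a $K_1,K_2$-oriented isomorphism $\lambda : (E,\jmath) \rightarrow (E',\jmath')$. Writing $\jmath = (\iota_1,\iota_2)$ and $\jmath'=(\iota_1',\iota_2')$, the isomorphism $\lambda$ is in particular a $K_i$-oriented isomorphism for $i=1,2$, so Proposition \ref{single oriented isogeny}(c) applied twice (once with each embedding cut out by $\jmath_0$) yields
\[
\frakO_\varphi \cong_{\jmath_0|_{K_1}} \frakO_{\varphi'}
\qquad \text{and} \qquad
\frakO_\varphi \cong_{\jmath_0|_{K_2}} \frakO_{\varphi'}.
\]
Since the images of $K_1$ and $K_2$ under $\jmath_0$ generate $B_0$ and the images of their maximal orders generate an order in $B_0$, Proposition \ref{single and double conjugate} collapses these two simultaneous conjugacies into an honest equality $\frakO_\varphi = \frakO_{\varphi'}$.

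For the converse direction of (a), the equality $\frakO_\varphi = \frakO_{\varphi'}$ places the pairs $(E,\varphi)$ and $(E',\varphi')$ in the same equivalence class of $\SScy(E_0)$ by Theorem \ref{bjection between curves, ideals, maximal orders}, so $\varphi' = \lambda \varphi$ for some isomorphism $\lambda : E \rightarrow E'$. Functoriality of pushforward then gives
\[
\jmath' = \varphi'_\ast(\jmath_0) = \lambda_\ast(\varphi_\ast(\jmath_0)) = \lambda_\ast(\jmath),
\]
so $\lambda$ is the desired $K_1,K_2$-oriented isomorphism. For part (b), Proposition \ref{cyclic factor through} supplies a cyclic isogeny $\psi : E \rightarrow E'$ with $\varphi' = \psi\varphi$, and the same functoriality argument gives
\[
\jmath' = \varphi'_\ast(\jmath_0) = \psi_\ast(\varphi_\ast(\jmath_0)) = \psi_\ast(\jmath),
\]
so $\psi$ is automatically $K_1,K_2$-oriented.

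The main conceptual ingredient is the rigidity result \ref{single and double conjugate}: equipping a supersingular curve with a double-orientation eliminates the $\iota_0$-conjugacy ambiguity present in the single-orientation setting, so that the maximal-order invariant $\frakO_\varphi$ recovers the $K_1,K_2$-oriented isomorphism class on the nose rather than merely up to conjugation by $\iota_0(K)^\times$. Once this is invoked, the remainder of the argument is routine manipulation of pushforwards and Deuring-type dictionaries, and I do not expect any technical obstacle.
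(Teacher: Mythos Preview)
Your proof is correct and essentially the same as the paper's: both directions of (a) go through Proposition~\ref{single oriented isogeny}(c) and Proposition~\ref{single and double conjugate}, and (b) goes through Proposition~\ref{cyclic factor through}. Your converse of (a) via Theorem~\ref{bjection between curves, ideals, maximal orders} and your direct functoriality argument $\jmath' = (\psi\varphi)_\ast(\jmath_0) = \psi_\ast(\jmath)$ for (b) are slightly more streamlined than the paper's route through Proposition~\ref{single oriented isogeny}(d), but the underlying ideas are identical.
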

\begin{proof}
    Let $\jmath_0=(\iota_{0,1}, \iota_{0,2})$, $\jmath=(\iota_1, \iota_2)$, and $\jmath'=(\iota_1',\iota_2')$.

   (a) By Proposition \ref{single oriented isogeny}(c) and Proposition \ref{single and double conjugate}, $(E,\jmath) \cong (E',\jmath')$ if and only if $(E,\iota_1) \cong (E', \iota_1')$ and $(E,\iota_2) \cong (E', \iota_2')$ if and only if $\mathfrak{O}_{\varphi} \cong_{\iota_{0,1}}\mathfrak{O}_{\varphi'}$ and $\mathfrak{O}_{\varphi} \cong_{\iota_{0,2}}\mathfrak{O}_{\varphi'}$ if and only if $\mathfrak{O}_{\varphi}=\mathfrak{O}_{\varphi'}$. 

    (b) If $\ker \varphi \subseteq \ker \varphi'$, then there exists a cyclic isogeny $\psi: E \rightarrow E'$ such that $\varphi'=\psi\varphi$ by Proposition \ref{cyclic factor through}. By Proposition \ref{single oriented isogeny}(d), $\iota_1'=\psi_{\ast}(\iota_1)$ and $\iota_2'=\psi_{\ast}(\iota_2)$. Hence, $\jmath'=\psi_{\ast}(\jmath)$.
\end{proof}

Suppose we endow the initial curve $E_0$ with a $K_1,K_2$-orientation $\jmath_0$ on $E_0$.
\begin{corollary}  \label{marked curves to oriented curves}
    There is a bijection
     \begin{equation}
\begin{array}{c@{\;}c@{\;}c}
\SS^\ast(E_0)
& \longleftrightarrow & 
\{ \substack{\text{isomorphism classes of } K_1,K_2\text{-oriented} \\ \text{supersingular  elliptic curves over } \Fbar_p}\} \\[1ex]
(E,\varphi)/\sim 
& \mapsto & 
(E, \jmath)/\cong
\end{array}
\end{equation}
where $\jmath=\varphi_\ast(\jmath_0)$.
\end{corollary}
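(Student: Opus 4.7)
The plan is to check well-definedness of the stated map, then deduce injectivity from Proposition~\ref{double orientaion and maximal order}(a) combined with Theorem~\ref{bjection between curves, ideals, maximal orders}, and finally prove surjectivity using Skolem--Noether together with the cyclic-reduction argument of Remark~\ref{unique cyclic isogeny}.

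For well-definedness, observe that if $\varphi:E_0\to E$ is a non-zero isogeny, then the assignment $\varphi_\ast:\End^0(E_0)\to\End^0(E)$ inverse to $\kappa_\varphi$ is an isomorphism of $\Q$-algebras, so the composite $\varphi_\ast\circ\jmath_0:B_0\to\End^0(E)$ is again a $\Q$-algebra isomorphism, and hence a valid $K_1,K_2$-orientation. If $(E,\varphi)\sim(E',\varphi')$, one has $\varphi'=\lambda\varphi$ for some isomorphism $\lambda:E\to E'$, and then $\varphi'_\ast(\jmath_0)=\lambda_\ast(\varphi_\ast(\jmath_0))$, so $\lambda$ is a $K_1,K_2$-isomorphism between $(E,\varphi_\ast(\jmath_0))$ and $(E',\varphi'_\ast(\jmath_0))$.

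For injectivity, suppose $(E,\varphi_\ast(\jmath_0))\cong(E',\varphi'_\ast(\jmath_0))$. By Proposition~\ref{double orientaion and maximal order}(a) we obtain $\mathfrak{O}_\varphi=\mathfrak{O}_{\varphi'}$, and then Theorem~\ref{bjection between curves, ideals, maximal orders} yields $(E,\varphi)/\!\sim\,=\,(E',\varphi')/\!\sim$ in $\SScy(E_0)$.

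For surjectivity, given $(E,\jmath)$, first pick any isogeny $\psi:E_0\to E$, which exists by the connectedness of the supersingular isogeny graph (Theorem~\ref{Koh96 Theorem 79}). The induced orientation $\psi_\ast(\jmath_0)$ is generally not equal to $\jmath$, but by the Skolem--Noether theorem applied to $\End^0(E)\cong B_0$, there exists $\alpha\in\End^0(E)^\times$ with $\jmath(\beta)=\alpha\,\psi_\ast(\jmath_0)(\beta)\,\alpha^{-1}$ for all $\beta\in B_0$. After clearing denominators we may assume $\alpha\in\End(E)$, and setting $\varphi:=\alpha\psi$ a short computation with $\hat\alpha=\deg(\alpha)\alpha^{-1}$ gives
\begin{equation}
\varphi_\ast(\jmath_0)(\beta)=\frac{1}{\deg(\alpha\psi)}(\alpha\psi)\jmath_0(\beta)(\widehat{\alpha\psi})=\alpha\,\psi_\ast(\jmath_0)(\beta)\,\alpha^{-1}=\jmath(\beta).
\end{equation}
The isogeny $\varphi$ need not be cyclic, but by Remark~\ref{unique cyclic isogeny} there exists a cyclic $\varphi_c:E_0\to E$ with $\varphi\sim\varphi_c[n]$, and the same calculation given there (applied componentwise to $\iota_1,\iota_2$, since $[n]$ commutes with $\jmath_0(\beta)$) shows $\varphi_{c,\ast}(\jmath_0)=\varphi_\ast(\jmath_0)=\jmath$. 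Hence $(E,\varphi_c)/\!\sim\,\in\SScy(E_0)$ is a preimage of $(E,\jmath)/\!\cong$.

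The main obstacle is the surjectivity step: the Skolem--Noether adjustment produces an isogeny realizing $\jmath$, but this isogeny is not automatically cyclic, so one must invoke the cyclic-decomposition argument of Remark~\ref{unique cyclic isogeny} and verify (as in that remark) that passing to a cyclic representative does not change the induced orientation.
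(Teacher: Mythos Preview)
The paper records this as an unproven corollary of Proposition~\ref{double orientaion and maximal order}(a) and Theorem~\ref{bjection between curves, ideals, maximal orders}; your well-definedness and injectivity arguments correctly make that implicit reasoning explicit.

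The surjectivity step has a genuine gap. After Skolem--Noether, the primitive representative of $\alpha$ in $\End(E)$ may have $p\mid\nrd(\alpha)$, making $\varphi=\alpha\psi$ inseparable, and Remark~\ref{unique cyclic isogeny} is stated only for separable $\varphi\in\SS(E_0)$. Since the class $\Q^\times\varphi\subset\Hom(E_0,E)\otimes\Q$ is already determined by $\jmath$, no alternative choice of $\psi$ avoids this. In fact the gap reflects an issue with the statement itself: when $j(E_0)\in\F_p$ there is $\varpi\in\End(E_0)$ of reduced norm $p$, and $(E_0,\varpi_\ast(\jmath_0))$ is not isomorphic to $(E_0,\jmath_0)$ (an isomorphism would force $\lambda\in\Aut(E_0)\cap\Q^\times\varpi$, impossible since $\nrd(\lambda)=1\notin p\,\Q^{\times2}$), yet any isogeny $\varphi$ from $E_0$ whose induced orientation is isomorphic to $\varpi_\ast(\jmath_0)$ must, after composing with an isomorphism onto $E_0$, lie in $\Q^\times\Aut(E_0)\varpi$ and hence have degree divisible by $p$. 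Thus $(E_0,\varpi_\ast(\jmath_0))$ lies outside the image of $\SScy(E_0)$, so surjectivity fails as stated; the same issue affects Corollary~\ref{double oriented curves to maximal orders}.
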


\begin{corollary} \label{double oriented curves to maximal orders}
There are bijections
\begin{equation}
\begin{array}{c@{\;}c@{\;}c}
\{ \substack{\text{isomorphism classes of } K_1,K_2\text{-oriented} \\ \text{supersingular  elliptic curves over } \Fbar_p}\} 
& \longleftrightarrow & 
\left\{\text{maximal orders in} \End^0(E_0)  \right\} \\[1ex]
(E,\jmath)/\cong
& \mapsto & \mathfrak{O}_{\varphi}\\[1ex]
& \longleftrightarrow & \{\text{primitive left integral} \ \mathfrak{O}_0\text{-ideals}\} \\[1ex]
& \mapsto & I_\varphi
\end{array}
\end{equation}    
where $\varphi: (E_0, \jmath_0) \rightarrow (E, \jmath)$ is a cyclic $K_1,K_2$-oriented isogeny.
\end{corollary}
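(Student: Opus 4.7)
The plan is to compose the preceding Corollary~\ref{marked curves to oriented curves} with the two bijections of Theorem~\ref{bjection between curves, ideals, maximal orders}. The former identifies $\SScy(E_0)$ with isomorphism classes of $K_1, K_2$-oriented supersingular elliptic curves via $(E, \varphi)/\sim \mapsto (E, \varphi_\ast(\jmath_0))/\cong$, while the latter identifies $\SScy(E_0)$ with primitive left integral $\mathfrak{O}_0$-ideals through $(E, \varphi)/\sim \mapsto I_\varphi$, and with maximal orders in $\End^0(E_0)$ through $(E, \varphi)/\sim \mapsto \mathfrak{O}_\varphi = \calO_R(I_\varphi)$. Composing these should give exactly the maps claimed.

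To carry this out, I would first unpack the composite maps explicitly. Given a class $(E, \jmath)/\cong$, choose (via Corollary~\ref{marked curves to oriented curves}) any cyclic $K_1, K_2$-oriented isogeny $\varphi: (E_0, \jmath_0) \to (E, \jmath)$, and assign $I_\varphi$ and $\mathfrak{O}_\varphi$. Conversely, a primitive left $\mathfrak{O}_0$-ideal $I$ (respectively a maximal order $\mathfrak{O}$) determines $(E_I, \varphi_I)/\sim$ in $\SScy(E_0)$ by Theorem~\ref{bjection between curves, ideals, maximal orders}, and we equip $E_I$ with the pushforward orientation $(\varphi_I)_\ast(\jmath_0)$. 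The key thing to verify is independence of the chosen cyclic representative: if $\varphi: (E_0, \jmath_0) \to (E, \jmath)$ and $\varphi': (E_0, \jmath_0) \to (E', \jmath')$ are two cyclic $K_1, K_2$-oriented isogenies with $(E, \jmath) \cong (E', \jmath')$, then Proposition~\ref{double orientaion and maximal order}(a) forces $\mathfrak{O}_\varphi = \mathfrak{O}_{\varphi'}$, and then Theorem~\ref{bjection between curves, ideals, maximal orders} applied in the reverse direction yields $(E, \varphi) \sim (E', \varphi')$ and thus $I_\varphi = I_{\varphi'}$. Bijectivity of each component correspondence then upgrades to bijectivity of the composite.

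The step I expect to be the main obstacle is actually the surjectivity half of the prerequisite Corollary~\ref{marked curves to oriented curves}: one must show that every $K_1, K_2$-oriented curve $(E, \jmath)$ arises, up to $K_1, K_2$-isomorphism, as $(E, \varphi_\ast(\jmath_0))$ for some cyclic $\varphi: E_0 \to E$. The approach would be to first choose any cyclic $\psi: E_0 \to E$, which exists by Theorem~\ref{Koh96 Theorem 79} together with Corollary~\ref{decompose-primitive}, and then to compare the two $K_1, K_2$-orientations $\jmath$ and $\psi_\ast(\jmath_0)$ on $E$ by means of Skolem-Noether: their composition $\jmath \circ \kappa_\psi$ is an automorphism of $B_0$, hence inner, and the corresponding conjugating element of $B_0^\times$ can be used to modify $\psi$ into a cyclic isogeny $\varphi$ satisfying $\varphi_\ast(\jmath_0) = \jmath$. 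Granted this adjustment and the well-definedness argument above, the corollary follows at once as a composition of bijections already in hand.
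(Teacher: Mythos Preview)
Your approach is correct and matches the paper's: the corollary is stated there without proof, as an immediate composition of Corollary~\ref{marked curves to oriented curves} with the bijections of Theorem~\ref{bjection between curves, ideals, maximal orders}, exactly as you describe. Your well-definedness check via Proposition~\ref{double orientaion and maximal order}(a) is also the intended justification.

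Your final paragraph goes beyond what the paper does. The paper never explicitly verifies that every $K_1,K_2$-oriented curve $(E,\jmath)$ is $K_1,K_2$-isomorphic to one of the form $(E,\varphi_\ast(\jmath_0))$; this surjectivity is silently taken for granted in Corollary~\ref{marked curves to oriented curves}. Your Skolem--Noether argument is the right way to fill this gap: given any cyclic $\psi:E_0\to E$, the automorphism $\kappa_\psi\circ\jmath\circ\jmath_0^{-1}$ of $\End^0(E_0)$ is conjugation by some $\delta\in\End^0(E_0)^\times$, and after clearing denominators the composite $\psi\delta$ (followed by passing to the cyclic part via Remark~\ref{unique cyclic isogeny}) pushes $\jmath_0$ forward to $\jmath$. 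So your proposal is in fact slightly more complete than the paper's own treatment.
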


\begin{corollary}\label{bijection of edges with orientations} 
There is a bijection
 \begin{equation} 
\begin{array}{c@{\;}c@{\;}c}
\left\{ \substack{\text{equivalence classes of cyclic $K_1,K_2$-oriented isogenies of} \\ K_1,K_2\text{-oriented supersingular elliptic curves over } \Fbar_p} \right\} 
& \longleftrightarrow & 
\left\{ \substack{\text{primitive connecting ideals between} \\ \text{maximal orders in } \End^0(E_0) }\right\} \\[1ex]
\psi:(E,\jmath) \rightarrow (E',\jmath') /\sim
& \mapsto & 
\operatorname{pr}(I_{\varphi}^{-1}I_{\varphi}),
\end{array}
\end{equation}
where $\varphi : (E_0, \jmath_0) \rightarrow (E, \jmath)$ and $\varphi': (E_0, \jmath_0) \rightarrow (E',\jmath')$ are cyclic $K_1, K_2$-oriented isogenies.
\end{corollary}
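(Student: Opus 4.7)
The plan is to deduce this corollary from Proposition~\ref{bijection of edges} by transporting its content along the bijection of Corollary~\ref{marked curves to oriented curves}, which identifies $\SScy(E_0)$ with isomorphism classes of $K_1,K_2$-oriented supersingular elliptic curves via $(E,\varphi)/\sim\ \mapsto\ (E,\varphi_\ast(\jmath_0))/\cong$. Under this identification, a cyclic $K_1,K_2$-oriented representative $\varphi:(E_0,\jmath_0)\to (E,\jmath)$ is unique up to equivalence, so $I_\varphi$ and $\mathfrak{O}_\varphi$ are well-defined invariants of the oriented isomorphism class, and similarly for $\varphi'$ and $(E',\jmath')$. The map $\psi/\sim\ \mapsto\ \operatorname{pr}(I_\varphi^{-1}I_{\varphi'})$ is therefore unambiguous once the correspondence between isogenies is established.

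The crux of the argument is to show that equivalence classes of cyclic $K_1,K_2$-oriented isogenies $\psi:(E,\jmath)\to (E',\jmath')$ correspond to equivalence classes of cyclic compatible isogenies $\psi:(E,\varphi)\to (E',\varphi')$ in the sense of Proposition~\ref{bijection of edges}. For the forward direction, starting with a cyclic $K_1,K_2$-oriented $\psi$, the composite $\psi\varphi:E_0\to E'$ is separable and its induced orientation is $(\psi\varphi)_\ast(\jmath_0)=\psi_\ast(\varphi_\ast(\jmath_0))=\psi_\ast(\jmath)=\jmath'$. By Remark~\ref{unique cyclic isogeny}, we may write $\psi\varphi\sim \varphi''[n]$ for some cyclic $\varphi''$ and some $n\geq 1$, with $\varphi''$ inducing the same orientation on $E'$ as $\psi\varphi$, namely $\jmath'$. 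Hence $\varphi''$ is a cyclic $K_1,K_2$-oriented isogeny $(E_0,\jmath_0)\to (E',\jmath')$, and the bijection in Corollary~\ref{marked curves to oriented curves} forces $\varphi''\sim \varphi'$. This yields $\psi\varphi\sim n\varphi'$, i.e., the compatibility relation with integers $m=n$, $n_{\text{prop}}=1$. Conversely, compatibility $m\varphi'\sim n\psi\varphi$ gives $\varphi'_\ast(\jmath_0)=(\psi\varphi)_\ast(\jmath_0)$ once we use that $[n]_\ast$ acts trivially on pushforwards (since $[n]$ is central) and that equivalent isogenies produce pushforward orientations differing only by conjugation with an automorphism of $E'$; thus $\jmath'=\psi_\ast(\jmath)$ at the level of oriented isomorphism classes, so $\psi$ represents a cyclic $K_1,K_2$-oriented isogeny after adjustment by the corresponding automorphism.

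Since equivalence on both sides reduces to equivalence of the underlying isogenies, the correspondence descends to a bijection of equivalence classes, and Proposition~\ref{bijection of edges} then supplies the desired bijection with primitive connecting ideals and the explicit formula. The main obstacle will be the converse direction: a priori compatibility only forces $\psi_\ast(\jmath)$ and $\jmath'$ to be conjugate by some $\mu\in\Aut(E')$, and one must check that this ambiguity is precisely absorbed by the passage from isogenies to equivalence classes of isogenies together with isomorphism classes of oriented curves. This requires tracking how pushforward orientations transform under pre-composition with $[n]$ and post-composition with automorphisms of the target, and invoking the uniqueness from Corollary~\ref{marked curves to oriented curves} to pin down $\varphi''$ modulo equivalence.
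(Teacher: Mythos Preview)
Your proposal is correct and follows essentially the same route as the paper: reduce to Proposition~\ref{bijection of edges} by identifying equivalence classes of cyclic compatible isogenies between pairs in $\SScy(E_0)$ with equivalence classes of cyclic $K_1,K_2$-oriented isogenies, via Corollary~\ref{marked curves to oriented curves}. The paper's own proof is terser and only spells out one direction of this identification (compatible $\Rightarrow$ oriented, using $m\varphi'\sim n\psi\varphi$ to get $\psi_\ast(\jmath)=\jmath'$), whereas you supply both directions and explicitly track the automorphism ambiguity that arises when passing from $\sim$ to genuine equalities of pushforward orientations; your observation that this ambiguity is absorbed by the equivalence relation on isogenies is exactly what makes the argument go through.
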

\begin{proof}
    This follows from Proposition \ref{bijection of edges} as a cyclic $K_1, K_2$-oriented isogenies naturally arise from compatible isogenes; given a cyclic compatible isogeny $\psi: (E, \varphi) \rightarrow (E', \varphi')$, we have induced $K_1, K_2$-orientations $\jmath=\varphi_\ast(\jmath_0)$ and $ \jmath'=\varphi'_\ast(\jmath_0)$, where
    \begin{equation}  \psi_\ast(\jmath)=\psi_\ast(\varphi_\ast(\jmath_0))=\varphi'_\ast(\jmath_0)=\jmath'
    \end{equation}
    as we have $m\varphi' =n\psi \varphi$ for some non-zero integers $m,n$. Hence, $\psi:(E,\jmath) \rightarrow (E', \jmath')$ is a cyclic $K_1, K_2$-oriented isogeny.
\end{proof}

\begin{details}
\begin{tbox}
    $m\varphi' =n\psi \varphi$ implies $m\hat{\varphi'}=n\hat{\varphi}\hat{\psi}$ and $m^2\deg\varphi'=n^2\deg\psi \deg\varphi$. It follows that
    \begin{equation}
\psi_\ast(\iota)=\psi_\ast(\varphi_\ast(\iota_0))=\frac{1}{\deg\psi \deg\varphi}\psi\varphi \iota_0 \hat{\varphi}\hat{\psi}=\frac{1}{n^2\deg\varphi \deg\psi}(n\psi\varphi) \iota_0 (n \hat{\varphi}\hat{\psi})=\frac{1}{\deg\varphi'}\varphi' \iota_0 \hat{\psi}'
    \end{equation}
    for $K$-orientations.
\end{tbox}
\end{details}

\begin{lemma} \label{no loop maximal order}
    Let $\ell \neq p$ be a prime and $\mathfrak{O}$ be a maximal order in $B_0$. There is no integral two-sided $\mathfrak{O}$-ideal of reduced norm $\ell$.
\end{lemma}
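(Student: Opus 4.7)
The plan is to reduce the statement to a local claim at the prime $\ell$ and exploit the fact that $B_0$ is split there. Since $\ell\neq p$ and $B_0$ is ramified only at $p$ and $\infty$, we have $B_{0,\ell}:=B_0\otimes_{\Q}\Q_\ell \cong M_2(\Q_\ell)$, and up to conjugation the maximal order $\mathfrak{O}_\ell:=\mathfrak{O}\otimes_\Z\Z_\ell$ is $M_2(\Z_\ell)$. Suppose for contradiction that $I\subseteq \mathfrak{O}$ is an integral two-sided $\mathfrak{O}$-ideal of reduced norm $\ell$. The completion $I_\ell$ is then an integral two-sided $\mathfrak{O}_\ell$-ideal, and because reduced norm is compatible with completion one has $\nrd(I_\ell)=\ell\Z_\ell$.

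Next I would classify the integral two-sided $M_2(\Z_\ell)$-ideals in $M_2(\Q_\ell)$. Because $M_2(\Z_\ell)$ is Morita equivalent to $\Z_\ell$, every two-sided $M_2(\Z_\ell)$-ideal has the form $\ell^n M_2(\Z_\ell)$ for some $n\in \Z$, integral precisely when $n\geq 0$. This classification is also recorded in Voight's treatment of local quaternion orders and can be cited directly. Computing reduced norms, $\nrd(\ell^n M_2(\Z_\ell))$ is the $\Z_\ell$-ideal generated by $\nrd(\ell^n M)=\ell^{2n}\det(M)$ as $M$ ranges over $M_2(\Z_\ell)$, namely $\ell^{2n}\Z_\ell$. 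Hence the reduced norm of any integral two-sided $\mathfrak{O}_\ell$-ideal is a nonnegative even power of $\ell$.

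In particular no such ideal has reduced norm equal to $\ell$, contradicting $\nrd(I_\ell)=\ell\Z_\ell$. The argument is essentially routine; the only care needed is the standard compatibility $\nrd(I)_\ell=\nrd(I_\ell)$ under completion, so that the global assumption $\nrd(I)=\ell$ transports to $\nrd(I_\ell)=\ell\Z_\ell$, and the correct identification of the unique maximal order of $B_{0,\ell}$ (up to conjugation) with $M_2(\Z_\ell)$ when $\ell\neq p$.
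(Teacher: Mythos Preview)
Your proof is correct and follows essentially the same approach as the paper: localize at $\ell$, use that every two-sided ideal of $\mathfrak{O}_\ell\cong M_2(\Z_\ell)$ is of the form $\ell^n\mathfrak{O}_\ell$ (the paper cites \cite[23.2.3]{Voi21} for this, while you derive it via Morita equivalence), and conclude that the reduced norm must be an even power of $\ell$.
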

\begin{proof}
  By \cite[23.2.3]{Voi21}, every two-sided ideals of $\mathfrak{O}_\ell$ are powers of $\ell \mathfrak{O}_\ell$. It follows that for every integral two-sided $\mathfrak{O}$-ideal $I$, $\ell^2 \mid \nrd(I)$.
\end{proof}

\begin{corollary} \label{no loop curve}
    Let $\ell \neq p$ be a prime and $(E, \jmath) \cong (E',\jmath')$ be $K_1, K_2$-oriented supersingular elliptic curves over $\Fbar_p$. There is no $K_1,K_2$-oriented isogeny $\psi: (E, \jmath) \rightarrow (E',\jmath')$ of degree $\ell$.
\end{corollary}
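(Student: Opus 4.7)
The plan is to reduce the claim to Lemma \ref{no loop maximal order} via the bijection of Corollary \ref{bijection of edges with orientations}. The key preliminary observation is that any isogeny of prime degree $\ell$ is automatically cyclic, since its kernel has order $\ell$ and hence is cyclic. So a hypothetical $\psi: (E, \jmath) \rightarrow (E', \jmath')$ of degree $\ell$ would qualify as a cyclic $K_1, K_2$-oriented isogeny to which Corollary \ref{bijection of edges with orientations} applies.

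First I would take the reference curve to be $E_0 = E$ equipped with $\jmath_0 = \jmath$, so that $\varphi = \mathrm{id}: (E_0, \jmath_0) \rightarrow (E, \jmath)$ is a (trivially) cyclic $K_1, K_2$-oriented isogeny with $\mathfrak{O}_\varphi = \End(E)$. Since $(E, \jmath) \cong (E', \jmath')$, there is a $K_1, K_2$-isomorphism $\lambda: (E, \jmath) \rightarrow (E', \jmath')$, and I take $\varphi' = \lambda$ as a cyclic $K_1, K_2$-oriented isogeny from $(E_0, \jmath_0)$ to $(E', \jmath')$. Proposition \ref{double orientaion and maximal order}(a) then yields $\mathfrak{O}_{\varphi'} = \mathfrak{O}_{\varphi} = \End(E)$.

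Assume for contradiction that such a $\psi$ exists. By Corollary \ref{bijection of edges with orientations}, the equivalence class of $\psi$ corresponds to a primitive connecting $\mathfrak{O}_\varphi, \mathfrak{O}_{\varphi'}$-ideal $J$. Unwinding through Lemma \ref{composition by endomorphism}(a), applied to the composition $\psi \varphi = \psi$, we have $I_{\psi \varphi} = I_{\varphi} J$ with $\nrd(J) = \deg \psi = \ell$ (note that an ideal of prime reduced norm is automatically primitive, since non-primitivity would force $n^2 \mid \nrd$ for some $n > 1$). Since $\mathfrak{O}_\varphi = \mathfrak{O}_{\varphi'}$, the ideal $J$ is a two-sided integral $\mathfrak{O}_\varphi$-ideal of reduced norm $\ell$, directly contradicting Lemma \ref{no loop maximal order}.

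The main subtlety I anticipate is confirming that the primitive part of the connecting ideal attached to $\psi$ really does have reduced norm $\ell$ rather than a strict divisor of $\ell$; this is precisely where the primality of $\ell$ is used, and taking $E_0 = E$ (so that $\varphi = \mathrm{id}$ is already trivially primitive) avoids any issue with the primitivization operator $\operatorname{pr}$ shrinking the norm.
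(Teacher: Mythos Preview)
Your overall strategy is correct and mirrors the paper's: both reduce to Lemma~\ref{no loop maximal order} by exhibiting an integral two-sided ideal of reduced norm $\ell$ in a maximal order. The simplification $E_0=E$, $\varphi=\mathrm{id}$ is a nice touch. However, there is a small logical slip in the middle. You introduce $\varphi'=\lambda$ solely to deduce $\mathfrak{O}_{\varphi'}=\mathfrak{O}_\varphi$, and then invoke Lemma~\ref{composition by endomorphism}(a) on the composition $\psi\varphi$; but that lemma is stated for a post-composition by $\epsilon\in\End(E)$, whereas your $\psi$ maps into $E'$, so the citation does not literally apply. More importantly, it is not immediate from what you wrote that the ideal you obtain this way (namely $I_\psi$) has right order $\mathfrak{O}_{\varphi'}=\mathfrak{O}_\lambda$ rather than $\mathfrak{O}_\psi$ --- and that identification is precisely what makes the ideal two-sided.

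The quickest repair, given your setup, is to drop $\lambda$ altogether: since $\psi$ is itself a cyclic $K_1,K_2$-oriented isogeny $(E_0,\jmath_0)\to(E',\jmath')$ and $(E_0,\jmath_0)\cong(E',\jmath')$, Proposition~\ref{double orientaion and maximal order}(a) applied with $\varphi'=\psi$ gives $\mathfrak{O}_\psi=\mathfrak{O}_0$ directly, so $I_\psi$ is an integral two-sided $\mathfrak{O}_0$-ideal of reduced norm $\ell$. The paper takes the complementary route: it keeps a general reference curve $E_0$, forms the genuine endomorphism $\epsilon=\lambda^{-1}\psi\in\End(E)$ (so that $\epsilon:(E,\jmath)\to(E,\jmath)$ is $K_1,K_2$-oriented of degree $\ell$), and then applies Lemma~\ref{composition by endomorphism}(a) as stated, with $\epsilon$ rather than $\psi$, to obtain a two-sided $\mathfrak{O}_\varphi$-ideal of reduced norm $\deg\epsilon=\ell$.
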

\begin{proof}
    Let $\varphi: (E_0, \jmath_0) \rightarrow (E,\jmath)$ be a cyclic $K_1, K_2$-oriented isogeny. Suppose that there is a $K_1, K_2$-oriented isogeny $\psi : (E,j)\rightarrow (E',\jmath')$ of degree $\ell$. Since $(E,\jmath) \cong (E',\jmath')$, there is an isomorphism $\lambda: (E,\jmath) \rightarrow (E,\jmath')$ of $K_1, K_2$-oriented curves, and post-composition by $\lambda$ gives an endomorphism $\epsilon : (E,\jmath) \rightarrow (E,\jmath)$ of $K_1, K_2$-oriented curves. By Proposition \ref{double orientaion and maximal order}(a), $\mathfrak{O}_{\varphi}=\mathfrak{O}_{\epsilon\varphi}$. By the proof of Lemma \ref{composition by endomorphism}(a), there is an integral two-sided $\mathfrak{O}_{\varphi}$-ideal of reduced norm equal to $\deg \epsilon =\ell$, which is a contradiction by Lemma \ref{no loop maximal order}. 
\end{proof}

\section{Zoology of supersingular isogeny graphs and quaternion ideal graphs} \label{Supersingular isogeny graphs and quaternion ideal graphs} In this section, we give a complete analogy of variants of the supersingular $\ell$-isogeny graph and the corresponding variants of the quaternion $\ell$-ideal graph, and show that there exist graph isomorphisms between them.

\subsection{Double-oriented isogeny graphs} We define double-oriented isogeny graphs analogously to single-oriented isogeny graphs.

\begin{definition}
    The $K_1, K_2$-oriented supersingular $\ell$-isogeny graph $G_{K_1, K_2}(p, \ell)$ in characteristic $p$ is the directed multigraph whose vertices are isomorphism classes of $K_1, K_2$-oriented supersingular elliptic curves over $\Fbar_p$ and edges are the equivalence classes of $K_1, K_2$-oriented isogenies between them.
\end{definition}

The graph $G_{K_1, K_2}(p, \ell)$ has $\ell+1$ outgoing edges from each vertex. Given a double-oriented isogeny, we have two underlying single-oriented isogenies. Hence, we give the following definitions.

\begin{definition}
Let $\varphi : (E, \jmath) \rightarrow (E',\jmath')$ be a $K_1, K_2$-oriented isogeny, where $\jmath=(\iota_1, \iota_2)$ and $\jmath'=(\iota_1', \iota_2')$. We say $\varphi$ is $K_1$-ascending, $K_1$-horizontal, or $K_1$-descending if the underlying $K_1$-oriented isogeny $\varphi:(E,\iota_1) \rightarrow (E,\iota_1')$ is ascending, horizontal, or descending respectively, and similarly for $K_2$. We say that $\varphi$ is simultaneously ascending, horizontal, or descending, if both $\iota_1$ and $\iota_2$ have these properties, respectively. Finally, we say $\varphi$ is mixed property $1$-property $2$ if $\iota_1$ has property $1$ and $\iota_2$ has property $2$ and property $1$ is different from property $2$.
\end{definition}

Each vertex in $G_{K_1, K_2}(p,\ell)$ can be represented by a maximal order in the endomorphism algebra of a chosen supersingular ellipitc curve $E_0/\Fbar_p$. Such a representation is instrumental in analyzing the structure of $G_{K_1, K_2}(p,\ell)$ through the following notion, which is analogous to the rim of the single-oriented isogeny graph.
\begin{definition}
Given a vertex $(E,\jmath)/\cong$ in $G_{K_1, K_2}(p, \ell)$, we will call it a local root of $G_{K_1, K_2}(p, \ell)$ if $\jmath$ is a primitive $\calO_1,\calO_2$-orientation on $E$ for $\ell$-fundamental orders $\calO_1 \subseteq K_1$ and $\calO_2 \subseteq K_2$. It is local in the sense that we cannot further ascend, i.e.\ there is no $K_1,K_2$-oriented isogeny from $(E,\jmath)$ that is either $K_1$-ascending or $K_2$-ascending in its connected component in $G_{K_1,K_2}(p,\ell)$.
\end{definition}

Observe that for any prime $\ell' \neq p$, $G_{K_1, K_2}(p,\ell)$ and $G_{K_1, K_2}(p,\ell')$ share the same vertex set with variation only in their edge structures. 

\begin{definition} We will call $(E,\jmath)/\cong$ a global root if $\jmath$ is a primitive $\calO_{K_1}, \calO_{K_2}$-orientation on $E$. It is global in the sense that it is a local root for any prime $\ell\neq p$. 
\end{definition}

\subsection{Reduced isogeny graphs}

The $j$-invariant of a supersingular elliptic curve $E/\overline{\F}_p$ lies in $\F_{p^2}$ and we refer to such an element $j \in \F_{p^2}$ as a supersingular $j$-invariant. If $j$ is a supersingular $j$-invariant, then its $\F_{p^2}$-conjugate $j^p$ is a supersingular $j$-invariant as well. 

Moreover, for supersingular $j$-invariants $j, j' \in \mathbb{F}_{p^2}$, we have that $\Phi_{\ell}(j, j')=0$ if and only if $\Phi_{\ell}(j^p, j'^p)=0$. Since the modular polynomial $\Phi_\ell(X,Y) \in \Z[X,Y]$, there is a bijection between the roots of $\Phi_{\ell}(j, Y)$ and the roots of $\Phi_{\ell}(j^p, Y)$.


Let $[j]$ denote the Galois orbit $\{j, j^p\}$ of a supersingular $j$-invariant $j$. We now take the graph in Definition~\ref{defn-j-graph} and identify conjugate vertices as well as conjugate edges.

\begin{definition}
    The reduced supersingular $\ell$-isogeny graph $\tilde{G}(p,\ell)$ in characteristic $p$ is a directed multigraph whose vertices are supersingular $j$-invariants in $\F_{p^2}$ up to Galois conjugation $\Gal(\mathbb{F}_{p^2}/\mathbb{F}_p)$ and there is an edge from $[j]$ to $[j']$ for each root $j'$ of $\Phi_{\ell}(j, X)$.
\end{definition}

\begin{details}
\begin{tbox}
    \begin{proposition} \label{isomorphism of curves to conjugacy class of maximal orders}
      There is a bijection
     \begin{equation}
\begin{array}{c@{\;}c@{\;}c}
\{\substack{\text{isomorphism clases of supersingular} \\ \text{elliptic curves over $\Fbar_p$}}\}
& \longleftrightarrow & 
\Cls_L(\mathfrak{O}_0) \\[1ex]
E/\cong
& \mapsto & 
I/\sim
\end{array}
\end{equation}
where $\End(E) \cong \calO_R(I)$. Also, there is a surjection
  \begin{equation}
\begin{array}{c@{\;}c@{\;}c}
\Cls_L(\mathfrak{O}_0)
& \rightarrow & \typee(\mathfrak{O}_0) \\[1ex]
I/\sim
& \mapsto & 
\calO_R(I)/\cong
\end{array}
\end{equation}
where the fiber $\{ I/\sim \ \in \Cls(\mathfrak{O}) : \calO_L(I) \cong \mathfrak{O}\}$ of this map over $\mathfrak{O}/\cong$ in $\typee(\mathfrak{O}_0)$ is in bijection with $\Idl(\mathfrak{O})/\PIdl(\mathfrak{O})$ given by
 \begin{equation}
\begin{array}{c@{\;}c@{\;}c}
\Idl(\mathfrak{O})/\PIdl(\mathfrak{O})
& \longleftrightarrow & 
\{ I/\sim \ \in \Cls_L(\mathfrak{O}) : \calO_R(I) \cong \mathfrak{O}\} \\[1ex]
I/\PIdl
& \mapsto & 
I/\sim.
\end{array}
\end{equation}
Furthermore, abelian group
\begin{equation}
    \Pic(\mathfrak{O}) \cong  \Z/2\Z
\end{equation}
is generated by the unique maximal two-sided ideal $\mathfrak{P} \subseteq \mathfrak{O}$ of reduced norm $p$ and there is a surjection
\begin{equation}
 \Pic(\mathfrak{O}) \rightarrow  \Idl(\mathfrak{O})/\PIdl(\mathfrak{O})
\end{equation}
where the class of $\mathfrak{P}$ in the quotient is trivial if and only if $\mathfrak{P}$ is principal.
 \cite[Theorem 18.1.3]{Voi21}
\end{proposition}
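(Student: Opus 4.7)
The plan is to deduce all four assertions of the proposition from the Deuring correspondence (Theorem~\ref{ss-correspondence}) together with a local--global analysis of the two-sided ideals of a maximal order in the quaternion algebra $B_0$, which is ramified only at $p$ and $\infty$; the first two pieces are essentially translations of the categorical equivalence, and the Picard group computation reduces to a local check at each prime.

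First I would establish the bijection between isomorphism classes of supersingular elliptic curves and $\Cls_L(\mathfrak{O}_0)$. By Theorem~\ref{ss-correspondence}, the functor $E \mapsto I = \Hom(E,E_0)$ produces an invertible left $\mathfrak{O}_0$-ideal satisfying $\End(E) \cong \calO_R(I)$, and any $\Fbar_p$-isomorphism $\lambda\colon E \xrightarrow{\sim} E'$ induces the left $\mathfrak{O}_0$-module isomorphism $\Hom(E', E_0) \xrightarrow{\sim} \Hom(E, E_0)$ given by $\phi \mapsto \phi\lambda$, which on ideals in $B_0$ is precisely right multiplication by a single element of $B_0^\times$, exactly the relation $\sim_L$. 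Conversely, left-equivalent invertible ideals come from isomorphic curves by applying the quasi-inverse of the Deuring functor, giving injectivity.

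Next I would verify the surjection $\Cls_L(\mathfrak{O}_0) \to \typee(\mathfrak{O}_0)$ defined by $[I]_L \mapsto [\calO_R(I)]$. Well-definedness holds because $I' = I\alpha$ forces $\calO_R(I') = \alpha^{-1}\calO_R(I)\alpha$, an isomorphic order. Surjectivity uses Lemma~\ref{minimal reduced norm global}: for any maximal order $\mathfrak{O}\subseteq B_0$, the primitive connecting $\mathfrak{O}_0,\mathfrak{O}$-ideal is an $I$ with $\calO_R(I) = \mathfrak{O}$. For the fiber identification, I would fix a base ideal $I_0$ with $\calO_R(I_0) = \mathfrak{O}$; after adjusting each element of the fiber within its $\sim_L$-class so that $\calO_R(I) = \mathfrak{O}$ holds on the nose, any such $I$ admits the unique decomposition $I = I_0 J$ with $J = I_0^{-1}I$ an invertible two-sided $\mathfrak{O}$-ideal by Lemma~\ref{colon ideals}. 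Two decompositions $I_0 J$ and $I_0 J'$ determine the same left class precisely when $J^{-1}J' \in \PIdl(\mathfrak{O})$, which yields the claimed bijection with $\Idl(\mathfrak{O})/\PIdl(\mathfrak{O})$.

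Finally, for the Picard group I would argue locally, as in \cite[Theorem 18.1.3]{Voi21}. At every prime $\ell \neq p$ the completion $\mathfrak{O}_\ell$ is conjugate to $M_2(\Z_\ell)$, whose invertible two-sided ideals are all principal by Morita equivalence, so the local Picard group is trivial. At $p$ the algebra $B_p$ is a division algebra with $\mathfrak{O}_p$ its unique maximal order, and $\mathfrak{O}_p$ carries a unique maximal two-sided ideal $\mathfrak{P}_p$ of reduced norm $p$ with $\mathfrak{P}_p^2 = p\mathfrak{O}_p$, contributing a single $\Z/2\Z$ factor. Gluing yields $\Pic(\mathfrak{O}) \cong \Z/2\Z$, generated by the global $\mathfrak{P}$ of reduced norm $p$, and the natural surjection $\Pic(\mathfrak{O}) \twoheadrightarrow \Idl(\mathfrak{O})/\PIdl(\mathfrak{O})$ sends $[\mathfrak{P}]$ to the trivial coset exactly when $\mathfrak{P}$ is a principal two-sided ideal. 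The main obstacle I expect is the bookkeeping in the fiber step: one must sharply distinguish the ambient equivalence $\sim_L$ from equivalence by $\calO_R(I)^\times$ after normalizing $\calO_R(I) = \mathfrak{O}$, so that the fiber really matches $\Idl(\mathfrak{O})/\PIdl(\mathfrak{O})$ rather than a proper quotient.
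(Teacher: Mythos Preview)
Your proposal is essentially correct and follows the same route the paper takes: the paper's proof is nothing more than the citation ``This is \cite[Corollary 42.3.7]{Voi21}, \cite[Lemma 17.4.13]{Voi21}, and \cite[Proposition 18.5.10]{Voi21}'', and your three paragraphs are a faithful sketch of exactly those three results.

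One imprecision is worth flagging in your Picard computation. Locally at $p$ the maximal order $\mathfrak{O}_p$ is the valuation ring of the division algebra $B_p$, and its maximal ideal $\mathfrak{P}_p$ \emph{is} principal (generated by any uniformizer), so the local Picard group at $p$ is trivial, not $\Z/2\Z$. The $\Z/2\Z$ does not arise from gluing local Picard groups; rather, by \cite[Theorem 18.1.3]{Voi21} the group $\Idl(\mathfrak{O})$ is generated by the ideals $n\mathfrak{O}$ for $n\in\Q^\times$ together with the single global prime $\mathfrak{P}$ above $p$, subject to $\mathfrak{P}^2=p\mathfrak{O}$. Since $\Z$ is a PID the scalar ideals are already principal, so $\Idl(\mathfrak{O})$ modulo scalar principal ideals is $\Z/2\Z$ generated by $[\mathfrak{P}]$, and the further quotient by $\PIdl(\mathfrak{O})$ kills this class exactly when $\mathfrak{P}$ is globally principal. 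With this correction your argument is complete and matches the references the paper invokes.
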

\begin{proof}
    This is \cite[Corollary 42.3.7]{Voi21}, \cite[Lemma 17.4.13]{Voi21}, and  \cite[Proposition 18.5.10]{Voi21}
\end{proof}
\end{tbox}
\end{details}

\subsection{Quaternion ideal graphs}
Let $B_0/\Q$ be a quaternion algebra ramified at $p$ and $\infty$ and $\mathfrak{O}_0$ be a maximal order in $B_0$.  

Let $I, I'$ be invertible left $\mathfrak{O}_0$-ideals. We say $I$ is a $\ell$-neighbor of $I'$ if $I \subseteq I'$ and $\ell \nrd(I')=\nrd(I)$. By \cite[Exercise 17.5]{Voi21}, we may assume $I \subseteq I'$ are integral and their reduced norms are coprime to $p$; suppose $I_1'=I'\alpha, \alpha \in B_0^\times$ is integral and $\nrd(I_1')$ is coprime to $p$. Then $I_1=I\alpha$ is integral with $\nrd(I_1)$ coprime to $p$ and it is a $\ell$-neighbor of $I_1'$. We may further assume that $I, I'$ are primitive; if $n\geq 1$ is the largest integer such that $I \subseteq n\mathfrak{O}_0$, then $n^{-1}I \subseteq \mathfrak{O}_0$ is primitive. Assume for contradiction that $n^{-1}I' \subseteq m \mathfrak{O}_0$ for some integer $m>1$. Then $I \subseteq I' \subseteq nm\mathfrak{O}_0$, so $n^{-1}I \subseteq m\mathfrak{O}_0$. This is a contradiction. Hence, $n^{-1}I'$ is primitive as well.

\begin{definition}
    The quaternion $\ell$-Brandt graph $\operatorname{Br}(p,\ell)$ is a directed multigraph whose vertex set  is a fixed set of representatives $I_1, \ldots, I_n$ for $\operatorname{Cls}_L(\frakO_0)$ and there is a directed edge from $I_i$ to $I_j$ for each $\ell$-neighbor $J$ of $I_i$ with $J\sim I_j$.
\end{definition}

Let $n=\#\Cls_L(\mathfrak{O}_0)$ be the left class number of $\mathfrak{O}_0$, and let $I_1, \cdots, I_n$ be the representatives for distinct classes in $\Cls_L(\mathfrak{O}_0)$. The $\ell$-Brandt matrix $B(p, \ell)$ is a $n \times n$-matrix with entries
\begin{equation}
    \begin{split}
        b_{ij} & :=\#\{J \subset I_i : \nrd(J)=\ell \nrd(I_i) \text{ and } J\sim I_j\} \\
        & = \#\{J\subset I_i : [I_i: J]=\ell^2 \text{ and } J \sim I_j\}\\
        & = \frac{1}{2a_j} \#\{\alpha \in I_j^{-1}I_i : \nrd(\alpha)\nrd(I_j)=\ell\nrd(I_i)\}
    \end{split}
\end{equation}
where $a_j=\#\calO_R(I_j)^\times/\{\pm 1\}$. The last equality holds since $ I_j\alpha=J \subseteq I_i$ with $\nrd(J)=\ell\nrd(I_i)$ if and only if $\alpha \in I_j^{-1}I_i$ and $\nrd(\alpha)\nrd(I_j)=\ell\nrd(I_i)$
where $\alpha$ is well-defined up to left multiplication by $\mu \in \calO_R(I_j)^\times$ (cf. \cite[41.1.3]{Voi21}). 

\begin{details}
    \begin{tbox}
    If $\mu \in B_0^\times$, then $I\mu = I$ if and only if $\mu \in \mathcal{O}_R(I)^\times$ \cite[Exercise 16.3]{Voi21}.

    In particular, 
\begin{equation}
    b_{ii}=\frac{1}{2a_i}\# \{\alpha \in \calO_R(I_i) : \nrd(\alpha)=\ell\}.
\end{equation}
\end{tbox}
\end{details}

Hence, the $\ell$-Brandt matrix $B(p,\ell)$ is the adjacency matrix of the quaternion $\ell$-Brandt graph $\operatorname{Br}(p,\ell)$, where $b_{ij}$ is the number of directed edges from $I_i/\sim$ to $I_j/\sim$. See \cite{Gro87} and \cite[Chapter 41]{Voi21} for more details on the quaternion $\ell$-Brandt graph and the $\ell$-Brandt matrix.

For an order $\mathfrak{O}$ in $B_0$, the genus of $\mathfrak{O}$ is the set of orders in $B_0$ locally isomorphic to $\mathfrak{O}$. The type set of $\mathfrak{O}$, denoted by $\typee(\mathfrak{O})$, is the set of isomorphism classes of orders in the genus of $\mathfrak{O}$. When $\mathfrak{O}=\mathfrak{O}_0$ is maximal, since any local maximal orders in $B_\ell$ are conjugate to each other, $\typee(\mathfrak{O}_0)$ is the set of conjugacy classes of maximal orders in $B_0$. 

\begin{definition}
    The quaternion $\ell$-ideal graph $Q(p,\ell)$ is a directed multigraph whose vertex set is $\typee(\mathfrak{O}_0)$ and there is a directed edge from $\mathfrak{O}/\cong$ to $\mathfrak{O}'/\cong$ for each primitive connecting $\mathfrak{O},\mathfrak{O}'$-ideal of reduced norm $\ell$.
\end{definition}

Analogously, for a $\Q$-algebra embedding $\iota_0: K \rightarrow B_0$, we define $\Cls_L(\mathfrak{O}_0, \iota_0)$ to be the set of left $\iota_0$-classes of $\mathfrak{O}_0$ and $\typee(\mathfrak{O}_0,\iota_0)$ to be the set of $\iota_0$-conjugacy classes of $\mathfrak{O}_0$. For $\Q$-algebra embeddings $\iota_{0,1}: K_1 \rightarrow B_0$ and $\iota_{0,2}:K_2 \rightarrow B_0$, we can similarly define $\typee(\mathfrak{O}_0, \jmath_0)$, where $\jmath_0=(\iota_{0,1}, \iota_{0,2})$ is the induced isomorphism of quaternion algebras, but it is just the set of maximal orders in $B_0$ by Proposition \ref{single and double conjugate}. We define $\Cls_L(\mathfrak{O}_0, \jmath_0)$ to be the set of primitive left $\mathfrak{O}_0$-ideals.

\begin{definition}
The $K$-oriented quaternion $\ell$-ideal graph $Q_K(p,\ell)$ is a directed multigraph whose vertex set is $\typee(\mathfrak{O}_0, \iota_0)$ and there is a directed edge from $\mathfrak{O}/\cong_{\iota_0}$ to $\mathfrak{O}'/\cong_{\iota_0}$ for each left $\iota_0$-class of primitive connecting $\mathfrak{O},\mathfrak{O}'$-ideal of reduced norm $\ell$.
\end{definition}

\begin{definition}
    The $K_1,K_2$-oriented quaternion $\ell$-ideal graph $Q_{K_1,K_2}(p,\ell)$ is a directed graph whose vertex set is $\typee(\mathfrak{O}_0, \jmath_0)$ and there is a directed edge from $\mathfrak{O}$ to $\mathfrak{O}'$ if there is a primitive connecting $\mathfrak{O},\mathfrak{O}'$-ideal of reduced norm $\ell$.
\end{definition}

\begin{extra}
    \begin{tbox}
        \begin{definition} \label{extended left class set}
    Analogous to the equivalence of isogenies, for conjugate maximal orders $\mathfrak{O}, \mathfrak{O}' =\alpha\mathfrak{O} \alpha^{-1}$ in $B_0$, we can extend the definition of equivalence relation on left ideals in $\Cls_L(\mathfrak{O})$ and $\Cls_L(\mathfrak{O}')$; for a left $\mathfrak{O}$-ideal $I$ and left $\mathfrak{O}'$-ideal $I'$, we will say $I, I'$ are equivalent if $I_1 \sim I$ for $I_1=(\mathfrak{O}\alpha)I'$. We can similarly extend the definition of equivalence relation on $\iota_0$-left ideals in $\Cls_L(\mathfrak{O},\iota_0)$ and $\Cls_L(\mathfrak{O}',\iota_0)$ as well. 
\end{definition}
{\color{red} Note following the above definition: we need to have an equivalence relation on primitive connecting $\mathfrak{O},\mathfrak{O'}$-ideals of reduced norm $\ell$ as the equivalence classes of these should correspond to an edge in the quaternion $\ell$-ideal graphs.}

 \begin{remark}
     We show that edges in $Q(p,\ell)$ and $Q_K(p,\ell)$ are well-defined.
\[
\begin{tikzcd}
\alpha^{-1}\mathfrak{O}_1\alpha=\hspace{-3em}& \mathfrak{O} \arrow[r, "J"] \arrow[dr, swap, "J'"] & \mathfrak{O}' \arrow[d, "\mathfrak{O}'\alpha'", "\rotatebox{90}{$\sim$}"']    \\
& \mathfrak{O}_1\arrow[u, "\mathfrak{O}_1\alpha", "\rotatebox{90}{$\sim$}"']  \arrow[r, "J_1"'] & \mathfrak{O}_1'& \hspace{-3em} = {\alpha'}^{-1}\mathfrak{O}'\alpha' 
\end{tikzcd}
\]

Let $\mathfrak{O} \cong \mathfrak{O}_1$ and $\mathfrak{O}' \cong \mathfrak{O}_1'$ be maximal orders in $B_0$. Then $\mathfrak{O}= \alpha^{-1}\mathfrak{O}_1\alpha$ and $\mathfrak{O}_1'={\alpha'}^{-1}\mathfrak{O}'\alpha'$ for some $\alpha, \alpha' \in B_0^\times$. Scaling $\alpha, \alpha'$ by some integers, we may assume that $\alpha \in \mathfrak{O}_1$ and $\alpha' \in \mathfrak{O}'$ are primitive. By Lemma \ref{index for isomorphic maximal orders}(b), $\mathfrak{O}_1\alpha$ is the primitive $\mathfrak{O}_1, \mathfrak{O}$-connecting ideal and $\mathfrak{O}'\alpha'$ is the primitive $\mathfrak{O}',\mathfrak{O}_1'$-connecting ideal. Let $J$ be the primitive connecting $\mathfrak{O},\mathfrak{O}'$-ideal, $J'$ be the primitive connecting $\mathfrak{O},\mathfrak{O}_1'$-ideal, and $J_1$ be the primitive connecting $\mathfrak{O}_1,\mathfrak{O}_1'$-ideal. By Lemma \ref{colon ideals}(d), $J'=J\alpha'$ and $J_1=\mathfrak{O}_1\alpha J\alpha'$. Hence, $J\sim J'$. In particular, $J \sim_{\iota_0} J'$ if $\mathfrak{O} \cong_{\iota_0} \mathfrak{O}_1$ and $\mathfrak{O}' \cong_{\iota_0} \mathfrak{O}_1'$ with $\alpha, \alpha' \in \iota_0(K)^\times$. With Definition \ref{extended left class set}, we have $J \sim J_1$. In particular, $J \sim_{\iota_0} J_1$ if $\mathfrak{O} \cong_{\iota_0} \mathfrak{O}_1$ and $\mathfrak{O}' \cong_{\iota_0} \mathfrak{O}_1'$.
 \end{remark}

{\color{red} I'm still not understanding this remark: We want to show that if $J$ and $J'$ are equivalent $\mathfrak{O}, \mathfrak{O'}$-ideals, then $J_1$ and $J_1'$ are equivalent $\mathfrak{O}_1,\mathfrak{O}_1'$-ideals.
}

    \end{tbox}
\end{extra}

\begin{extra}
    \subsection{Multi-edges and loops}

In undireted graphs, a multi-edge is between two vertices.

We give a basic description of multi-edges and loops in the family of supersingular isogeny graphs and the family of quaternion ideal graphs. 

Let $\ell \neq p$ be a prime, $E, E'$ be supersingular elliptic curves over $\Fbar_p$, $\mathfrak{O} \cong \End(E)$ be a maximal order in $B_0$. Note that $\mathfrak{O}^\times \cong \Aut(E)$.

\begin{remark}[Multi-edges to another vertex]
    Suppose $E, E'$ are non-isomorphic. Let $\varphi_1, \cdots, \varphi_n : E \rightarrow E', n \leq \ell+1$ be distinct $\ell$-isogenies between  up to equivalence. By Theorem \ref{bjection between curves, ideals, maximal orders}, there are corresponding primitive left $\End(E)$-ideals $\tilde{J}_i=\Hom(E', E)\varphi_i, 1 \leq i \leq n$ of reduced norm $\ell$ where they are all distinct.

Let $\varphi, \varphi' : E \rightarrow E'$ be equivalent isogenies. The followings are equivalent
\begin{enumerate}
    \item 
\end{enumerate}
Recall that elliptic curve $E/\Fbar_p$ has extra automorphisms when its $j$-invariant is either $0$ or $1728$ (cf. \cite[Theorem 10.1]{Sil09}).
\end{remark}

\begin{remark}[Loops]
Suppose $E, E'$ are isomorphic. Let $\varphi: E \rightarrow E'$ be isogeny of degree $\ell$. Since $E \cong E'$, there is an equivalent endomorphism of endomorphism of $E$. 

Such an endomorphism exists if and only if 

Endomorphisms of $E$ corresponds to two-sided $\mathfrak{O}$-ideals. See \cite[Chapter 18]{Voi21} for details on two-sided $\mathfrak{O}$-ideals.

Let $\mathfrak{O}$ be a maximal order in $B_0$. Let $N_{B_0^\times}(\mathfrak{O})$ be the normalizer of $\mathfrak{O}$ in $B_0^\times$ given by
\begin{equation}
    N_{B_0^\times}(\mathfrak{O}):=\{\alpha \in B^\times : \alpha^{-1}\mathfrak{O}\alpha=\mathfrak{O}\}.
\end{equation}
By \cite[Exercise 16.7]{Voi21}, $I=\mathfrak{O}\alpha$ is an integral two-sided $\mathfrak{O}$-ideal if and only if $\alpha \in N_{B_0^\times}(\mathfrak{O}) \cap \mathfrak{O}$. In case $\nrd(I)=\nrd(\alpha)=\ell$.

There is an exact sequence of groups
\begin{equation}
    \begin{split}
          1 \rightarrow \mathfrak{O}^\times \rightarrow N_{B_0^\times}(\mathfrak{O}) & \rightarrow \PIdl(\mathfrak{O}) \rightarrow 1 \\
          \alpha & \mapsto \mathfrak{O}\alpha \mathfrak{O}
    \end{split}
\end{equation}
\cite[Lemma 18.5.1]{Voi21}.
\end{remark}

If $p>3$, then $j=0$ is supersingular if and only if $p \equiv 2 \pmod 3$, and $j=1728$ is supersingular if and only if $p \equiv 3 \pmod 4$. chehck for $p=2, 3$.

\end{extra}

\subsection{Graph isomorphisms} We give graph isomorphisms between supersingular isogeny graphs and quaternion ideal graphs.

\begin{remark} We give an additional description on the vertex sets of the graphs in the right column of (\ref{graph isomorphisms}).

Recall that $\Cls_L(\mathfrak{O}_0)$ is the vertex set of $\operatorname{Br}(p,\ell)$ and $\typee(\mathfrak{O}_0)$ is the vertex set of $Q(p,\ell)$. There is a surjection from $\Cls_L(\mathfrak{O}_0)$ to $\typee(\mathfrak{O}_0)$ given by
  \begin{equation}
\begin{array}{c@{\;}c@{\;}c}
\Cls_L(\mathfrak{O}_0)
& \rightarrow & \typee(\mathfrak{O}_0) \\[1ex]
I/\sim
& \mapsto & 
\calO_R(I)/\cong
\end{array}
\end{equation}
\cite[Lemma 17.4.13]{Voi21}. We have the following diagram of maps where each double arrow means bijection between sets and each one-way arrow means surjection from one set to the other. The surjection among the variants of left class set and the surjection among the variants of type set follow naturally from their definition. The bijection between $\Cls(\mathfrak{O},\jmath_0)$ and $\typee(\mathfrak{O},\jmath_0)$ is given in Theorem \ref{bjection between curves, ideals, maximal orders}, and the bijection between $\Cls(\mathfrak{O},\iota_0)$ and $\typee(\mathfrak{O},\iota_0)$ is given in Corollary \ref{bijections in single orientation}.

\begin{equation}
\begin{array}{c@{\;}c@{\;}c@{\;}c@{\;}c}
\Cls_L(\mathfrak{O}_0,\jmath_0) & \longrightarrow & \Cls_L(\mathfrak{O}_0,\iota_0) & \longrightarrow & \Cls_L(\mathfrak{O}_0) \\[1ex]
I & \longmapsto & I/\sim_{\iota_0} & \longmapsto & I/\sim \\[-1ex]
\rotatebox{270}{$\longleftrightarrow$} & & \rotatebox{270}{$\longleftrightarrow$} & & \rotatebox{270}{$\longmapsto$} \\[5ex]
\calO_R(I) & \longmapsto & \calO_R(I)/\cong_{\iota_0} & \longmapsto & \calO_R(I)/\sim \\[1ex]
\typee(\mathfrak{O}_0,\jmath_0) & \longrightarrow & \typee(\mathfrak{O}_0,\iota_0) & \longrightarrow & \typee(\mathfrak{O}_0) 
\end{array}
\end{equation}
\end{remark}

We could have defined $\Cls_L(\mathfrak{O}_0, \jmath_0)$ to be the set of left $\mathfrak{O}_0$-ideals, then the map $I/\Q \mapsto \operatorname{pr}(I)  \mapsto \calO_R(I)$ gives a bijection between the sets $\Cls_L(\mathfrak{O}_0,\jmath_0)/\Q$ and $\typee(\mathfrak{O}_0,\jmath_0)$.

\begin{theorem} \label{graph isomorphisms} There are graph isomorphisms:
\begin{equation} 
\begin{array}{c@{\;}c@{\;}c}
\substack{ \text{reduced supersingular} \\ \ell\text{-isogeny graph } \tilde{G}(p,\ell) }
& \longleftrightarrow & 
\text{quaternion } \ell\text{-ideal } \text{graph } Q(p,\ell)  \\[1ex]
[j] & \mapsto & 
\mathfrak{O}/\cong \\[1ex]
 & & I^{-1}I/\sim \\[1ex]
\text{supersingular } \ell\text{-isogeny graph } G(p,\ell) 
& \longleftrightarrow & 
\text{quaternion $\ell$-Brandt graph } \operatorname{Br}(p,\ell)  \\[1ex]
E / \cong 
& \mapsto & 
I/\sim \\
E\rightarrow E'/\sim
&  &  \\[1ex]
\substack{K\text{-oriented supersingular} \\ \ell\text{-isogeny graph } G_K(p,\ell) }
& \longleftrightarrow & 
\substack{K\text{-oriented quaternion $\ell$-ideal}\\ \text{graph } Q_{K}(p,\ell)} \\[1ex]
(E,\iota) / \cong 
& \mapsto & 
\mathfrak{O}/\cong_{\iota_0} \\
(E, \iota) \rightarrow (E',\iota')/\sim  & \mapsto & I^{-1}I'/\sim_{\iota_0} \\[1ex]
\substack{K_1,K_2\text{-oriented supersingular} \\ \ell\text{-isogeny graph } G_{K_1, K_2}(p,\ell) }
& \longleftrightarrow & 
\substack{K_1,K_2\text{-oriented quaternion $\ell$-ideal}\\ \text{graph } Q_{K_1,K_2}(p,\ell)} \\[1ex]
(E,\jmath) / \cong 
& \mapsto & 
\mathfrak{O} \\ (E, \jmath) \rightarrow (E',\jmath')/\sim  & \mapsto & 
\operatorname{pr}(I^{-1}I')
\end{array}
\end{equation}
\end{theorem}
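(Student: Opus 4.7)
The plan is to verify each of the four claimed isomorphisms by producing a vertex bijection and an edge bijection compatible with source and target assignments. Most of the required data have already been assembled in Sections \ref{Equivalence of categories in terms of quaternion algebras} and \ref{Orientations and maximal orders}. For the second, third and fourth rows of the statement, the vertex bijections come from Theorem~\ref{ss-correspondence} (together with the class-set reformulation), Corollary~\ref{bijections in single orientation}, and Corollary~\ref{double oriented curves to maximal orders}, respectively, while the edge bijections come from Proposition~\ref{bijection of edges} and its $K$- and $K_1,K_2$-oriented refinements (most directly Corollary~\ref{bijection of edges with orientations} in the double case). So for these three rows the proof is essentially a repackaging into graph-theoretic language: a directed edge on the isogeny side, namely an equivalence class of (possibly oriented) cyclic $\ell$-isogenies, is to be matched with a directed edge on the ideal side, namely a primitive (connecting) left ideal of reduced norm $\ell$, with source and target assigned by $\calO_L$ and $\calO_R$.

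The first isomorphism $\tilde{G}(p,\ell) \leftrightarrow Q(p,\ell)$ is the most classical and the most subtle, since it requires identifying Galois orbits of supersingular $j$-invariants with conjugacy classes of maximal orders in $B_0$. The core input is the classical Deuring correspondence: the $p$-power Frobenius twist $E \mapsto E^{(p)}$ sends $j$ to $j^p$, and its effect on the quaternion side is conjugation by the unique two-sided $\mathfrak{O}$-ideal generator of reduced norm $p$, which defines a non-trivial class in $\Pic(\mathfrak{O}) \cong \Z/2\Z$ precisely when $E \not\cong E^{(p)}$. This pins down the vertex bijection. For edges, the multiplicity of $j'$ as a root of $\Phi_\ell(j,Y)$ equals the number of equivalence classes of $\ell$-isogenies $E(j) \rightarrow E(j')$, and since $\ell$ is prime each such isogeny is automatically cyclic and gives a primitive left $\End(E(j))$-ideal of reduced norm $\ell$; passing to $\sim$-classes on both sides then matches the primitive connecting $\mathfrak{O},\mathfrak{O}'$-ideals of reduced norm $\ell$ used to define the edges of $Q(p,\ell)$.

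The main obstacle I foresee is the edge-multiplicity bookkeeping in the first case, particularly for vertices fixed by the Galois involution (supersingular $j \in \F_p$), and for edges between two such fixed vertices, where a single primitive ideal may correspond to either one or two edges in $\tilde{G}(p,\ell)$ depending on whether the Frobenius-induced conjugation identifies the ideal with itself or with a distinct one. This will require a careful case analysis using the two-sided ideal $\mathfrak{P}$ of reduced norm $p$ and the structure of $\Pic(\mathfrak{O})$. The remaining three cases I expect to go through cleanly, since no further quotient by conjugation is imposed: $G(p,\ell)$ retains the full left class set $\Cls_L(\mathfrak{O}_0)$, $G_K(p,\ell)$ retains left $\iota_0$-classes, and $G_{K_1,K_2}(p,\ell)$ retains primitive ideals outright, so in each of these three the relevant bijection already lives at the right level of equivalence.
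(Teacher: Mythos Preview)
Your proposal is essentially correct and identifies the right ingredients, but the paper organizes the argument differently in a way that neatly sidesteps the ``main obstacle'' you anticipate. Rather than attacking $\tilde{G}(p,\ell) \leftrightarrow Q(p,\ell)$ directly and wrestling with edge multiplicities at Galois-fixed vertices, the paper first establishes $G(p,\ell) \leftrightarrow \Br(p,\ell)$ on the nose---via the classical Brandt-matrix interpretation (\cite[Proposition~2.3]{Gro87}, not Proposition~\ref{bijection of edges})---and then observes that $\tilde{G}(p,\ell)$ and $Q(p,\ell)$ are the respective quotients of these by identifying $j$ with $j^p$ on one side and $[I]$ with $[I']$ whenever $\calO_R(I)\cong\calO_R(I')$ on the other; the edge bijection is then inherited. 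Likewise, the paper handles $G_K\leftrightarrow Q_K$ not directly but as a quotient of $G_{K_1,K_2}\leftrightarrow Q_{K_1,K_2}$ (setting $K_1=K$), using Corollary~\ref{bijection of edges with orientations} only for the double-oriented case. So the order of work is: $G\leftrightarrow\Br$ first, quotient to get $\tilde{G}\leftrightarrow Q$; then $G_{K_1,K_2}\leftrightarrow Q_{K_1,K_2}$, quotient to get $G_K\leftrightarrow Q_K$.

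One technical point: for $G(p,\ell)\leftrightarrow\Br(p,\ell)$ your citation of Proposition~\ref{bijection of edges} is off target. That proposition is about cyclic compatible isogenies between \emph{marked} pairs $(E,\varphi)\in\SScy(E_0)$ and primitive connecting ideals---exactly the data of the double-oriented graph, not of $G(p,\ell)$. The Brandt graph's edges are $\ell$-neighbors $J\subset I_i$ with $J\sim I_j$, and matching these to equivalence classes of $\ell$-isogenies $E_i\to E_j$ is precisely the content of the Brandt-matrix entry $b_{ij}$; the paper invokes Gross for this. Your approach would still work if you pass to the quotient of Proposition~\ref{bijection of edges} by the $\Q^\times$-action on markings, but it is not immediate and the paper's route via Brandt matrices is shorter.
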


\begin{details}
\begin{tbox}
    For the description of quaternion $\ell$-ideal graph, see Definition 2.3, from \href{https://eprint.iacr.org/2021/372.pdf}{Explicit connections between supersingular isogeny graphs and
Bruhat–Tits trees}.
\end{tbox}
\end{details}

\begin{proof}   
    Let $E_0/\Fbar_p$ be a supersingular elliptic curve and $\mathfrak{O}_0=\End(E_0)$. Instead of an abstract quaternion algebra, we consider $B_0=\End^0(E_0)$. 

    We first show bijections between vertex sets.

    \begin{extra}
    \begin{tbox}
      Let $\pi_p:\Fbar_p \rightarrow \Fbar_p, x \mapsto x^p$ be the Frobenius automorphism of finite field of characteristic $p$. $\pi_p$ is the generator of $\Gal(\F_{p^n}/\F_p)\cong \Z/n\Z$. There is a bijection between up to Frobenius twist
\begin{equation}
\begin{array}{c@{\;}c@{\;}c}
\{j \in \F_{p^2} : j \text{ is supersingular}\}/\Gal(\F_{p^2}/\F_p) 
& \longleftrightarrow & 
\left\{j \in \Fbar_p  : j \text{ is supersingular} \right\}/\pi_p
\end{array}
\end{equation}

    quotienting by the action of the Frobenius automorphism 
\end{tbox}
\end{extra}

    By \cite[Lemma 42.4.1]{Voi21}, there exists a bijection between the set of Galois orbits of supersingular $j$-invariants in $\F_{p^2}$ and the set of conjugacy classes of maximal orders in $B_0$
\begin{equation}
\label{classical-deuring}
\begin{array}{c@{\;}c@{\;}c}
\{j \in \F_{p^2} : j \text{ is supersingular}\}/\Gal(\F_{p^2}/\F_p) 
& \longleftrightarrow & 
\typee(\mathfrak{O}_0)
\end{array}
\end{equation}    
sending $j \in \F_{p^2}$ to a maximal order $\mathfrak{O}$ such that $\End(E(j))\cong \mathfrak{O}$, where $E(j)$ is a choice of elliptic curve over $\overline{\F}_p$ with $j$-invariant equal to $j$.

\begin{details}
    \begin{tbox}
    Voight commented in \cite[Remark 42.3.6]{Voi21} that the equivalence of categories are enriched with a Frobenius morphism in \cite[Theorem 45]{Koh96}
\end{tbox}
\end{details}

    By \cite[Corollary 42.3.7]{Voi21}, there exists a bijection between isomorphism classes of supersingular elliptic curves over $\Fbar_p$ and $\Cls_L(\mathfrak{O}_0)$ that sending $E/\cong$ to $I/\sim$ such that $\End(E) \cong \calO_R(I)$ and $\Aut(E)\cong \calO_R(I)^\times$.

Fix a $K$-orientation $\iota_0$ on $E_0$. By Corollary \ref{bijections in single orientation}, there exists a bijection between the set of isomorphism classes of $K$-oriented supersingular elliptic curves over $\Fbar_p$ and the set of $\iota_0$-conjugacy classes of maximal orders in $\End^0(E_0)$. 

Fix a $K_1,K_2$-orientation $\jmath_0$ on $E_0$. By Corollary \ref{double oriented curves to maximal orders}, there exists a bijection between the set of isomorphism classes of $K_1,K_2$-oriented supersingular elliptic curves over $\Fbar_p$ and the set of maximal orders in $\End^0(E_0)$.  

Next we show bijections between edge sets.

Let $n=\#\Cls_L(\mathfrak{O}_0)$. Let $I_1, I_2, \cdots, I_n$ be the representatives for classes in $\Cls_L(\mathfrak{O}_0)$ and $E_1, E_2, \cdots, E_n$ be the representatives for the corresponding isomorphism classes of supersingular elliptic curves. We may assume $I_1, I_2, \cdots, I_n$ are primitive. The entry $b_{ij}$ of the quaternion $\ell$-Brandt matrix $B(p, \ell)$ is the number of equivalence classes of isogenies from $E_i$ to $E_j$ of degree $\ell$ \cite[Proposition 2.3]{Gro87}. In particular, recall that $b_{ij}$ counts the number of elements $\alpha \in I_j^{-1}I_i$ with $\nrd(\alpha)\nrd(I_j)=\ell\nrd(I_i)$ well-defined up to left multiplication by $\mu \in \mathcal{O}_R(I_j)^\times$. Since $\End(E_j) \cong \calO_R(I_j)$ and $\Aut(E_j) \cong \calO_R(I_j)^\times$, this is equivalent to counting the number of $\ell$-isogenies from $E_i$ to $E_j$ up to post-composition by an automorphism of $E_j$. This gives a bijection between the edge sets of $G(p,\ell)$ and $\Br(p,\ell)$.

\begin{details}
\begin{tbox}
  Found the reference in  \href{https://arxiv.org/abs/2101.08761}{Loops, multi-edges and collisions in supersingular isogeny graphs 
} p.9. it is the number of subgroups $G$ of $E_i$ of order $\ell$ such that $E_i/G \cong E_j$. Therfore, $b_{ij}$ is the adjacency matrix of $G(p,\ell)$.\\

Also, see \href{https://www.math.mcgill.ca/goren/Students/NicoleThesis.pdf}{Proposition 2.31} for modern writing of Gross.

Note that it seems in Ghantous's paper and the original reference, their Brandt matrix is the transpose of the one given by Voight. 
\end{tbox}
\end{details}

$\tilde{G}(p,\ell)$ and $Q(p,\ell)$ are quotient graphs of $G(p,\ell)$ and $\Br(p, \ell)$ respectively, where in $\tilde{G}(p,\ell)$, we identify the $j$-invariants $j$ and $j^p$, and in $Q(p,\ell)$, we identify two classes $[I], [I']$ in $\Cls_L(\mathfrak{O}_0)$ if $\calO_R(I)\cong \calO_R(I')$. From the bijection between the edge sets of $G(p,\ell)$ and $\Br(p,\ell)$, we get a natural bijection between the edge sets of $\tilde{G}(p,\ell)$ and $Q(p,\ell)$. 

By Lemma \ref{no loop maximal order} and Corollary \ref{no loop curve}, there are no loops, i.e., edges to itself, in $G_{K_1,K_2}(p,\ell)$ and $Q_{K_1, K_2}(p,\ell)$. Let $(E,\jmath)/\cong$ and $(E', \jmath')/\cong$ be vertices in $G_{K_1, K_2}(p,\ell)$ which are neighbors. The first bijection in Corollary \ref{double oriented curves to maximal orders} sends $(E,\jmath)/\cong$ and $(E',\jmath')/\cong$ to vertices $\mathfrak{O}_{\varphi}$ and $\mathfrak{O}_{\varphi'}$ in $Q_{K_1, K_2}(p,\ell)$ respectively, and the second bijection sends $\varphi: (E_0,\jmath_0) \rightarrow (E,\jmath) /\sim$ and $\varphi': (E_0,\jmath_0) \rightarrow (E',\jmath') / \sim$ to $I_\varphi$ and $I_{\varphi'}$ respectively. Let $\psi:(E,\jmath)\rightarrow (E',\jmath')/\sim$ be an edge in $G_{K_1, K_2}(p,\ell)$ from $(E,\jmath)/\cong$ to $(E',\jmath')/\cong$. By Corollary \ref{bijection of edges with orientations}, there is a bijection between the edge sets of $G_{K_1, K_2}(p,\ell)$ and $Q_{K_1, K_2}(p,\ell)$ sending $\psi:(E,\jmath)\rightarrow (E',\jmath')/\sim$ to $\operatorname{pr}(I_{\varphi}^{-1}I_{\varphi})$.

By setting $K_1=K$, $G_K(p,\ell)$ and $Q(p,\ell)$ are quotient graphs of $G_{K_1, K_2}(p,\ell)$ and $Q_{K_1,K_2}$ respectively. From the bijection between edge sets of $G_{K_1, K_2}(p,\ell)$ and $Q_{K_1, K_2}(p,\ell)$, we get a natural bijection between the edge sets of $G_{K}(p,\ell)$ and $Q_{K}(p,\ell)$.
\end{proof}

\begin{corollary} \label{no multi-degs and loops}
    There are no multi-edges and loops in $G_{K_1, K_2}(p,\ell)$ and $Q_{K_1, K_2}(p, \ell)$. 
\end{corollary}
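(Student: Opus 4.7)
The plan is to exploit the graph isomorphism between $G_{K_1,K_2}(p,\ell)$ and $Q_{K_1,K_2}(p,\ell)$ from Theorem~\ref{graph isomorphisms} so that it suffices to verify the statement on the quaternion side, where we have direct structural control via connecting ideals. Concretely, an edge in $Q_{K_1,K_2}(p,\ell)$ from $\mathfrak{O}$ to $\mathfrak{O}'$ is precisely a primitive connecting $\mathfrak{O},\mathfrak{O}'$-ideal of reduced norm $\ell$, so loops and multi-edges translate into natural statements about such ideals.

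First I would handle loops. A loop at $\mathfrak{O}$ in $Q_{K_1,K_2}(p,\ell)$ corresponds to a primitive connecting $\mathfrak{O},\mathfrak{O}$-ideal $I$ of reduced norm $\ell$; such an $I$ is in particular an integral two-sided $\mathfrak{O}$-ideal of reduced norm $\ell$, which is forbidden by Lemma~\ref{no loop maximal order}. Transferring through the graph isomorphism of Theorem~\ref{graph isomorphisms} rules out loops in $G_{K_1,K_2}(p,\ell)$ as well; alternatively (and redundantly), Corollary~\ref{no loop curve} already gives this directly on the curve side.

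Next I would dispose of multi-edges. Suppose $\mathfrak{O}\neq\mathfrak{O}'$ and that there are two distinct edges from $\mathfrak{O}$ to $\mathfrak{O}'$ in $Q_{K_1,K_2}(p,\ell)$; these correspond to two distinct primitive connecting $\mathfrak{O},\mathfrak{O}'$-ideals of reduced norm $\ell$. By Proposition~\ref{primitive and minimal reduced norm ideal} (using Lemma~\ref{minimal reduced norm global}), the primitive connecting $\mathfrak{O},\mathfrak{O}'$-ideal is the \emph{unique} integral connecting ideal of minimal reduced norm $[\mathfrak{O}:\mathfrak{O}\cap\mathfrak{O}']$, a contradiction. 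Hence $Q_{K_1,K_2}(p,\ell)$ has no multi-edges, and by Theorem~\ref{graph isomorphisms} the same holds for $G_{K_1,K_2}(p,\ell)$.

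I do not anticipate a genuine obstacle: both statements reduce to facts already proved in the preceding sections. The only thing to be careful about is that the graph isomorphism of Theorem~\ref{graph isomorphisms} on edges sends an equivalence class of $K_1,K_2$-oriented isogenies $(E,\jmath)\to(E',\jmath')$ precisely to a \emph{single} primitive connecting ideal $\mathrm{pr}(I_\varphi^{-1}I_{\varphi'})$ (not to an equivalence class of ideals, as in the single-oriented case), so the counting of edges on the two sides literally matches, and the ``no multi-edges'' statement is exactly the uniqueness of the primitive connecting ideal.
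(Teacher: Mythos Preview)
Your proposal is correct and follows essentially the same approach as the paper: the paper also invokes Proposition~\ref{primitive and minimal reduced norm ideal} for the uniqueness of the primitive connecting ideal (ruling out multi-edges) and refers back to the already-established absence of loops via Lemma~\ref{no loop maximal order} and Corollary~\ref{no loop curve}, transferring via the graph isomorphism of Theorem~\ref{graph isomorphisms}. Your write-up is slightly more explicit about the role of the edge bijection, but the argument is the same.
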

\begin{proof}
     Since edges in $Q_{K_1, K_2}(p,\ell)$ are precisely the primitive connecting ideals of reduced norm $\ell$ between maximal orders in $B_0$, there is at most one directed edge from $\mathfrak{O}$ to $\mathfrak{O}'$ for maximal orders $\mathfrak{O}, \mathfrak{O}'$ in $B_0$ by Proposition~\ref{primitive and minimal reduced norm ideal}. We already showed that there are no loops in $G_{K_1, K_2}(p,\ell)$ and $Q_{K_1, K_2}(p,\ell)$. 
\end{proof}

\begin{remark}
    The bijections between vertex sets and edge sets of graphs in Theorem \ref{graph isomorphisms} justify that one graph is well-defined when the corresponding graph is well-defined, i.e.\ after taking equivalence classes of vertices. For example, we show that $Q_K(p,\ell)$ is well-defined; let $\mathfrak{O}/\cong_{\iota_0}, \mathfrak{O}'/\cong_{\iota_0}$ be distinct vertices in $Q_{K}(p,\ell)$ and let $\mathfrak{O}_1 \cong_{\iota_0} \mathfrak{O}, \mathfrak{O}'_1 \cong_{\iota_0}\mathfrak{O}'$ be maximal orders in $B_0$. We want to show that there is a bijection between the set of left $\iota_0$-classes of primitive connecting $\mathfrak{O},\mathfrak{O}'$-ideals of reduced norm $\ell$ and the set of $\iota_0$-classes of primitive connecting $\mathfrak{O}_1, \mathfrak{O}_1'$-ideals of reduced norm $\ell$. Let $(E, \iota), (E',\iota'), (E_1, \iota_1), (E_1', \iota_1')$ be $K$-oriented supersingular elliptic curves over $\Fbar_p$ corresponding to maximal orders $\mathfrak{O}, \mathfrak{O}', \mathfrak{O}_1, \mathfrak{O}_1'$ respectively. 
\[
\begin{tikzcd}
 \mathfrak{O} \arrow[d, phantom, "\rotatebox{-90}{\ $\cong_{\iota_0}$}", ""'] \arrow[r, "J/\sim_{\iota_0}"] & \mathfrak{O}' \arrow[d, phantom, "\rotatebox{-90}{\ $\cong_{\iota_0}$}", ""']    \\
 \mathfrak{O}_1 ,  \arrow[r, "J_1/\sim_{\iota_0}"'] & \mathfrak{O}_1'
\end{tikzcd} \qquad
\begin{tikzcd}
 (E,\iota) \arrow[d, phantom, "\rotatebox{-90}{$\cong$}", ""'] \arrow[r, "\psi/\sim"] & (E',\iota') \arrow[d, phantom, "\rotatebox{-90}{$\cong$}", ""']    \\
 (E_1,\iota_1) ,  \arrow[r, "\psi_1/\sim"'] & (E_1', \iota_1')
\end{tikzcd}
\]
Given a left $\iota_0$-class of primitive connecting $\mathfrak{O}, \mathfrak{O}'$-ideal $J/\sim_{\iota_0}$ of reduced norm $\ell$, there is an equivalence class of $K$-oriented isogeny $\psi : (E,\iota)\rightarrow (E',\iota')/\sim$ corresponding to $J/\sim_{\iota_0}$. Since $G_{K}(p,\ell)$ is well-defined, we have an equivalence class of $K$-oriented isogeny $\psi_1: (E_1, \iota_1) \rightarrow (E_1', \iota_1')/\sim$ corresponding to $\psi: (E,\iota) \rightarrow (E',\iota')/\sim$, and we identify them as an edge from $(E,\iota)/\cong$ to $(E', \iota')/\cong$. There is also a left $\iota_0$-class of primitive connecting $\mathfrak{O},\mathfrak{O}'$-ideal $J_1 /\sim_{\iota_0}$ corresponding to $\psi_1:(E_1, \iota_1) \rightarrow (E_1', \iota_1')/\sim$, and we identify it with $J/\sim{\iota_0}$ as an edge from $\mathfrak{O}/\cong \rightarrow \mathfrak{O}'/\cong$.
\end{remark}

\subsection{Connection between oriented isogeny graphs and Bruhat-Tits trees}
Next, we discuss how the connectivity in oriented isogeny graphs is related to Bruhat-Tits trees. For standard background on Bruhat-Tits trees, see \cite[23.5]{Voi21} or \cite{AIL+21, Mil15}. The link between supersingular isogeny graphs and Bruhat-Tits trees was discussed in \cite{AIL+21, Mil15}.

We say two homothety classes of $\Z_\ell$-lattices in $\Q_\ell^2$ are adjacent if there exist representatives $\Lambda, \Lambda'$ such that 
\begin{equation}
    \ell \Lambda \subsetneq \Lambda' \subsetneq \Lambda.
\end{equation}
This is equivalent to $\Lambda'$ being a cyclic sublattice of index $\ell$ in $\Lambda$.

\begin{definition}
   Let $\ell$ be a prime. The Bruhat-Tits tree $\mathcal{T}(\ell)$ associated to $\PGL_2(\Q_\ell)$ is an undirected $(\ell+1)$-regular connected tree graph whose vertices are homothety classes of $\Z_\ell$-lattices in $\Q_\ell^2$ and edges are pairs of adjacent homothety classes.
\end{definition}

The vertices of $\mathcal{T}(\ell)$ are in bijection with local maximal orders in $\M_2(\Q_\ell)$ by 
\begin{equation}
    \Lambda/\sim \ \mapsto \End_{\Z_\ell}(\Lambda)
\end{equation}
for any choice of representative \cite[Lemma 23.5.2]{Voi21}.

Now assume $\ell \neq p$. For a maximal order $\mathfrak{O} \subseteq B_0$, let $S_\ell(\mathfrak{O})$ denote the set of maximal orders $\mathfrak{O}'$ in $B_0$ such that there exists an integral connecting $\mathfrak{O},\mathfrak{O}'$-ideal of $\ell$-power reduced norm $\not=1$. We will show that maximal orders in $S_\ell(\mathfrak{O})$ is in bijection with vertices in $\mathcal{T}(\ell)$.

\begin{proposition} \label{connected component and local maximal orders}
  There are bijections
 \begin{equation}
\begin{array}{c@{\;}c@{\;}c}
\left\{ \substack{ \text{vertices in a connected} \\ \text{component of } G_{K_1,K_2}(p,\ell) } \right\} 
& \longleftrightarrow & 
S_\ell(\mathfrak{O}) \\[1ex]
& \longleftrightarrow & 
\{\text{local maximal orders in } B_\ell\} \\[1ex]
& \longleftrightarrow &  \{\text{vertices in } \mathcal{T}(\ell)\}.
\end{array}
\end{equation}
\end{proposition}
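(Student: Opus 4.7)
My plan is to establish the three bijections sequentially, using the graph isomorphism $G_{K_1,K_2}(p,\ell) \cong Q_{K_1,K_2}(p,\ell)$ from Theorem \ref{graph isomorphisms} to translate everything to the quaternion side, where each vertex is a maximal order in $B_0$ and each edge is a primitive connecting ideal of reduced norm $\ell$. Fix a vertex $(E,\jmath)/\cong$ in the connected component and let $\mathfrak{O}$ be the corresponding maximal order in $B_0$.

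For the first bijection, I would show that the connected component of $\mathfrak{O}$ in $Q_{K_1,K_2}(p,\ell)$ coincides with $S_\ell(\mathfrak{O})$. Given a walk $\mathfrak{O} = \mathfrak{O}^{(0)} \to \mathfrak{O}^{(1)} \to \cdots \to \mathfrak{O}^{(k)} = \mathfrak{O}'$ whose arrows are primitive connecting ideals $I_i$ of reduced norm $\ell$, the product $I_1 I_2 \cdots I_k$ is an integral connecting $\mathfrak{O},\mathfrak{O}'$-ideal of reduced norm $\ell^k$, placing $\mathfrak{O}' \in S_\ell(\mathfrak{O})$. Conversely, given any integral connecting $\mathfrak{O},\mathfrak{O}'$-ideal $I$ of $\ell$-power reduced norm, I would replace it by its primitive part $\operatorname{pr}(I)$, which by Proposition \ref{primitive and minimal reduced norm ideal} is the unique integral connecting $\mathfrak{O},\mathfrak{O}'$-ideal of minimal reduced norm and still has $\ell$-power reduced norm $\ell^{k'}$. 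Choosing a supersingular curve whose endomorphism ring realizes $\mathfrak{O}$, Corollary \ref{decompose-primitive} decomposes $\operatorname{pr}(I)$ into primitive ideals of prime reduced norm, and the multiplicativity of the reduced norm (Lemma \ref{colon ideals}(b)) forces every factor to have reduced norm $\ell$, producing the required walk in $Q_{K_1,K_2}(p,\ell)$.

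For the bijection $S_\ell(\mathfrak{O}) \leftrightarrow \{\text{local maximal orders in } B_\ell\}$, I would use $\mathfrak{O}' \mapsto \mathfrak{O}'_\ell$. Well-definedness and injectivity follow from the observation that if $I$ is an integral connecting $\mathfrak{O},\mathfrak{O}'$-ideal of reduced norm $\ell^k$, then at every prime $\ell' \neq \ell$ the localization $I_{\ell'}$ is an integral connecting $\mathfrak{O}_{\ell'},\mathfrak{O}'_{\ell'}$-ideal of reduced norm $1$; by Lemma \ref{minimal reduced norm local} this forces $\mathfrak{O}'_{\ell'} = \mathfrak{O}_{\ell'}$, and uniqueness of the maximal order in the division algebra $B_p$ handles $v = p$, so $\mathfrak{O}' = \bigl(\bigcap_v \mathfrak{O}'_v\bigr) \cap B_0$ is recovered from $\mathfrak{O}'_\ell$ alone. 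For surjectivity, given any local maximal order $\mathfrak{O}^\star_\ell \subseteq B_\ell$, I would glue it with $\mathfrak{O}_v$ at every other place; the local-global correspondence for orders yields a maximal order $\mathfrak{O}^\star \subseteq B_0$, and Lemma \ref{minimal reduced norm local} supplies an integral connecting $\mathfrak{O},\mathfrak{O}^\star$-ideal of reduced norm $[\mathfrak{O}_\ell : \mathfrak{O}_\ell \cap \mathfrak{O}^\star_\ell]$, which is a power of $\ell$ by the Bruhat--Tits tree structure of local maximal orders in $B_\ell$.

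The last bijection with vertices of $\mathcal{T}(\ell)$ is the standard fact recalled just before the statement: since $\ell \neq p$ we have $B_\ell \cong M_2(\Q_\ell)$, and every local maximal order has the form $\End_{\Z_\ell}(\Lambda)$ for a $\Z_\ell$-lattice $\Lambda \subseteq \Q_\ell^2$, well-defined up to homothety of $\Lambda$. The main technical obstacle I anticipate is the decomposition step in the first bijection: Corollary \ref{decompose-primitive} yields primitive prime-norm factors, but I still need to track that the successive left orders $\calO_L$ of the partial products form a genuine walk in $Q_{K_1,K_2}(p,\ell)$ and that the factor norms must all equal $\ell$; everything else reduces to a systematic application of the local-global dictionary for quaternion orders and invertible ideals.
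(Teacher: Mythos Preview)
Your proposal is correct and takes essentially the same approach as the paper: both establish the key bijection $S_\ell(\mathfrak{O}) \leftrightarrow \{\text{local maximal orders in } B_\ell\}$ by showing that an integral connecting $\mathfrak{O},\mathfrak{O}'$-ideal of $\ell$-power reduced norm is locally principal with unit generator at every prime $\ell' \neq \ell$, forcing $\mathfrak{O}'_{\ell'} = \mathfrak{O}_{\ell'}$, so that $\mathfrak{O}'$ is recovered from $\mathfrak{O}'_\ell$ alone. The paper's proof is terser---it does not spell out the first bijection (connected component $\leftrightarrow S_\ell(\mathfrak{O})$) via ideal decomposition, nor the gluing step for surjectivity---so your use of Corollary~\ref{decompose-primitive} and the local-global dictionary fills in steps the paper leaves implicit.
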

\begin{proof}
    Let $\mathfrak{O}' \subseteq B_0$ be a maximal order and suppose there exists an integral connecting $\mathfrak{O}, \mathfrak{O}'$-ideal $I$ of $\ell$-power reduced norm $\not= 1$. Let $\ell'\neq \ell$ be a prime. Since $I$ is integral and locally principal,    
    \begin{equation}
        I_{\ell'}=\mathfrak{O}_{\ell'} \mu=\mu\mathfrak{O}'_{\ell'}
    \end{equation}
    for some $\mu \in \mathfrak{O}_{\ell'} \cap \mathfrak{O'}_{\ell'}$, where $\mu$ is a unit in $\mathfrak{O}_{\ell'}$ and in $\mathfrak{O}_{\ell'}'$ since $\nrd(\mu) \in \Z_\ell^\times$.
    Hence,
\begin{equation}
      \mathfrak{O}_{\ell'}=\mathfrak{O}_{\ell'} \mu=I_{\ell'}=\mu\mathfrak{O}'_{\ell'}=\mathfrak{O}'_{\ell'}.
\end{equation}
As well we have that
\begin{equation}
    I_\ell=\mathfrak{O}_\ell \alpha =\alpha\mathfrak{O}'_\ell
\end{equation}
for some $\alpha \in B_{\ell}^\times$. Hence,
    \begin{equation}
\mathfrak{O}'_\ell=\alpha^{-1}\mathfrak{O}_\ell \alpha.
    \end{equation} The set of such maximal orders $\mathfrak{O}'_\ell$ is in bijection with the set of local maximal orders in $B_{\ell}$.

    Therefore, there is a one-to-one correspondence between $S_\ell(\mathfrak{O})$ and the set of local maximal orders in $B_{\ell}$.
\end{proof}
\begin{details}
   \begin{tbox}
        Also see \href{https://eprint.iacr.org/2021/372.pdf}{Explicit connections between supersingular isogeny graphs and Bruhat–Tits trees}
   \end{tbox}
\end{details}
\begin{details}
    \begin{tbox}
        (i) $I$ is locally principal since $\calO_L(I), \calO_R(I)$ are maximal by \cite[Corollary 17.2.3]{Voi21}. \\
(ii) $\nrd(I)=(\ell^k)$ as an ideal. \\
(iii) $I$ is integral if and only if $I \subseteq \calO_L(I)\cap \calO_R(I)$ \cite[Lemma 16.2.8]{Voi21}. \\
(iv) For any prime $\ell'\neq \ell$, $\nrd(\mu)=\nrd(I_{\ell'})=\nrd(I)_{\ell'} \in \Z_{\ell'}^\times$. \\
(v) $\mu$ is a unit in $\calO_{\ell'}$ as $\nrd(\mu) \in \Z_{\ell'}^\times$
    \end{tbox}
\end{details}

\begin{extra}
    \begin{remark}
For a given left $\mathfrak{O}$-ideal $I$, KLPT algorithm returns a left $\mathfrak{O}$-ideal $I'$ in the same class with $\ell$-power reduced norm. This is done by strong approximation: suppose $n=\nrd(I)$. There exists $\alpha \in B_{0}$ such that $I=\mathfrak{O}n+\mathfrak{O}\alpha$. The strong approximation theorem implies that there exists $\beta \in I$ such that $\beta \equiv \alpha \mod n\mathfrak{O}$ and $\nrd(\alpha)=N\ell^e$ for some $e>0$. In conclusion $I=\mathfrak{O}n+\mathfrak{O}\alpha=\mathfrak{O}n+\mathfrak{O}\beta$.

{\color{red} 
\[
\begin{tikzcd}
\mathfrak{O} \arrow[r, "I \text{ with } \nrd(I)=n"] \arrow[dr, swap, "\jmath"] & \calO_R(I) \\
& \mathfrak{O}
\end{tikzcd}
\]}
\end{remark}
\end{extra}

\begin{theorem} \label{tree structure of double oriented graphs}
   Identifying the unique conjugate directed edges between two neighboring vertices, the graphs $G_{K_1, K_2}(p,\ell)$ and $Q_{K_1, K_2}(p,\ell)$ become an undirected graph which is a tree.
\end{theorem}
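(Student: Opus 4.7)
The plan is to identify each connected component of $Q_{K_1, K_2}(p,\ell)$ with the Bruhat-Tits tree $\mathcal{T}(\ell)$. By Theorem~\ref{graph isomorphisms} it suffices to work with $Q_{K_1,K_2}(p,\ell)$, whose vertices are maximal orders in $B_0$ and whose directed edges are primitive connecting ideals of reduced norm $\ell$. By Corollary~\ref{no multi-degs and loops} this graph has no loops or multi-edges, and by Lemma~\ref{primitivity of inverse ideal} any primitive connecting $\mathfrak{O},\mathfrak{O}'$-ideal $I$ of reduced norm $\ell$ is paired with the primitive connecting $\mathfrak{O}',\mathfrak{O}$-ideal $\bar{I}$ of the same reduced norm, so the identification of conjugate directed edges produces a well-defined undirected graph.

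Next, I would fix a connected component $C$ with base vertex $\mathfrak{O}$. By Proposition~\ref{connected component and local maximal orders}, the map $\mathfrak{O}' \mapsto \mathfrak{O}'_\ell$ yields a bijection between the vertices of $C$ and the vertex set of $\mathcal{T}(\ell)$. The heart of the argument is to upgrade this vertex bijection to a graph isomorphism with $\mathcal{T}(\ell)$: I would show that two vertices $\mathfrak{O}', \mathfrak{O}''$ in $C$ are joined by an undirected edge if and only if the local orders $\mathfrak{O}'_\ell$ and $\mathfrak{O}''_\ell$ are adjacent in $\mathcal{T}(\ell)$.

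For the forward direction, a primitive connecting $\mathfrak{O}',\mathfrak{O}''$-ideal $I$ of reduced norm $\ell$ has $I_{\ell'}$ principal and trivial for $\ell' \neq \ell$ (so $\mathfrak{O}'_{\ell'} = \mathfrak{O}''_{\ell'}$, as in the proof of Proposition~\ref{connected component and local maximal orders}), while $I_\ell$ is a primitive connecting $\mathfrak{O}'_\ell, \mathfrak{O}''_\ell$-ideal of reduced norm $\ell$. By Lemma~\ref{minimal reduced norm local}, this means $\mathfrak{O}''_\ell = \alpha^{-1} \mathfrak{O}'_\ell \alpha$ for some $\alpha \in \mathfrak{O}'_\ell \setminus \ell\mathfrak{O}'_\ell$ of reduced norm $\ell$, which under the identification of local maximal orders with homothety classes of $\mathbb{Z}_\ell$-lattices corresponds precisely to the existence of a cyclic sublattice of index $\ell$, hence to adjacency in $\mathcal{T}(\ell)$. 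For the reverse direction, adjacency in $\mathcal{T}(\ell)$ produces a primitive connecting ideal of reduced norm $\ell$ in $B_\ell$ by Lemma~\ref{minimal reduced norm local}, and trivial local data at all other primes; gluing via the local-to-global dictionary (in the spirit of \cite[Lemma 27.6.8]{Voi21}, as used in Proposition~\ref{Primitivity local property}) yields a global integral left $\mathfrak{O}'$-ideal $I$ whose reduced norm is $\ell$ and whose primitivity follows from the local criterion of Proposition~\ref{Primitivity local property}. Since $\mathcal{O}_R(I) = \mathfrak{O}''$ by construction, $I$ is the desired primitive connecting ideal.

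Once the edge-preserving bijection with $\mathcal{T}(\ell)$ is in hand, the conclusion is immediate: $\mathcal{T}(\ell)$ is a tree, hence each connected component of $Q_{K_1,K_2}(p,\ell)$ (and correspondingly of $G_{K_1,K_2}(p,\ell)$) is a tree. The main technical obstacle is the reverse edge direction, i.e.\ the local-to-global gluing that produces an honest primitive $\mathfrak{O}',\mathfrak{O}''$-connecting ideal of reduced norm $\ell$ from the purely local datum of adjacency in $\mathcal{T}(\ell)$; everything else is essentially bookkeeping using results already established in this paper.
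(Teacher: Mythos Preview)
Your proposal is correct and follows essentially the same route as the paper: both identify each connected component of $Q_{K_1,K_2}(p,\ell)$ with the Bruhat--Tits tree $\mathcal{T}(\ell)$ via Proposition~\ref{connected component and local maximal orders}, check the edge correspondence, pair $I$ with $\bar I$, and conclude. The paper's proof is terser—simply asserting the edge bijection and invoking Corollary~\ref{no multi-degs and loops}—whereas you spell out both directions of the edge correspondence (including the local-to-global gluing you correctly flag as the one nontrivial step), but the strategy is identical.
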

\begin{proof}
    By Proposition \ref{connected component and local maximal orders}, there is a bijection between vertices in a connected component of $Q_{K_1, K_2}(p,\ell)$ and vertices in $\mathcal{T}(\ell)$. Given two vertices $\mathfrak{O}, \mathfrak{O}'$ in $Q_{K_1, K_2}(p,\ell)$, $\mathfrak{O}_\ell, \mathfrak{O}'_\ell$ are vertices in $\mathcal{T}(\ell)$, and there is a directed edge from $\mathfrak{O}$ to $\mathfrak{O}'$ in $Q_{K_1, K_2}$ if and only if there is an edge between $\mathfrak{O}_\ell, \mathfrak{O}_\ell'$ in $\mathcal{T}(\ell)$. If there is a directed edge from $\mathfrak{O}$ to $\mathfrak{O}'$ given by the primitive connecting $\mathfrak{O},\mathfrak{O}'$-ideal of reduced norm $\ell$, then $\bar{I}$ gives a directed edge from $\mathfrak{O}'$ to $\mathfrak{O}$. We can identify $I$ and $\bar{I}$, and obtain an undirected version of $Q_{K_1, K_2}(p,\ell)$. The resulting undirected graph is a tree since there are no multi-edges and loops in $Q_{K_1, K_2}(p,\ell)$ by Corollary \ref{no multi-degs and loops}. 
\end{proof}

\begin{corollary} \label{local roots are neigbors}
    If two distinct vertices lie in the same connected component of $G_{K_1, K_2}(p,\ell)$, then they lie in different connected components of $G_{K_1, K_2}(p,\ell')$ for any prime $\ell' \neq \ell$.
\end{corollary}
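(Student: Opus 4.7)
The plan is to leverage Proposition \ref{connected component and local maximal orders} together with Corollary \ref{double oriented curves to maximal orders} by tracking the local behaviour of the associated maximal orders. Let $(E_1, \jmath_1)$ and $(E_2, \jmath_2)$ be two distinct vertices lying in the same connected component of $G_{K_1, K_2}(p,\ell)$, and let $\mathfrak{O}_1, \mathfrak{O}_2 \subseteq B_0$ be the corresponding maximal orders, which are distinct by Corollary \ref{double oriented curves to maximal orders}. I will derive a contradiction from the supposition that they also lie in the same connected component of $G_{K_1, K_2}(p, \ell')$ for some prime $\ell' \neq \ell$.

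First, I would observe that by Theorem \ref{graph isomorphisms}, a path of length $k \geq 1$ from $\mathfrak{O}_1$ to $\mathfrak{O}_2$ in $Q_{K_1,K_2}(p,\ell)$ corresponds to a composition of primitive connecting ideals of reduced norm $\ell$, yielding an integral connecting $\mathfrak{O}_1, \mathfrak{O}_2$-ideal $I$ with $\nrd(I) = \ell^k$. Hence $\mathfrak{O}_2 \in S_\ell(\mathfrak{O}_1)$. The key observation is then that the argument in the proof of Proposition \ref{connected component and local maximal orders} shows strictly more than the bijection stated: for any prime $q \neq \ell$, the local fact that $I_q$ is locally principal with reduced norm a unit in $\Z_q$ forces $\mathfrak{O}_{1,q} = \mathfrak{O}_{2,q}$.

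Next, under the hypothesis that $(E_1, \jmath_1)$ and $(E_2, \jmath_2)$ also lie in the same connected component of $G_{K_1, K_2}(p, \ell')$, the same reasoning applied to $\ell'$ produces an integral connecting $\mathfrak{O}_1, \mathfrak{O}_2$-ideal $J$ of reduced norm $(\ell')^m$ for some $m \geq 1$, and yields $\mathfrak{O}_{1,q} = \mathfrak{O}_{2,q}$ for every prime $q \neq \ell'$. Since $\ell \neq \ell'$, the two conclusions together cover all primes $q$, so $\mathfrak{O}_{1,q} = \mathfrak{O}_{2,q}$ for every prime $q$. The local–global principle for $\Z$-lattices in $B_0$ then gives
\begin{equation*}
    \mathfrak{O}_1 \;=\; \bigcap_{q} \bigl( \mathfrak{O}_{1,q} \cap B_0 \bigr) \;=\; \bigcap_{q} \bigl( \mathfrak{O}_{2,q} \cap B_0 \bigr) \;=\; \mathfrak{O}_2,
\end{equation*}
contradicting $\mathfrak{O}_1 \neq \mathfrak{O}_2$.

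The argument is essentially bookkeeping rather than a substantial step, so I do not anticipate a real obstacle; the only point requiring care is to confirm that the proof of Proposition \ref{connected component and local maximal orders}, rather than just its statement, delivers the local equality $\mathfrak{O}_{1,q} = \mathfrak{O}_{2,q}$ at all primes $q \neq \ell$, which it does because any integral locally principal ideal of reduced norm a $q$-adic unit is generated by a local unit on both sides.
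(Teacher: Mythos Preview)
Your proof is correct and follows essentially the same approach as the paper: both extract from the proof of Proposition~\ref{connected component and local maximal orders} the key fact that an integral connecting $\mathfrak{O}_1,\mathfrak{O}_2$-ideal of $\ell$-power reduced norm forces $(\mathfrak{O}_1)_q = (\mathfrak{O}_2)_q$ for every prime $q \neq \ell$. The only difference is in the final step: you apply this symmetrically to $\ell$ and $\ell'$ and invoke the local--global principle to conclude $\mathfrak{O}_1 = \mathfrak{O}_2$, whereas the paper works asymmetrically, using only the $\ell$-connection to get $(\mathfrak{O}_1)_{\ell'} = (\mathfrak{O}_2)_{\ell'}$ and then invoking Lemma~\ref{minimal reduced norm local} to rule out any $\ell'$-power connecting ideal of reduced norm $\neq 1$ directly at the prime $\ell'$. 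Both endgames are equally short; yours is slightly more symmetric, the paper's avoids reassembling the global order.
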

\begin{proof}
Let $(E,\jmath)/\cong$ and $(E',\jmath')/\cong$ be distinct vertices in $G_{K_1, K_2}(p, \ell)$. There are corresponding maximal orders $\mathfrak{O}$ and $\mathfrak{O}'$ in $Q_{K_1, K_2}(p,\ell)$ by Theorem \ref{graph isomorphisms}. By Corollary~\ref{connected component and local maximal orders}, $(E,\jmath)$ and $(E',\jmath')$ are in the same connected component of $G_{K_1, K_2}(p,\ell)$ if and only if there is an integral connecting $\mathfrak{O},\mathfrak{O}'$-ideal of $\ell$-power reduced norm $\neq 1$. In the proof of Proposition \ref{connected component and local maximal orders}, we showed that if maximal orders $\mathfrak{O}, \mathfrak{O}'$ in $B_0$ have an integral connecting $\mathfrak{O},\mathfrak{O}'$-ideal of $\ell$-power reduced norm, then $\mathfrak{O}_{\ell'} = \mathfrak{O}'_{\ell'}$ for any prime $\ell' \neq \ell$. Assume for contradiction that there exists a connecting $\mathfrak{O}_{\ell'}, \mathfrak{O}'_{\ell'}$-ideal $I$ of $\ell'$-power reduced norm $\not=1$ for some prime $\ell'\neq\ell$. Then 
\begin{equation}
\mathfrak{O}_{\ell'}'=\alpha^{-1}\mathfrak{O}_{\ell'}\alpha
\end{equation}
for some $\alpha \in B_{\ell'}^\times$ such that 
\begin{equation}
\nrd(\alpha)=[\mathfrak{O}_{\ell'}:\mathfrak{O}_{\ell'}\cap\mathfrak{O}'_{\ell'}] =\nrd(I) \neq 1
\end{equation}
 where $I = \mathfrak{O} \alpha$ by Lemma \ref{minimal reduced norm local}. This is a contradiction as $\mathfrak{O}_{\ell'}=\mathfrak{O}_{\ell'}'$. Hence, there does not exits a connecting $\mathfrak{O},\mathfrak{O}'$-ideal of $\ell'$-power reduced norm $\not=1$ for any prime $\ell' \neq \ell$.
\end{proof}

Let $\iota_0: K \rightarrow B_0$ be an embedding of an imaginary quadratic field $K$. We can naturally extend it and get a canonical embedding $\iota_0: K \rightarrow B_\ell$.

\begin{definition}
    We say two local maximal orders $\mathfrak{O}, \mathfrak{O}'$ in $B_\ell$ are $\iota_0$-isomorphic, denoted by $\mathfrak{O}\cong_{\iota_0} \mathfrak{O}'$, if $\mathfrak{O}'=\alpha^{-1}\mathfrak{O}\alpha$ for some $\alpha \in \iota_0(K)^\times$. We will call the $\iota_0$-isomorphism class of $\mathfrak{O}$ the $\iota_0$-conjugacy class of $\mathfrak{O}$. We say two lattices $I, I'$ in $B_\ell$ are in the same left $\iota_0$-class, denoted by $I \sim_{\iota_0} I'$, if there exists $\alpha \in \iota_0(K)^\times$ such that $I'=I\alpha$.
\end{definition}

\begin{proposition}
    Being $\iota_0$-isomorphic is a local property.
\end{proposition}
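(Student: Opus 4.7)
The plan is to prove both directions of the equivalence: two maximal orders $\mathfrak{O}, \mathfrak{O}'$ in $B_0$ satisfy $\mathfrak{O} \cong_{\iota_0} \mathfrak{O}'$ if and only if $\mathfrak{O}_\ell \cong_{\iota_0} \mathfrak{O}'_\ell$ in $B_\ell$ for every prime $\ell$. The forward direction is immediate: if $\mathfrak{O}' = \iota_0(\alpha)^{-1}\mathfrak{O}\iota_0(\alpha)$ for some $\alpha \in K^\times$, then localizing at each prime $\ell$ yields $\mathfrak{O}'_\ell = \iota_0(\alpha)^{-1}\mathfrak{O}_\ell\iota_0(\alpha)$, where the same $\alpha \in K^\times$ is viewed through the canonical extension $\iota_0 : K \to B_\ell$.

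For the converse, first note that by localizing the intersections one gets $\mathfrak{O} \cap \iota_0(K) = \mathfrak{O}' \cap \iota_0(K) =: \iota_0(\calO)$, since the intersections of lattices commute with localization and since conjugation by an element of $\iota_0(K)$ fixes $\iota_0(K)$ pointwise. By Proposition~\ref{single oriented isogeny}(b) one therefore has $\mathfrak{O}' = \iota_0(\mathfrak{a})^{-1}\mathfrak{O}\iota_0(\mathfrak{a})$ for some invertible fractional $\calO$-ideal $\mathfrak{a}$. Using Lemma~\ref{minimal reduced norm global} and Lemma~\ref{minimal reduced norm local}, let $I$ be the primitive connecting $\mathfrak{O},\mathfrak{O}'$-ideal with $\mathfrak{a} = I \cap \iota_0(K)$. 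The primitive connecting ideal has finite support $S = \{\ell : \mathfrak{O}_\ell \neq \mathfrak{O}'_\ell\}$, and at each $\ell \in S$ the hypothesis yields $\alpha_\ell \in K^\times$ with $I_\ell = \mathfrak{O}_\ell \iota_0(\alpha_\ell)$, so $\mathfrak{a}_\ell = \alpha_\ell \calO_\ell$; for $\ell \notin S$, $\mathfrak{a}_\ell = \calO_\ell$. Thus $\mathfrak{a}$ is locally principal at every prime.

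It remains to show that this locally principal ideal $\mathfrak{a}$ is globally principal in $\calO$, which upgrades the existence of local generators $\alpha_\ell \in K^\times$ into a global generator $\alpha \in K^\times$ satisfying $I = \mathfrak{O}\iota_0(\alpha)$; this is the heart of the proof. For each $\ell$, the open subgroup $U_\ell = \iota_0^{-1}\bigl(N_{B_\ell^\times}(\mathfrak{O}_\ell)\bigr) \cap K_\ell^\times$ contains $\calO_\ell^\times$, and the local generator $\alpha_\ell$ is determined up to $U_\ell$. The plan is to apply weak approximation to $K^\times$ over the finite set $S$, producing $\alpha \in K^\times$ lying in the coset $\alpha_\ell U_\ell$ at each $\ell \in S$ and with unit valuations outside $S$, so that $\iota_0(\alpha)$ conjugates $\mathfrak{O}_\ell$ to $\mathfrak{O}'_\ell$ locally everywhere. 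By the uniqueness statement of Lemma~\ref{minimal reduced norm local} combined with the local-global principle for lattices, such an $\alpha$ satisfies $\iota_0(\alpha)^{-1}\mathfrak{O}\iota_0(\alpha) = \mathfrak{O}'$, yielding $\mathfrak{O} \cong_{\iota_0} \mathfrak{O}'$. The main obstacle is verifying that the weak approximation step can be carried out with the $U_\ell$ sufficiently large to absorb the valuation constraints at places outside $S$; this ultimately reduces to the fact that $\iota_0(\calO_\ell^\times) \subseteq \mathfrak{O}_\ell^\times \subseteq N_{B_\ell^\times}(\mathfrak{O}_\ell)$ at every prime.
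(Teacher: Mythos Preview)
You have interpreted the statement more strongly than the paper does, and your argument for that stronger statement has a genuine gap.

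The paper's proof is short and proves something narrower: it fixes a single $\alpha \in \iota_0(K)^\times$, scales it to be primitive in $\mathfrak{O}$, and then runs the chain of equivalences
\[
\mathfrak{O}'=\alpha^{-1}\mathfrak{O}\alpha
\ \Longleftrightarrow\
\mathfrak{O}\alpha \text{ is the primitive connecting } \mathfrak{O},\mathfrak{O}'\text{-ideal}
\ \Longleftrightarrow\
\mathfrak{O}_\ell\alpha \text{ is primitive connecting for all }\ell
\ \Longleftrightarrow\
\mathfrak{O}'_\ell=\alpha^{-1}\mathfrak{O}_\ell\alpha \text{ for all }\ell,
\]
using Lemma~\ref{index for isomorphic maximal orders}(b) and Proposition~\ref{Primitivity local property}. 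In other words, the paper shows that the relation ``conjugate by this particular $\alpha$'' is local. It does \emph{not} attempt to produce a global conjugator from a family of possibly different local ones.

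Your version --- given only that some $\alpha_\ell \in K^\times$ works at each prime, find a single global $\alpha$ --- is a genuinely harder statement, and your weak-approximation step does not close it. You correctly reduce to showing that the invertible $\calO$-ideal $\mathfrak{a}$ is principal, and you correctly observe that $\mathfrak{a}$ is locally principal at every prime. But every invertible ideal of a quadratic order is locally principal; this says nothing. Weak approximation in $K^\times$ lets you match prescribed cosets at the finitely many places in $S$, but it gives you \emph{no} control on the valuations of $\alpha$ at places outside $S$. Demanding ``unit valuations outside $S$'' is exactly strong approximation for $\mathbb{G}_m$, whose failure is measured by the ideal class group of $\calO$. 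Your closing sentence, that the obstacle ``reduces to'' the inclusion $\iota_0(\calO_\ell^\times)\subseteq N_{B_\ell^\times}(\mathfrak{O}_\ell)$, does not help: that inclusion only tells you $U_\ell \supseteq \calO_\ell^\times$, and $K^\times \cdot \prod_\ell \calO_\ell^\times$ is not all of the finite id\`eles of $K$ unless $\operatorname{Cl}(\calO)$ is trivial. So the argument as written does not go through, and indeed the stronger statement you are aiming for can fail whenever a non-principal $\calO$-ideal conjugates $\mathfrak{O}$ to a genuinely different maximal order.
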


\begin{proof}
    Let $\mathfrak{O}, \mathfrak{O}'$ be maximal orders in $B_0$ and $\alpha \in \iota_0(K)^\times$. Scaling $\alpha$ by an integer, we may assume that $\alpha \in \mathfrak{O}$ is primitive. By Proposition \ref{Primitivity local property} and Lemma \ref{index for isomorphic maximal orders}(b), $\mathfrak{O}'=\alpha^{-1}\mathfrak{O}\alpha$ if and only if $I=\mathfrak{O}\alpha$ is the primitive connecting $\mathfrak{O},\mathfrak{O}'$-ideal if and only if $I_\ell=\mathfrak{O}_\ell\alpha$ is the primitive connecting $\mathfrak{O}_\ell,\mathfrak{O}'_\ell$-ideal for every prime $\ell$ if and only if $\mathfrak{O}_{\ell}' = \alpha^{-1}\mathfrak{O}_{\ell}\alpha$ for every prime $\ell$.   
\end{proof}

\begin{definition}
     A $K$-oriented Bruhat-Tits graph $\mathcal{T}_{K}(\ell)$ associated to $\operatorname{PGL}_2(\mathbb{Q}_\ell)$ is a graph whose vertices are $\iota_0$-isomorphism classes of local maximal orders in $B_\ell$ and edges are $\iota_0$-equivalence classes of connecting ideals of reduced norm $\ell$ between them.
\end{definition}

Let $K=K_1$. Since the Bruhat-Tits tree $\mathcal{T}(\ell)$ is an undirected version of a connected component of $Q_{K_1, K_2}(p, \ell)$ by Theorem \ref{tree structure of double oriented graphs}, and $Q_{K}(p,\ell), \mathcal{T}_{K}(\ell)$ are quotient graphs of $Q_{K_1, K_2}(p,\ell), \mathcal{T}(\ell)$ respectively, $\mathcal{T}_{K}(\ell)$ is an undirected version of a connected component of $Q_{K}(p,\ell)$. Hence, $\mathcal{T}_K(\ell)$ inherits a volcano structure from $G_{K}(p,\ell)$ given in Proposition \ref{Onu21 Proposition 4.1}.

\begin{corollary}  \label{connected component and local maximal orders - oriented}
Let $\mathfrak{O}$ be a maximal order in $B_0$. There are bijections
 \begin{equation}
\begin{array}{c@{\;}c@{\;}c}
\left\{ \substack{ \text{vertices in a connected} \\ \text{component of } G_{K}(p,\ell) } \right\} 
& \longleftrightarrow & 
\{\substack{\text{$\iota_0$-conjugacy classes of} \\ \text{maximal orders in $S_\ell(\mathfrak{O})$}}\} \\[1ex]
& \longleftrightarrow & 
\{\substack{\text{$\iota_0$-conjugacy classes of} \\ \text{local maximal orders in } B_\ell}\} \\[1ex]
& \longleftrightarrow &  \{\text{vertices in } \mathcal{T}_K(\ell)\}.
\end{array}
\end{equation}
\end{corollary}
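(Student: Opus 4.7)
The plan is to derive the three bijections by passing to $\iota_0$-conjugacy classes in the unoriented bijections already established in Proposition~\ref{connected component and local maximal orders}. First I would use Theorem~\ref{graph isomorphisms}, which identifies $G_K(p,\ell)$ with $Q_K(p,\ell)$, whose vertex set is $\typee(\mathfrak{O}_0,\iota_0)$. The forgetful map $\mathfrak{O}'\mapsto \mathfrak{O}'/\!\cong_{\iota_0}$ from the vertex set of $Q_{K_1,K_2}(p,\ell)$ onto that of $Q_K(p,\ell)$ preserves edges by construction, so it sends each connected component of $Q_{K_1,K_2}(p,\ell)$ onto a connected component of $Q_K(p,\ell)$. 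Combined with the bijection between vertices in a connected component of $G_{K_1,K_2}(p,\ell)$ and $S_\ell(\mathfrak{O})$ from Proposition~\ref{connected component and local maximal orders}, this yields the first bijection.

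Next I would invoke the proposition immediately preceding the statement, which says that $\iota_0$-isomorphism of maximal orders is a local property. Recall from the proof of Proposition~\ref{connected component and local maximal orders} that any two maximal orders $\mathfrak{O}',\mathfrak{O}''\in S_\ell(\mathfrak{O})$ satisfy $\mathfrak{O}'_{\ell'}=\mathfrak{O}''_{\ell'}$ for every prime $\ell'\neq \ell$. Hence $\mathfrak{O}'$ and $\mathfrak{O}''$ are $\iota_0$-conjugate in $B_0$ if and only if $\mathfrak{O}'_\ell$ and $\mathfrak{O}''_\ell$ are $\iota_0$-conjugate in $B_\ell$. Since, by Proposition~\ref{connected component and local maximal orders}, the assignment $\mathfrak{O}'\mapsto \mathfrak{O}'_\ell$ gives a bijection from $S_\ell(\mathfrak{O})$ onto the set of local maximal orders in $B_\ell$, descending to quotients yields the second bijection.

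The third bijection is then essentially by definition: the vertices of $\mathcal{T}_K(\ell)$ are by construction $\iota_0$-isomorphism classes of local maximal orders in $B_\ell$. The main thing to verify is that the descent of the first bijection is well-defined, that is, that isomorphism of $K$-oriented curves on the left corresponds exactly to $\iota_0$-conjugacy of the associated maximal orders on the right; this is the content of Proposition~\ref{single oriented isogeny}(c) together with Corollary~\ref{bijections in single orientation}, so no additional work is required.
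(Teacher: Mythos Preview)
Your proposal is correct and follows essentially the same approach as the paper: the corollary is stated without a separate proof, and the argument is contained in the paragraph immediately preceding it, which observes that $Q_K(p,\ell)$ and $\mathcal{T}_K(\ell)$ are the $\iota_0$-quotients of $Q_{K_1,K_2}(p,\ell)$ and $\mathcal{T}(\ell)$, so the bijections of Proposition~\ref{connected component and local maximal orders} descend. Your write-up is in fact more explicit than the paper's sketch, particularly in invoking the locality of $\iota_0$-isomorphism and in checking that the quotient map carries connected components onto connected components; one small point you might make fully explicit is that surjectivity onto a \emph{full} connected component follows from the fact that both graphs are $(\ell+1)$-regular, so the quotient map is locally surjective on outgoing edges.
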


\subsection{Example of global roots} \label{Example of roots} We end this section by giving an explicit example of global roots of double-oriented graphs.

Let $p > 2 $ be a prime. Let $1, i, j, k$ be a standard basis for $B_0$ such that
\begin{equation*}
    i^2= -q, \quad j^2 = -p, \quad k = ij=-ji,
\end{equation*}
where $q$ is prime. By \cite[Proposition 5.1 and 5.2]{Piz80}, we can take
\begin{equation} \label{Pizer basis}
   q = \begin{cases}
            1 & \text{ if }  p \equiv 3 \pmod 4, \\
            2 & \text{ if }  p \equiv 5 \pmod 8, \\
            q \equiv 3 \pmod 4 \text{ and } (p/q)= -1 & \text{ if } p \equiv 1 \pmod 8 
        \end{cases}
\end{equation}
In the last case, under the generalized Riemann hypothesis (GRH), $q = O((\log p)^2)$ by \cite[Proposition 1]{EHL+18}.

\begin{remark}
\label{opposite-mod-4}
  In all cases, at most one of $-q, -p$ is congruent to $1 \pmod 4$.
\end{remark}

The full ring of integers of the quadratic fields $K_i=\Q(i)$ and $K_j=\Q(j)$ are given by 
\begin{equation*}
\calO_{K_i}, \calO_{K_j} = \begin{cases}
 \Z[i], \Z[\frac{1+j}{2}] & \text{if} \ p \equiv 3 \pmod 4, \\
 \Z[i], \Z[j] & \text{if} \ p \equiv 5 \pmod 8, \\
 \Z[\frac{1+i}{2}], \Z[j] & \text{if} \ p \equiv 1 \pmod 8.
\end{cases}
\end{equation*}
Denote by $\Z\langle \alpha_1, \ldots, \alpha_n \rangle$ the subring of $B_{p, \infty}$ generated by $\alpha_1, \ldots, \alpha_n \in B_0$. There are at most two special maximal orders that will serve as the roots of the supersingular isogeny graph in characteristic $p$. The following proposition from \cite[Lemma 3, 4, and 5]{KLPT14} describes them explicitly.


\begin{proposition} \label{roots}
Let $p > 2$ be a prime. There are exactly two maximal orders $\frakO_0$ and $\frakO_1$ in $B_0$ (possibly conjugate) containing $\Z\langle i, j \rangle$ if $p \not\equiv 1 \pmod 8$, and $\Z\langle \frac{1+i}{2}, j \rangle$ if $p \equiv 1 \pmod 8$. These are given explicitly as follows:
\begin{equation*}
\frakO_0, \frakO_1 = \begin{cases}
\Z\langle i, \frac{1+j}{2} \rangle, \Z\langle  i, \frac{1+k}{2} \rangle & \text{if} \ p \equiv 3 \pmod 4, \\
\Z \langle i, \frac{1+j+k}{2}, \frac{i+2j+k}{4}\rangle, \Z \langle i, \frac{1+j+k}{2}, \frac{i+2j-k}{4} \rangle & \text{if} \ p \equiv 5 \pmod 8, \\
\Z \langle \frac{1+i}{2}, j, \frac{ci+k}{q} \rangle, \Z \langle \frac{ 1+i}{2}, j, \frac{ci-k}{q} \rangle & \text{if} \ p \equiv 1 \pmod 8,
\end{cases}
\end{equation*}
where $c$ is an integer such that $q \mid c^2p+1$.
\end{proposition}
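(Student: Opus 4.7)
The plan is to verify, in each of the three congruence cases for $p$, three assertions: (a) each candidate $\frakO_\epsilon$ is a $\Z$-subring of $B_0$ containing $1$ (hence a $\Z$-order); (b) its reduced discriminant equals $p$, so $\frakO_\epsilon$ is maximal; and (c) these two orders exhaust the maximal orders containing the specified subring $R$, where $R=\Z\langle i,j\rangle$ if $p\not\equiv 1\pmod 8$ and $R=\Z\langle (1+i)/2,\,j\rangle$ if $p\equiv 1\pmod 8$.

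For (a) and (b), the approach is a direct calculation. Fix a $\Z$-basis for each candidate (for instance $\{1,\,i,\,(1+j)/2,\,(i+k)/2\}$ when $p\equiv 3\pmod 4$); verify closure under multiplication using $i^2=-q$, $j^2=-p$, and $ij=-ji=k$; then compute the Gram matrix $G=(\trd(e_a\overline{e_b}))_{a,b}$ of the reduced trace pairing. Maximality then follows from $\sqrt{|\det G|}=p=\disc(B_0)$ via \cite[Theorem 15.5.5]{Voi21}. In the $p\equiv 1\pmod 8$ case one uses crucially that $c$ is chosen so that $q\mid c^2p+1$: this is exactly what makes $(ci\pm k)/q$ integral, since its reduced norm is $(c^2q+pq)/q^2=(c^2p+1)/q\cdot (1/1)$, lying in $\Z$.

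For (c), the plan is the standard local--global reduction: a maximal order $\frakO\supseteq R$ is determined by the compatible family of completions $\frakO_\ell\supseteq R_\ell$. A direct computation of $\discrd(R)$ (using the Gram matrix above) shows that $\discrd(R)/p$ is a power of $2$ when $p\not\equiv 1\pmod 8$ and a power of $q$ when $p\equiv 1\pmod 8$. Consequently $R_\ell$ is already a local maximal order at every prime $\ell$ except one \emph{exceptional} prime $\ell_0$ (namely $\ell_0=2$ in the first two cases, $\ell_0=q$ in the third), forcing $\frakO_\ell=R_\ell$ there. At $\ell=p$ the division algebra $B_{0,p}$ admits a unique maximal order, so $\frakO_p$ is likewise forced. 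Thus $\frakO$ is determined by its completion at the single prime $\ell_0$.

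The core of the argument is the analysis at $\ell_0$. Since $\ell_0\neq p$, the completion $B_{0,\ell_0}\cong M_2(\Q_{\ell_0})$ is split (by the choices in \eqref{Pizer basis}, cf.\ \cite[Proposition 5.1, 5.2]{Piz80}), and local maximal orders correspond to vertices of the Bruhat--Tits tree $\mathcal{T}(\ell_0)$ via \cite[Lemma 23.5.2]{Voi21}. One enumerates those vertices whose associated maximal order contains $R_{\ell_0}$, and should find exactly two such vertices, adjacent in $\mathcal{T}(\ell_0)$; these are then recognised as the completions of the global candidates $\frakO_0$ and $\frakO_1$. The main obstacle is this explicit Bruhat--Tits calculation, particularly in the case $p\equiv 5\pmod 8$ where $R$ has local index $8=2^3$ in a local maximal order at $\ell_0=2$ so that a priori many intermediate $\Z_2$-lattices could qualify; one controls this by choosing explicit local bases dictated by the relations on $i,j,k$ and reducing the enumeration to a finite linear-algebra check modulo $2$ (respectively modulo $q$).
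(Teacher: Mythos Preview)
The paper does not prove this proposition at all: it simply cites \cite[Lemmas~3--5]{KLPT14}. Your proposal therefore already does more than the paper, and the strategy in parts~(a)--(c) is sound.

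There is one arithmetic slip in~(b). Since $i^2=-q$, $k^2=-qp$, and $ik+ki=0$, one has $\nrd(ci+k)=c^2q+qp$, so $\nrd((ci\pm k)/q)=(c^2+p)/q$, not $(c^2p+1)/q$ as you wrote. This in fact exposes an inconsistency in the statement itself: for $(ci\pm k)/q$ to be integral one needs $q\mid c^2+p$, whereas the hypothesis reads $q\mid c^2p+1$. Both congruences are solvable for~$c$ (each is equivalent to $-p$ being a square modulo~$q$, which holds because $q\equiv 3\pmod 4$ and $(p/q)=-1$), but they select different~$c$ unless $p^2\equiv 1\pmod q$. The fix is either to take the generator $(i+ck)/q$, whose norm is $(1+c^2p)/q$, or to change the congruence on~$c$ to $q\mid c^2+p$.

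For part~(c), your local--global reduction is correct and the Bruhat--Tits enumeration at the exceptional prime~$\ell_0$ would go through, but the paper's own Bass-order machinery removes exactly the ``main obstacle'' you flag. In each case $R$ contains $\calO_{K_i}$, so $R$ is Bass; one computes $\discrd(R)=4p,\ 8p,\ qp$ respectively (via Lemma~\ref{disriminant-formula}), so $\ell_0=2,2,q$ is the unique prime $\neq p$ dividing $\discrd(R)$. Propositions~\ref{Eichler Symbol Computation} and~\ref{Formula for local embedding number} then give the count directly: in the first two cases $\ell_0^2\mid\discrd(R)$ and $2$ is ramified in $K_i$, so the Eichler symbol is~$0$ and $e_{\ell_0}(R)=2$; in the third case $q\parallel\discrd(R)$ with $q\neq p$, so the Eichler symbol is~$1$ and $e_{\ell_0}(R)=\nu_q(\discrd R)+1=2$. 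Hence exactly two maximal orders contain~$R$, and your explicit candidates exhaust them without any lattice enumeration modulo~$\ell_0$.
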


\begin{corollary}
\label{quat-root}
Let $p > 2$ be a prime. There are exactly two maximal orders $\frakO_0$ and $\frakO_1$ in $B_0$ (possibly conjugate) containing both $\calO_{K_i}$ and $\calO_{K_j}$. These are given explicitly as follows:
\begin{equation}
\frakO_0, \frakO_1 = \begin{cases}
\Z\langle i, \frac{1+j}{2} \rangle  & \text{if} \ p \equiv 3 \pmod 4, \\
\Z \langle i, \frac{1+j+k}{2}, \frac{i+2j+k}{4}\rangle, \Z \langle i, \frac{1+j+k}{2}, \frac{i+2j-k}{4} \rangle & \text{if} \ p \equiv 5 \pmod 8, \\
\Z \langle \frac{1+i}{2}, j, \frac{ci+k}{q} \rangle, \Z \langle \frac{ 1+i}{2}, j, \frac{ci-k}{q} \rangle & \text{if} \ p \equiv 1 \pmod 8,
\end{cases}
\end{equation}
where $c$ and $q$ are as in Proposition~\ref{roots}.
\end{corollary}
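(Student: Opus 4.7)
The plan is to deduce this directly from Proposition~\ref{roots} by a containment argument. The key observation is that a maximal order $\frakO \subseteq B_0$ contains both $\calO_{K_i}$ and $\calO_{K_j}$ if and only if it contains the ring $R \subseteq B_0$ generated by $\calO_{K_i} \cup \calO_{K_j}$. So the first step is to identify, in each of the three congruence cases for $p \pmod 8$, the ring $R$ explicitly and compare it to the rings appearing in Proposition~\ref{roots}.

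First I would handle the cases $p \equiv 5 \pmod 8$ and $p \equiv 1 \pmod 8$, which should be immediate. When $p \equiv 5 \pmod 8$, $\calO_{K_i} = \Z[i]$ and $\calO_{K_j} = \Z[j]$, so $R = \Z\langle i, j\rangle$, which is precisely the ring in Proposition~\ref{roots}. Hence the two maximal orders containing $R$ coincide with those from the proposition. The same reasoning handles $p \equiv 1 \pmod 8$, since $\calO_{K_i} = \Z[\tfrac{1+i}{2}]$ and $\calO_{K_j} = \Z[j]$ together generate $\Z\langle \tfrac{1+i}{2}, j\rangle$, which is exactly the ring appearing in Proposition~\ref{roots}.

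The case $p \equiv 3 \pmod 4$ is the one requiring a small additional check, since here $\calO_{K_j} = \Z[\tfrac{1+j}{2}]$ strictly contains the subring generated by $j$. Any maximal order containing $\calO_{K_i}$ and $\calO_{K_j}$ must contain $\Z\langle i, j\rangle$, so by Proposition~\ref{roots} it is either $\frakO_0 = \Z\langle i, \tfrac{1+j}{2}\rangle$ or $\frakO_1 = \Z\langle i, \tfrac{1+k}{2}\rangle$. Clearly $\frakO_0$ contains $\tfrac{1+j}{2}$. For $\frakO_1$, I would write a $\Z$-basis $\{1, i, \tfrac{1+k}{2}, \tfrac{i+j}{2}\}$ (using $ki = j$) and check by matching coordinates that the element $\tfrac{1+j}{2}$ has no integer expansion in this basis, so $\calO_{K_j} \not\subseteq \frakO_1$. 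This leaves $\frakO_0$ as the unique maximal order containing both full rings of integers, which is consistent with the corollary's "(possibly conjugate)" caveat allowing the two listed orders to coincide in this case.

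The only real obstacle is the last coordinate-matching argument in the case $p \equiv 3 \pmod 4$; it is purely computational but must be done carefully because it distinguishes the two maximal orders of Proposition~\ref{roots}. Everything else reduces to recognizing that the ring generated by $\calO_{K_i} \cup \calO_{K_j}$ is the one already handled by Proposition~\ref{roots}.
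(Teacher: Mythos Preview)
Your proposal is correct and follows essentially the same route as the paper: reduce to Proposition~\ref{roots} and, in the case $p\equiv 3\pmod 4$, rule out $\frakO_1=\Z\langle i,\tfrac{1+k}{2}\rangle$ by showing $\tfrac{1+j}{2}\notin\frakO_1$. The only cosmetic differences are that for $p\equiv 1\pmod 4$ you observe directly that the ring generated by $\calO_{K_i}\cup\calO_{K_j}$ coincides with the ring in Proposition~\ref{roots} (making the containment automatic), whereas the paper verifies the containments by hand; and for $p\equiv 3\pmod 4$ you exclude $\tfrac{1+j}{2}$ by a $\Z$-basis coordinate check, while the paper multiplies by $i$ to produce $\tfrac{i+k}{2}$ and hence $\tfrac{1+i}{2}$, which is non-integral since $\nrd(\tfrac{1+i}{2})=\tfrac{1}{2}$.
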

\begin{proof}
In the case $p \equiv 1 \pmod 4$, the listed maximal orders contain both $\calO_{K_i}$ and $\calO_{K_j}$. For $p \equiv 5 \pmod{8}$, we note that the order
\begin{equation}
  \Z \langle i, \frac{1+j+k}{2}, \frac{i+2j-k}{4} \rangle 
\end{equation}
contains the element $(-i+2j+k)/4$ by conjugating the third element by $j$. It follows that the order contains the element $j$.

In the case $p \equiv 3 \pmod{4}$, the order
\begin{equation}
    \Z\langle  i, \frac{1+k}{2} \rangle 
\end{equation}
does not contain $\calO_{K_j}$, for if it did, it would contain $(i+k)/2$,
and hence $(1+i)/2$ which is a contradiction.
\end{proof}

As a result, we have the following corollary.
\begin{corollary}
  There are at most two supersingular elliptic curves over $\overline{\F}_p$ up to isomorphism which are both $\calO_{K_i}$-oriented and $ \calO_{K_j}$-oriented.
\end{corollary}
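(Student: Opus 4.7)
The plan is to reduce the problem to Corollary~\ref{quat-root} via the bijection of Corollary~\ref{double oriented curves to maximal orders}. First I would argue that any supersingular elliptic curve $E/\overline{\F}_p$ which is simultaneously $\calO_{K_i}$- and $\calO_{K_j}$-oriented admits a primitive $\calO_{K_i},\calO_{K_j}$-double-orientation. Each single orientation is automatically primitive because $\calO_{K_i}$ and $\calO_{K_j}$ are the maximal orders in $K_i$ and $K_j$ respectively: the order $\End(E)\cap \iota_i(K_i)$ contains $\iota_i(\calO_{K_i})$ and, being an order in $\iota_i(K_i)\cong K_i$, must therefore equal it, and similarly at $j$. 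The pair $(\iota_i,\iota_j)$ then constitutes a valid $K_i,K_j$-orientation, because $K_i = \Q(\sqrt{-q})$ and $K_j = \Q(\sqrt{-p})$ are non-isomorphic fields, so their images are distinct quadratic $\Q$-subalgebras of the quaternion division algebra $\End^0(E)\cong B_0$; the only proper $\Q$-subalgebras of $B_0$ containing $\Q$ are quadratic subfields, so the $\Q$-subalgebra they jointly generate must be all of $B_0$.

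Next I would appeal to Corollary~\ref{double oriented curves to maximal orders}. Fix any supersingular $E_0/\overline{\F}_p$, identify $\End^0(E_0)$ with $B_0$, and take $\jmath_0 = (\iota_{0,i},\iota_{0,j})$ to be the double orientation on $E_0$ obtained from the standard embeddings sending generators of $K_i$ and $K_j$ to the basis elements $i,j \in B_0$ introduced just before Corollary~\ref{quat-root}. By the primitivity criterion \eqref{orientation to optimal embedding} applied componentwise, a primitively $\calO_{K_i},\calO_{K_j}$-oriented curve $(E,\jmath)$ corresponds under this bijection to a maximal order $\mathfrak{O}\subseteq B_0$ satisfying $\mathfrak{O}\cap\iota_{0,i}(K_i) = \iota_{0,i}(\calO_{K_i})$ and the analogous identity at $j$. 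Since $\calO_{K_i}$ and $\calO_{K_j}$ are already maximal, this is equivalent to $\mathfrak{O}$ simultaneously containing the standard copies of both $\calO_{K_i}$ and $\calO_{K_j}$.

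Finally, Corollary~\ref{quat-root} says that there are at most two such maximal orders in $B_0$. Hence there are at most two isomorphism classes of $K_i,K_j$-oriented supersingular elliptic curves admitting such a primitive double-orientation, and forgetting the orientation yields a surjection from this set onto the isomorphism classes of underlying curves $E$ admitting both single orientations, giving the claimed bound of at most two. No serious obstacle remains once Corollaries~\ref{double oriented curves to maximal orders} and~\ref{quat-root} are in hand; the only check is the translation between primitivity of each component and containment of the standard copies of the two orders, which is routine.
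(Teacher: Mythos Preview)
Your proof is correct and matches the paper's intended approach. The paper states the corollary immediately after Corollary~\ref{quat-root} with only the phrase ``As a result, we have the following corollary,'' leaving the reader to supply exactly the bridge you describe: pass from a curve admitting both $\calO_{K_i}$- and $\calO_{K_j}$-orientations to a primitively $\calO_{K_i},\calO_{K_j}$-double-oriented curve, invoke the bijection of Corollary~\ref{double oriented curves to maximal orders} together with the primitivity criterion~\eqref{orientation to optimal embedding} to land on a maximal order of $B_0$ containing the standard copies of both $\calO_{K_i}$ and $\calO_{K_j}$, and then apply Corollary~\ref{quat-root}. Your observation that the forgetful map from oriented isomorphism classes to plain isomorphism classes is surjective is precisely what closes the argument.
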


Let $E_0, E_1$ denote the supersingular elliptic curves over $\overline{\F}_p$ corresponding to $\frakO_0, \frakO_1$, respectively.

\begin{lemma} \label{orientation-determination}
Let $E/\overline{\F}_{p}$ be a supersingular elliptic curve and $u \in \left\{ i, j \right\} \subseteq B_0$. Suppose we are given an isomorphism $\varphi : \End(E) \cong \frakO \subseteq B_0$ where $\mathfrak{O}$ is a maximal order in $B_0$. Then we can determine a $\Q(u)$-orientation on $E/\overline{\F}_{p}$ from $\mathfrak{O}$ and the corresponding primitive $\calO_u$-orientation on $E/\overline{\F}_p$.
\end{lemma}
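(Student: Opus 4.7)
The plan is to use the given isomorphism $\varphi$ to transport the standard inclusion $\Q(u) \hookrightarrow B_0$ back to an embedding of $\Q(u)$ into $\End^0(E)$. Specifically, since $\varphi:\End(E)\cong\mathfrak{O}$ is a ring isomorphism, it extends $\Q$-linearly to an isomorphism of $\Q$-algebras
\begin{equation*}
\varphi_\Q:\End^0(E)=\End(E)\otimes\Q \ \overset{\sim}{\To}\ B_0.
\end{equation*}
Writing $\iota_{B_0}:\Q(u)\hookrightarrow B_0$ for the canonical inclusion (available since $u\in\{i,j\}\subseteq B_0$), the composite
\begin{equation*}
\iota \ :=\ \varphi_\Q^{-1}\circ \iota_{B_0}\ :\ \Q(u)\hookrightarrow \End^0(E)
\end{equation*}
is a $\Q(u)$-orientation on $E$ that is directly determined by the data of $\varphi$.

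To identify the corresponding primitive order $\calO_u\subseteq\Q(u)$, recall from Section~\ref{Orientations and maximal orders} that $\iota$ is a primitive $\calO_u$-orientation if and only if $\iota(\calO_u)=\iota(\Q(u))\cap \End(E)$. Applying $\varphi_\Q$ to both sides and using $\varphi_\Q\circ\iota=\iota_{B_0}$ together with $\varphi_\Q(\End(E))=\mathfrak{O}$, this equality is equivalent to
\begin{equation*}
\calO_u \ =\ \Q(u)\cap \mathfrak{O},
\end{equation*}
which is explicitly computable from the given maximal order $\mathfrak{O}$ (for instance, in the three explicit cases of Corollary~\ref{quat-root} one can read off $\calO_u$ directly by intersecting the listed $\Z$-generators of $\mathfrak{O}$ with $\Q(u)$). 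There is essentially no obstacle beyond unwinding the definition of an orientation and its primitive order; the content of the lemma is the observation that the quaternionic data $(\mathfrak{O},\varphi)$ suffices to pin down both the embedding $\iota$ and the quadratic order $\calO_u$ by which it is primitively oriented.
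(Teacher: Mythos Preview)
Your proof is correct and follows essentially the same approach as the paper: extend $\varphi$ to a $\Q$-algebra isomorphism $\End^0(E)\cong B_0$, define the orientation as $\varphi^{-1}$ restricted to $\Q(u)$, and identify the primitive order as $\calO_u=\Q(u)\cap\mathfrak{O}$. Your write-up is slightly more detailed in justifying why this intersection gives the primitive order, but the argument is identical in substance.
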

\begin{proof}
The isomorphism $\varphi$ extends to an isomorphism $\varphi: \End^0(E)  \cong B_0$. The $\Q(u)$-orientation is given by $\varphi^{-1} \mid_{\Q(u)} : \Q(u) \hookrightarrow \End^0(E)$. Let $\calO_u$ be the order in $\Q(u)$ given by $\calO_u = \frakO \cap \Q(u)$. Then $\varphi^{-1} \mid_{\calO_u} : \calO_u \hookrightarrow \End(E)$ is the primitive $\calO_u$-orientation corresponding to the given $\Q(u)$-orientation on $E/\overline{\F}_p$.
\end{proof}

\section{Embedding numbers of Bass orders and counting roots} \label{embedding numbers of bass orders and counting roots}

Recently, Bass orders have been used in the study of endomorphism rings of supersingular elliptic curves \cite{EHL+20} and single-orientations \cite{Ler24}. Following this philosophy, we explore Bass orders to study double-orientations, specifically as a tool for computing the number of local and global roots.

We begin with a review of Bass orders and their embedding numbers. Let $R$ be a Dedekind domain, let $F$ be its field of fractions, let $B$ be a quaternion algebra over $F$, and let $\frakO \subseteq B$ be an $R$-order. 
\begin{definition}
We say $\frakO$ is Gorenstein if its codifferent
\begin{equation*}
    \left\{ \alpha \in B: \trd(\alpha \frakO) \subseteq R \right\}
\end{equation*}
is an invertible (two-sided) $\frakO$-module.
\end{definition}
\begin{definition}
We say $\frakO$ is Bass if its every superorder in $B$ is Gorenstein.
\end{definition}

The properties of Gorenstein and Bass orders were studied by Brzezinski \cite{Bre82, Bre83} and Eichler \cite{Eic36}. They also studied the problem of counting the number of distinct maximal orders containing a given Bass order \cite{BE92, Bre83}. These numbers are referred to as the embedding numbers of Bass orders. We present some useful properties of Bass orders. Our main reference in this section would be \cite{BE92}.

\begin{remark}
  In \cite{Voi21}, there is another notion of embedding number which counts the number of optimal embeddings of a quadratic order $\calO$ of a quadratic algebra $K$ into a maximal order $\mathfrak{O}$ of a quaternion algebra $B$ up to some equivalence.
\end{remark}

The following is proven in \cite{CSV21}.
\begin{theorem}
Let $\ell$ be a prime and assume that $R=\Z$ or $R=\Z_\ell$. Then $\mathfrak{O}$ is Bass if and only if it contains a maximal order in a quadratic subfield of $B$. Furthermore, being Bass is a local property.
\end{theorem}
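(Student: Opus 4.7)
The plan is to prove the locality statement first, then use it to reduce the global characterization to the local setting $R=\Z_\ell$, and finally invoke the local structure theory of Bass quaternion orders.

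First I would establish that being Bass is a local property. Since the reduced trace pairing and the codifferent both commute with localization and completion, the codifferent of $\mathfrak{O}$ localizes to the codifferent of $\mathfrak{O}_\ell$. Moreover, invertibility of a two-sided $\mathfrak{O}$-ideal is a local property (this is part of \cite[Main Theorem 16.1.3]{Voi21}), so $\mathfrak{O}$ is Gorenstein iff every $\mathfrak{O}_\ell$ is Gorenstein. Next, the bijection between $R$-lattices and tuples of compatible $R_\ell$-lattices (see the remark following the definition of localization in the preliminaries) restricts to a bijection between superorders of $\mathfrak{O}$ in $B$ and tuples of local superorders of $\mathfrak{O}_\ell$ in $B_\ell$. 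Combining these two facts, $\mathfrak{O}$ has all superorders Gorenstein iff $\mathfrak{O}_\ell$ has all local superorders Gorenstein at every $\ell$, which gives locality.

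Next, with the locality established, the global characterization reduces to the local case: $\mathfrak{O}$ contains a maximal quadratic order iff $\mathfrak{O}_\ell$ contains a maximal local quadratic order at every prime $\ell$ (using the local-global correspondence for quadratic orders analogously). So it suffices to prove the equivalence for $R=\Z_\ell$.

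For the local characterization, the easy direction is that if $\mathfrak{O}$ contains a maximal quadratic order $\calO$, then every superorder $\mathfrak{O}' \supseteq \mathfrak{O}$ also contains $\calO$, and a short direct computation of the codifferent using a $\Z_\ell$-basis adapted to $\calO$ shows that such $\mathfrak{O}'$ is Gorenstein. For the harder converse, I would invoke the structure theory of local Bass quaternion orders: every local Bass order in $B_\ell$ is, up to isomorphism, either a maximal order, a hereditary order, an Eichler order, or one of a short list of exceptional orders (this is the content of Brzezinski's classification \cite{Bre83} and of \cite{BE92}). By inspection of each type, one exhibits a maximal quadratic subring explicitly.

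The main obstacle is the converse direction in the local setting: it is not at all tautological that a Bass property (defined via all superorders being Gorenstein) forces the existence of a maximal quadratic subring. The proof essentially rests on Brzezinski's explicit classification of local Bass quaternion orders, together with an inspection of each type showing that a maximal quadratic subring always exists. Since a full treatment is given in \cite{CSV21}, I would cite that reference for this classification step rather than reproduce it.
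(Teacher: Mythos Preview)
The paper itself does not give a proof; it simply attributes the result to \cite{CSV21}. Your outline is broadly in the right spirit and ultimately also defers to \cite{CSV21}, but there is a genuine gap in your reduction step that you should be aware of.

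The problematic sentence is: ``$\mathfrak{O}$ contains a maximal quadratic order iff $\mathfrak{O}_\ell$ contains a maximal local quadratic order at every prime $\ell$ (using the local-global correspondence for quadratic orders analogously).'' The forward implication is immediate, but the converse is not: if each completion $\mathfrak{O}_\ell$ contains the maximal order of \emph{some} quadratic $\Q_\ell$-subalgebra $K_{(\ell)} \subseteq B_\ell$, there is no a~priori reason these local quadratic subalgebras arise from a single global quadratic subfield $K \subseteq B$. Different primes may select incompatible quadratic directions inside $B$. Establishing that ``basic'' (i.e.\ containing a maximal quadratic order) is a local property is in fact one of the main theorems of \cite{CSV21}, not a routine local--global statement, and it is proved there by going through the equivalence with Bass rather than directly.

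Consequently your logical flow should be reversed: one first proves (locally or globally) that Bass implies basic by a direct argument---this is where the Brzezinski-type classification or the ternary-form / Gorenstein saturation arguments in \cite{CSV21} enter---and only then deduces that basic is a local property as a corollary, since Bass is local. Your step~1 (locality of Bass via locality of Gorenstein and the superorder bijection) and the easy direction of step~3 (basic $\Rightarrow$ Bass) are fine; it is the attempt to reduce the hard direction to the local case beforehand that does not work as written.
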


\begin{details}
\begin{tbox}
See \cite{CSV21}, \cite[Chapter 24]{Voi21}, and \cite{BE92} for more details on Bass orders. 
\end{tbox}
\end{details}

From now on, we work in a quaternion algebra $B_0/\Q$ ramified at $p$ and $\infty$. 
 
\begin{definition} \label{Eichler symbol}
    Let $\mathfrak{O}$ be a Bass order in $B_0$. Given a local order $\mathfrak{O}_\ell:=\mathfrak{O} \otimes \Z_\ell$ in $B_\ell:=B_0 \otimes \Q_\ell $, let $\rad(\mathfrak{O}_\ell)$ be the Jacobson radical of $\mathfrak{O}_\ell$. The Eichler symbol is defined as follows:
\begin{equation}
 \left(\mathfrak{O}/\ell\right) = \begin{cases}
        1 & \text{if} \ \mathfrak{O}_\ell/\rad(\mathfrak{O}_\ell) \cong \F_\ell \times \F_\ell,\\
        0 & \text{if} \ \mathfrak{O}_\ell/\rad(\mathfrak{O}_\ell) \cong \F_\ell,\\
        -1 & \text{if} \ \mathfrak{O}_\ell/\rad(\mathfrak{O}_\ell) \ \text{is a quadratic extension of} \ \F_\ell.
    \end{cases}
\end{equation}
\end{definition}
In particular, if $\mathfrak{O}_\ell = \mathfrak{O}'_\ell$ for an order $\mathfrak{O}'$ in $B_0$, then $(\mathfrak{O}/\ell)=(\mathfrak{O}'/\ell)$.

\begin{details}
    \begin{tbox}
        Implicitly, we are only considering the case $\mathfrak{O}$ is non-maximal. By \cite[Lemma 24.3.6]{Voi21}, a local order $\mathfrak{O}$ is Eichler if and only if $\mathfrak{O}$ is maximal or residually split, i.e., $\mathfrak{O}/\rad(\mathfrak{O}) \simeq \F_\ell \times \F_\ell$. Voight defined $(\mathfrak{O}/\ell)=\ast$ if $\mathfrak{O}_\ell$ is maximal.

        In both Proposition \ref{Eichler Symbol Computation} and \ref{Formula for local embedding number}, we are assuming the reduced discriminant of the Bass order is divisible by $\ell$, which is again implicitly assuming the Bass order is not locally maximal at $\ell$.
    \end{tbox}
\end{details}

We can compute the Eichler symbol of a Bass order using the following property.

\begin{proposition}\label{Eichler Symbol Computation}
Let $\mathfrak{O} \subseteq B_0$ be a Bass order with the reduced discriminant $D$ and let $K \subseteq B_0$ be any quadratic subfield such that $\mathcal{O}_K \subseteq \mathfrak{O}$. 
\begin{enumerate}
\item[(a)] If $\ell \parallel D$, then
\begin{equation}
        \left(\frac{\mathfrak{O}}{\ell}\right)=\begin{cases}
        1 & \text{if and only if} \ \ell \neq p,\\
        -1 & \text{if and only if} \ \ell=p.
        \end{cases}
\end{equation}
In the first case, $\ell$ is split or ramified in $K$, and in the second, $\ell$ is ramified or inert in $K$.
\item[(b)] If $\ell^2 \mid D$, then
    \begin{equation}
        \left(\frac{\mathfrak{O}}{\ell}\right)=\begin{cases}
        1 & \text{if and only if} \ \ell \ \text{splits in} \ K,\\
        0 & \text{if and only if} \ \ell \ \text{is ramified in} \ K,\\
        -1 & \text{if and only if} \ \ell \ \text{is inert in} \ K.
        \end{cases}
    \end{equation}

\end{enumerate}
\end{proposition}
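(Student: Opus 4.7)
The plan is to reduce the computation of the Eichler symbol to the local setting at $\ell$: localize so that $\mathfrak{O}_\ell$ is a $\Z_\ell$-order in $B_\ell$, $\calO_K$ gives a subring $\calO_{K,\ell} \subseteq \mathfrak{O}_\ell$, and the Eichler symbol $(\mathfrak{O}/\ell)$ depends only on $\mathfrak{O}_\ell/\rad(\mathfrak{O}_\ell)$ by Definition~\ref{Eichler symbol}. The hypothesis on $\nu_\ell(D)$ constrains $\discrd(\mathfrak{O}_\ell)$, and I would split into cases based on (i) whether $\ell = p$ (so $B_\ell$ is a division algebra) or $\ell \neq p$ (so $B_\ell \cong M_2(\Q_\ell)$), and (ii) the splitting behavior of $\ell$ in $K$.

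For part (a), suppose $\ell \parallel D$. If $\ell \neq p$, then $\mathfrak{O}_\ell$ is a Bass order in $M_2(\Q_\ell)$ with $\nu_\ell(\discrd(\mathfrak{O}_\ell)) = 1$, which forces $\mathfrak{O}_\ell$ to be an Eichler order of level $\ell$. Its residue ring is $\mathfrak{O}_\ell/\rad(\mathfrak{O}_\ell) \cong \F_\ell \times \F_\ell$, so the symbol is $1$. The injection $\calO_{K,\ell} \hookrightarrow \mathfrak{O}_\ell$ then rules out the inert case (whose residue field $\F_{\ell^2}$ cannot embed in $\F_\ell \times \F_\ell$), leaving $\ell$ split or ramified in $K$. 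If instead $\ell = p$, then $\mathfrak{O}_p$ is the unique maximal order in the division algebra $B_p$, with residue $\F_{p^2}$, giving symbol $-1$; since $K_p \hookrightarrow B_p$ must then be a field, $p$ cannot split in $K$, hence is inert or ramified.

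For part (b), suppose $\ell^2 \mid D$, so $\mathfrak{O}_\ell$ is a Bass order which is neither maximal nor an Eichler order of level $\ell$. I proceed by cases on $\calO_{K,\ell}$. If $\ell$ splits in $K$, the two orthogonal primitive idempotents of $\calO_{K,\ell}$ provide a Peirce decomposition which puts $\mathfrak{O}_\ell$ into Eichler form of higher level, giving residue $\F_\ell \times \F_\ell$ and symbol $1$. If $\ell$ is ramified in $K$, a uniformizer $\pi$ of $\calO_{K,\ell}$ satisfies $\pi^2 \in \ell\Z_\ell^\times$, so $\pi \in \rad(\mathfrak{O}_\ell)$ and the residue of $\calO_{K,\ell}$ reduces to $\F_\ell$; the Bass structure then forces $\mathfrak{O}_\ell/\rad(\mathfrak{O}_\ell) \cong \F_\ell$, symbol $0$. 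If $\ell$ is inert, $\calO_{K,\ell}/\ell\calO_{K,\ell} \cong \F_{\ell^2}$ embeds as a subalgebra of $\mathfrak{O}_\ell/\rad(\mathfrak{O}_\ell)$; since the latter is a semisimple $\F_\ell$-algebra of dimension at most $2$ for such a Bass order, equality forces the residue to be $\F_{\ell^2}$, yielding symbol $-1$.

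The main technical obstacle is the last step in each case: pinning down $\mathfrak{O}_\ell/\rad(\mathfrak{O}_\ell)$ exactly. This rests on the local classification of Bass orders containing a maximal quadratic subring, which I would invoke from the Brzezinski--Eichler structure theory cited in the excerpt (in particular \cite{BE92}). The key point to extract from that theory is that the presence of $\calO_{K,\ell}$ together with the Bass condition rigidly determines the isomorphism type of the residue algebra, so the three mutually exclusive cases of split, ramified, and inert exhaust the possibilities and match the three values of the Eichler symbol, giving both directions of each ``if and only if''.
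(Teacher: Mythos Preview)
The paper does not give its own proof of this proposition: it simply cites \cite[Proposition~3]{BE92} and moves on. Your sketch is therefore more detailed than anything in the paper, and the localization-plus-case-analysis strategy you outline is indeed the approach underlying the cited Brzezinski--Eichler result.

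Your argument is essentially sound as a sketch. A couple of points worth tightening if you were to write it out fully: in part~(b), the step ``the Bass structure then forces $\mathfrak{O}_\ell/\rad(\mathfrak{O}_\ell) \cong \F_\ell$'' in the ramified case is exactly where the local classification of Bass orders is doing the real work, and you correctly flag this as the main obstacle. The inert case is slightly cleaner than you make it: once $\F_{\ell^2}$ injects into the semisimple residue (a field maps injectively to any ring it maps to nontrivially), and once you know the residue has $\F_\ell$-dimension at most~$2$ for a non-maximal order, you are done without further appeal to Bass theory. Similarly, in part~(a) with $\ell \neq p$, the assertion that $\nu_\ell(\discrd) = 1$ forces $\mathfrak{O}_\ell$ to be hereditary (hence Eichler of level~$\ell$) is a standard fact about orders in $M_2(\Q_\ell)$ and does not require the full Bass hypothesis.
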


\begin{proof}
  This is \cite[Proposition 3]{BE92}.
\end{proof}

\begin{remark}
 It seems Brzezinski used term ``unramified'' for inert in \cite{BE92}. In his other paper \cite{Brz90}, which contains the proof of Proposition \ref{Eichler Symbol Computation}, he wrote ``$L\supseteq K$ is unramified or split''.        
\end{remark}

\begin{details}
    \begin{tbox}
         Let $\ell \neq 2, p$ and 
    \begin{equation}
        -d_1=a\cdot\ell^m \quad \text{and} \quad -d_2=b\cdot\ell^{n},
    \end{equation}
    where $m=\nu(-d_1)$ and $n=\nu(-d_2)$.

    By \cite[Proposition 1(a)]{BE92}, the Hilbert symbol formula over $\Q_\ell$ must satisfy
\begin{equation}
    (-d_1, -4d_1d_2)_\ell =1
\end{equation}
as $\ell \neq 2, p$. Using basic properties for Hilbert symbol in \cite[Lemma 12.4.3]{Voi21}, we can simplify as follows
\begin{equation}
    (-d_1, -4d_1d_2)_\ell=(-d_1, -d_1d_2)_\ell=(-d_1, d_1)_\ell(-d_1, -d_2)_\ell=(-d_1, -d_2)_\ell.
\end{equation}
Then by \cite[Exercise 12.16]{Voi21},
\begin{equation}
    (-d_1, -d_2)_\ell = (-1)^{mn(\ell-1)/2}\left(\frac{a}{\ell}\right)^{n}\left(\frac{b}{\ell}\right)^{m}.
\end{equation}
In particular, if $\ell \nmid \discrd(\mathfrak{O}_{i,j})$, then $m=n=0$ so $(-d_1, -d_2)=1$.
    \end{tbox}
\end{details}

\begin{definition}
Let $e_\ell(\mathfrak{O})$ denote the number of local maximal orders in $B_\ell$ containing $\mathfrak{O}_\ell$.
\end{definition}

Note that $e_\ell(\mathfrak{O})=1$ if $\ell \nmid D$ in which case $\mathfrak{O}_\ell$ is maximal by \cite[Lemma 15.5.3]{Voi21}. Also, $e_\ell(\mathfrak{O})=1$ if $\ell=p$ as the valuation ring in $B_\ell$ is the unique maximal order in $B_\ell$.

\begin{proposition} \label{Formula for local embedding number}
    Let $\mathfrak{O}$ be a non-maximal Bass order in $B_0$ of reduced discriminant $D$ and let $\ell \neq p$ be a prime dividing $D$. The local embedding number $e_\ell(\mathfrak{O})$ is given by
    \begin{equation}
        e_\ell(\mathfrak{O}) = \begin{cases}
            \nu_\ell(D) + 1 & \text{if} \ (\mathfrak{O}/\ell) = 1,\\
            2 & \text{if} \ (\mathfrak{O}/\ell) = 0 ,\\
            1 & \text{if} \ (\mathfrak{O}/\ell) = -1,
        \end{cases}
    \end{equation}
\end{proposition}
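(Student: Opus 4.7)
The plan is to reduce the count to a purely local problem and then invoke the classification of local Bass orders together with the vertex structure of the Bruhat--Tits tree. The first step is to observe that containment of orders is a local condition: for a maximal order $\mathfrak{O}'\subseteq B_0$, one has $\mathfrak{O}\subseteq\mathfrak{O}'$ iff $\mathfrak{O}_{\ell'}\subseteq\mathfrak{O}'_{\ell'}$ for every prime $\ell'$. At primes $\ell'\neq \ell$ with $\ell'\nmid D$, $\mathfrak{O}_{\ell'}$ is already maximal so $\mathfrak{O}_{\ell'}=\mathfrak{O}'_{\ell'}$ is forced, and at $\ell'=p$ there is a unique local maximal order. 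Consequently $e_\ell(\mathfrak{O})$ equals the number of local maximal orders in $B_\ell$ containing $\mathfrak{O}_\ell$. Since $\ell\neq p$, we have $B_\ell\cong M_2(\Q_\ell)$, and by the bijection $\Lambda\mapsto\End_{\Z_\ell}(\Lambda)$ these local maximal orders correspond to vertices of the Bruhat--Tits tree $\calT(\ell)$.

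Next, I would invoke Brzezinski's classification of local Bass orders in $M_2(\Q_\ell)$: up to conjugation by $B_\ell^\times$, a non-maximal local Bass order is determined by its Eichler symbol together with $\nu_\ell(D)$. This splits the count into three cases, which I would then translate into combinatorial statements about $\calT(\ell)$.

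Case $(\mathfrak{O}/\ell)=1$: $\mathfrak{O}_\ell$ is a local Eichler order of level $\ell^n$ with $n=\nu_\ell(D)$, i.e.\ the intersection of two maximal orders $\mathfrak{O}_{\Lambda_0},\mathfrak{O}_{\Lambda_n}$ at distance $n$ in $\calT(\ell)$. The maximal orders containing $\mathfrak{O}_\ell$ are exactly those on the unique geodesic joining $\Lambda_0$ to $\Lambda_n$, giving $n+1=\nu_\ell(D)+1$ vertices. Case $(\mathfrak{O}/\ell)=0$: $\mathfrak{O}_\ell$ is residually ramified and, by Brzezinski's structure theorem, sits inside exactly two adjacent vertices of $\calT(\ell)$, yielding $e_\ell(\mathfrak{O})=2$. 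Case $(\mathfrak{O}/\ell)=-1$: $\mathfrak{O}_\ell$ contains the ring of integers of the unramified quadratic extension of $\Q_\ell$, which has a unique fixed vertex in $\calT(\ell)$ (since the corresponding torus acts transitively on the ends but fixes a single vertex), so $e_\ell(\mathfrak{O})=1$.

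The routine step is Case 1, which follows immediately from the classical geodesic description of Eichler orders in the Bruhat--Tits tree; the main obstacle is Cases 2 and 3, where one must extract from Brzezinski's local classification (as recorded in \cite{BE92}) that a residually ramified (resp.\ residually inert) non-maximal Bass order admits exactly the claimed containing configuration in the tree. I would either cite these results directly from \cite{BE92} or verify them by hand using an explicit local model of $\mathfrak{O}_\ell$ (for instance $\mathcal{O}_K+\varpi M_2(\Z_\ell)$ in the inert case with $K/\Q_\ell$ unramified quadratic, and the analogous ramified model in the residually ramified case), computing the lattices fixed by $\mathfrak{O}_\ell$ inside $\Q_\ell^2$.
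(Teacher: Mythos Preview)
The paper's own proof is simply the one-line citation ``This is \cite[Proposition 2]{BE92}.'' There is no argument given in the paper itself, so your proposal is not so much a different route as an actual unpacking of the content behind that citation.

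Your sketch is mathematically sound and is essentially the argument Brzezinski gives. One small redundancy: your opening paragraph reduces the count to a local problem, but in this paper $e_\ell(\mathfrak{O})$ is already \emph{defined} as the number of local maximal orders in $B_\ell$ containing $\mathfrak{O}_\ell$ (see the definition just before the proposition), so no reduction is needed. The substantive content---identifying maximal orders with vertices of $\calT(\ell)$, the geodesic description of Eichler orders for the residually split case, and invoking Brzezinski's local classification for the residually ramified and residually inert cases---is correct and is exactly what lies behind the cited result. Your acknowledgment that Cases~2 and~3 ultimately rest on \cite{BE92} (or an explicit local-model computation) is honest and appropriate; that is indeed where the work is.
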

\begin{proof} 
   This is \cite[Proposition 2]{BE92}.
\end{proof}

By the local-to-global principle, we can write the formula of the global embedding number $e(\frak{O})$, the number of global maximal orders in $B_0$ containing $\mathfrak{O}$, as the product of local embedding numbers:
\begin{equation}
    e(\mathfrak{O}) = \prod_\ell e_\ell(\frakO),
\end{equation}
where $\ell$ runs over primes.

\begin{details}
    \begin{tbox}
        For orders $\mathfrak{O}, \mathfrak{O}'$ in  $B_0$,
    \begin{equation}
        \mathfrak{O}\subseteq \mathfrak{O}' \ \Leftrightarrow \ \mathfrak{O}_{(\ell)} \subseteq \mathfrak{O}'_{(\ell)}
      \ \Leftrightarrow \  \mathfrak{O}_\ell \subseteq \mathfrak{O}'_\ell \ \text{for every prime} \ \ell.
    \end{equation}

   \begin{itemize}
       \item Being Bass is a local property (Voight says it follows from the definition and being Gorenstein is lcoal property as invertibility is so). For the $R$-order $\mathfrak{O}$, We have
       \begin{enumerate}
           \item If $R$ is local, then $\mathfrak{O}$ is basic if and only if $\mathfrak{O}$ is Bass \cite[Proposition 24.5.8]{Voi21}.
           \item If $F=\Frac(R)$ is a number field, the following are equivalent: 
           \begin{enumerate}
               \item[(i)] $\mathfrak{O}$ is basic.
               \item[(ii)] $\mathfrak{O}_{(\mathfrak{p})}$ is basic for all primes $\mathfrak{p}$ of $R$.
               \item[(iii)] $\mathfrak{O}_{(\mathfrak{p})}$ is Bass for all primes $\mathfrak{p}$
 of $R$               
 \item[(iv)] $\mathfrak{O}$ is Bass.
           \end{enumerate}           
       \end{enumerate}, i.e., for orders in a quaternion algebra over a number field, being basic is a local property and it is equivalent to being Bass.
       \item For Bass order $\mathfrak{O}$, $e(\mathfrak{O})=\prod_\ell e_\ell(\mathfrak{O})$.
   \end{itemize}
    \end{tbox}
\end{details}

In Proposition \ref{roots}, we presented an example of maximal orders in $B_0$ corresponding to the roots of double-oriented supersingular isogeny graphs. In this section, we will describe these maximal orders in a more general setting and discuss how to count the number of roots in the double-oriented supersingular isogeny graph using them. The key idea is to find a pair of quadratic subfields of $B_0$ whose maximal orders generate a Bass order $\mathfrak{O}$ in $B_0$. Then, we count the number of maximal orders in $B_0$ containing $\mathfrak{O}$, either locally or globally. 

\begin{extra}
\begin{tbox}
Let $i,j$ be elements in $B_0$ such that 
\begin{equation}
    i^2 = a, j^2 = b, \tr(ij)=ij+ji=s\in \Q
\end{equation}
for some square-free integers $a,b <0$. Consider the following conditions on $s$:
 \begin{enumerate}
     \item[(a)] $s^2-4ab<0$ and we have Hilbert symbols over $\Q_\ell$
     \begin{equation}
         (a, s^2-4ab)_\ell =(b, s^2-4ab)_\ell=\begin{cases}
             1 & \text{if } \ell\neq p \\
             -1 & \text{if } \ell = p.
         \end{cases}
     \end{equation}
     This is the condition for imaginary quadratic subfields $\Q(\sqrt{a})$ and $\Q(\sqrt{b})$ have simultaneous embeddings into $B_0$, whose images are $K_i=\Q(i)$ and $K_j=\Q(j)$ respectively \cite[Proposition 1(a)]{BE92}. 
     \item[(b)] $s \equiv 2 \pmod{\delta}$, where 
\begin{equation} 
    \delta =\begin{cases}
        1 & \text{if} \ a \not\equiv 1 \pmod{4} \ \text{and} \ b \not\equiv 1 \pmod{4},\\
        4 & \text{if} \ a \equiv 1 \pmod{4} \ \text{and} \ b \equiv 1 \pmod{4},\\
        2 & \text{otherwise}.
    \end{cases}
\end{equation}
Under this condition, maximal orders in $K_i$ and $K_j$ generate a Bass order in $B_0$ \cite[Proposition 1(b)]{BE92}.
 \end{enumerate}

Suppose $s$ satisfies the above conditions. Since $1, i,j,ij$ form a basis for $B_0$, 
\begin{equation}
    1, i'=i, j'=\frac{2ij-s}{n}, i'j'=j'i',
\end{equation}
 where $n$ is the square root of the square-part of $(2ij-s)^2= s^2-4ab<0$, form a standard basis for $B_0$. Moreover, maximal orders of $K_{i'}=\Q(i')$ and $K_{j'}=\Q(j')$ form a Bass order in $B_0$ if and only if 
 \begin{equation}
     \text{either } a \not\equiv 1 \pmod{4} \text{ or } b \not\equiv 1 \pmod{4}.
 \end{equation}
In the latter case we may use $1, i'=2ij-t, j', i'j'$ for the standard basis). In particular, when $a, b \equiv 1 \pmod{4}$, maximal orders of $K_{i'}$ and $K_{j'}$ do not generate an order in $B_0$. 
\end{tbox}
\end{extra}

\begin{extra}
    \begin{tbox}
    Let $t\neq 0$ be an integer. Given a standard basis 
    \begin{equation}
        1, i^2=d_1, j^2=d_2, k=ij=ji
    \end{equation}
    for $B_0$, where $d_1, d_2<0$ are square-free integers, when $\left|\frac{t^2-d_2}{4d_1^2}\right| = \frac{a}{b}$ with $a,b>0$ and $\gcd(a,b)=1$. we get
    \begin{equation}
        1, i'=i, j'=\frac{b}{\sqrt{mn}} \cdot\frac{ti+k}{2d_1}, i'j',
    \end{equation}
   where $m$ is the square-part of $a$ and $n$ is the square-part of $b$, form a non-standard basis such that
    \begin{equation}
        j'^2 =\frac{b^2}{mn} \cdot\frac{t^2-d_2}{4d_1^2}=\frac{b^2}{mn}\cdot\left(-\frac{a}{b}\right)=-\frac{ab}{mn}<0
    \end{equation}
is a square-free integer and    
    \begin{equation}
        i'j'+j'i'=s=\frac{bt}{\sqrt{mn}}.
    \end{equation}
\end{tbox}
\end{extra}

Let $1, i, j, k$ be a standard basis for $B_0$ such that 
\begin{equation} \label{standard basis-first}
    i^2=d_i, \quad j^2=d_j, \quad k=ij=-ji,
\end{equation}
where $d_i, d_j < 0 $ are square-free integers. For the rest of the paper, $K_u=\Q(u)\subseteq B_0$, $u \in \{i,j\}$ denote the quadratic subfields given by the standard basis $1,i,j,k$ satisfying the relations in \eqref{standard basis-first}. This is equivalent to say $K_i$ and $K_j$ are perpendicular quadratic subalgebras generating $B_0$ as they give orthogonal basis with respect to the symmetric bilinear form
\begin{equation}
    x,y \mapsto \frac{1}{2}\trd(x\overline{y})
\end{equation}
on $B_0$. 

\begin{details}
\begin{tbox}
    See the following: \href{https://arxiv.org/pdf/2503.03478}{page 3}
\end{tbox}
\end{details}

Let
\begin{equation}
    \omega_u = \begin{cases}
        \frac{ 1 + u}{2} & \text{if} \ d_u\equiv 1\pmod 4, \\
        u & \text{if} \ d_u \not\equiv 1\pmod 4
    \end{cases}
\end{equation}
so that $\calO_{K_u}=\Z[\omega_{u}]$ is the maximal order of $K_u$. Let $d_{K_u}$ be the fundamental discriminant of $K_u$

As we have $s:=\trd(ij)=0$, by Proposition \ref{condition for simultaneous embeddings - maximal orders}, the maximal orders $\calO_{K_i}$ and $\calO_{K_j}$ have simultaneous embeddings into $B_0$ and their images generate a Bass order in $B_0$ if and only if
\begin{equation}\label{condition1}
    d_u\not\equiv 1\pmod 4 \quad \text{for at least one of} \ u=i,j.
\end{equation}
 or equivalently,
\begin{equation}
    d_{K_u} \equiv 0 \pmod{4} \quad \text{for at least one of} \ u=i,j.
\end{equation}

For the rest of the section, we will assume that condition \ref{condition1} holds for $K_i$ and $K_j$. 

Let $\mathfrak{O}_{i, j}$ denote the Bass order in $B_0$ generated by $\mathcal{O}_{K_i}$ and $\mathcal{O}_{K_j}$. The following lemma would be useful for computing the discriminant of $\mathfrak{O}_{i, j}$ from its generators $\omega_i, \omega_j$.

\begin{lemma} \label{disriminant-formula}
Let $\mathfrak{O} \subseteq B_{p, \infty}$ be an order generated by two non-commuting elements $\alpha_1, \alpha_2$. The reduced discriminant of $\mathfrak{O}$ is given by 
\begin{equation}\discrd (\mathfrak{O}) = \frac{ d_1 d_2-(t_1 t_2-2t)^2 }{4},\end{equation}
where $d_n$ is the discriminant of the quadratic order $\Z[\alpha_n]$,  $t_n=\trd(\alpha_n),$ and $t= \trd(\alpha_1\alpha_2)$ for $n=1,2$.
\end{lemma}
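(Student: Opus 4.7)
First I would observe that $\{1, \alpha_1, \alpha_2, \alpha_1\alpha_2\}$ is a $\Z$-basis of $\mathfrak{O}$. It spans $\mathfrak{O}$ as a $\Z$-module because the minimal polynomials $\alpha_n^2 = t_n\alpha_n - \nrd(\alpha_n)$ reduce powers of $\alpha_1$ and $\alpha_2$ to degree $\le 1$, and the identity $\alpha_2\alpha_1 = t - t_1t_2 + t_2\alpha_1 + t_1\alpha_2 - \alpha_1\alpha_2$ (obtained from $\trd(\alpha_1\alpha_2) = \alpha_1\alpha_2 + \overline{\alpha_1\alpha_2}$) lets one rewrite any reversed product. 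Linear independence over $\Q$ is forced by $\alpha_1\alpha_2 \neq \alpha_2\alpha_1$. Then the reduced discriminant is computed as $\discrd(\mathfrak{O})^2 = |\det(G)|$, where $G = (\trd(e_i\bar{e}_j))_{i,j}$ is the Gram matrix of the reduced-norm bilinear form (Voight \cite[\S 15.4]{Voi21}).

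The second step is to move to a nicer basis. Setting $a' = \alpha_1 - t_1/2$ and $b' = \alpha_2 - t_2/2$, the transition matrix from $\{1,\alpha_1,\alpha_2,\alpha_1\alpha_2\}$ to $\{1, a', b', a'b'\}$ is upper triangular with $1$'s on the diagonal, so the determinant of the Gram matrix is unchanged. In the new basis the computation simplifies dramatically using the following identities: since $a', b'$ are trace zero, one has $\bar{a'} = -a'$, $\bar{b'} = -b'$, and the scalars $(a')^2 = d_1/4$, $(b')^2 = d_2/4$ (coming from the minimal polynomial of $\alpha_n$). Moreover $\trd(a'b') = \trd(\alpha_1\alpha_2) - (t_1/2)\trd(\alpha_2) - (t_2/2)\trd(\alpha_1) + t_1t_2/2 = t - t_1t_2/2 = -y/2$, where $y := t_1t_2 - 2t$.

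The key observation is that the Gram matrix in the basis $\{1, a', b', a'b'\}$ is orthogonal between the pair $\{1, a'b'\}$ and the pair $\{a', b'\}$. Indeed, all entries of the form $\trd(1\cdot(-a'))$, $\trd(1\cdot(-b'))$ vanish by trace-zero, and the two remaining cross-terms vanish by cyclic invariance of the trace: $\trd(a'b'\cdot(-a')) = -\trd((a')^2 b') = -(d_1/4)\trd(b') = 0$, and symmetrically for $\trd(a'b'\cdot(-b'))$. Reordering the basis as $\{1, a'b', a', b'\}$ yields a block-diagonal matrix with $2\times 2$ blocks
\[
\begin{pmatrix} 2 & -y/2 \\ -y/2 & d_1d_2/8 \end{pmatrix}, \qquad \begin{pmatrix} -d_1/2 & y/2 \\ y/2 & -d_2/2 \end{pmatrix},
\]
where the entry $d_1d_2/8$ comes from $\trd((a'b')(b'a')) = \trd((a')^2(b')^2)$ with both factors scalar. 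Each block has determinant $(d_1d_2 - y^2)/4$, so $\det(G) = (d_1d_2 - y^2)^2/16$ and $\discrd(\mathfrak{O}) = |d_1d_2 - y^2|/4 = (d_1d_2 - (t_1t_2 - 2t)^2)/4$ (positivity follows from positive definiteness of the reduced-norm form on the definite algebra $B_0$).

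The main potential obstacle is bookkeeping the change of basis carefully—in particular confirming that the $\Q$-linear transition to $\{1,a',b',a'b'\}$ has determinant $1$ so that $\det(G)$ truly is preserved, and separately confirming that $\{1,\alpha_1,\alpha_2,\alpha_1\alpha_2\}$ is really a $\Z$-basis (not merely a $\Q$-basis) of $\mathfrak{O}$. Once these are in place, the trace-zero orthogonality trick reduces everything to two routine $2\times 2$ determinants.
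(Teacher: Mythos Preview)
Your argument is correct. The paper does not give a proof of this lemma at all---it simply cites \cite[Exercise 15.9]{Voi21}, \cite[Proposition 4]{Ler24}, and \cite[Proposition 83]{Koh96}---so your write-up supplies what the paper omits. Your strategy (pass to the trace-zero basis $\{1,a',b',a'b'\}$ via a unipotent change of basis, then exploit the orthogonality between $\{1,a'b'\}$ and $\{a',b'\}$ to reduce the $4\times4$ Gram determinant to two $2\times2$ blocks) is exactly the computation behind Voight's exercise, carried out cleanly.

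Two small remarks. First, the form you use is $\trd(x\bar y)$, whereas the discriminant in \cite[\S15.5]{Voi21} is defined via $\trd(xy)$; the two Gram matrices differ by the matrix of the involution $x\mapsto\bar x$, which in your basis is triangular with determinant $-1$, so the absolute values agree and your conclusion is unaffected. Second, your justification of positivity via Cauchy--Schwarz for the positive-definite norm form on $B_{p,\infty}$ is correct and worth stating explicitly: $d_1,d_2<0$ and $(y/2)^2=\langle a',b'\rangle^2<\langle a',a'\rangle\langle b',b'\rangle=d_1d_2/4$, with strict inequality because $a',b'$ are not proportional (else $\alpha_1,\alpha_2$ would commute).
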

\begin{proof}
See \cite[Exercise 15.9]{Voi21} or \cite[Proposition 4]{Ler24} which is referring to \cite[Proposition 83]{Koh96}.
\end{proof}


Now we compute the local embedding number of the Bass order $\mathfrak{O}_{i, j}$.
\begin{lemma} \label{number of local roots} $e_\ell(\mathfrak{O}_{i, j})=1, 2 $ for any prime $\ell$.\end{lemma}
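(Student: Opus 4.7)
The plan is to apply Lemma \ref{disriminant-formula} with $\alpha_1 = \omega_i$, $\alpha_2 = \omega_j$ to compute $D := \discrd(\mathfrak{O}_{i,j})$ explicitly, and then for each prime $\ell$ to read off the Eichler symbol $(\mathfrak{O}_{i,j}/\ell)$ from Proposition \ref{Eichler Symbol Computation} and apply Proposition \ref{Formula for local embedding number}.

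First I would show that $D = d_{K_i}d_{K_j}/4$ as an ideal. By condition \eqref{condition1}, at most one of $\omega_i, \omega_j$ is of the form $(1+u)/2$, so $t_1 t_2 = \trd(\omega_i)\trd(\omega_j) = 0$. A direct case check using $ij = -ji$ shows $t := \trd(\omega_i \omega_j) = 0$: in each admissible subcase, $\omega_i \omega_j$ is a $\Q$-linear combination of the pure quaternions $i, j, k$, all of which have trace zero. Substituting $t_1 t_2 - 2t = 0$ and the discriminants $d_1 = d_{K_i}$, $d_2 = d_{K_j}$ into Lemma \ref{disriminant-formula} gives the claimed expression for $D$.

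Next, fix a prime $\ell$. If $\ell \nmid D$, then $\mathfrak{O}_{i,j,\ell}$ is already maximal, so $e_\ell = 1$. If $\ell \parallel D$, then Proposition \ref{Eichler Symbol Computation}(a) together with Proposition \ref{Formula for local embedding number} gives $e_\ell \in \{1, 2\}$. The only case requiring work is $\ell^2 \mid D$: here I claim that $\ell$ ramifies in at least one of $K_i, K_j$, so that by Proposition \ref{Eichler Symbol Computation}(b) the Eichler symbol lies in $\{0, -1\}$, and Proposition \ref{Formula for local embedding number} again gives $e_\ell \in \{1,2\}$. To verify the claim, recall that $\ell$-adic valuations of fundamental discriminants are highly constrained: for $\ell$ odd, $\nu_\ell(d_{K_u}) \leq 1$ with equality iff $\ell$ ramifies in $K_u$; for $\ell = 2$, $\nu_2(d_{K_u}) \in \{0, 2, 3\}$ with $\nu_2 \geq 2$ iff $2$ ramifies in $K_u$. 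Writing $\nu_\ell(D) = \nu_\ell(d_{K_i}) + \nu_\ell(d_{K_j}) - 2\cdot[\ell=2]$, a short case analysis shows that $\nu_\ell(D) \geq 2$ forces $\ell$ to ramify in both $K_i$ and $K_j$ (certainly in at least one, which is all we need).

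The main obstacle is purely bookkeeping: one must verify the vanishing $t_1 t_2 - 2t = 0$ across the subcases of condition \eqref{condition1} and carry out the $\ell = 2$ valuation count in parallel with odd $\ell$. No substantive difficulty arises, because whenever $\ell^2 \mid D$ the prime $\ell$ is forced to ramify in at least one of the quadratic subfields $K_i, K_j$, which is precisely the structural reason the Eichler symbol cannot equal $+1$ and hence $e_\ell(\mathfrak{O}_{i,j})$ never exceeds $2$.
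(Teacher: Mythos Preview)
Your approach is essentially the same as the paper's: compute $\discrd(\mathfrak{O}_{i,j}) = d_{K_i}d_{K_j}/4$ via Lemma~\ref{disriminant-formula} (using $t_1 t_2 = t = 0$ under condition~\eqref{condition1}), then read off the Eichler symbol from Proposition~\ref{Eichler Symbol Computation} and apply Proposition~\ref{Formula for local embedding number}. Your unified valuation bookkeeping for the case $\ell^2 \mid D$ is a bit cleaner than the paper's explicit $\ell = 2$ versus $\ell \neq 2$ split, but the content is identical.

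There is one small omission: you never isolate the prime $\ell = p$. Proposition~\ref{Formula for local embedding number} is stated only for $\ell \neq p$, so your appeals to it in the cases $\ell \parallel D$ and $\ell^2 \mid D$ are not justified when $\ell = p$ (and both cases can occur, e.g.\ $p^2 \mid D$ whenever $p$ ramifies in both $K_i$ and $K_j$). The paper handles this at the outset by noting that $B_p$ has a unique maximal order, so $e_p(\mathfrak{O}_{i,j}) = 1$ trivially, and then restricts to $\ell \neq p$. You should do the same; once that sentence is added, your proof is complete.
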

\begin{proof} Since $e_\ell(\mathfrak{O}_{i,j})=1$ when $\ell=p$, assume $\ell \neq p$. Under the condition \ref{condition1}, we always have
\begin{equation}
\trd(\omega_i)\trd(\omega_j)=\trd(\omega_i\omega_j) = 0.
\end{equation}
 Hence, by Lemma \ref{disriminant-formula},
\begin{equation}\label{discriminant}\discrd (\mathfrak{O}_{i, j}) = \frac{ d_{K_i} d_{K_j}}{4}.\end{equation}
We  compute the Eichler symbol ($\mathfrak{O}_{i,j}/\ell)$ using Proposition \ref{Eichler Symbol Computation} and verify that $e_\ell(\mathfrak{O}_{i,j})=1, 2$ using Proposition \ref{Formula for local embedding number}.

Suppose $\ell \neq 2$. Then $\ell^2 \nmid d_{K_i}, d_{K_j}$.
\begin{itemize}
        \item If $\ell \nmid \discrd(\mathfrak{O}_{i,j})$, then $\ell$ is unramified in both fields $K_i$, $K_j$ and $e_\ell(\mathfrak{O}_{i,j})=1$.
        \item If $\ell \parallel \discrd(\mathfrak{O}_{i,j})$, without loss of generality, assume $\ell \mid d_{K_i}$ and $\ell \nmid d_{K_j}$.   
        Then $\ell$ is ramified in $K_i$ and split in $K_j$. Also as $\ell \not= p$, $(\mathfrak{O}_{i,j}/\ell)=1$. Hence $e_\ell(\mathfrak{O}_{i,j})=\nu_\ell(\discrd(\mathfrak{O}_{i,j}))+1=2$.
        \item If  $\ell^2 \mid \discrd(\mathfrak{O}_{i,j})$, then $\ell$ is ramified in the both fields $K_i, K_j$. Hence, $(\mathfrak{O}_{i,j}/\ell)=0$ and $e_\ell(\mathfrak{O}_{i,j})=2$.
\end{itemize}
Suppose $\ell=2$. Recall that $d_{K_u} = 4d_u$ for at least one of $u=i,j$.
\begin{itemize}
        \item If $d_{K_i}=4d_i$ and $d_{K_j}=4d_j$, then $\ell$ is ramified in both fields $K_i, K_j$ and $\ell^2 \mid \discrd(\mathfrak{O}_{i,j})$. Hence, $(\mathfrak{O}_{i,j}/\ell)=0$ and $e_\ell(\mathfrak{O}_{i,j})=2$.
        \item In the other case, $\discrd(\mathfrak{O}_{i,j})=d_id_j$. Since $d_i, d_j$ are square-free, $\ell^3 \nmid \discrd(\mathfrak{O}_{i,j})$.  Without loss of generality, assume $d_{K_i}=4d_i$ and $d_{K_j}=d_j$. 
        \begin{itemize}
            \item If $\ell \nmid \discrd(\mathfrak{O}_{i,j})$, then $\ell$ is ramified in $K_i$ and unramified in $K_j$. Also, $e_\ell(\mathfrak{O}_{i,j})=1$.
            \item If $\ell \parallel \discrd(\mathfrak{O}_{i,j})$, then $\ell$ divides exactly one of $d_i$ and $d_j$. If $\ell \mid d_i$, then $\ell$ is ramified in $K_i$ and split in $K_j$. If $\ell \mid d_j$, then $\ell$ is ramified in both fields $K_i, K_j$. In both cases, $(\mathfrak{O}_{i,j}/\ell)=1$. Hence, $e_\ell(\mathfrak{O}_{i,j})=\nu_\ell(\discrd(\mathfrak{O}_{i,j}))+1=2$. 
            \item If $\ell^2 \mid \discrd(\mathfrak{O}_{i,j})$, then $\ell$ is ramified in both fields $K_i, K_j$. Hence, $(\mathfrak{O}_{i,j}/\ell)=0$ and $e_\ell(\mathfrak{O}_{i,j})=2$.
        \end{itemize}
\end{itemize}
\end{proof}

\begin{corollary}
    The number of local roots in a connected component of $G_{K_i,K_j}(p, \ell)$ is $e_\ell(\mathfrak{O}_{i,j})$. The number of global roots in $G_{K_i,K_j}(p, \ell)$ is $e(\mathfrak{O}_{i,j})$.
\end{corollary}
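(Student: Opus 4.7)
The plan is to translate both the local and global root conditions into conditions on the corresponding maximal orders in $B_0$, and then apply Theorem~\ref{graph isomorphisms} together with Proposition~\ref{connected component and local maximal orders} to reduce the counting to the definitions of $e_\ell(\mathfrak{O}_{i,j})$ and $e(\mathfrak{O}_{i,j})$.

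First, I would establish the following translation. By Theorem~\ref{graph isomorphisms}, a vertex $(E,\jmath)/\cong$ of $G_{K_i,K_j}(p,\ell)$ corresponds bijectively to a maximal order $\mathfrak{O}\subseteq B_0$, with $\jmath=(\iota_1,\iota_2)$ factoring through the fixed embeddings $\iota_{0,u}:K_u\hookrightarrow B_0$ in the sense of \eqref{orientation to optimal embedding}. Writing $\calO_u:=\mathfrak{O}\cap\iota_{0,u}(K_u)\subseteq\iota_{0,u}(K_u)$ for $u\in\{i,j\}$, the orientation $\jmath$ is primitively $\calO_i,\calO_j$ by \eqref{orientation to optimal embedding}. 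Thus $(E,\jmath)/\cong$ is a local root iff each $\calO_u$ is $\ell$-fundamental, i.e.\ $(\calO_u)_\ell=(\calO_{K_u})_\ell$, which is equivalent to $\iota_{0,u}((\calO_{K_u})_\ell)\subseteq\mathfrak{O}_\ell$ for both $u=i,j$ (the reverse inclusion holds automatically since $(\calO_{K_u})_\ell$ is the unique maximal order of $\iota_{0,u}(K_u)_\ell$). Since $\mathfrak{O}_{i,j}$ is generated by $\iota_{0,i}(\calO_{K_i})$ and $\iota_{0,j}(\calO_{K_j})$, this is exactly the condition $(\mathfrak{O}_{i,j})_\ell\subseteq\mathfrak{O}_\ell$. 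The same argument globally shows $(E,\jmath)/\cong$ is a global root iff $\mathfrak{O}_{i,j}\subseteq\mathfrak{O}$.

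For the count of global roots, the bijection of Theorem~\ref{graph isomorphisms} identifies them with maximal orders in $B_0$ containing $\mathfrak{O}_{i,j}$, of which there are exactly $e(\mathfrak{O}_{i,j})$ by definition. For local roots in a connected component $C$, I would invoke Proposition~\ref{connected component and local maximal orders}, which gives a bijection between the vertices of $C$ and local maximal orders in $B_\ell$: sending $\mathfrak{O}\mapsto\mathfrak{O}_\ell$. Under this bijection, local roots correspond precisely to those local maximal orders at $\ell$ that contain $(\mathfrak{O}_{i,j})_\ell$, whose count is $e_\ell(\mathfrak{O}_{i,j})$ by definition.

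The only mild obstacle is checking that the local root condition truly pulls back only to the local behavior at $\ell$ (so that the count per component is independent of the component) — this is taken care of by Proposition~\ref{connected component and local maximal orders}, which shows that components of $G_{K_i,K_j}(p,\ell)$ are parameterized purely by the local data at $\ell$, while the local root condition itself is also a purely local condition at $\ell$ (since $\ell$-fundamental orders are detected locally at $\ell$). Hence each connected component contains the same number $e_\ell(\mathfrak{O}_{i,j})$ of local roots, and in particular every component contains at least one since $e_\ell(\mathfrak{O}_{i,j})\ge 1$.
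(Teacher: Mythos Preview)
Your proof is correct and follows essentially the same approach as the paper: translate the (local/global) root condition on $(E,\jmath)$ into the containment $(\mathfrak{O}_{i,j})_\ell\subseteq\mathfrak{O}_\ell$ (resp.\ $\mathfrak{O}_{i,j}\subseteq\mathfrak{O}$) via \eqref{orientation to optimal embedding}, and then invoke Proposition~\ref{connected component and local maximal orders} to identify vertices of a connected component with local maximal orders at $\ell$. You are in fact slightly more explicit than the paper about why the local root condition is purely local at $\ell$ and hence why the count is the same in every component.
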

\begin{proof}
    Let $(E_0,\jmath_0)/\cong$ be a vertex in $G_{K_i,K_j}(p, \ell)$, $\mathfrak{O}_0=\End(E_0)$, and $B_0=\End^0(E_0)$. By Proposition \ref{connected component and local maximal orders}, every vertex in the connected component of $(E_0,\jmath_0)/\cong$ is uniquely represented by a maximal order in $S_\ell(\mathfrak{O}_0)$. Let $(E,\jmath)/\cong$ be a vertex in the connected component of $(E_0,\jmath_0)/\cong$ and $\mathfrak{O}$ be the maximal order in $S_\ell(\mathfrak{O}_0)$ representing $(E,\jmath)/\cong$. $(E,\jmath)$ is a local root if and only if $\mathfrak{O}$ locally contains $\mathfrak{O}_{i,j}$ at $\ell$. Hence, the number of local roots in the connected component of $(E_0,\jmath_0)/\cong$ is given by the local embedding number $e_\ell(\mathfrak{O}_{i,j})$. It follows that the number of global roots in $G_{K_i, K_j}(p,\ell)$ is given by the global embedding number $e(\mathfrak{O}_{i,j})$. 
\end{proof}

\begin{example} \label{root example}
    Consider the standard basis $1, i, j, k$ for $B_0$ such that
\begin{equation}
    i^2= -q, \quad j^2 = -p, \quad k = ij=-ji,
\end{equation}
where $q$ is given as in (\ref{Pizer basis}). Let $K_i=\Q(i)$ and $K_j=\Q(j)$ be quadratic subfields of $B_0$ and $\mathfrak{O}_{i,j}$ be the Bass order generated by the maximal orders of $K_i$ and $K_j$.
\begin{enumerate}
    \item[(i)] If $p \equiv 3 \pmod 4$ and $q=1$, then $\discrd(\mathfrak{O}_{i,j})=p$ and $\mathfrak{O}_{i,j}=\Z\langle i, \frac{1+j}{2}\rangle$ is a maximal order. The local embedding numbers are $e_\ell(\mathfrak{O}_{i,j})=1$ for any prime $\ell$. The global embedding number is $e(\mathfrak{O}_{i,j})=1$.
    \item[(ii)] If $p \equiv 5 \pmod{8}$ and $q=2$, then $\discrd(\mathfrak{O}_{i,j})=8p$ and $\mathfrak{O}_{i,j}=\Z\langle i,j\rangle$ is a non-maximal Bass order. The local embedding numbers are $e_\ell(\mathfrak{O}_{i,j})=1$ for any prime $\ell \neq 2$ and $e_\ell(\mathfrak{O}_{i,j})=2$ for $\ell=2$. The global embedding number is $e(\mathfrak{O}_{i,j})=2$. 
    \item[(iii)] If $p\equiv 1 \pmod 8$ and $q \equiv 3 \pmod 4$, then $\discrd(\mathfrak{O}_{i,j})=pq$ and $\mathfrak{O}_{i,j}=\Z\langle \frac{1+i}{2}, j \rangle$ is a non-maximal Bass order. The local embedding numbers are $e_\ell(\mathfrak{O}_{i,j})=1$ for any prime $\ell \neq q$ and $e_\ell(\mathfrak{O}_{i,j})=2$ for $\ell=q$. The global embedding number is $e(\mathfrak{O}_{i,j})=2$.
\end{enumerate}
Maximal orders in $B_0$ containing $\mathfrak{O}_{i,j}$ are explicitly given in Proposition \ref{roots}. 
\end{example}

\begin{details}
    \begin{tbox}
\[
\begin{tikzpicture}[scale=2.5,
  roundnode/.style={
    draw=none,       
    rectangle,       
    inner sep=2pt,   
    align=center
  }]

\def\L{0.866}

\coordinate (E0) at (0,0);
\coordinate (E1) at (0,-\L);
\coordinate (E2) at ({\L*sin(60)},{\L*cos(60)});
\coordinate (E3) at ({-\L*sin(60)},{\L*cos(60)});

\node[roundnode] (N0) at (E0) {$(E, \mathfrak{O}_, \mathfrak{O}_\ell$)};
\node[roundnode] (N1) at (E1) {$(E_1, \mathfrak{O}_1, (\mathfrak{O}_1)_\ell)$};
\node[roundnode] (N2) at (E2) {$(E_2, \mathfrak{O}_2, (\mathfrak{O}_2)_\ell)$};
\node[roundnode] (N3) at (E3) {$(E_3, \mathfrak{O}_3, (\mathfrak{O}_3)_\ell)$};

\draw[-, shorten >=4pt, shorten <=4pt] (N0) -- (N1);
\draw[-, shorten >=4pt, shorten <=4pt] (N0) -- (N2);
\draw[-, shorten >=4pt, shorten <=4pt] (N0) -- (N3);

\draw[dotted, thick, shorten >=4pt] (N2) -- ++(60:0.6);
\draw[dotted, thick, shorten >=4pt] (N2) -- ++(30:0.6);

\draw[dotted, thick, shorten >=4pt] (N3) -- ++(120:0.6);
\draw[dotted, thick, shorten >=4pt] (N3) -- ++(150:0.6);

\draw[dotted, thick, shorten >=4pt] (N1) -- ++(-45:0.6);
\draw[dotted, thick, shorten >=4pt] (N1) -- ++(-135:0.6);

\end{tikzpicture}
\]
\end{tbox}
\end{details}

\section{Structure of double-oriented isogeny graphs} \label{Structure of double-oriented isogeny graphs}
\subsection{Simultaneous ascension}
We show that one can ascend simultaneously in double-oriented isogeny graphs. 

Recall that we defined the quadratic fields $K_i=\Q(i)$ and $K_j=\Q(j)$ for a standard basis $1, i, j, k$ for $B_0$
such that 
\begin{equation} \label{standard basis}
    i^2=d_i, \quad j^2=d_j, \quad k=ij=-ji,
\end{equation}
where $d_i, d_j < 0 $ are square-free integers. Let $\mathfrak{O}_{i,j}$ be the Bass order generated by the maximal quadratic orders $\calO_{K_i}$ and $\calO_{K_j}$.

For $u \in \{i, j\}$, let $\calO_u = \Z[\alpha_u]$ be an order in $K_u$ with the conductor $f_u$. We denote by $\tilde{\calO}_u$ the next order in the chain
\begin{equation*}
\cdots \subset  \Z+\ell\calO_u \subset  \calO_u \subset \Z+\ell^{-1}\calO_u \subset \cdots \subset \Z+\ell^{-\nu_\ell(f_u)}\calO_u 
\end{equation*}
that is strictly larger than $\calO_u$, under the ordering given by the conductor of the quadratic order, unless $\calO_u$ is already $\ell$-fundamental in $K_u$, in which case we set $\tilde{\calO}_u = \calO_u$.

\begin{proposition}
\label{joint-ascend-general}
Let $(E,\jmath)/\cong$ be a vertex in $G_{K_i, K_j}(p,\ell)$ where $(E,\jmath)$ is primitively $\calO_i, \calO_j$-oriented. Consider the connected component of $(E,\jmath)/\cong$ in $G_{K_i, K_j}(p,\ell)$. The following hold:
\begin{enumerate}
    \item[(a)] If both orders $\calO_i, \calO_j$ are $\ell$-fundamental and $\ell$ is split in them, then $(E, \jmath)/\cong$ is the unique local root in the connected component.
    \item[(b)] If one of the orders $\calO_i, \calO_j$ is $\ell$-fundamental and $\ell$ is inert in it, then $(E,\jmath)/\cong$ is the unique local root in the connected component.
    \item[(c)] In other cases, there is a unique $K_i,K_j$-oriented isogeny $\varphi: (E,\jmath) \rightarrow (\tilde{E}, \tilde{\jmath})$ of degree $\ell$ up to equivalence such that $(\tilde{E},\tilde{\jmath})$ has orientations by $\tilde{\calO}_i, \tilde{\calO}_j$, and there are two local roots in the connected component.
\end{enumerate}
\end{proposition}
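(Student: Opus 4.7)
The plan is to pass to the quaternionic picture via Theorem~\ref{graph isomorphisms} and work in the Bruhat--Tits tree $\mathcal{T}(\ell)$ associated with $\operatorname{PGL}_2(\Q_\ell)$. By Proposition~\ref{connected component and local maximal orders}, the connected component of $(E,\jmath)/\cong$ in $G_{K_i,K_j}(p,\ell)$ is canonically parametrized by the vertices of $\mathcal{T}(\ell)$, and by the corollary following Lemma~\ref{number of local roots} the set of local roots corresponds to those vertices whose maximal order $\mathfrak{O}'\subseteq B_0$ locally contains the Bass order $\mathfrak{O}_{i,j}:=\Z\langle\calO_{K_i},\calO_{K_j}\rangle$ at $\ell$. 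In particular, the number of local roots equals $e_\ell(\mathfrak{O}_{i,j})\in\{1,2\}$, and equals $1$ exactly when $\ell\nmid\discrd(\mathfrak{O}_{i,j})=d_{K_i}d_{K_j}/4$.

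For parts (a) and (b), the hypotheses make $(E,\jmath)$ itself a local root, so the claim reduces to showing $e_\ell(\mathfrak{O}_{i,j})=1$. The essential tool is the contrapositive of Proposition~\ref{Eichler Symbol Computation}(a): since $\calO_{K_i},\calO_{K_j}\subseteq\mathfrak{O}_{i,j}$, having $\ell\parallel\discrd(\mathfrak{O}_{i,j})$ with $\ell\neq p$ would force the Eichler symbol to equal $1$, which requires $\ell$ to be split or ramified in each of $K_i$ and $K_j$. Thus the combination "$\ell$ inert in one field and ramified in the other" cannot occur away from $p$. In case (a), $\ell$ splits in both $K_i,K_j$, so $\ell\nmid d_{K_i}d_{K_j}$, and the subcase $\ell=2$ is excluded by~\eqref{condition1} (since $2$ splitting in $K_u$ forces $d_u\equiv 1\pmod 4$). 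In case (b), the inert hypothesis together with the above impossibility forces the other field to be unramified at $\ell$ as well, so again $\ell\nmid d_{K_i}d_{K_j}/4$.

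For part (c), I split into two subcases based on whether $(E,\jmath)$ is already a local root. If both $\calO_i,\calO_j$ are $\ell$-fundamental but the configuration falls outside (a) and (b), then $\ell$ is ramified in at least one $K_u$ without being inert in either, forcing $\ell\mid\discrd(\mathfrak{O}_{i,j})$ and hence $e_\ell=2$; the two local roots correspond to the two maximal orders whose intersection is the Eichler order $(\mathfrak{O}_{i,j})_\ell$ of level $\ell$, they are adjacent in $\mathcal{T}(\ell)$, and the connecting $\ell$-isogeny is simultaneously $K_i$- and $K_j$-horizontal (so $\tilde\calO_i=\calO_i$ and $\tilde\calO_j=\calO_j$). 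If instead $(E,\jmath)$ is not a local root, I invoke Proposition~\ref{Onu21 Proposition 4.1} to produce the unique $K_u$-ascending edge from $(E,\iota_u)$ whenever $\ell\mid f_u$; in the tree picture this is the unique edge reducing the horodistance from $\mathfrak{O}_\ell$ to the cluster of maximal orders containing $(\mathfrak{O}_{i,j})_\ell$, and the volcano structure in the other direction shows the same edge is simultaneously ascending or horizontal in $K_{u'}$, producing the target $(\tilde E,\tilde\jmath)$ with orientation $(\tilde\calO_i,\tilde\calO_j)$.

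The main obstacle is the second subcase of (c): one must coordinate the single-oriented ascending directions in $K_i$ and $K_j$ and verify they select a common edge of $\mathcal{T}(\ell)$. The cleanest route is to identify each edge of $\mathcal{T}(\ell)$ with an Eichler order of level $\ell$ in $B_\ell$ and to argue that the local order generated by $\jmath(\calO_i)$ and $\jmath(\calO_j)$ inside $(\End E)_\ell$ singles out the unique adjacent vertex realizing the required primitive orientation $(\tilde\calO_i,\tilde\calO_j)$; uniqueness and the absence of backtracking then follow from Theorem~\ref{tree structure of double oriented graphs}.
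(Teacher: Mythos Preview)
Your overall strategy for (a) and the root-counting via $e_\ell(\mathfrak{O}_{i,j})$ is close to the paper's, but there are two substantive gaps.

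\textbf{Gap in (b).} You assert that ``the hypotheses make $(E,\jmath)$ itself a local root,'' but the hypothesis of (b) only says that \emph{one} of $\calO_i,\calO_j$ is $\ell$-fundamental with $\ell$ inert in it; nothing a priori prevents the other order from having $\ell\mid f_{u'}$. Your Eichler-symbol argument correctly yields $e_\ell(\mathfrak{O}_{i,j})=1$, giving uniqueness of the local root in the component, but it does not show that $(E,\jmath)$ \emph{is} that root. The paper closes this by a separate contradiction: if, say, $\calO_i$ is $\ell$-fundamental with $\ell$ inert in $K_i$ but $\calO_j$ is not $\ell$-fundamental, one exhibits an order $\mathfrak{O}''$ that locally contains $\calO_{K_i}\otimes\Z_\ell$ yet sits in two distinct local maximal orders (namely $\mathfrak{O}_\ell$ and $(\mathfrak{O}_{i,j})_\ell$), contradicting the same Eichler-symbol computation applied to $\mathfrak{O}''$ rather than to $\mathfrak{O}_{i,j}$. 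Your argument about ramification in $K_{u'}$ is about the field, not about the conductor of $\calO_{u'}$, and does not recover this.

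\textbf{Gap in (c).} You correctly identify the crux---coordinating the $K_i$- and $K_j$-ascending directions into a single edge of $\mathcal{T}(\ell)$---but your proposed resolution (``identify each edge with an Eichler order of level $\ell$'' and appeal to the local order generated by $\jmath(\calO_i),\jmath(\calO_j)$) is a sketch, not a proof. When $\ell^2\mid\discrd(\mathfrak{O}_{i,j})$ the order $(\mathfrak{O}_{i,j})_\ell$ has Eichler symbol $0$ and is \emph{not} an Eichler order, so your adjacency claim for the two local roots needs more than ``their intersection is Eichler of level $\ell$.'' More seriously, in the non-root case your horodistance heuristic does not explain why the unique $K_i$-ascending edge (when $\ell\mid f_i$) is the \emph{same} edge as the unique $K_j$-ascending edge (when $\ell\mid f_j$), nor why it is $K_j$-horizontal when $\ell\nmid f_j$. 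The paper proves precisely this coincidence by passing to $M_2(\F_\ell)$ via $\mathfrak{O}/\ell\mathfrak{O}\cong M_2(\F_\ell)$: it builds the left $\mathfrak{O}$-ideals $I_u=I(\alpha_u,\lambda_u)$ of Lemma~\ref{Constructing ascending/horizontal isogeny}, reduces them to principal left ideals of $M_2(\F_\ell)$, and uses the anti-commutation relation $X_iX_j+X_jX_i\in\F_\ell X_i+\F_\ell X_j$ (coming from $ij=-ji$) to force, in a five-case analysis, that the generators $X_i-\lambda_i\id$ and $X_j-\lambda_j\id$ generate the same left ideal. Your tree-theoretic outline would need an analogous mechanism to exploit the orthogonality of $K_i$ and $K_j$; as written it does not.
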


Proposition \ref{joint-ascend-general} implies that given a vertex $(E,\jmath)/\cong$ in $G_{K_i, K_j}(p,\ell)$, we can always simultaneously ascend to the local roots in the connected component of $(E,\jmath)/\cong$. Also, if there are two local roots in the connected component, then they are neighbors.

\begin{proof}[Proof of Proposition \ref{joint-ascend-general}(a)(b)]
By Corollary~\ref{double oriented curves to maximal orders}, there is a maximal order $\mathfrak{O}$ in $B_0$ which corresponds to $(E,\jmath)/\cong$.

(a) Suppose $\calO_i, \calO_j$ are $\ell$-fundamental and $\ell$ splits in both of $\calO_i$ and $\calO_j$. Since $\mathfrak{O}$ contains $\calO_i$ and $\calO_j$, we have that $\mathfrak{O} \supseteq \mathfrak{O}_{i,j}$. As seen in the proof of Proposition \ref{number of local roots}, this case can only happen when $\ell \neq 2$ and $\ell \nmid d_{K_i}, d_{K_j}$, in which case 
\begin{equation}
    e_\ell(\mathfrak{O}_{i,j})=1.
\end{equation}

(b) Let $\mathfrak{O}'$ be any order in $B_0$ locally containing $\mathcal{O}_{K_i} \otimes \Z_\ell$, i.e., $\mathfrak{O}' \otimes \Z_\ell$ is Bass. If $\ell \nmid \disc(\mathfrak{O}')$, then $\mathfrak{O}_{\ell}'$ is maximal and $e_\ell(\mathfrak{O}')=1$. If $\ell \parallel \disc(\mathfrak{O}')$, then by Proposition \ref{Eichler Symbol Computation}(a), $\ell$ is split or ramified in $K_i$, which contradicts our assumption that $\ell$ is inert in $K_i$. If $\ell^2 \mid \disc(\mathfrak{O}')$, then Proposition \ref{Eichler Symbol Computation}(b) implies that the Eichler symbol satisfies
\[
\left( \frac{\mathfrak{O}'}{\ell} \right) = -1,
\]
and $e_\ell(\mathfrak{O}')=1$ by Proposition \ref{Formula for local embedding number}. 
Thus, the local embedding number of $\mathfrak{O}'$ is always
\begin{equation}
\label{local-1}
    e_\ell(\mathfrak{O}') = 1
\end{equation}
and there is a unique local maximal order in $B_0 \otimes \Q_\ell$ containing $\mathcal{O}_{K_i} \otimes \Z_\ell$. Now suppose that $\mathcal{O}_{j}$ is not $\ell$-fundamental but $\mathcal{O}_i$ is $\ell$-fundamental. Then for an order $\mathfrak{O}''$ in $B_0$ locally containing $\mathcal{O}_{K_i} \otimes \Z_\ell$, but not $\mathcal{O}_{K_j} \otimes \Z_\ell$,
\begin{equation}
    e_\ell(\mathfrak{O}'') > 1
\end{equation}
as there are two distinct maximal orders locally, namely $\mathfrak{O} \otimes \Z_\ell$ and $\mathfrak{O}_{i,j} \otimes \Z_\ell$, containing $\mathfrak{O}'' \otimes \Z_\ell$, which is a contradiction to \eqref{local-1}. Therefore, $\mathcal{O}_{j}$ must also be $\ell$-fundamental.
\end{proof}

\begin{extra}
    {\color{red}Since $\ell$ is inert in $\mathcal{O}_{K_i}$ and $\ell^2 \mid \discrd(\mathfrak{O}_{i,j})$, by Proposition \ref{Eichler Symbol Computation}(b), $\ell$ is inert in $\mathcal{O}_{K_j}$ as well.}
\end{extra}

Before proving part (c) of Proposition \ref{joint-ascend-general}, we list a few lemmas.

\begin{lemma} \label{ideal norm}
    Let $\mathfrak{O}$ be a maximal order in $B_0$ and $\alpha \in \mathfrak{O}$ be an element such that $\ell \mid \nrd(\alpha)$. Then $I=\mathfrak{O}\ell + \mathfrak{O}\alpha$ is an integral left $\mathfrak{O}$-ideal of reduced norm $\ell$. Conversely, any integral left $\mathfrak{O}$-ideal $I$ can be generated by its reduced norm and an element $\alpha \in \mathfrak{O}$ such that $\nrd(I) \mid \nrd(\alpha)$ as a left $\frakO$-ideal. 
\end{lemma}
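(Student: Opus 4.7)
For the forward direction, the assertion that $I = \mathfrak{O}\ell + \mathfrak{O}\alpha$ is an integral left $\mathfrak{O}$-ideal is immediate: it is a sum of two left $\mathfrak{O}$-submodules of $\mathfrak{O}$, and it is a full lattice since $\ell \in I$. The substance is the computation $\nrd(I) = \ell$, which I would verify locally via the decomposition $\nrd(I) = \prod_{\ell'} \nrd(I_{\ell'})$. At any prime $\ell' \neq \ell$, the element $\ell$ is a unit in $\Z_{\ell'}$, so $I_{\ell'} = \mathfrak{O}_{\ell'}$ and $\nrd(I_{\ell'}) = 1$. At $\ell$, since $\mathfrak{O}$ is maximal and $\ell \neq p$, we have an isomorphism $\mathfrak{O}_\ell \cong M_2(\Z_\ell)$ under which the reduced norm becomes the determinant. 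The quotient $\mathfrak{O}_\ell / I_\ell$ is then a quotient of $M_2(\F_\ell)$ by the left ideal $M_2(\F_\ell)\bar{\alpha}$, where $\bar\alpha$ is the image of $\alpha$ modulo $\ell$. The hypothesis $\ell \mid \nrd(\alpha)$ forces $\det(\bar\alpha) = 0$, so $\bar\alpha$ is singular; assuming the (implicit) nondegeneracy condition $\alpha \notin \ell\mathfrak{O}$, i.e.\ $\bar\alpha \neq 0$, the ideal $M_2(\F_\ell)\bar\alpha$ is a minimal left ideal of $\F_\ell$-dimension $2$. Hence $[\mathfrak{O}_\ell : I_\ell] = \ell^2$, giving $\nrd(I_\ell) = \ell$ and thus $\nrd(I) = \ell$.

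For the converse direction, I would invoke \cite[Exercise 16.6]{Voi21}, which yields some $\alpha \in B_0^\times$ with $I = \mathfrak{O}\nrd(I) + \mathfrak{O}\alpha$. Since $I$ is integral, $\alpha = 0 \cdot \nrd(I) + 1 \cdot \alpha \in I \subseteq \mathfrak{O}$, so $\alpha$ lies in $\mathfrak{O}$ as required. The divisibility $\nrd(I) \mid \nrd(\alpha)$ is then automatic: by the very definition
\begin{equation*}
\nrd(I) = \gcd\{\nrd(\beta) : \beta \in I\},
\end{equation*}
$\nrd(I)$ divides $\nrd(\beta)$ for every $\beta \in I$, and in particular for $\beta = \alpha$.

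The main obstacle, and the only place any real work occurs, is the local computation at $\ell$ in the forward direction, together with the bookkeeping needed to handle the degenerate case $\alpha \in \ell\mathfrak{O}$ (in which $I$ collapses to $\mathfrak{O}\ell$ of reduced norm $\ell^2$); this will need to be either excluded by a primitivity hypothesis on $\alpha$ or addressed explicitly in a remark. Everything else is bookkeeping with localization and the structure of left ideals in $M_2(\F_\ell)$.
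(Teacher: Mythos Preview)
Your proposal is correct and takes a genuinely different route from the paper's own argument.

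The paper argues globally: it fixes a $\Z$-basis $\beta_1,\dots,\beta_4$ of $\mathfrak{O}$, so that $I$ is generated as a $\Z$-module by the eight elements $\beta_i\ell$ and $\beta_i\alpha$, and then appeals to the proof of \cite[Lemma 16.3.2]{Voi21} to say that $\nrd(I)$ is generated by the reduced norms of these generators and of their pairwise sums (minus the individual norms). One checks directly that $\ell$ divides each of $\nrd(\beta_i\ell)$, $\nrd(\beta_i\alpha)$, $\nrd(\beta_i\ell+\beta_j\ell)$, $\nrd(\beta_i\ell+\beta_j\alpha)$, $\nrd(\beta_i\alpha+\beta_j\alpha)$, giving $\ell\mid\nrd(I)$; the paper then uses $\ell\in I$ to bound $\nrd(I)$ from above. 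The converse is handled, as in your proposal, by \cite[Exercise 16.6]{Voi21}.

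Your local argument via $\mathfrak{O}_\ell/\ell\mathfrak{O}_\ell\cong M_2(\F_\ell)$ is more structural: it replaces the bilinear bookkeeping with the elementary fact that a nonzero singular matrix generates a two-dimensional left ideal in $M_2(\F_\ell)$. This has two advantages. First, it makes the hidden nondegeneracy hypothesis $\alpha\notin\ell\mathfrak{O}$ completely transparent (as you note, without it $I=\ell\mathfrak{O}$ has norm $\ell^2$); the paper's global computation does not surface this cleanly. Second, it dovetails with the very next lemma in the paper (Lemma~\ref{ideal-matrix}), which sets up exactly the bijection between left $\mathfrak{O}$-ideals of reduced norm $\ell$ and nonzero proper left ideals of $M_2(\F_\ell)$ that your argument is implicitly using. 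The paper's approach, by contrast, is slightly more self-contained in that it avoids localization and the matrix identification, at the cost of the explicit polarization check.
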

\begin{proof}
If $I$ is generated by $\{\alpha_1, \cdots, \alpha_n\}$ for some $n \geq 4$ as a $\Z$-module, then $\nrd(I)$ is generated by $\nrd(\alpha_i)$ and $\nrd(\alpha_i+\alpha_j)-\nrd(\alpha_i)-\nrd(\alpha_j)$ for $i,j =1, \cdots, n$ by the proof of \cite[Lemma 16.3.2]{Voi21}. Let $\{\beta_1, \beta_2, \beta_3, \beta_4\}$ be a basis for $\mathfrak{O}$. Then $I$ is generated by $\beta_i\ell$ and $\beta_i\alpha$ for $i=1, \cdots, 4$. One can check that $\ell$ divides all of $\nrd(\beta_i\ell), \nrd(\beta_i\alpha), \nrd(\beta_i\ell+\beta_j\ell), \nrd(\beta_i\ell+\beta_j\alpha)$, and $\nrd(\beta_i\alpha+\beta_j\alpha)$ for $i, j=1,\cdots, 4$. Hence, $\ell \mid \nrd(I)$. Also, since $\ell \in I$ and $\nrd(I)=\gcd(\{\nrd(\alpha) : \alpha \in I\})$, $\nrd(I) \mid \ell$. 

\begin{details}
\begin{tbox}
For the second last divisibility statement, use $\nrd(\alpha)=\alpha\overline{\alpha}$.
\end{tbox}
\end{details}

The converse statement is \cite[Exercise 16.6]{Voi21}.
\end{proof}

For $\alpha \in \End(E) \setminus \Z$, consider an order $\mathcal{O}=\Z[\alpha]$ in $K=\Q(\alpha)$ with the conductor $f$. Let $\lambda_1, \lambda_2 \in \F_{\ell^2}$ be the eigenvalues of $\alpha$ acting on $E[\ell]$, i.e., the roots of $c_\alpha(x)$. 
  We have $\disc(\calO)=\trd(\alpha)^2-4\nrd(\alpha)$ and its reduction modulo $\ell$ coincides with  the discriminant of $c_\alpha(x)$.
  
  In case $\calO$ is not $\ell$-fundamental, both eigenvalues are zero as $\ell \mid f$ implies $c_{\alpha}(x)=x^2$. Otherwise, $\calO$ is $\ell$-fundamental and we have either $\disc(\calO)$ is a quadratic residue modulo $\ell$ and $c_\alpha(x)$ has two non-zero distinct roots in $\F_\ell$, 
 $\ell$ divides the discriminant $\disc(\calO)$ and $c_\alpha(x)$ has a non-zero double root, or $c_\alpha(x)$ has no solution in $\F_\ell$. Hence, we have the following cases:
\begin{enumerate}
    \item[(i)]$\lambda_1=\lambda_2=0$ if and only if $\calO$ is not $\ell$-fundamental,
    \item[(ii)] $\lambda_1 \neq \lambda_2$ are non-zero elements in $\F_\ell$ if and only if $\ell$ splits in $K$,
    \item[(iii)] $\lambda_1= \lambda_2$ is a non-zero element in $\F_\ell$ if and only if $\ell$ is ramified in $K$,
    \item[(iv)] $\lambda_1, \lambda_2 \in \F_{\ell^2} \setminus \F_\ell$ if and only if $\ell$ is inert in $K$.
\end{enumerate}

\begin{lemma} \label{Constructing ascending/horizontal isogeny}
Let $E/\Fbar_p$ be a supersingular elliptic curve, $\ell \neq p$ be a prime, and $\mathfrak{O}=\End(E)$. Suppose $\alpha \in \mathfrak{O}$ is $\ell$-suitable and $\ell$-primitive. Consider an order $\mathcal{O}=\Z[\alpha]$ in $K=\Q(\alpha)$ and an associated $K$-orientation $\iota$ on $E$ of $\alpha$. Let $\lambda \in \mathbb{F}_{\ell^2}$ be an eigenvalue of $\alpha$ acting on $E[\ell]$. Define a left integral $\mathfrak{O}$-ideal
\begin{equation}
        I(\alpha, \lambda):=\mathfrak{O}\ell+\mathfrak{O}(\alpha-\lambda).
\end{equation}
Let $I=I(\alpha, \lambda)$. We have the following properties:
\begin{enumerate}
    \item[(a)] If $\calO$ is not $\ell$-fundamental, then $E[I]$ is the kernel of the $K$-ascending $\ell$-isogeny from $(E,\iota)$.
    \item[(b)]  If $\mathcal{O}$ is $\ell$-fundamental and $\ell$ is not inert in $\mathcal{O}$, then $E[I]$ is the kernel of a $K$-horizontal $\ell$-isogeny from $(E,\iota)$. Moreover if $\ell$ is split in $\calO$, then for the non-zero distinct eigenvalues $\lambda_1, \lambda_2 \in \F_\ell$ of $\alpha$ acting on $E[\ell]$, we have that $E[I_1]$ and $E[I_2]$ are the kernels of distinct $K$-horizontal isogenies from $(E,\iota)$, where $I_1=I(\alpha, \lambda_1)$ and $I_2=I(\alpha, \lambda_2)$.
\end{enumerate}
\end{lemma}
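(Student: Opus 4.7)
The approach is to verify in turn that (i) $I := I(\alpha,\lambda)$ is an integral left $\mathfrak{O}$-ideal of reduced norm $\ell$; (ii) $E[I]$ is cyclic of order $\ell$, so $\varphi_I : E \to E_I := E/E[I]$ is a separable $\ell$-isogeny; and (iii) the induced primitive order on $(E_I,\iota_I)$ is read off from $\calO_R(I) \cap \iota(K)$ inside $\End^0(E)$. For (i), I would apply Lemma~\ref{ideal norm}: since $\lambda$ is a root of the characteristic polynomial $c_\alpha(x) \pmod{\ell}$, we have $\ell \mid c_\alpha(\lambda) = \nrd(\alpha-\lambda)$, so $\nrd(I) \mid \ell$; and because $\alpha-\lambda$ is a non-unit in $\mathfrak{O}/\ell\mathfrak{O}\cong M_2(\F_\ell)$ under \eqref{Isomorphism of quotient and matrix ring}, $I \neq \mathfrak{O}$, forcing $\nrd(I)=\ell$. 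For (ii), $E[I] = E[\ell] \cap \ker(\alpha-\lambda)$ is the $\lambda$-eigenspace of $\alpha\rvert_{E[\ell]}$; the $\ell$-primitivity of $\iota$ ensures $\alpha \pmod{\ell\mathfrak{O}}$ is neither zero (case~(a)) nor scalar (case~(b)) in $M_2(\F_\ell)$, so this eigenspace is one-dimensional. Setting $\iota_I := (\varphi_I)_*(\iota)$ makes $\varphi_I$ a $K$-oriented isogeny, and under $\End(E_I) \cong \calO_R(I) \subseteq \End^0(E)$ from Theorem~\ref{ss-correspondence}(d), the induced orientation is primitive $\calO''$-oriented with $\calO'' := \iota^{-1}(\calO_R(I)\cap \iota(K))$.

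The crux is to pin down $\calO''$. My first step would be to show $I\alpha \subseteq I$, i.e.\ $\iota(\calO) \subseteq \calO_R(I)$ and hence $\calO \subseteq \calO''$. This follows from the expansions
\begin{equation*}
  \alpha\ell = (\alpha-\lambda)\ell + \lambda\ell, \qquad \alpha(\alpha-\lambda) = (\trd(\alpha)-\lambda)(\alpha-\lambda) + \big((\trd(\alpha)-\lambda)\lambda - \nrd(\alpha)\big),
\end{equation*}
combined with the key congruence $(\trd(\alpha)-\lambda)\lambda - \nrd(\alpha) = -c_\alpha(\lambda) \equiv 0 \pmod{\ell}$, which places the residual integer in $\ell\Z \subseteq I\cap\Z$.

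For part~(a), with $\lambda=0$ and $\ell\mid f$, I would use $\ell$-suitability to write $\alpha = f\omega_K + k\ell$ and consider $\beta := \alpha/\ell = (f/\ell)\omega_K + k \in K$, which generates the next order $\tilde{\calO} = \Z[\beta]$ of conductor $f/\ell$. A brief case split on $d_K \pmod 4$ (and, when $d_K \equiv 1 \pmod 4$, the parity of $f$) using Lemma~\ref{order-gen} and the explicit formula for $\nrd(\alpha)$ confirms that $\ell \mid \trd(\alpha)$ and $\ell^2 \mid \nrd(\alpha)$; rerunning the computation of the previous paragraph with $\beta$ in place of $\alpha$ then gives $\beta \in \calO_R(I)$, so $\tilde{\calO} \subseteq \calO''$. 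Since a $K$-oriented $\ell$-isogeny can change the primitive order by index at most $\ell$, this forces $\calO''=\tilde{\calO}$ and $\varphi_I$ is $K$-ascending. For part~(b), $\ell$-fundamentality of $\calO$ rules out any order strictly containing $\calO$ with index $\ell$, so the inclusion $\calO \subseteq \calO''$ of the previous paragraph, together with the trichotomy ascending/horizontal/descending for $\ell$-isogenies on oriented curves, forces $\calO'' = \calO$ and $\varphi_I$ is $K$-horizontal. In the split case, the two distinct eigenvalues $\lambda_1 \neq \lambda_2 \in \F_\ell^\times$ cut out two distinct one-dimensional eigenspaces in $E[\ell]$, so $\ker\varphi_{I_1} \neq \ker\varphi_{I_2}$ and the two horizontal isogenies are inequivalent.

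The main obstacle I expect is the bookkeeping in case~(a) to confirm $\ell^2 \mid \nrd(\alpha)$ and $\ell \mid \trd(\alpha)$ simultaneously across the three normalizations of $\omega_K$ dictated by Lemma~\ref{order-gen}; these divisibilities are exactly what is needed to make $\alpha^2/\ell$ lie in $I$ rather than merely in $\mathfrak{O}$, and therefore to lift the computation of Step~2 from $\alpha$ to $\beta = \alpha/\ell$.
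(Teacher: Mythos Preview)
Your approach is correct and takes a genuinely different route from the paper. The paper compares the quaternion ideal $I = I(\alpha,\lambda)$ to the quadratic $\calO$-ideal $\a = \calO\ell + \calO(\alpha-\lambda)$: since $\a \subseteq I$ one has $E[I] \subseteq E[\a]$, and once both are shown to have order $\ell$ (via Lemma~\ref{ideal norm} for $I$ and $N(\a)=\ell$ for $\a$) they coincide; the classification of the isogeny with kernel $E[\a]$ as ascending or horizontal is then delegated entirely to \cite[Proposition~4.8]{ACL+23}, which reads it off from the invertibility of $\a$ in $\calO$. Your route bypasses this external input by computing $\calO_R(I)\cap\iota(K)$ directly through the membership tests $I\alpha\subseteq I$ and $I(\alpha/\ell)\subseteq I$; this makes the argument self-contained at the cost of the divisibility bookkeeping you anticipate in your final paragraph.

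One small point to tighten: when you write ``forces $\calO''=\tilde{\calO}$'' in case~(a) (resp.\ ``$\calO''=\calO$'' in case~(b)), this is not literally what you have proved, because $\ell$-primitivity only guarantees $\ell\nmid[\calO':\calO]$ for the actual primitive order $\calO'$ of $(E,\iota)$, which may strictly contain $\calO$ at primes other than $\ell$. What your inclusions really yield is $\nu_\ell(f(\calO''))\leq\nu_\ell(f(\calO))-1=\nu_\ell(f(\calO'))-1$ in case~(a) and $\nu_\ell(f(\calO''))=0=\nu_\ell(f(\calO'))$ in case~(b), after which the trichotomy finishes the argument exactly as you indicate.
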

\begin{proof}

Suppose $\calO$ is not $\ell$-fundamental. Then eigenvalues of $\alpha$ acting on $E[\ell]$ are zero. Let $\a =  \calO \ell +  \calO\alpha$. Then $\a$ is a non-invertible $\calO$-ideal with $N(\a)=\ell$ and $E[\a]$ is the kernel of a $K$-ascending $\ell$-isogeny by \cite[Proposition 4.8]{ACL+23}. Since $\ell$ divides the norms $N(\alpha)=\nrd(\alpha)$, $I(\alpha, 0)$ is a left integral $\mathfrak{O}$-ideal of reduced norm $\ell$ by Lemma \ref{ideal norm}. It follows that $E[I]=E[\a]$
since $E[I] \subseteq E[\a]$ and $\#E[I] = \#E[\a]=\ell$.

Suppose $\calO$ is $\ell$-fundamental and $\ell$ is not inert in $\calO$. There exists a non-zero eigenvalue $\lambda \in \F_\ell$ of $\alpha$ acting on $E[\ell]$. Let $\a=  \calO\ell+ \calO(\alpha-\lambda)$. Then $\a$ is an invertible $\calO$-ideal with $N(\a)=\ell$ and $E[\a]$ is the kernel of a $K$-horizontal $\ell$-isogeny by \cite[Proposition 4.8]{ACL+23}. In particular, $\ell$ divides the norms $N(\alpha-\lambda)=\nrd(\alpha-\lambda)$. Hence $I(\alpha,\lambda)$ is a left integral $\mathfrak{O}$-ideal of reduced norm $\ell$ by Lemma \ref{ideal norm}. It follows that $E[I]=E[\a]$
since $E[I] \subseteq E[\a]$ and $\#E[I] =\#E[\a]=\ell$. 

The last statement holds since $\calO\ell +\calO(\alpha - \lambda_1)$ and $ \calO\ell+\calO(\alpha - \lambda_2)$ give rise to the kernels of distinct $K$-horizontal isogenies \cite[Proposition 4.8]{ACL+23}.
\end{proof}

\begin{details}
    \begin{tbox}
        WLOG assume $\alpha=f \omega_K$. When $\calO$ is not $\ell$-fundamental, $\ell \mid f$ and
        \begin{center}
        $  N(\alpha)=f^2N(\omega_K)=$
            \begin{cases}
                \frac{f^2(1-d)}{4} & \text{if} \ d \equiv 1 \pmod 4 \\
              -f^2d& \text{if} \ d \not\equiv 1 \pmod 4
            \end{cases}
        \end{center}
Hence, $\ell \mid f \mid N(\alpha)$.
        
                When $\calO$ is $\ell$-fundamental, $\ell \nmid f$ and  
        \begin{center}
        $  N(\alpha-\lambda)=N(f\omega_K-\lambda)=$
            \begin{cases}
               \lambda^2-f\lambda+ \frac{f^2(1-d)}{4} & \text{if} \ d \equiv 1 \pmod 4 \\
              \lambda^2-f^2d& \text{if} \ d \not\equiv 1 \pmod 4
            \end{cases}
        \end{center}
Note that $\lambda$ is a root of the characteristic polynomial $x^2-f x +\frac{f^2(1-d)}{4}$ if $d \equiv 1 \pmod 4$ and the characteristic polynomial $x^2-f^2d$ if $d \not\equiv 1 \pmod 4$ of the action of $\alpha$ in $\F_\ell$. Hence, $\ell \mid N(\alpha-\lambda)$.
    \end{tbox}
\end{details}

Let $M_2(\F_\ell)$ be the ring of $2\times 2$ matrices with entries in $\F_\ell$. For any non-invertible elements $A$ and $B$ in $M_2(\F_\ell)$, we define an equivalence relation on them by
\begin{equation} \label{eqivalence matrix}
        A \sim B \text{ if } B=PA \text{ for an invertible } P \in M_2(\F_\ell).
\end{equation}

Then $A$ and $B$ generate the same left ideals in $M_2(\F_\ell)$. Let
\begin{equation}
   \omega = \begin{pmatrix}
    0 & 0 \\ 0 & 1 
\end{pmatrix} \quad \text{and} \quad \omega_x = \begin{pmatrix}
    1 & x \\ 0 & 0
\end{pmatrix} 
\end{equation}
for some $x \in \F_\ell$ which are non-invertible elements in $M_2(\F_\ell)$.

\begin{lemma} \label{ideal-matrix}
    The set of non-zero proper left ideals of $M_2(\F_\ell)$ is given by
\begin{equation}
    \{M_2(\F_\ell)\omega,  M_2(\F_\ell)\omega_x: x \in \F_\ell\}
\end{equation}
and this set is in bijection with the set of left $\mathfrak{O}$-ideals of reduced norm $\ell$. 
\end{lemma}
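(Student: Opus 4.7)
The plan is to parametrize the non-zero proper left ideals of $M_2(\F_\ell)$ via one-dimensional subspaces of $\F_\ell^2$, and then transfer this classification to left $\mathfrak{O}$-ideals of reduced norm $\ell$ through the reduction-mod-$\ell$ isomorphism $\mathfrak{O}/\ell \mathfrak{O} \cong M_2(\F_\ell)$ from \eqref{Isomorphism of quotient and matrix ring}.

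First I would analyze left ideals of $M_2(\F_\ell)$ directly. As a left module over itself, $M_2(\F_\ell)$ splits as a direct sum of two copies of a simple $2$-dimensional minimal left ideal, so every non-zero proper left ideal is $2$-dimensional and these are in bijection with the $\ell + 1$ lines in $\F_\ell^2$. For the listed representatives, direct multiplication gives
\begin{equation*}
\begin{pmatrix} a & b \\ c & d \end{pmatrix} \omega = \begin{pmatrix} 0 & b \\ 0 & d \end{pmatrix}, \qquad
\begin{pmatrix} a & b \\ c & d \end{pmatrix} \omega_x = \begin{pmatrix} a & ax \\ c & cx \end{pmatrix},
\end{equation*}
so $M_2(\F_\ell)\omega$ consists of matrices whose rows lie in the line spanned by $(0,1)$, and each $M_2(\F_\ell) \omega_x$ consists of matrices whose rows lie in the line spanned by $(1, x)$. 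Varying $x \in \F_\ell$ together with $\omega$ exhausts all $\ell+1$ lines in $\F_\ell^2$, so these give all the non-zero proper left ideals. Since the relation \eqref{eqivalence matrix} exactly identifies two matrices that generate the same principal left ideal, the representatives $\omega$ and $\{\omega_x : x \in \F_\ell\}$ lie in $\ell+1$ distinct equivalence classes.

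For the bijection with left $\mathfrak{O}$-ideals of reduced norm $\ell$: given such an ideal $I$, \cite[Exercise 16.6]{Voi21} allows us to write $I = \mathfrak{O}\ell + \mathfrak{O}\alpha$ for some $\alpha \in \mathfrak{O}$, so $\ell \mathfrak{O} \subseteq I \subseteq \mathfrak{O}$. Since $\mathfrak{O}$ is maximal, $I$ is locally principal and $[\mathfrak{O} : I] = \nrd(I)^2 = \ell^2$; therefore $I/\ell\mathfrak{O}$ is a $2$-dimensional $\F_\ell$-subspace of $\mathfrak{O}/\ell\mathfrak{O} \cong M_2(\F_\ell)$ stable under left multiplication, i.e., a non-zero proper left ideal. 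Conversely, the preimage in $\mathfrak{O}$ of any non-zero proper left ideal of $M_2(\F_\ell)$ under the surjection $\mathfrak{O} \twoheadrightarrow \mathfrak{O}/\ell\mathfrak{O}$ is a left $\mathfrak{O}$-ideal containing $\ell \mathfrak{O}$ with index $\ell^4/\ell^2 = \ell^2$ in $\mathfrak{O}$, hence of reduced norm $\ell$. These two constructions are mutually inverse, yielding the desired bijection.

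The argument is essentially a consequence of the structure theory of the simple Artinian ring $M_2(\F_\ell)$ together with the reduction-mod-$\ell$ isomorphism for maximal quaternion orders, so I do not anticipate a substantive obstacle. The only point demanding care is the convention for the left-action on $M_2(\F_\ell)$ (rows versus columns) when checking that the $\ell+1$ listed generators hit each of the $\ell+1$ lines in $\F_\ell^2$ exactly once.
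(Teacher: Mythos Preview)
Your proposal is correct and follows essentially the same approach as the paper: both classify the non-zero proper left ideals of $M_2(\F_\ell)$ and then transport this classification to left $\mathfrak{O}$-ideals of reduced norm $\ell$ via the reduction-mod-$\ell$ isomorphism $\mathfrak{O}/\ell\mathfrak{O}\cong M_2(\F_\ell)$. The paper's proof is terser---it simply asserts the classification of left ideals of $M_2(\F_\ell)$ (citing \cite[Theorem 5]{LOX20}) and invokes Lemma~\ref{ideal norm} for the generator form $I=\mathfrak{O}\ell+\mathfrak{O}\alpha$---whereas you supply the structure-theoretic justification via lines in $\F_\ell^2$ and the index computation $[\mathfrak{O}:I]=\nrd(I)^2=\ell^2$ explicitly.
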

\begin{proof}
    We follow the proof of \cite[Theorem 5]{LOX20}.

    Note that a non-zero proper left ideal in $M_2(\F_\ell)$ is principally generated by an element in $\left\{ \omega \right\} \cup \left\{ \omega_x : x \in \F_\ell \right\}$.

    By Lemma \ref{ideal norm}, a left $\mathfrak{O}$-ideal of reduced norm $\ell$ is of the form $\mathfrak{O}\ell+\mathfrak{O}\alpha$ for an element $\alpha \in B_0^\times$ with $\ell \mid \nrd(\alpha)$. Then the isomorphism $\mathfrak{O} /\ell\mathfrak{O} \cong M_2(\F_\ell)$ in (\ref{Isomorphism of quotient and matrix ring}) induces a bijection
    \begin{equation}
\begin{array}{c@{\;}c@{\;}c} \label{ideal bijection}
\left\{ \text{left integral $\mathfrak{O}$-ideals of reduced norm $\ell$} \right\} 
& \longleftrightarrow & \left\{\text{non-zero proper left ideals of $M_2(\F_\ell)$}\right\}
 \\[1ex]
I=\mathfrak{O}\ell+\mathfrak{O}\alpha
& \mapsto & 
M_2(\F_\ell)M_\alpha, 
\end{array}
\end{equation}
where $M_\alpha$ is the matrix representation of an endomorphism $\alpha$ on $E[\ell]$ given in \eqref{Isomorphism of quotient and matrix ring}. The correspondence follows from the correspondence between left $\mathfrak{O}$-ideals of reduced norm $\ell$ and non-zero proper left ideals of $\mathfrak{O}/\ell \mathfrak{O}$. 
\end{proof}

\begin{extra}
    \begin{corollary}
    Let $\mathfrak{O}$ be a maximal order in $B_0$ and $\ell \neq p$ be a prime. A left $\mathfrak{O}$-ideal of reduced norm $\ell$ is integral.
\end{corollary}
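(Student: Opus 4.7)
The plan is to pass to a local statement via the local--global principle for lattices: since $I = \bigcap_{\ell'}(I_{\ell'}\cap B_0)$ and $\mathfrak{O} = \bigcap_{\ell'}(\mathfrak{O}_{\ell'}\cap B_0)$, the inclusion $I\subseteq\mathfrak{O}$ is equivalent to $I_{\ell'}\subseteq\mathfrak{O}_{\ell'}$ at every prime $\ell'$. Because $\mathfrak{O}$ is maximal, each $I_{\ell'}$ is locally principal, so $I_{\ell'} = \mathfrak{O}_{\ell'}\alpha_{\ell'}$ for some $\alpha_{\ell'}\in B_{\ell'}^{\times}$, and the problem becomes one about the generator $\alpha_{\ell'}$.

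At the ramified prime $\ell' = p$, the local algebra $B_p$ is a division algebra whose unique maximal order is a discrete valuation ring with uniformizer having reduced norm $p$. The assumption $\nrd(I_p) = \Z_p$ (which follows from $\nrd(I)=\ell\Z$ and $\ell\neq p$) forces $v_p(\nrd(\alpha_p)) = 0$, hence $\alpha_p\in\mathfrak{O}_p^{\times}$ and $I_p = \mathfrak{O}_p$. At the primes $\ell'\neq\ell,p$, one again has $\nrd(I_{\ell'})=\Z_{\ell'}$; one would try to deduce $\alpha_{\ell'}\in\mathfrak{O}_{\ell'}^{\times}$ from this, so that $I_{\ell'}=\mathfrak{O}_{\ell'}$.

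The remaining case $\ell'=\ell$ is the one where Lemma~\ref{ideal-matrix} does the work: under the reduction $\mathfrak{O}_\ell \twoheadrightarrow \mathfrak{O}_\ell/\ell\mathfrak{O}_\ell \cong M_2(\F_\ell)$, any left $\mathfrak{O}_\ell$-ideal of reduced norm $\ell$ should pull back from one of the $\ell+1$ proper left ideals $M_2(\F_\ell)\omega$, $M_2(\F_\ell)\omega_x$. Lifting $\omega$ and the $\omega_x$ to elements of $\mathfrak{O}_\ell$ produces an integral generator $\tilde\alpha\in\mathfrak{O}_\ell$ for the lifted ideal, which together with $\ell\mathfrak{O}_\ell\subseteq\mathfrak{O}_\ell$ gives $I_\ell = \mathfrak{O}_\ell\ell + \mathfrak{O}_\ell\tilde\alpha\subseteq\mathfrak{O}_\ell$ by Lemma~\ref{ideal norm}.

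The main obstacle I expect is at the split primes $\ell'\neq\ell,p$: here $\mathfrak{O}_{\ell'}\cong M_2(\Z_{\ell'})$, and having $\nrd(\alpha_{\ell'})\in\Z_{\ell'}^{\times}$ does not by itself force $\alpha_{\ell'}\in M_2(\Z_{\ell'})$—witness $\operatorname{diag}(\ell',1/\ell')\in GL_2(\Q_{\ell'})$, whose determinant is a unit but which lies outside $M_2(\Z_{\ell'})$. To make the step go through, one would have to invoke either Smith normal form together with an additional hypothesis (for instance, that $I$ is already integral away from $\ell$, which is implicit in the surrounding use of $I$ coming from an isogeny $\varphi_I$), or replace $\alpha_{\ell'}$ by a unit-multiple representative inside $\mathfrak{O}_{\ell'}^{\times}\alpha_{\ell'}$ lying in $M_2(\Z_{\ell'})$; absent such extra input, the corollary as stated is not automatic from Lemma~\ref{ideal-matrix} alone and should be read with the convention in force in the surrounding text that ``left $\mathfrak{O}$-ideal of reduced norm $\ell$'' is taken from the outset to be integral.
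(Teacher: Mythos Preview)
Your analysis is correct, and in fact you have identified a genuine issue with the statement itself. The corollary appears in the paper only inside an \texttt{extra} environment that is excluded from compilation, and no proof is given there; it was evidently a remark the authors considered and then dropped.

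Under the paper's explicit convention that ``a $\mathfrak{O}$-ideal'' means a \emph{fractional} $\mathfrak{O}$-ideal, the statement is actually false, and your local obstruction at a split prime $q\neq\ell,p$ is exactly the reason. Concretely: take any integral left $\mathfrak{O}$-ideal $J$ with $\nrd(J)=\ell$, pick a split prime $q\neq\ell,p$, identify $\mathfrak{O}_q\cong M_2(\Z_q)$, and use the local--global principle for lattices (cf.\ \cite[Lemma~27.6.8]{Voi21}) to produce a global lattice $I$ with $I_{\ell'}=J_{\ell'}$ for $\ell'\neq q$ and $I_q=M_2(\Z_q)\cdot\operatorname{diag}(q,1/q)$. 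Then $\calO_L(I)=\mathfrak{O}$ (check locally), $\nrd(I_q)=\Z_q$ so $\nrd(I)=\ell$, yet $I_q\not\subseteq\mathfrak{O}_q$, hence $I\not\subseteq\mathfrak{O}$.

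Your proposed reading---that in the surrounding passage ``left $\mathfrak{O}$-ideal of reduced norm $\ell$'' is already meant to be integral (as in Lemma~\ref{ideal norm} and Lemma~\ref{ideal-matrix}, where the ideals arise from kernels of $\ell$-isogenies)---is the only way to make the corollary true, at which point it is a tautology. This is presumably why it was excised.
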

\end{extra}

\begin{proof}[Proof of Proposition \ref{joint-ascend-general}(c)] 
By Corollary~\ref{double oriented curves to maximal orders}, there is a maximal order $\mathfrak{O}$ in $B_0$ which corresponds to $(E,\jmath)/\cong$.

Let $u \in \{i,j\}$. Suppose $\ell$ is not split in at least one of $K_i, K_j$ if $\calO_i, \calO_j$ are both $\ell$-fundamental, and suppose $\ell$ is not inert in both $K_i, K_j$. Let $f_u$ be the conductor of $\calO_u=\Z[\alpha_u]$. We may choose $\alpha_u=f_u\omega_u$. Let $\lambda_u$ be the eigenvalue of the action of $\alpha_u$ on $E[\ell]$. By Proposition \ref{Constructing ascending/horizontal isogeny}, 
    \begin{align}
        I_u&=I(\alpha_u, \lambda_u)
    \end{align}  
gives the kernel $E[I_u]$ of an isogeny which is either $K_u$-ascending or $K_u$-horizontal. We want to prove that $I_i=I_j$, which will imply that there is always at least one of simultaneously ascending, simultaneously horizontal, ascending-horizontal, or horizontal-ascending isogeny from $(E,\jmath)$. We will do so by showing that they correspond to the same proper left ideal of $M_2(\F_\ell)$ under the bijection in \eqref{ideal bijection}.

Consider the non-zero image $X_u$ of $\alpha_u$ under the isomorphism $\mathfrak{O}/\ell\mathfrak{O} \cong M_2(\F_\ell)$, i.e., $X_u=M_{\alpha_u}$. We first compute $X_u$.

\begin{details}
    \begin{tbox}
        \begin{enumerate}
    \item[(a)] $X_u$ is non-zero since $\alpha_u$ being $\ell$-primitive implies $\alpha_u \not \in \ell \mathfrak{O}$.
    \item[(b)] $X_u$ is not a multiple of the identity matrix $\id$ in $M_2(\F_\ell)$ since we assumed $\alpha_u$ generates a quadratic order.
    \item[(c)] $X_u:=M_{\alpha_u}$ for $u \in \{i,j\}$
\end{enumerate}
    \end{tbox}
\end{details}

If $d_u \equiv 1 \pmod 4$, then $\alpha_u = f_u\frac{1+u}{2}$. $X_u$ satisfies the characteristic polynomial $x^2-f_ux+\frac{f_u^2(1-d_u)}{4}$, so $X_u$ is of the form
\begin{equation} \label{matrix form pre-computation1}
    X_u = \begin{pmatrix}
   a & b \\ c & f_u-a
\end{pmatrix} \quad \text{with} \quad a^2-f_ua+bc+\frac{f_u^2(1-d_u)}{4}=0 \text{ in } \F_\ell. 
\end{equation}

\begin{details}
    \begin{tbox}
        Suppose 
\[
X = \begin{pmatrix} a & b \\ c & d \end{pmatrix} \in M_2(\mathbb{F}_\ell)
\]
We have 

\[
X^2 - f_u X + \frac{f_u^2(1 - d_u)}{4} I = 0
\]

\[
\begin{pmatrix} 
a^2 + bc & b(a+d) \\ 
c(a+d) & d^2 + bc 
\end{pmatrix}
- f_u 
\begin{pmatrix} 
a & b \\ 
c & d 
\end{pmatrix}
+ \frac{f_u^2(1 - d_u)}{4} I = 0
\]
Hence,
\[
a^2 - f_u a + bc+ \frac{f_u^2(1 - d_u)}{4} = d^2- f_ud+bc + \frac{f_u^2(1 - d_u)}{4} = 0
\]

\[
b(a+d - f_u) = c(a+d - f_u) = 0.
\]

Suppose \( a + d - f_u \neq 0 \). Then \( b = c = 0 \) as \( \mathbb{F}_\ell \) is an integral domain.

\begin{enumerate}
    \item If $a\neq d$, then \( a, d \) are roots of $x^2 - f_u x + \frac{f_u^2(1 - d_u)}{4} = 0$ (this polynomial has discriminant $f_u^2d_u>0$, so has distinct roots) , i.e., \( a + d = f_u \), which is a contradiction.
    \item If $a=d$, then $X=aI$, which is a contradiction.
\end{enumerate}
Hence, we must have $a+d=f_u$.
    \end{tbox}
\end{details}

If $d_u \not \equiv 1 \pmod 4$, then $\alpha_u=f_u u$. $X_u$ satisfies the characteristic polynomial $x^2-f_u^2d_u$, so $X_u$ is of the form
\begin{equation} \label{matrix form pre-computation2}
    X_u = \begin{pmatrix}
   a & b \\ c & -a
\end{pmatrix} \quad \text{with} \quad a^2+bc = f_u^2d_u \text{ in } \F_\ell.
\end{equation}

Let $\lambda \in \F_\ell$ be an eigenvalue of $\alpha_u$ acting on $E[\ell]$. From (\ref{matrix form pre-computation1}) and (\ref{matrix form pre-computation2}), one can check that $X_u$ takes one of the following forms:
\begin{equation} \label{matrix form}
    X_u = \begin{cases}
        \begin{pmatrix}
    f_u-ax-\lambda & f_ux-ax^2-2\lambda x \\ a & ax+\lambda
\end{pmatrix} & \text{if} \ X_u-\lambda \id \sim \omega_x \ \text{and} \ d_u \equiv 1 \pmod 4, \\
\begin{pmatrix}
    -ax-\lambda & -ax^2-2\lambda x \\ a & ax+\lambda
\end{pmatrix} & \text{if} \ X_u-\lambda \id \sim \omega_x \ \text{and} \ d_u \not\equiv 1 \pmod 4, \\
 \begin{pmatrix}
    \lambda & a \\ 0 & f_u-\lambda
\end{pmatrix} & \text{if} \ X_u-\lambda \id \sim \omega \ \text{and} \ d_u \equiv 1 \pmod 4, \\
\begin{pmatrix}
    \lambda & a \\ 0 & -\lambda 
\end{pmatrix} & \text{if} \ X_u-\lambda \id \sim \omega \ \text{and} \ d_u \not\equiv 1 \pmod 4
    \end{cases}
\end{equation}
for some $a \in \F_\ell$, where $a \neq 0$ if $\calO_u$ is not $\ell$-fundamental and $\sim$ is as in \eqref{eqivalence matrix}.

\begin{details}
\begin{tbox}
{\color{red} For $\calO_u$ not $\ell$-fundamental, the calculation in details doesn't seem to match (8.13): there is no $\lambda$ that comes out of the calculation.}
{\color{blue} When $\calO_u$ is not $\ell$-fundamental, $\lambda=f_u=0$ in $\F_\ell$ and the first two cases and the last two cases in (8.13) are combined. so there are two cases. When $\calO_u$ is $\ell$-fundamental, we have $4$ cases. So there are total $6$ cases. This is just a concise way to write them all together. In details, I went over all $6$ cases separately.}
\end{tbox}
\end{details}

\begin{details}
    \begin{tbox}
    Suppose $\calO_u$ is not $\ell$-fundamental. We want to show that 
    \begin{itemize}
    \item[1-(i)] If $X \sim \omega_x$, then
\begin{equation*}
    X =\begin{pmatrix}
    -cx & -cx^2 \\ c & cx
\end{pmatrix}
\end{equation*}
for some $c \neq 0$ in $\F_\ell$.
    \item[1-(ii)] If $X \sim \omega$, then
\begin{equation*}
    X = \begin{pmatrix}
    0 & b \\ 0 & 0
\end{pmatrix}
\end{equation*}
for some $b\neq 0$ in $\F_\ell$.
\end{itemize}

\begin{proof}
    Since $\ell \mid f_u$, i.e., $f_u=0$ in $\F_\ell$, $X=\begin{pmatrix}
        a & b \\ c & -a
    \end{pmatrix}$ for both cases: $d_u\equiv 1 \pmod 4$ and $d_u \not\equiv 1 \pmod 4$ from \eqref{matrix form pre-computation1}, \eqref{matrix form pre-computation2}.
    
1-(i) Suppose $X=P\omega_x$ for $P=\begin{pmatrix}
    u & v \\ w & z
\end{pmatrix}$ invertible.

$$\begin{pmatrix}
    a & b \\ c & -a
\end{pmatrix}=\begin{pmatrix}
    u & v \\ w & z
\end{pmatrix} \begin{pmatrix}
     1 & x  \\ 0 & 0
\end{pmatrix}=\begin{pmatrix}
    u & ux \\ w & wx
\end{pmatrix}$$

Then 
\begin{equation*}
\begin{split}
        u&=a \\
    b&=ax \\
    c&=w\\
    a &= -cx.
\end{split}
\end{equation*}
Hence, $b=-cx^2$.
So $X=\begin{pmatrix}
      - cx & -cx^2 \\ c & cx
\end{pmatrix}$.

1-(ii) Suppose $X=P\omega$ for $P=\begin{pmatrix}
    u & v \\ w & z
\end{pmatrix}$ invertible.
$$\begin{pmatrix}
    a & b \\ c & -a
\end{pmatrix}=\begin{pmatrix}
    u & v \\ w & z
\end{pmatrix} \begin{pmatrix}
     0 & 0  \\ 0 & 1
\end{pmatrix}=\begin{pmatrix}
    0 & v \\ 0 & z
\end{pmatrix}$$
Then 
\begin{equation*}
\begin{split}
        a&=0 \\
    b&=v \\
    c&=0\\
    z &= -a =0.
\end{split}
\end{equation*}
Hence, $X=\begin{pmatrix}
      0 & b \\ 0 & 0
\end{pmatrix}$.
\end{proof}

    \end{tbox}
\end{details}

\begin{details}
    \begin{tbox}

Suppose $\calO_u$ is $\ell$-fundamental. We want to show the following:

Suppose $d_u \equiv 1 \pmod 4$. $\lambda$ is a root of $x^2-f_ux+\frac{f_u^2(1-d_u)}{4} \in \F_\ell[x]$.
\begin{enumerate}
    \item[2-(i)] If $Y \sim \omega_x$, then
\begin{equation*}
    X = \begin{pmatrix}
    f_u-cx-\lambda & f_ux-cx^2-2\lambda x \\ c & cx+\lambda
\end{pmatrix}.
\end{equation*}

    \item[2-(ii)] If $Y \sim \omega$, then
\begin{equation*}
    X = \begin{pmatrix}
    \lambda & b \\ 0 & f_u-\lambda
\end{pmatrix}.
\end{equation*}
\end{enumerate}

Suppose $d_u \not \equiv 1 \pmod 4$. $\lambda$ is a root of $x^2-f^2_ud_u \in \F_\ell[x]$. 
\begin{enumerate}
    \item[2-(iii)] If $Y \sim \omega_x$, then
\begin{equation*}
    X = \begin{pmatrix}
    -cx-\lambda & -cx^2-2\lambda x \\ c & cx+\lambda
\end{pmatrix}.
\end{equation*}

    \item[2-(iv)] If $Y \sim \omega$, then
\begin{equation*}
    X = \begin{pmatrix}
    \lambda & b \\ 0 & -\lambda
\end{pmatrix}.
\end{equation*}
\end{enumerate}

\begin{proof}
     (i) Suppose $d_u \equiv 1 \pmod 4$ and $Y\sim \omega_x$.
     
        Suppose $Y=P\omega_x$ for $P=\begin{pmatrix}
    u & v \\ w & z
\end{pmatrix}$ invertible.

$$\begin{pmatrix}
    a-\lambda & b \\ c & f_u-a-\lambda
\end{pmatrix}=\begin{pmatrix}
    u & v \\ w & z
\end{pmatrix} \begin{pmatrix}
     1 & x  \\ 0 & 0
\end{pmatrix}=\begin{pmatrix}
    u & ux \\ w & wx
\end{pmatrix}$$
Then 
\begin{equation*}
\begin{split}
        u&=a-\lambda \\
    b&=(a-\lambda)x \\
    c&=w\\
    f_u-a-\lambda &= cx.
\end{split}
\end{equation*}
Hence, $a=f_u-cx-\lambda$ and $b=f_ux-cx^2-2\lambda x$.
So $X=Y+\lambda I =\begin{pmatrix}
    a & b \\ c & f_u-a
\end{pmatrix}$

  (ii) Suppose $d_u \equiv 1 \pmod 4$ and $Y \sim \omega$.
    
        Suppose $Y=P\omega_x$ for $P$ invertible.

$$\begin{pmatrix}
    a-\lambda & b \\ c & f_u-a-\lambda
\end{pmatrix}=\begin{pmatrix}
    u & v \\ w & z
\end{pmatrix} \begin{pmatrix}
     0 & 0  \\ 0 & 1
\end{pmatrix}=\begin{pmatrix}
    0 & v \\ 0 & z
\end{pmatrix}$$
Then 
\begin{equation*}
\begin{split}
        a&=\lambda \\
        c & =0
\end{split}
\end{equation*}
Hence $X=Y+\lambda I =\begin{pmatrix}
    a & b \\ c & f_u-a
\end{pmatrix}$
\end{proof}

    \end{tbox}
\end{details}

\begin{details}
    \begin{tbox}
(iii) Suppose $d_u \not\equiv 1 \pmod 4$ and $Y \sim \omega_x$. 

Suppose $Y=P\omega_x$ for $P$ invertible.

$$\begin{pmatrix}
    a-\lambda & b \\ c & -a-\lambda
\end{pmatrix}=\begin{pmatrix}
    u & v \\ w & z
\end{pmatrix} \begin{pmatrix}
     1 & x  \\ 0 & 0
\end{pmatrix}=\begin{pmatrix}
    u & ux \\ w & wx
\end{pmatrix}$$
Then 
\begin{equation*}
\begin{split}
        u&=a-\lambda \\
    b&=(a-\lambda)x \\
    c&=w\\
    -a-\lambda &= cx.
\end{split}
\end{equation*}
Hence, $a=-cx-\lambda$ and $b=-cx^2-2\lambda x$.
So $X=Y+\lambda I =\begin{pmatrix}
    a & b \\ c & -a
\end{pmatrix}$

(iv) Suppose $d_u \not\equiv 1 \pmod 4$ and $Y \sim \omega$. 

Suppose $Y=P\omega_x$ for $P$ invertible.

$$\begin{pmatrix}
    a-\lambda & b \\ c &-a-\lambda
\end{pmatrix}=\begin{pmatrix}
    u & v \\ w & z
\end{pmatrix} \begin{pmatrix}
     0 & 0  \\ 0 & 1
\end{pmatrix}=\begin{pmatrix}
    0 & v \\ 0 & z
\end{pmatrix}$$
Then 
\begin{equation*}
\begin{split}
        a&=\lambda \\
        c & =0
\end{split}
\end{equation*}
Hence $X=Y+\lambda I =\begin{pmatrix}
    a & b \\ c & -a
\end{pmatrix}$
    \end{tbox}
\end{details}

\begin{details}
    \begin{tbox}
        Recall that 
\begin{equation*}
    \alpha_u=\begin{cases}
        f_u \frac{1+u}{2} & \text{if} \ d_u \equiv 1 \pmod 4,\\
        f_u u & \text{if} \ d_u \not \equiv 1 \pmod{4}.
    \end{cases}
\end{equation*}
    \end{tbox}
\end{details}

Since $\lambda$ is an eigenvalue for $X_u$, we have that 
\begin{equation}
    Y_u=X_u-\lambda \id
\end{equation}
is non-invertible in $M_2(\F_\ell)$, and hence it generates the proper left ideal
\begin{equation}
    M_2(\F_\ell)Y_u
\end{equation}
of $M_2(\F_\ell)$ corresponding to $I(\alpha_u,\lambda)$. Moreover, if $\calO_u$ is $\ell$-fundamental and $\ell$ splits in $\calO_u$, there are non-zero distinct eigenvalues of $\lambda_1, \lambda_2 \in \F_\ell$ of $\alpha_u$ acting on $E[\ell]$ and
$X_u-\lambda_1 \id \not\sim X_u-\lambda_2 \id$.

\begin{details}
{\color{red} Why do we have the last inequivalence?}
{\color{blue} it's given in the proof of \cite{ACL+23} Proposition 4.8}
\end{details}

\begin{details}
    \begin{tbox}
        Recall that $f_u =0$ in $\F_\ell$ if and only if $\calO_u$ is not $\ell$-fundamental for $u \in \{1,2\}$ in which case all three equations become $X_iX_j+X_jX_i=0$.
    \end{tbox}
\end{details}
\begin{details}
    \begin{tbox}
    Suppose $\calO_u$ is $\ell$-fundamental.
    
    We show $Y=X-\lambda I$ is non-invertible.
    
    Suppose \( d_u \equiv 1 \pmod{4} \), 
\[
Y = \begin{pmatrix} a - \lambda & b \\ c & f_u - a - \lambda \end{pmatrix}
\]

The determinant of \( Y\) is:

\[
\det(Y) = - (a^2 - f_u a + bc) + \lambda^2 - f_u \lambda=0
\]

Since \( \lambda \) is a root of the equation:

\[
x^2 -f_u x + \frac{f_u^2(1 - d_u)}{4} = 0.
\]

Suppose \( d \not\equiv 1 \pmod{4} \)
\[
Y = \begin{pmatrix} a - \lambda & b \\ c & -a - \lambda \end{pmatrix}
\]

\[
\det(Y) = - (a^2 + bc) + \lambda^2 = 0
\]

Since \( \lambda \) is a root of the polynomial $x^2 - f_u^2 d_u$.
    \end{tbox}
\end{details}

Also, $X_i$ and $X_j$ satisfy one of the following relations:
\begin{equation} \label{matrix-equation}
    \begin{cases}
        X_iX_j+X_jX_i=0 & \text{if} \ d_i, d_j \not \equiv 1 \pmod{4},\\
        X_iX_j+X_jX_i=f_iX_j & \text{if} \ d_i \equiv 1 \pmod{4} \ \text{and} \ d_j\not\equiv 1 \pmod{4}, \\
        X_iX_j+X_jX_i=f_jX_i & \text{if} \ d_i \not\equiv 1 \pmod{4} \ \text{and} \ d_j\equiv 1 \pmod{4}.\\
    \end{cases}
\end{equation}

\begin{details}
\begin{tbox}
You can check (8.16) in hands as $\alpha_u=f_uu$ or $f_u(1+u)/2$. I didn't use maple.
\end{tbox}
\end{details}

Next, using (\ref{matrix form}) and (\ref{matrix-equation}), we compare the left $\mathfrak{O}$-ideals of the form $I(\alpha_u, \lambda)$, which correspond to an ascending isogeny or a horizontal isogeny.

\begin{details}
    \begin{tbox}
        To see $Y_u$ is non-invertible, use that $\lambda=f_i=0$ in $\F_\ell$  if $\calO_u$ is not $\ell$-fundamental. If $\calO_u$ is $\ell$-fundamental, just compute the determinant.
    \end{tbox}
\end{details}

In the calculations below, recall from the preamble to Lemma~\ref{Constructing ascending/horizontal isogeny} that $\lambda = 0$ when the order $\calO$ under consideration is not $\ell$-fundamental.

Case 1. Both $\calO_i, \calO_j$ are not $\ell$-fundamental:
\begin{equation}
    I_u=I(\alpha_u, 0)
\end{equation}
 gives the kernel of a $K_u$-ascending $\ell$-isogeny, where
\begin{equation}
    M_2(\F_\ell)X_u
\end{equation}
is the corresponding left ideal of $M_2(\F_\ell)$. We show that $I_i=I_j$ by showing that $X_i \sim X_j$.

\begin{itemize}
    \item Suppose $X_i \sim \omega_x$ and $X_j \sim \omega$. By (\ref{matrix form}),
\begin{equation}
        X_i=\begin{pmatrix}
    -ax & -ax^2 \\ a & ax
\end{pmatrix} \text{ and } X_j=\begin{pmatrix}
    0 & b \\ 0 & 0
\end{pmatrix}
\end{equation}
for some $a, b \neq0 $ in $\F_\ell$. Solving (\ref{matrix-equation}), we get $ ab=0$. This is a contradiction since $a,b \neq 0$.
    \item Suppose $X_i \sim \omega_x$ and $X_j \sim \omega_y$ for some $x\neq y$ in $\F_\ell$. By (\ref{matrix form}), 
\begin{equation}
       X_i=\begin{pmatrix}
    -ax & -ax^2 \\ a & ax
\end{pmatrix} \text{ and } X_j=\begin{pmatrix}
    -by & -by^2 \\ b & by
\end{pmatrix}
\end{equation}
for some $a, b \neq0 $ in $\F_\ell$. Solving (\ref{matrix-equation}), we get $ab(x-y)^2=0$. This is a contradiction since $a,b \neq 0$ and $x\neq y$.
\end{itemize}

 In other cases, $X_i \sim X_j$.\\

Case 2. $\calO_i$ is $\ell$-fundamental, $\ell$ is ramified in $\calO_i$, and $\calO_j$ is not $\ell$-fundamental:

Let $\lambda$ be the unique non-zero eigenvalue of $\alpha_i$ acting on $E[\ell]$. Let $Y_i= X_i-\lambda \id$. 
\begin{equation}
        I_i=I(\alpha_i, \lambda), I_j =(\alpha_j, 0)
\end{equation}
 give the kernel of a $K_i$-horizontal $\ell$-isogeny and the kernel of a $K_j$-asending $\ell$-isogeny respectively, where 
 \begin{equation}
     M_2(\F_\ell)Y_i, M_2(\F_\ell)X_j
 \end{equation}
are the corresponding left ideals of $M_2(\F_\ell)$ respectively. We show that $I_i=I_j$ by showing $Y_i \sim X_j$. 

(i) Let $d_i \equiv 1 \pmod 4$. Then $\lambda$ is a non-zero double root of 
\begin{equation}
    x^2-f_ix+\frac{f_i^2(1-d_i)}{4} \in \F_\ell[x],
\end{equation}
by the preamble to Lemma~\ref{Constructing ascending/horizontal isogeny}
so $2\lambda = f_i \neq 0$ in $\F_\ell$. In particular, $\ell \neq 2$.

\begin{details}
    \begin{tbox}
        $f_i \neq 0$ as $\calO_i$ is $\ell$-fundamental. Hence, $f_i =2\lambda \neq 0$ in $\F_\ell$. $\ell \neq 2$.
    \end{tbox}
\end{details}

\begin{itemize}
    \item Suppose $Y_i \sim \omega_x$ for some $x $ in $\F_\ell$ and $X_j \sim \omega$. By (\ref{matrix form}),
\begin{equation}
    X_i = \begin{pmatrix}
    -ax +\lambda& -ax^2 \\ a & ax+\lambda
\end{pmatrix} \text{ and } X_j = \begin{pmatrix}
    0 & b \\ 0 & 0
\end{pmatrix}
\end{equation}
for some $a, b \neq 0$ in $\F_\ell$. Solving (\ref{matrix-equation}), we get $ab=0$. This is a contradiction since $a, b \neq 0$.
    \item Suppose $Y_i \sim \omega_x$ and $X_j \sim \omega_y$ for some $x\neq y$ in $\F_\ell$. By (\ref{matrix form}),
\begin{equation}
    X_i = \begin{pmatrix}
    -ax +\lambda& -ax^2 \\ a & ax+\lambda
\end{pmatrix} \text{ and } X_j =  \begin{pmatrix}
   - by & -by^2 \\ b & by
\end{pmatrix}
\end{equation}
for some $a, b \neq 0$ in $ \F_\ell$.  Solving (\ref{matrix-equation}), we get $-ab(x-y)^2=0$. This is a contradiction since $a,b \neq 0$ and $x\neq y$.
    \item Suppose $Y_i \sim \omega$  and $X_j \sim \omega_x$ for some $x $ in $\F_\ell$. By (\ref{matrix form}),
\begin{equation}
    X_i = \begin{pmatrix}
    \lambda&  a\\ 0& \lambda
\end{pmatrix} \text{ and } X_j = \begin{pmatrix}
    -bx & -bx^2 \\ b & bx
\end{pmatrix}
\end{equation}
 for some $a, b \neq 0$ in $\F_\ell$. Solving (\ref{matrix-equation}), we get $ab=0$. This is a contradiction since $a, b \neq 0$.
\end{itemize}

In other cases, $Y_i \sim X_j$. 

(ii) Let $d_i \not\equiv 1 \pmod 4$. Then $\lambda$ is a non-zero double root of 
\begin{equation}
    x^2-f_i^2d_i \in \F_\ell[x],
\end{equation}
so $2\lambda = 0$ in $\F_\ell$ (see case (i) for details). 

\begin{itemize}
    \item Suppose $Y_i \sim \omega_x$ for some $x $ in $\F_\ell$ and $X_j \sim \omega$. By (\ref{matrix form}),
\begin{equation}
    X_i = \begin{pmatrix}
    -ax +\lambda& -ax^2 \\ a & ax+\lambda
\end{pmatrix} \text{ and } X_j = \begin{pmatrix}
    0 & b \\ 0 & 0
\end{pmatrix}
\end{equation}
for some $a, b \neq 0$ in $\F_\ell$. Solving (\ref{matrix-equation}), we get $ab=0$. This is a contradiction since $a,b\neq0$.
    \item Suppose $Y_i \sim \omega_x$ and $X_j \sim \omega_y$ for some $x\neq y$ in $\F_\ell$. By (\ref{matrix form}),
\begin{equation}
    X_i = \begin{pmatrix}
    -ax +\lambda& -ax^2 \\ a & ax+\lambda
\end{pmatrix} \text{ and } X_j =  \begin{pmatrix}
   - by & -by^2 \\ b & by
\end{pmatrix}
\end{equation}
for some $a, b \neq 0$ in $ \F_\ell$. Solving (\ref{matrix-equation}), we get $-ab(x-y)^2=0$. This is a contradiction since $a,b \neq 0$ and $x\neq y$.
\item Suppose $Y_i \sim \omega$  and $X_j \sim \omega_x$ for some $x $ in $\F_\ell$. By (\ref{matrix form}),
\begin{equation}
    X_i = \begin{pmatrix}
    \lambda&  a\\ 0& \lambda
\end{pmatrix} \text{ and } X_j = \begin{pmatrix}
    -bx & -bx^2 \\ b & bx
\end{pmatrix}
\end{equation}
 for some $a, b \neq 0$ in $\F_\ell$. Solving (\ref{matrix-equation}), we get $ab=0$. This is a contradiction since $a, b \neq 0$. 
\end{itemize}

In other cases, $Y_i \sim X_j$.\\

Case 3. $\calO_i$ is $\ell$-fundamental, $\ell$ splits in $\calO_i$, and $\calO_j$ is not $\ell$-fundamental:

Let $\lambda_1\neq \lambda_2$ be the non-zero eigenvalues of $\alpha_i$ acting on $E[\ell]$.
Let $Y_i= X_i-\lambda_1 \id$ and $Z_i=X_i-\lambda_2 \id$, where $Y_i \not \sim Z_i$.
\begin{equation}
    I_i= I(\alpha_i,\lambda_1), J_i= I(\alpha_i, \lambda_2), I_j=I(\alpha_j,0)
\end{equation}
give the kernels of distinct $K_i$-horizontal $\ell$-isogenies and the kernel of a $K_j$-asending $\ell$-isogeny respectively, where
\begin{equation}
    M_2(\F_\ell)Y_i, M_2(\F_\ell)Z_i, M_2(\F_\ell)X_j
\end{equation}
are the corresponding left ideals of $M_2(\F_\ell)$ respectively. We show that either $I_i=I_j$ or $J_i=I_j$ by showing that either $Y_i \sim X_j$ or $Z_i \sim X_j$.

(i) Let $d_i \equiv 1 \pmod 4$. Then $\lambda_1, \lambda_2$ are non-zero distinct roots of 
\begin{equation}
    x^2-f_ix+\frac{f_i^2(1-d_i)}{4} \in \F_\ell[x],
\end{equation}
so $\lambda_1+\lambda_2 = f_i \neq 0$ in $\F_\ell$. 

\begin{itemize}
    \item Suppose $Y_i \sim \omega_x$, $Z_i \sim \omega_y$ for some $x \neq y $ in $\F_\ell$. By (\ref{matrix form}), 
$$X_i = \begin{pmatrix}
    -ax +\lambda_2& -ax^2+(-\lambda_1+\lambda_2)x\\ a & ax+\lambda_1
\end{pmatrix}=\begin{pmatrix}
    -by+\lambda_1 & -by^2+(\lambda_1 - \lambda_2)y \\ b& by+\lambda_2
\end{pmatrix},$$ so  $a=b\neq 0$ and $a(x-y)=\lambda_2-\lambda_1$. 

If $X_j \sim \omega_z$ for some $z \neq x,y $ in $\F_\ell$, then by (\ref{matrix form}),
\begin{equation}
    X_j =\begin{pmatrix}
    -cz & -cz^2 \\ c & cz
\end{pmatrix}
\end{equation}
for some $c \neq 0$ in $\F_\ell$. Solving (\ref{matrix-equation}), we get $a(x - z)=\lambda_2-\lambda_1$. Hence, $a(y-z)=0$. This is a contradiction since $a\neq 0$ and $y\neq z$. 

If $X_j \sim \omega$, then by (\ref{matrix form}),
\begin{equation}
    X_j =\begin{pmatrix}
    0 & c \\ 0 & 0
\end{pmatrix}
\end{equation}
for some $c \neq 0$ in $\F_\ell$. Solving (\ref{matrix-equation}), we get $ac=0$, which is a contradiction since $a,c\neq 0$. 
\item Suppose $Y_i \sim \omega_x$, $Z_i \sim \omega$, and $X_j \sim \omega_y$ for some $x\neq y$ in $\F_\ell$. By (\ref{matrix form}),
\begin{equation}
    X_i = \begin{pmatrix}
    -ax +\lambda_2& -ax^2+(-\lambda_1+\lambda_2)x\\ a & ax+\lambda_1
\end{pmatrix}=\begin{pmatrix}
    \lambda_2 & b \\ 0& \lambda_1
\end{pmatrix}
\end{equation}
for some $a, b\in \F_\ell$ and
\begin{equation}
    X_j =\begin{pmatrix}
    -cy & -cy^2 \\ c & cy
\end{pmatrix}
\end{equation}
for some $c \in \F_\ell, c \neq 0$. We have $a=0$ and $b=(\lambda_2-\lambda_1)x$. Solving (\ref{matrix-equation}), we get $-c(\lambda_1-\lambda_2)(x-y)=0$. This is a contradiction since $c \neq 0$, $\lambda_1 \neq \lambda_2$, and $x\neq y$.
\end{itemize}

In other cases, either $Y_i \sim X_j$ or $Z_i \sim X_j$.

(ii) Let $d_i \not\equiv 1 \pmod 4$. $\lambda_1, \lambda_2$ are non-zero distinct roots of 
\begin{equation}
    x^2-f^2_id_i\in \F_\ell[x],
\end{equation}
so $\lambda_1+\lambda_2 = 0$ in $\F_\ell$. In particular, $\ell \neq 2$.  

\begin{details}
    \begin{tbox}
       If $\ell =2$, $\lambda_1=\lambda_2$.
    \end{tbox}
\end{details}

\begin{itemize}
    \item Suppose $Y_i \sim \omega_x$, $Z_i \sim \omega_y$ for some $x \neq y $ in $\F_\ell$. By (\ref{matrix form}),
\begin{equation}
    X_i = \begin{pmatrix}
    -ax -\lambda_1& -ax^2-2\lambda_1x\\ a & ax+\lambda_1
\end{pmatrix}=\begin{pmatrix}
    -by+\lambda_1 & -by^2+2 \lambda_1y \\ b& by-\lambda_1
\end{pmatrix}
\end{equation}
for some $a, b \in \F_\ell$. Hence, $a=b\neq 0$ and $a(x-y)=-2\lambda_1$. 

If $X_j \sim \omega_z$ for some $z \neq x,y $ in $\F_\ell$, then by (\ref{matrix form}),
\begin{equation}
    X_j =\begin{pmatrix}
    -cz & -cz^2 \\ c & cz
\end{pmatrix}
\end{equation}
for some $c \neq 0$ in $\F_\ell$. Solving (\ref{matrix-equation}), we get $a(x-z)=-2\lambda_1$. Hence $a(y-z)=0$. This is a contradiction since $a\neq 0$ and $y\neq z$.

If $X_j \sim \omega$, then by (\ref{matrix form}),
\begin{equation}
    X_j =\begin{pmatrix}
    0 & c \\ 0 & 0
\end{pmatrix}
\end{equation}
 for some $c \neq 0$ in $\F_\ell$. Solving (\ref{matrix-equation}), we get $ac=0$, which is a contradiction since $a,c\neq 0$.  
 \item Suppose $Y_i \sim \omega_x$, $Z_i \sim \omega$, and $X_j \sim \omega_y$ for some $x\neq y$ in $\F_\ell$. By (\ref{matrix form}),
\begin{equation}
    X_i = \begin{pmatrix}
    -ax -\lambda_1& -ax^2-2\lambda_1 x \\ a & ax+\lambda_1
\end{pmatrix}=\begin{pmatrix}
    -\lambda_1 & b \\ 0& \lambda_1
\end{pmatrix}
\end{equation}
for some $a, b\in \F_\ell$ and
\begin{equation}
    X_j =\begin{pmatrix}
    -cy & -cy^2 \\ c & cy
\end{pmatrix}
\end{equation}
for some $c \in \F_\ell, c \neq 0$. We have $a=0$ and $b=-2\lambda_1 x$. Solving (\ref{matrix-equation}), we get $-2c\lambda_1(x-y)=0$. This is a contradiction since $\ell \neq 2, \lambda_1, c\neq 0,$ and $ x\neq y$.
\end{itemize}

In other cases, either $Y_i \sim X_j$ or $Z_i \sim X_j$.\\

Case 4. Both $\calO_i, \calO_j$ are $\ell$-fundamental, $\ell$ is split in $\calO_i$, and $\ell$ is ramified in $\calO_j$:

Let $\lambda_1\neq \lambda_2$ be the non-zero eigenvalues of $\alpha_i$ acting on $E[\ell]$.
Let $Y_i= X_i-\lambda_1 \id$ and $Z_i=X_i-\lambda_2 \id$, where $Y_i \not \sim Z_i$. Also, let $\lambda$ be the unique non-zero eigenvalue of $\alpha_j$ acting on $E[\ell]$ and let $Y_j = X_j - \lambda \id$.
\begin{equation}
    I_i=I(\alpha_i, \lambda_1), J_i=I(\alpha_i, \lambda_2), I_j=I(\alpha_j,\lambda)
\end{equation}
give the kernels of distinct $K_i$-horizontal $\ell$-isogenies and the kernel of a $K_j$-horizontal $\ell$-isogeny respectively, where
\begin{equation}
    M_2(\F_\ell)Y_i, M_2(\F_\ell)Z_i, M_2(\F_\ell)Y_j
\end{equation}
are the corresponding left ideals of $M_2(\F_\ell)$ respectively. We show that either $I_i=I_j$ or $J_i=I_j$ by showing that either $Y_i \sim Y_j$ or $Z_i \sim Y_j$.

(i) Let $d_i \not\equiv 1 \pmod{4}$ and $d_j \equiv 1 \pmod{4}$. $\lambda_1, \lambda_2$ are non-zero distinct roots of
\begin{equation}
    x^2-f_i^2 d_i \in \F_\ell[x],
\end{equation}
so $\lambda_1+\lambda_2=0$ in $\F_\ell$. In particular, $\ell \neq 2$. $\lambda$ is a non-zero double root of
\begin{equation}
    x^2-f_jx + \frac{f_j^2(1-d_j)}{4} \in \F_\ell[x],
\end{equation}
so $2\lambda = f_j \neq 0$ in $\F_\ell$.

\begin{itemize}
    \item Suppose $Y_i \sim \omega_x$, $Z_i \sim \omega_y$ for some $x \neq y$ in $\F_\ell$. By (\ref{matrix form}),
\begin{equation}
    X_i = \begin{pmatrix}
    -ax -\lambda_1& -ax^2-2\lambda_1x\\ a & ax+\lambda_1
\end{pmatrix}=\begin{pmatrix}
    -by+\lambda_1 & -by^2+2 \lambda_1y \\ b& by-\lambda_1
\end{pmatrix}
\end{equation}
for some $a, b \in \F_\ell$. Hence, $a=b\neq 0$ and $a(x-y)=-2\lambda_1$. 

If $Y_j \sim \omega_z$ for some $z \neq x,y$ in $\F_\ell$, then by (\ref{matrix form}),
\begin{equation}
    X_j=\begin{pmatrix}
    -cz+\lambda & -cz^2 \\ c & cz+\lambda
\end{pmatrix}
\end{equation}
for some $c \neq 0$ in $\F_\ell$. Solving (\ref{matrix-equation}), we get $a(x-z)=-2 \lambda_1$. Hence, $a(y-z)=0$. This is a contradiction since $a \neq 0$ and $y \neq z$. 

If $Y_j \sim \omega$, then  by (\ref{matrix form}),
\begin{equation}
    X_j=\begin{pmatrix}
    \lambda & c \\ 0 & \lambda 
\end{pmatrix}
\end{equation}
for some $c \neq 0$ in $\F_\ell$. Solving (\ref{matrix-equation}), we get $ac = 0$. This is a
contradiction since $a, c \neq 0$.
\item Suppose $Y_i \sim \omega_x$, $Z_i \sim \omega$, and $Y_j \sim \omega_y$ for some $x\neq y$ in $\F_\ell$. By (\ref{matrix form}),
\begin{equation}
    X_i = \begin{pmatrix}
    -ax -\lambda_1& -ax^2-2\lambda_1x\\ a & ax+\lambda_1
\end{pmatrix}=\begin{pmatrix}
    -\lambda_1 & b \\ 0& \lambda_1
\end{pmatrix}
\end{equation}
for some $a, b \in \F_\ell$ and
\begin{equation}
    X_j =\begin{pmatrix}
    -cy+\lambda & -cy^2 \\ c & cy+\lambda
\end{pmatrix}
\end{equation}
for some $c \in \F_\ell, c \neq 0$. We have $a=0$ and $b=-2\lambda_1x$. Solving (\ref{matrix-equation}), we get $-2 \lambda_1c(x-y)=0$. This is a contradiction since $\ell \neq 2$, $\lambda_1, c \neq 0$, and $x \neq y$. 
\end{itemize}

In other cases, either $Y_i \sim Y_j$ or $Z_i \sim Y_j$.

(ii) Let $d_i \equiv 1 \pmod{4}$ and $d_j \not\equiv 1 \pmod{4}$. $\lambda_1, \lambda_2$ are non-zero distinct roots of 
\begin{equation}
    x^2-f_ix+\frac{f_i^2(1-d_i)}{4} \in \F_\ell[x],
\end{equation}
so $\lambda_1+\lambda_2 = f_i \neq 0$ in $\F_\ell$. In particular, $\ell \neq 2$. $\lambda$ is a non-zero double root of
\begin{equation}
    x^2-f_j^2d_j \in \F_\ell[x],
\end{equation}
so $2\lambda = 0$ in $\F_\ell$. This implies $\ell=2$ as $\lambda \neq 0$. Hence, this case does not occur.

\begin{details}
    \begin{tbox}
        If $\ell=2$, then the only non-zero element in $\F_\ell$ is $1$, so $\lambda_1=\lambda_2=1$, which is a contradiction as $\lambda_1\neq \lambda_2$.
    \end{tbox}
\end{details}

\begin{details}
    \begin{tbox}
        Suppose $Y_i \sim \omega_x$, $Z_i \sim \omega_y$ for some $x \neq y $ in $\F_\ell$. Then 
$$X_i = \begin{pmatrix}
    -ax +\lambda_2& -ax^2+(-\lambda_1+\lambda_2)x\\ a & ax+\lambda_1
\end{pmatrix}=\begin{pmatrix}
    -by+\lambda_1 & -by^2+(\lambda_1 - \lambda_2)y \\ b& by+\lambda_2
\end{pmatrix},$$ so $a=b\neq 0$ and $a(x-y)=\lambda_2-\lambda_1$.

If $Y_j \sim \omega_z$ for some $z \neq x, y $ in $\F_\ell$, then $X_j=\begin{pmatrix}
    -cz+\lambda & -cz^2 \\ c & cz+\lambda
\end{pmatrix}$ for some $c \neq 0$ in $\F_\ell$. Solving (\ref{matrix-equation}), we get $2(x-z)(y-z)=0$. This is a contradiction since $\ell \neq 2$ and $z \neq x, y$.

If $Y_j \sim \omega$, then $X_j= \begin{pmatrix}
    \lambda & c \\ 0 & \lambda
\end{pmatrix}$ for some $c \neq 0$ in $\F_\ell$. Solving (\ref{matrix-equation}), we get 

Suppose $Y_i \sim \omega_x$, $Z_i \sim \omega$, and $X_j \sim \omega_y$ for some $x\neq y$ in $\F_\ell$.
$$X_i = \begin{pmatrix}
    -ax +\lambda_2& -ax^2+(-\lambda_1+\lambda_2)x\\ a & ax+\lambda_1
\end{pmatrix}=\begin{pmatrix}
    \lambda_2 & b \\ 0& \lambda_1
\end{pmatrix}$$
and $X_j =\begin{pmatrix}
    -cy+\lambda & -cy^2 \\ c & cy+\lambda
\end{pmatrix}$
for some $a, b, c \in \F_\ell, c \neq 0$. We have $a=0$ and $b=(\lambda_2-\lambda_1)x$. Solving (\ref{matrix-equation}), we get $\lambda=c(y-x)$. This implies $2c(y-x)=0$ which is a contradiction since $\ell \neq 2$ and $y \neq x$. \\
    \end{tbox}
\end{details}

(iii) Let $d_i \not\equiv 1 \pmod{4}$ and $d_j \not\equiv 1 \pmod{4}$. $\lambda_1, \lambda_2$ are non-zero distinct roots of
\begin{equation}
    x^2-f^2_id_i\in \F_\ell[x],
\end{equation}
so $\lambda_1+\lambda_2 = 0$ in $\F_\ell$. In particular, $\ell \neq 2$. $\lambda$ is a non-zero double root of 
\begin{equation}
    x^2-f_j^2d_j \in \F_\ell[x],
\end{equation}
so $2\lambda = 0$ in $\F_\ell$. This implies $\ell=2$ since $\lambda \neq 0$. Hence, this case does not occur.\\

Case 5. Both $\calO_i, \calO_j$ are $\ell$-fundamental and $\ell$ is ramified in both $\calO_i$ and $\calO_j$:

Let $\lambda$ be the unique non-zero eigenvalue of $\alpha_i$ acting on $E[\ell]$ and let $Y_i= X_i-\lambda \id$. Also, let $\lambda'$ be the unique non-zero eigenvalue of $\alpha_j$ acting on $E[\ell]$ and let $Y_j = X_j - \lambda' \id$.
\begin{equation}
    I_i=I(\alpha_i, \lambda), I_j=I(\alpha_j, \lambda')
\end{equation}
give the kernel of a $K_i$-horizontal $\ell$-isogeny and the kernel of a $K_j$-horizontal $\ell$-isogeny respectively, where
\begin{equation}
    M_2(\F_\ell)Y_i, M_2(\F_\ell)Y_j
\end{equation}
are the corresponding left ideals of $M_2(\F_\ell)$ respectively. We show that $I_i=I_j$ by showing that $Y_i \sim Y_j$. 

(i) Let $d_i \not\equiv 1 \pmod{4}$ and $d_j \equiv 1 \pmod{4}$. $\lambda$ is a non-zero double root of 
\begin{equation}
    x^2-f_i^2d_i \in \F_\ell[x],
\end{equation}
so $2\lambda = 0$ in $\F_\ell$. This implies $\ell=2$ since $\lambda \neq 0$. $\lambda'$ is a non-zero double root of
\begin{equation}
    x^2-f_jx+\frac{f_j^2(1-d_j)}{4} \in \F_\ell[x],
\end{equation}
so $2\lambda'=f_j \neq 0$ in $\F_\ell$. In particular, $\ell \neq 2$. Hence, this case does not occur. 

(ii) Let $d_i \not\equiv 1 \pmod{4}$ and $d_j \not\equiv 1 \pmod{4}$. $\lambda$ is a non-zero double root of 
\begin{equation}
    x^2-f_i^2d_i \in \F_\ell[x],
\end{equation}
so $2\lambda = 0$ in $\F_\ell$. $\lambda'$ is a non-zero double root of 
\begin{equation}
    x^2-f_j^2 d_j \in \F_\ell[x],
\end{equation}
so $2\lambda'=0$ in $\F_\ell$. 

\begin{itemize}
    \item Suppose $Y_i \sim \omega_x$ for some $x $ in $\F_\ell$ and $Y_j \sim \omega$. By (\ref{matrix form}),
\begin{equation}
    X_i = \begin{pmatrix}
    -ax +\lambda& -ax^2 \\ a & ax+\lambda
\end{pmatrix} \text{ and } X_j = \begin{pmatrix}
    \lambda' & b \\ 0 & \lambda'
\end{pmatrix}
\end{equation}
for some $a, b \neq 0$ in $\F_\ell$. Solving (\ref{matrix-equation}), we get $ab=0$. This is a contradiction since $a \neq 0$ and $b \neq 0$.
    \item Suppose $Y_i \sim \omega_x$ and $Y_j \sim \omega_y$ for some $x\neq y$ in $\F_\ell$. By (\ref{matrix form}),
\begin{equation}
    X_i = \begin{pmatrix}
    -ax +\lambda& -ax^2 \\ a & ax+\lambda
\end{pmatrix} \text{ and } X_j =  \begin{pmatrix}
   - by+\lambda' & -by^2 \\ b & by+\lambda'
\end{pmatrix}
\end{equation}
for some $a, b \neq 0$ in $ \F_\ell$. Solving (\ref{matrix-equation}), we get $-ab(x-y)^2=0$. This is a contradiction since $a \neq 0, b \neq 0$, and $x\neq y$. 
\end{itemize}

In other cases, $Y_i \sim Y_j$.
\begin{details}
    \begin{tbox}
      Suppose $Y_i \sim \omega$ and $Y_j \sim \omega_x$ for some $x $ in $\F_\ell$. This case is symmetric to $Y_i \sim \omega_x$ for some $x $ in $\F_\ell$ and $Y_j \sim \omega$.
    \end{tbox}
\end{details}
\end{proof}

\begin{extra}
Here is an alternate proof of Proposition~\ref{joint-ascend-general}.

\begin{proposition}
\label{joint-ascend}
Let $E/\overline{\F}_p$ be a supersingular elliptic curve and $\ell \not= p$ be a prime. Suppose $E$ is primitively $\calO_i, \calO_j$-oriented where $\calO_i = \Z[\alpha_i]$ is an order in $\Q(i)$ and $\calO_j = \Z[\alpha_j]$ is an order in $\Q(j)$.

If $(\calO_i, \calO_j) \not= (\calO_{K_i}, \calO_{K_j})$, then there is a unique isogeny $\phi: E \rightarrow E'$ of degree $\ell$ such that $E'$ has orientations by $\calO_i'$ and $\calO_j'$.
\end{proposition}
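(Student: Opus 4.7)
The plan is to prove Proposition~\ref{joint-ascend} by passing to the quaternion side, which avoids the long case analysis on $2 \times 2$ matrices used for Proposition~\ref{joint-ascend-general}(c). First I would reinterpret the sought isogeny as an adjacency in the Bruhat--Tits tree. By Corollary~\ref{double oriented curves to maximal orders}, the primitively $\calO_i, \calO_j$-oriented curve $(E, \jmath)$ corresponds to a maximal order $\mathfrak{O} \subseteq B_0$ with $\mathfrak{O} \cap K_i = \calO_i$ and $\mathfrak{O} \cap K_j = \calO_j$. By Theorem~\ref{tree structure of double oriented graphs} and Proposition~\ref{connected component and local maximal orders}, equivalence classes of $\ell$-isogenies $\phi: E \rightarrow E'$ correspond bijectively to neighbors of $\mathfrak{O}_\ell := \mathfrak{O} \otimes \Z_\ell$ in $\mathcal{T}(\ell)$, and the condition that $E'$ be $\calO'_i, \calO'_j$-oriented translates to the condition that the corresponding neighbor $\mathfrak{O}''_\ell$ contain $\calO'_i \otimes \Z_\ell$ and $\calO'_j \otimes \Z_\ell$.

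Next I would count the candidate neighbors using Bass order embedding numbers. The $\Z_\ell$-order $\mathfrak{O}'_{\mathrm{loc}} \subseteq B_\ell$ generated by $\calO'_i \otimes \Z_\ell$ and $\calO'_j \otimes \Z_\ell$ contains a maximal quadratic order, hence is Bass. Using Lemma~\ref{disriminant-formula} on the generators $\alpha'_i, \alpha'_j$ supplied by Lemma~\ref{order-gen}, I would compute its reduced discriminant, determine the Eichler symbol via Proposition~\ref{Eichler Symbol Computation}, and then apply Proposition~\ref{Formula for local embedding number} to obtain the count $e_\ell(\mathfrak{O}'_{\mathrm{loc}})$ of local maximal orders in $B_\ell$ containing $\mathfrak{O}'_{\mathrm{loc}}$. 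The original $\mathfrak{O}_\ell$ is among these, and I expect the remaining candidates to organize themselves as the vertices along a short geodesic segment in $\mathcal{T}(\ell)$ starting at $\mathfrak{O}_\ell$, reflecting the volcanic structure for each single orientation.

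The main obstacle will be showing that among the $e_\ell(\mathfrak{O}'_{\mathrm{loc}})$ local maximal orders containing $\mathfrak{O}'_{\mathrm{loc}}$, exactly one is a neighbor of $\mathfrak{O}_\ell$ distinct from $\mathfrak{O}_\ell$ itself, under the hypothesis $(\calO_i, \calO_j) \neq (\calO_{K_i}, \calO_{K_j})$. The strategy is to factor through the two underlying single-oriented Bruhat--Tits trees $\mathcal{T}_{K_i}(\ell)$ and $\mathcal{T}_{K_j}(\ell)$ via Corollary~\ref{connected component and local maximal orders - oriented}; Proposition~\ref{Onu21 Proposition 4.1} characterizes the $K_u$-ascending and $K_u$-horizontal edges out of $\mathfrak{O}_\ell$ in each tree, and a case analysis on whether $\ell$ divides the conductor $f_u$ would show that the intersection of the two sets of candidate edges is a singleton. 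Finally, the global maximal order $\mathfrak{O}''$, and hence $E'$ and $\phi$, is reconstructed uniquely from $\mathfrak{O}''_\ell$ together with the assignments $\mathfrak{O}''_{\ell'} = \mathfrak{O}_{\ell'}$ for $\ell' \neq \ell$ by the local-global principle for lattices.
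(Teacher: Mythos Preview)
Your approach is genuinely different from the paper's. The paper gives a direct, constructive argument on the ideal side: starting from the left $\frakO_0$-ideal $I$ corresponding to $E$, it writes down explicitly the lattice
\[
I' \;=\; I + (\alpha_i' \pm \alpha_j')\,I \quad\text{(or } I+\alpha_u' I \text{ when only one side needs ascent)},
\]
checks $I \subsetneq I' \subsetneq \ell^{-1} I$ so that $[I':I]=\ell^2$, and then verifies by hand, using only the relation $ij=-ji$ (so that $\alpha_i'\alpha_j'+\alpha_j'\alpha_i'\in\Z$), that the left order of $I'$ contains both $\alpha_i'$ and $\alpha_j'$. No Bruhat--Tits trees, no embedding numbers, no case split on the splitting behaviour of $\ell$.

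Your proposal, by contrast, has two concrete problems.

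First, a factual slip: you assert that ``the original $\mathfrak{O}_\ell$ is among'' the local maximal orders containing $\mathfrak{O}'_{\mathrm{loc}}=\langle \calO_i'\otimes\Z_\ell,\ \calO_j'\otimes\Z_\ell\rangle$. In the main case of interest, say $\ell\mid f_i$, we have $\calO_i'\supsetneq\calO_i=\mathfrak{O}\cap K_i$, so $\calO_i'\not\subseteq\mathfrak{O}$ and hence $\mathfrak{O}_\ell\not\supseteq\mathfrak{O}'_{\mathrm{loc}}$. So the embedding number $e_\ell(\mathfrak{O}'_{\mathrm{loc}})$ does not count $\mathfrak{O}_\ell$, and the picture of ``$\mathfrak{O}_\ell$ plus remaining candidates on a short geodesic'' is not the right one; you would instead have to locate $\mathfrak{O}_\ell$ on the longer geodesic of maximal orders containing $\langle\calO_i,\calO_j\rangle_\ell$ and argue it is adjacent to exactly one endpoint of the sub-geodesic for $\mathfrak{O}'_{\mathrm{loc}}$.

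Second, and more seriously, the step you flag as the ``main obstacle'' is the entire content of the proposition, and your plan for it is only ``a case analysis on whether $\ell\mid f_u$ would show that the intersection of the two sets of candidate edges is a singleton.'' But Proposition~\ref{Onu21 Proposition 4.1} only tells you \emph{how many} $K_u$-ascending/horizontal edges there are, not which subgroups of $E[\ell]$ they correspond to. For instance, when $\ell\mid f_i$ and $\ell\mid f_j$ you get a single $K_i$-ascending edge and a single $K_j$-ascending edge, and nothing in your outline forces them to coincide; that coincidence is exactly what the long matrix computation in Proposition~\ref{joint-ascend-general}(c) establishes, and what the paper's alternate proof here establishes via the explicit $I'$. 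Passing to the quotient trees $\mathcal{T}_{K_i}(\ell),\mathcal{T}_{K_j}(\ell)$ does not help, since the statement to be proved lives in the unquotiented tree $\mathcal{T}(\ell)$. As written, your sketch reduces the proposition to itself.
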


\begin{proof}
By Theorem \ref{ss-correspondence}, there is a fractional right $\frakO_0$-ideal $I$ in $B_0$ corresponding to $\Hom(E_0,E)$ which we may assume to be integral.

We seek an $\frakO_0$-ideal $I'$ corresponding to $\Hom(E_0,E')$ so that
\begin{align}
    \End_{\frakO_0}(I') & \supseteq \calO_i' \\
    \End_{\frakO_0}(I') & \supseteq \calO_j'.
\end{align}

We seek a natural way to extend $I$ to a new lattice $I'$ so it is closed under multiplication on the left by $\calO_i'$ and $\calO_j'$ and the index $[I':I] = \ell^2$ as a $\Z$-module of rank $4$. This is achieved by considering
\begin{equation}
   I' = \begin{cases}
        I + (\alpha_i' \pm \alpha_j') I & \text{ if } \calO_i \not= \calO_{K_i}, \calO_j \not= \calO_{K_j}, \\
        I + \alpha_i' I & \text{ if } \calO_i \not= \calO_{K_i}, \calO_j = \calO_{K_j}, \\
        I + \alpha_j' I & \text{ if } \calO_i = \calO_{K_i}, \calO_j \not= \calO_{K_j}. 
    \end{cases}
\end{equation}
First note that
\begin{equation}
    I \subsetneq I' \subsetneq I/\ell.
\end{equation}
Since $[I/\ell:I] = \ell^4$ and $[I':I]$ is a square, it follows that $[I':I] = \ell^2$.

If $\alpha_j' = \alpha_j$ (and similarly if $\alpha_i' = \alpha_i$), then 
\begin{equation}
    I' = I + \alpha_i'I,
\end{equation}
and we see that the left order $\frakO'$ of $I'$ contains $\Z[\alpha_i', \alpha_j']$ as
\begin{align}
    \alpha_i' \alpha_i' & \in \Z \alpha_i' + \Z \\
\label{rosati-commute}  \alpha_j \alpha_i' & \in \Z \alpha_i' \alpha_j + \Z \alpha_j,
\end{align}
where \eqref{rosati-commute} comes from Remark~\ref{opposite-mod-4}.

Suppose now that $\alpha_i' \not= \alpha_i$ and $\alpha_j' \not= \alpha_j$. 
By the relations
\begin{align*}
  (\alpha_i' + \alpha_j')^2 & \in \Z(\alpha_i' + \alpha_j') + \Z \\
  (\alpha_i' - \alpha_j')^2 & \in \Z(\alpha_i' - \alpha_j') + \Z 
\end{align*}
we see that the left order $\frakO'$ of $I'$ contains $\Z[\alpha_i'+\alpha_j', \alpha_i'-\alpha_j'] = \Z[2 \alpha_i', 2\alpha_j']$. If $\ell \not= 2$, we deduce that $\frakO'$ contains $\Z[\alpha_i', \alpha_j']$ as $2$ is coprime to $[I':I] = \ell^2$. 

If $\ell = 2$, then
\begin{align*}
  \left( \frac{\alpha_i' + \alpha_j'}{2} \right)^2 & \in \Z \left(\frac{\alpha_i' + \alpha_j'}{2}\right) + \Z \\
  \left( \frac{\alpha_i' - \alpha_j'}{2} \right)^2 & \in \Z \left(\frac{\alpha_i' - \alpha_j'}{2}\right) + \Z,
\end{align*}
from which it follows that $\frakO'$ contains $\Z[\alpha_i', \alpha_j']$. 
\end{proof}
\end{extra}

\subsection{Volcanoes of double-oriented isogeny graphs}
From Proposition \ref{joint-ascend-general}, we obtain Theorem~\ref{Formula for sturcture 1} which is refined result on the structure of the double-oriented isogeny graph, analogous to the one for the single-oriented isogeny graph given in Proposition \ref{Onu21 Proposition 4.1}.

\begin{theorem} \label{Formula for sturcture 1}
Let $(E,\jmath)/\cong$ be a vertex in $G_{K_i, K_j}(p,\ell)$ where $\jmath$ is a primitive $\calO_i, \calO_j$-orientation on $E$. Let $f_i, f_j$ be the conductors of $\calO_i, \calO_j$ respectively and $\Delta_i, \Delta_j$ be the discriminants of $\calO_i, \calO_j$ respectively. 

    If $\ell \nmid f_i, f_j$ and $\ell$ is not inert in both of $K_i, K_j$, then the following hold.
    \begin{enumerate}
        \item There are $1-\left(\frac{d_{K_i}}{\ell}\right)\left(\frac{d_{K_j}}{\ell}\right)$ simultaneously horizontal edges from $(E,\jmath)$.
        \item There are $\left(\frac{d_{K_i}}{\ell}\right)\left(\left(\frac{d_{K_j}}{\ell}\right)+1 \right)$ edges from $(E,\jmath)$ which are mixed horizontal-descending.
        \item There are $\left(\left(\frac{d_{K_i}}{\ell}\right)+1 \right)\left(\frac{d_{K_j}}{\ell}\right)$ edges from $(E,\jmath)$ which are mixed descending-horizontal.
        \item The remaining edges from $(E,\jmath)$ are simultaneously descending.
    \end{enumerate}

    If $\ell \nmid f_i, f_j$ and $\ell$ is inert in at least one of $K_i, K_j$, then the following hold.
    \begin{enumerate}
        \item There are $\left(\frac{d_{k_i}}{\ell}\right)+1$ edges from $(E,\jmath)$ which are mixed horizontal-descending.
        \item There are $\left(\frac{d_{k_j}}{\ell}\right)+1$ edges from $(E,\jmath)$ which are mixed descending-horizontal.
        \item The remaining edges from $(E,\jmath)$ are simultaneously descending.
    \end{enumerate}

    If $\ell$ divides exactly one of $f_i, f_j$, the following hold.
    \begin{enumerate}
        \item There is exactly one edge from $(E,\jmath)$ which is mixed ascending-horizontal or horizontal-ascending edge.
        \item There are $\max\left\{\left(\frac{\Delta_i \vphantom{\Delta_j}}{\ell}\right), \left(\frac{\Delta_j}{\ell}\right)\right\}$ edges from $(E,\jmath)$ which are mixed horizontal-descending or descending-horizontal.
        \item The remaining edges from $(E,\jmath)$ are simultaneously descending.
    \end{enumerate}
    If $\ell \mid f_i, f_j$, then the following hold.
    \begin{enumerate}
        \item There is exactly one simultaneously ascending edge from $(E,\jmath)$.
        \item The remaining edges from $(E,\jmath)$ are simultaneously descending.
    \end{enumerate}
\end{theorem}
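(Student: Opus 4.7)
The plan is to reduce the count to the single-oriented graphs $G_{K_i}(p,\ell)$ and $G_{K_j}(p,\ell)$, whose volcano structure is described by Proposition~\ref{Onu21 Proposition 4.1}, and to use Proposition~\ref{joint-ascend-general} to pin down the remaining ambiguity. Since the $\ell+1$ outgoing edges from $(E,\jmath)$ in $G_{K_i,K_j}(p,\ell)$ are in bijection with the outgoing edges from $(E,\iota_i)$ in $G_{K_i}(p,\ell)$ and with those from $(E,\iota_j)$ in $G_{K_j}(p,\ell)$, each edge acquires a $K_i$-type and a $K_j$-type in $\{a,h,d\}$. Writing $n_{xy}$ for the number of edges of type $(x,y)$, Proposition~\ref{Onu21 Proposition 4.1} applied in each of the two directions pins down the row and column sums of the $3\times 3$ matrix $(n_{xy})$, and Proposition~\ref{joint-ascend-general} supplies the further constraint needed to fix individual entries.

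In Case~1 ($\ell\nmid f_if_j$, $\ell$ not inert in either field), both orders are $\ell$-fundamental, so every entry with an ``$a$'' index vanishes. To pin down $n_{hh}$, I invoke Proposition~\ref{joint-ascend-general}: if $\ell$ splits in both fields, part~(a) makes $(E,\jmath)$ the unique local root in its component, forcing $n_{hh}=0$; otherwise part~(c) produces a unique edge to a $\tilde{\calO}_i,\tilde{\calO}_j$-oriented neighbor, and since both $\calO_u$ are $\ell$-fundamental one has $\tilde{\calO}_u=\calO_u$, so this edge is simultaneously horizontal and $n_{hh}=1$. Both subcases unify as $n_{hh}=1-\bigl(\tfrac{d_{K_i}}{\ell}\bigr)\bigl(\tfrac{d_{K_j}}{\ell}\bigr)$, since $\bigl(\tfrac{d_K}{\ell}\bigr)=0$ whenever $\ell$ ramifies. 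The formulas for $n_{hd}$, $n_{dh}$, and $n_{dd}$ follow by back-substitution into the row and column sums.

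In Case~2, WLOG $\ell$ is inert in $K_i$: Proposition~\ref{Onu21 Proposition 4.1} then kills all $K_i$-horizontal edges, so the $K_j$-marginal immediately gives $n_{dh}=\bigl(\tfrac{d_{K_j}}{\ell}\bigr)+1$ and $n_{dd}=\ell-\bigl(\tfrac{d_{K_j}}{\ell}\bigr)$, with the $(\tfrac{d_{K_i}}{\ell})+1=0$ horizontal-descending count matching the stated formula; the other subcase (inert in $K_j$) is symmetric. In Case~3, after WLOG reducing to $\ell\mid f_i$ and $\ell\nmid f_j$, I apply Proposition~\ref{joint-ascend-general}(b) contrapositively: if $\ell$ were inert in $K_j$, then $(E,\jmath)$ would be forced to be a local root, contradicting the fact that $\calO_i$ is non-$\ell$-fundamental. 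Hence $\bigl(\tfrac{d_{K_j}}{\ell}\bigr)\in\{0,1\}$, and Proposition~\ref{joint-ascend-general}(c) produces a unique edge to $(\tilde E,\tilde\jmath)$ with $\tilde{\calO}_i\supsetneq\calO_i$ but $\tilde{\calO}_j=\calO_j$, so it is mixed ascending-horizontal; thus $n_{ah}=1$ and $n_{dh}=\bigl(\tfrac{d_{K_j}}{\ell}\bigr)$. This matches $\max\bigl\{\bigl(\tfrac{\Delta_i}{\ell}\bigr),\bigl(\tfrac{\Delta_j}{\ell}\bigr)\bigr\}$, because $\ell\mid f_i$ gives $\bigl(\tfrac{\Delta_i}{\ell}\bigr)=0$ while $\ell\nmid f_j$ gives $\bigl(\tfrac{\Delta_j}{\ell}\bigr)=\bigl(\tfrac{d_{K_j}}{\ell}\bigr)\in\{0,1\}$. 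Case~4 is parallel but simpler: both marginals consist of one ascending and $\ell$ descending edges with no horizontals, and Proposition~\ref{joint-ascend-general}(c) with $\tilde{\calO}_u\supsetneq\calO_u$ for both $u=i,j$ forces the unique upward edge to be simultaneously ascending, so $n_{aa}=1$ and $n_{dd}=\ell$.

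The main obstacle is Case~3: I need to carefully justify that Proposition~\ref{joint-ascend-general}(b) excludes the inert-in-$K_j$ configuration by a contrapositive argument (noting that any vertex with $\ell\mid f_i$ can never be a local root) and then identify the Legendre symbol count $\bigl(\tfrac{d_{K_j}}{\ell}\bigr)$ with the extremal Jacobi-symbol expression in the statement. Everywhere else the argument is bookkeeping from the marginal equations.
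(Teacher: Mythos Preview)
Your proposal is correct and takes essentially the same approach the paper intends: the paper simply asserts that Theorem~\ref{Formula for sturcture 1} follows from Proposition~\ref{joint-ascend-general} (combined implicitly with Proposition~\ref{Onu21 Proposition 4.1}), and you have supplied exactly the marginal-plus-constraint bookkeeping that this derivation requires. Your contrapositive use of Proposition~\ref{joint-ascend-general}(b) in Case~3 to exclude the inert-in-$K_j$ configuration, and your identification of the unique edge in part~(c) with the simultaneously horizontal (resp.\ ascending) edge depending on $\ell$-fundamentality, are precisely the steps the paper leaves implicit.
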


\begin{details}
    \begin{tbox}
If $\ell \nmid f_i, f_j$, then the following hold.
        \begin{enumerate}
        \item There is simultaneously horizontal edges if $\ell$ is split in both of $K_i, K_j$. 
        \item There are exactly one simultaneously horizontal edge and exactly one horizontal/descending edge from $(E,\jmath)$ if $\ell$ is split in one of $K_i, K_j$ and is ramified in the other.
        \item There is exactly one simultaneously horizontal edge from $(E,\jmath)$ if $\ell$ is ramified in both $K_i, K_j$. 
        \item The remaining edges from $(E,\jmath)$ are simultaneosly descending. In particular, all edges $(E,\jmath)$ are simultaneosly descending if $\ell$ is inert in both of $K_i, K_j$.
    \end{enumerate}
     If $\ell$ divides exactly one of $f_i, f_j$, the the following hold.
    \begin{enumerate}
        \item There is exactly one ascending/horizontal edge from $(E,\jmath)$.
        \item There is exactly one horizontal/descending edge from $(E,\jmath)$ if $\ell$ is split in $K_u$ for $u \in \{i,j\}$ such that $\ell \nmid f_u$.
        \item The remaining edges from $(E,\jmath)$ are simultaneously descending.
    \end{enumerate}
\end{tbox}
\end{details}

\begin{extra}
    We have a different formulation of the result given in Corollary \ref{Formula for sturcture 1}.

\begin{corollary} \label{Formula for sturcture 2}
    Let $(E,\jmath)$ be a primitively $\mathfrak{O}$-oriented curve in $G_{K_i, K_j}(p,\ell)$. Let $D=\discrd(\mathfrak{O})$. 
Suppose $\mathfrak{O}=\mathfrak{O}_{i,j}$.

If $\ell \nmid D$, then the following hold.
\begin{enumerate}
    \item There are $\left(\frac{\mathfrak{O}_{i,j}}{\ell}\right)+1$ simultaneously horizontal edges from $(E,\jmath)$. 
    \item The remaining edges from $(E,\jmath)$ are simultaneously descending.
\end{enumerate}

If $\ell \parallel D$, then the following hold.
\begin{enumerate}
    \item There is $\left(\frac{\mathfrak{O}_{i,j}}{\ell}\right)=1$ simultaneously horizontal edge from $(E,\jmath)$.
\end{enumerate}

If $\ell^2 \mid D$, then the following hold.
\begin{enumerate}
    \item  There are $\left(\frac{\mathfrak{O}_{i,j}}{\ell}\right)+1$ simultaneously horizontal edges from $(E,\jmath)$. 
    \item 
\end{enumerate}

If $\mathfrak{O}$ is locally Bass at $\ell$ and contain exactly one of  $\calO_{K_i}\otimes \Z_\ell$ and $$\calO_{K_i}\otimes \Z_\ell$$
\begin{enumerate}
    \item There is exactly one ascending/horizontal edge from $(E,\jmath)$.
    \item There are $\max\left\{\left(\frac{\mathfrak{O}}{\ell}\right), 0\right\}$ horizontal/descending edges from $(E,\jmath)$.
    \item The remaining edges from $(E,\jmath)$ are simultaneously descending.
\end{enumerate}

If $\mathfrak{O}$ is not locally Bass at $\ell$, then the following hold.
    \begin{enumerate}
        \item There is exactly one simultaneously ascending edge from $(E,\jmath)$.
        \item The remaining edges from $(E,\jmath)$ are simultaneously descending.
    \end{enumerate}
\end{corollary}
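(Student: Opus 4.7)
The plan is to derive Corollary \ref{Formula for sturcture 2} directly from Theorem \ref{Formula for sturcture 1} by matching cases and translating the formulas using Proposition \ref{Eichler Symbol Computation}, together with the discriminant formula \eqref{discriminant} for $\mathfrak{O}_{i,j}$. First I would identify, for a primitively $\calO_i,\calO_j$-oriented vertex $(E,\jmath)$, which case of Theorem \ref{Formula for sturcture 1} we are in from the local structure of $\mathfrak{O} \subseteq B_0$: the case $\mathfrak{O}=\mathfrak{O}_{i,j}$ is exactly $\calO_i=\calO_{K_i}$ and $\calO_j=\calO_{K_j}$, equivalently $\ell\nmid f_i,f_j$; the case ``$\mathfrak{O}$ locally Bass at $\ell$, containing exactly one of $\calO_{K_i}\otimes\Z_\ell,\calO_{K_j}\otimes\Z_\ell$'' is exactly $\ell$ dividing one of $f_i,f_j$ but not the other; and ``$\mathfrak{O}$ not locally Bass at $\ell$'' is exactly $\ell\mid f_i,f_j$, since in that case neither maximal quadratic order embeds locally at $\ell$.

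For the first case ($\mathfrak{O}=\mathfrak{O}_{i,j}$), I would split on $\nu_\ell(D)$ using $D=d_{K_i}d_{K_j}/4$ from \eqref{discriminant}. When $\ell\nmid D$, both $\ell\nmid d_{K_i}$ and $\ell\nmid d_{K_j}$, so $\ell$ is split or inert in each; here $\mathfrak{O}_{i,j}\otimes\Z_\ell$ is maximal and one has to interpret $(\mathfrak{O}_{i,j}/\ell)$ as $-\bigl(\tfrac{d_{K_i}}{\ell}\bigr)\bigl(\tfrac{d_{K_j}}{\ell}\bigr)$, so that $(\mathfrak{O}_{i,j}/\ell)+1$ matches $1-\bigl(\tfrac{d_{K_i}}{\ell}\bigr)\bigl(\tfrac{d_{K_j}}{\ell}\bigr)$ from Theorem \ref{Formula for sturcture 1}. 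When $\ell\parallel D$, Proposition \ref{Eichler Symbol Computation}(a) gives $(\mathfrak{O}_{i,j}/\ell)=1$ (as $\ell\neq p$), so the one simultaneously horizontal edge comes from the single factor of $\ell$ in $d_{K_i}d_{K_j}$ (say $\ell\mid d_{K_i}$ but $\ell\nmid d_{K_j}$, so $\ell$ is ramified in $K_i$ and split/inert in $K_j$). When $\ell^2\mid D$, $\ell$ is ramified in both fields and $(\mathfrak{O}_{i,j}/\ell)\in\{-1,0,1\}$ by Proposition \ref{Eichler Symbol Computation}(b); in each subcase the edge count $(\mathfrak{O}_{i,j}/\ell)+1$ matches what Theorem \ref{Formula for sturcture 1} gives once one notes that $\bigl(\tfrac{d_{K_u}}{\ell}\bigr)=0$ for both $u$.

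For the second case (locally Bass, containing exactly one maximal quadratic order), Proposition \ref{joint-ascend-general}(c) supplies the unique edge which is mixed ascending-horizontal or horizontal-ascending. The count of mixed horizontal-descending edges is $\max\{(\Delta_i/\ell),(\Delta_j/\ell)\}$ by the corresponding case of Theorem \ref{Formula for sturcture 1}; since only one of $\calO_i,\calO_j$ is $\ell$-fundamental, say $\calO_i$, the Legendre symbol $(\Delta_j/\ell)=0$, so $\max\{(\Delta_i/\ell),(\Delta_j/\ell)\}=\max\{(\Delta_i/\ell),0\}$, and by Proposition \ref{Eichler Symbol Computation} this equals $\max\{(\mathfrak{O}/\ell),0\}$. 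Finally, the case $\ell\mid f_i,f_j$ matches the last case of Theorem \ref{Formula for sturcture 1} word for word: one simultaneously ascending edge, the rest simultaneously descending.

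The main obstacle will be the convention for $(\mathfrak{O}_{i,j}/\ell)$ when $\mathfrak{O}_{i,j}$ is locally maximal at $\ell$ (i.e.\ $\ell\nmid D$), which is not covered by Definition \ref{Eichler symbol}; I would need to introduce an explicit convention assigning $(\mathfrak{O}_{i,j}/\ell)\in\{-1,0,1\}$ according to whether the residue algebra $M_2(\F_\ell)$ is viewed as split, inert, or ramified relative to the embedded quadratic subrings, and verify consistency with Proposition \ref{Eichler Symbol Computation}. A secondary subtlety is the parity condition $\ell=2$ combined with \eqref{condition1}, since the formula $D=d_{K_i}d_{K_j}/4$ and the Eichler symbol computation both behave differently in the various sub-cases distinguished by whether $d_u\equiv 1\pmod 4$; these would be verified by the same case analysis used in Lemma \ref{number of local roots}.
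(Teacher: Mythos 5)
Before comparing, it is worth flagging that Corollary~\ref{Formula for sturcture 2} lives inside an \texttt{\textbackslash begin\{extra\}}\ldots\texttt{\textbackslash end\{extra\}} block, which is stripped out by \texttt{\textbackslash excludecomment\{extra\}} at the top of the file. It is an abandoned draft: it has empty enumerate items, the phrase ``contain exactly one of $\calO_{K_i}\otimes \Z_\ell$ and $\calO_{K_i}\otimes \Z_\ell$'' repeats the same order (the second should be $K_j$), and the paper never supplies a proof of it. The version that survived into the compiled paper is Theorem~\ref{Formula for sturcture 1}, phrased entirely in terms of $d_{K_i}$, $d_{K_j}$, $f_i$, $f_j$, which avoids the issues below.

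The main gap in your derivation is concentrated in the case $\ell \nmid D$. You invoke the formula $1-\bigl(\tfrac{d_{K_i}}{\ell}\bigr)\bigl(\tfrac{d_{K_j}}{\ell}\bigr)$ from Theorem~\ref{Formula for sturcture 1} as if that sub-case applied whenever $\ell \nmid f_i, f_j$, but that formula belongs only to the branch ``$\ell$ is not inert in both of $K_i, K_j$,'' i.e.\ $\ell$ is inert in \emph{neither}. The complementary branch ``$\ell$ is inert in at least one'' has a different set of formulas, giving zero simultaneously horizontal edges together with $\bigl(\tfrac{d_{K_i}}{\ell}\bigr)+1$ mixed horizontal-descending and $\bigl(\tfrac{d_{K_j}}{\ell}\bigr)+1$ mixed descending-horizontal edges. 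When $\ell$ splits in $K_i$ and is inert in $K_j$, the correct count of simultaneously horizontal edges is $0$, yet $1-\bigl(\tfrac{d_{K_i}}{\ell}\bigr)\bigl(\tfrac{d_{K_j}}{\ell}\bigr)=2$. Your proposed convention $(\mathfrak{O}_{i,j}/\ell)=-\bigl(\tfrac{d_{K_i}}{\ell}\bigr)\bigl(\tfrac{d_{K_j}}{\ell}\bigr)$ therefore reports the wrong count in the split--inert situation. More fundamentally, the draft corollary's second clause ``the remaining edges are simultaneously descending'' is simply false in the $\ell \nmid D$ case whenever $\ell$ is split in at least one of the two fields: Theorem~\ref{Formula for sturcture 1} then produces mixed edges that are neither simultaneously horizontal nor simultaneously descending (e.g.\ both split gives two mixed horizontal-descending and two mixed descending-horizontal edges). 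No convention for an Eichler symbol of a locally maximal order can retroactively absorb those mixed edges into the ``$+1$'' term and a pure-descending remainder, so the statement you are trying to prove cannot be established from Theorem~\ref{Formula for sturcture 1}; it needs to be rewritten, which is presumably why the authors dropped it.

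Your handling of the locally non-maximal branches is sound and is, in spirit, the computation one would do: for $\ell$ dividing exactly one conductor you correctly bridge $\max\{(\Delta_i/\ell),(\Delta_j/\ell)\}$ in Theorem~\ref{Formula for sturcture 1} to $\max\{(\mathfrak{O}/\ell),0\}$ via Proposition~\ref{Eichler Symbol Computation}, using that $(\Delta_u/\ell)=0$ when $\ell\mid f_u$; for $\ell\mid f_i,f_j$ the two statements coincide verbatim; and for $\ell\parallel D$ Proposition~\ref{Eichler Symbol Computation}(a) rules out the inert possibility at that prime, so the draft's single simultaneously horizontal edge is consistent. You also parsed the ill-typed ``contains exactly one of \ldots'' hypothesis correctly as $\ell$ dividing exactly one of $f_i,f_j$. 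The remaining work, had the statement been correct, would be exactly the parity bookkeeping you sketched via Lemma~\ref{number of local roots}; the obstruction is the statement itself, not your method.
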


\end{extra}

\begin{corollary}
 The graph $G_{K_i,K_j}(p,\ell)$ is a ``volcanic ridge'', that is, it has a volcano structure except that the rim is replaced by a ridge. The edges from the ridge vertices may be mixed, otherwise the remaining edges are simultaneously descending.
\end{corollary}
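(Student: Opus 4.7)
The plan is to deduce the corollary by assembling the structural information already established: Proposition~\ref{joint-ascend-general} which guarantees simultaneous ascent to local roots, Theorem~\ref{Formula for sturcture 1} which enumerates the edge types at every vertex, and Theorem~\ref{tree structure of double oriented graphs} which identifies each connected component, after merging conjugate directed edges, with a subtree of the Bruhat--Tits tree $\mathcal{T}(\ell)$. Fix a connected component $C$ of $G_{K_i,K_j}(p,\ell)$, and define the \emph{ridge} of $C$ to be the set of local roots in $C$, i.e.\ the vertices $(E,\jmath)/\!\cong$ for which both underlying orders $\calO_i,\calO_j$ are $\ell$-fundamental. By Proposition~\ref{joint-ascend-general} the ridge is either a single vertex or a pair of neighbouring vertices, and this will play the role of the volcano's rim.

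First I would show that every vertex $(E,\jmath)/\!\cong$ in $C$ admits a chain of $K_i,K_j$-oriented isogenies to a ridge vertex, each step being either simultaneously ascending or of mixed ascending-horizontal (respectively horizontal-ascending) type. This follows inductively by applying Proposition~\ref{joint-ascend-general}(c): as long as at least one of $\calO_i,\calO_j$ is not $\ell$-fundamental, there is a unique outgoing edge that strictly enlarges at least one of the two orders, and eventually the process terminates at a vertex of the ridge. This gives $C$ the usual volcano topology in which non-ridge vertices have a unique ``upward'' direction toward the ridge.

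Next I would verify the descending structure below the ridge using Theorem~\ref{Formula for sturcture 1}. For a vertex $(E,\jmath)$ strictly below the ridge, i.e.\ with $\ell \mid f_i$ or $\ell \mid f_j$, the theorem yields exactly one ascending or mixed ascending-horizontal/horizontal-ascending edge, at most $\max\{(\tfrac{\Delta_i}{\ell}),(\tfrac{\Delta_j}{\ell})\}$ mixed horizontal-descending or descending-horizontal edges (which occur only on the single layer immediately beneath the ridge, when $\ell \nmid f_u$ for exactly one $u$), and otherwise simultaneously descending edges. Together with the uniqueness of the upward direction just established, this shows that below the level immediately adjacent to the ridge every non-upward edge is simultaneously descending, reproducing the classical volcano picture. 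At the ridge itself the theorem describes the outgoing edges explicitly: some may be simultaneously horizontal, some mixed horizontal-descending or descending-horizontal, and the remainder simultaneously descending; in particular none are ascending, confirming that the ridge is the local top of $C$.

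Finally I would put the pieces together by noting that the edges identified as ``horizontal'' between ridge vertices are precisely the edges of the ridge (this uses Proposition~\ref{joint-ascend-general}(c) in the case where both orders are $\ell$-fundamental and $\ell$ is split in at least one of $K_i$, $K_j$, together with Corollary~\ref{local roots are neigbors}). The hard part of the argument is the bookkeeping on the layer immediately below the ridge, since there mixed horizontal-descending edges can appear at non-ridge vertices and one has to verify from the formulas in Theorem~\ref{Formula for sturcture 1} that these do not produce additional upward paths nor disturb the tree-like descending structure; here the key input is the uniqueness statement in Proposition~\ref{joint-ascend-general}(c) combined with the fact that the underlying undirected graph of $C$ is a tree by Theorem~\ref{tree structure of double oriented graphs}.
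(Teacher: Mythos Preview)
The paper gives no explicit proof of this corollary; it is stated as an immediate consequence of Theorem~\ref{Formula for sturcture 1} (and implicitly Proposition~\ref{joint-ascend-general} and Theorem~\ref{tree structure of double oriented graphs}). Your argument unpacks exactly these ingredients and is correct in its overall shape: unique upward edge from every non-ridge vertex via Proposition~\ref{joint-ascend-general}(c), edge-type count from Theorem~\ref{Formula for sturcture 1}, and the underlying tree from Theorem~\ref{tree structure of double oriented graphs}.

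One inaccuracy worth flagging: you assert that mixed horizontal-descending edges ``occur only on the single layer immediately beneath the ridge.'' This is not so. If $\ell\nmid f_i$, $\ell\mid f_j$, and $\bigl(\tfrac{\Delta_i}{\ell}\bigr)=1$, Theorem~\ref{Formula for sturcture 1} gives a horizontal-descending edge from $(E,\jmath)$; the target again has $\ell\nmid f_i$ and $\ell\mid f_j$ (now with $\nu_\ell(f_j)$ increased by one), so the same case of the theorem applies there. Iterating, one obtains an infinite ray of vertices, each at a different depth, each carrying a mixed edge. Thus mixed edges are not confined to depth one below the ridge; they persist along the ``slopes'' lying over the $K_i$-rim. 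This does not break the volcanic-ridge picture, since the unique upward edge from Proposition~\ref{joint-ascend-general}(c) is still well defined everywhere, but it does mean the clause ``otherwise the remaining edges are simultaneously descending'' in the corollary must be read as a statement about the generic behaviour (the case $\ell\mid f_i,f_j$) rather than about all non-ridge vertices.
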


\begin{remark}
    The graph $G_{K_i, K_j}(p,\ell)$ has two underlying single-oriented graphs $G_{K_i}(p,\ell)$ and $G_{K_j}(p, \ell)$. The rim of a connected component of an underlying single-oriented graph is a line in the corresponding connected component of $G_{K_i, K_j}(p,\ell)$. In case the rim forms a cycle, it corresponds to an infinite line in $G_{K_i, K_j}(p,\ell)$. The ridge of a connected component in $G_{K_i, K_j}(p,\ell)$ is the intersection of lines corresponding to the rims in the underlying single-oriented graphs.
\end{remark}

\begin{extra}

\section{Some insight/strategy we might want to state instead of actual algorithms}

\begin{remark}
    Consider two oriented graphs $G_{K}(p,\ell)$ and $G_{K_1, K_2}(p, \ell)$. 
 In the single-oriented isogeny graph $G_{K}(p,\ell)$, if a curve $E$ is primitively $\calO$-oriented by an order $\calO \subseteq K$ with conductor $f$, then 
 \begin{equation}
     \nu_\ell(f)
 \end{equation}
 represents the depth of $E$ from the rim, i.e., the number of ascensions required to reach the rim. 
        Similarly, in the double-oriented isogeny graph $G_{K_i, K_j}(p, \ell)$, if a curve $E$ is primitively $\calO_i, \calO_j$-oriented by orders $\calO_i \subseteq K_i$ and $\calO_j \subseteq K_j$ with conductors $f_i,f_j$ respectively, 
    \begin{equation}
        \nu_\ell(\lcm(f_if_j))=\max\{v_{\ell}(f_i), v_{\ell}(f_j)\}
    \end{equation}
     represents the depth of $E$ from the local roots. Notice that ascending in $G_{K_i, K_j}(p,\ell)$ is equivalent to simultaneously ascending in $G_{K_i}(p,\ell)$ and $G_{K_j}(p, \ell)$. We will reach to the rim of one of the single-oriented graphs with smaller depth first. Then we move horizontally in that graph, while we keep ascending in the other graph until we reach to the rim. Hence, to reach to the local roots in $G_{K_i, K_j}(p,\ell)$, we only need to ascend in one of the single-oriented graphs, namely the one farther from the rim.

        If we want to find a path from $E$ to a global root in $\bigcup_\ell G_{\ell}(K_i, K_j)$, we can recursively ascend to a local root in $G_{K_i,K_j}(p,\ell)$ for each $\ell$. Hence,    
\begin{equation}
    \sum_{\ell}\nu_\ell(\lcm(f_i,f_j)),
\end{equation}
        where $\ell$ runs over primes, represents the depth of $E$ from global roots. Note that each path to a global root has the length at most 
\begin{equation}
    n+\sum_{\ell}\nu_\ell(\lcm(f_i,f_j)),
\end{equation} 
where $n$ represents the number of horizontal moves between local roots. Let $\calO_{K_i}, \calO_{K_j}$ be maximal orders in $K_i, K_j$ respectively and let $\mathfrak{O}_{i,j}$ be the Bass order generated by $\calO_{K_i}, \calO_{K_j}$ in $B_0$. In the proof of Lemma \ref{number of local roots}, we showed that $e_{\ell}(\mathfrak{O}_{i,j})=2$ if and only if $\ell \mid \discrd(\mathfrak{O}_{i,j})=\frac{d_{K_i}d_{K_j}}{4}$, where $d_{K_i}, d_{K_j}$ are the fundamental discriminants of $K_i, K_j$ respectively. Hence, $n$ is the number of distinct primes divisors of  $\frac{d_{K_i}d_{K_j}}{4}$. 

    To find a path between two curves in $\bigcup_\ell G_{\ell}(K_i, K_j)$, we can find paths to all global roots from each curve and fine a collision. Recall that if there are two local roots in $G_{K_i, K_j}(p, \ell)$, then they are neighbors in $G_{K_i, K_j}(p, \ell)$, but not in $G_{K_i, K_j}(p, \ell')$ for any prime $\ell'\neq\ell$ by Corollary \ref{local roots are neigbors}. Hence, we can take the following recursive steps and detect all the global roots in $\bigcup_\ell G_{\ell}(K_i, K_j)$: let $\mathbb{P}=(\ell_k)$ be the ordered set of primes.  

    \begin{enumerate}
        \item[0] Let $(E, \jmath)$ be a curve in $\bigcup_\ell G_{\ell}(K_i, K_j)$. Store the null path of length $0$ from $E$ to itself.
        \item[\textbf{for} $\ell \in \mathbb{P}$ 
        \textbf{do}]
        \item[1] For each endpoint curve in the paths stored previously, find ascending paths to all the local roots in $G_\ell(K_i, K_j)$.
        \item[2] Extend the paths stored previously by concatenating with paths found in Step 1. Forget previously stored paths and store the extended paths.
    \end{enumerate}
    Note that after $m$-th iteration, we obtain $e_{\ell_1}(\mathfrak{O}_{i,j})\cdots e_{\ell_m}(\mathfrak{O}_{i,j})$ distinct paths from $E$. We only need to take finite number of recursions as there are finitely many primes $\ell$ such that either $\ell \mid f_if_j$ or $\ell \mid \frac{d_{K_i}d_{K_j}}{4}$, where the former implies we have ascending moves and the latter implies we have horizontal moves within $G_{K_i,K_j}(p,\ell)$. Hence, after taking recursions finite times, we obtain $e(\mathfrak{O}_{i,j})$ distinct paths from $E$ to all the global roots.
    \end{remark}

\begin{details}
    
Let $(E_0, \jmath_0)$ be the one of global roots in $\cup_\ell G_\ell(K_i, K_j)$. The above process construct an isogeny $\varphi: E_0 \rightarrow E$. Let $\mathfrak{O}_0=\End(E_0)$. Then
$$\deg \varphi =[\mathfrak{O}_0:\mathfrak{O}_0 \cap \mathfrak{O}_\varphi]$$
is the minimal distance between $(E_0, \jmath)$ and $(E, \jmath)$.

\begin{tbox}
    If $\mathfrak{O}_{i,j}$ is the Bass order generated by $\mathcal{O}_{K_i}, \mathcal{O}_{K_j}$ and $\mathfrak{O}$ is the order generated by $\mathcal{O}_i, \mathcal{O}_j$, then
\[
\begin{tikzcd}
\mathfrak{O}_0 \arrow[r, phantom, "\supseteq"] \arrow[d, phantom, "\rotatebox{90}{$\subseteq$}"'] & \mathfrak{O}_0\cap \mathfrak{O}_\varphi \arrow[d, phantom, "\rotatebox{90}{$\subseteq$}"] \\
\mathfrak{O}_{i,j} \arrow[r, phantom, "\supseteq"'] & \mathfrak{O}
\end{tikzcd}
\]
\begin{equation}
    [\mathfrak{O}_0 :\mathfrak{O}_0 \cap \mathfrak{O}_\varphi][\mathfrak{O}_0 \cap \mathfrak{O}_\varphi: \mathfrak{O}]=[\mathfrak{O}_0 : \mathfrak{O}_{i,j}][\mathfrak{O}_{i,j} : \mathfrak{O}]
\end{equation}
where
\begin{equation}
    \begin{split}
        [\mathfrak{O}_0 :\mathfrak{O}_0 \cap \mathfrak{O}_\varphi] & = \deg \varphi = n+\sum_{\ell} \nu_\ell(\lcm(f_if_j)) \\
        [\mathfrak{O}_0 \cap \mathfrak{O}_\varphi: [\mathfrak{O}_0 : \mathfrak{O}_{i,j}] & = \frac{\discrd (\mathfrak{O}_{i,j})}{\discrd(\mathfrak{O}_\varphi)}=\frac{d_{K_i}d_{K_j}}{4p}\\
        [\mathfrak{O}_{i,j} : \mathfrak{O}] & = [\calO_{K_i}\otimes \calO_{K_j}:\calO_i \otimes \mathcal{O}_j]= f_if_j \ \text{or} f_i^2f_j^2.
    \end{split}
\end{equation}
Hence, we can compute $[\mathfrak{O}_0 \cap \mathfrak{O}_\varphi: \mathfrak{O}]$.
\end{tbox}
\end{details}

\section{Proof of Theorem~\ref{main-reduce}}

Let $E/\overline{\F}_p$ be a supersingular elliptic curve.

Suppose we have used the Maximal Order Problem oracle to compute $\End(E) \cong \mathfrak{O}$ as a maximal order in $B_0$.

Using \cite[Lemma 7.2]{KV10}, we can enumerate efficiently the invertible right $\frakO$-ideals $J'$ of norm $\ell$. Taking the left order of the ideals $J'$ gives the maximal orders $\frakO' \cong \End(E')$ corresponding to endomorphism rings of the elliptic curves $E'$ which are $\ell$-isogenous to $E$.

\begin{algorithm}
\label{root-find}
\SetAlgoLined
\SetKwInOut{Input}{input}
\SetKwInOut{Output}{output}
\Input{Primes $p \not= \ell$ and a supersingular elliptic curve $E$ over $\overline{\F}_{p}$.} 
\Output{A sequence of $\ell$-isogenies from $E$ to $E_0$ or $E_1$.} 
1. Compute the isomorphism class of $\End(E) \cong \frakO$ as a maximal order in $B_0$. \\
2. Compute the elliptic curves $E$ which are $\ell$-isogenous to $E$. \\
2. Compute the maximal orders $\frakO' \cong \End(E')$ corresponding to each $E'$ using the oracle. \\
3. Let $E \leftarrow E'$ where $\End(E') \cong \frakO'$ ascends for both $\Q(i)$ and $\Q(j)$ orientations. \\
4. Repeat until one of the two roots $\D_0, \D_1$ is obtained.
\caption{Find a path to a root using an oracle to the Maximal Order Problem}
\end{algorithm}

\todo[inline]{need to bound depth from statement of problem}

Algorithm~\ref{root-find} finds a path from an arbitrary supersingular elliptic curve $E$ to one of the roots $E_0, E_1$ using a sequence of $\ell$-isogenies.

A random walk on $G(p,\ell)$ quickly reaches the uniform distribution after $O(\log N)$ steps where $N$ is the number of vertices as $G(p,\ell)$ is an expander graph \cite{expander}. The uniqueness of the ascending isogeny in Proposition~\ref{joint-ascend} implies the vertices of $G(p,\ell)$ are partitioned into up to two pieces which ascend up to each of the roots $E_0$ or  $E_1$, respectively.

If there is only one root, that is, $E_0 \cong E_1$, then the algorithm will quickly connect any supersingular elliptic curve $E$ to this root. 

Assume now that there are two roots $E_0 \not\cong E_1$. Let $p_0, p_1$ be the probability of ascending to the root $E_0, E_1$, respectively. Without loss of generality, we may assume $p_0 \ge 1/2$ by reversing the roles of $E_0$ and $E_1$.

If $p_0 \ge 2/3$, then given a supersingular elliptic curve $E$, we can perform a random walk from $E$ to $E'$ and use Algorithm~\ref{root-find} to find a path to the root $E_0$. After $n$ repetitions, the majority of the repetitions ascend to $E_0$ with high probability. Hence, given two supersingular elliptic curves, we can find a path to $E_0$, and thus to each other.

If $1/2 \le p_0 < 2/3$, then given a supersingular elliptic curve $E$, we can perform a random walk from $E$ to $E$', then use Algorithm~\ref{root-find} to find a path from $E'$ to either $E_0$ or $E_1$. Repeating this process, with high probability we ascend to both $E_0$  and $E_1$, thus connecting $E_0$ and $E_1$ by a known path, and hence any two supersingular elliptic curves.

\begin{details}
\begin{tbox}

Remark: In the arguments below, one should be able to replace $2/3$ by $1/2 + (\log p)^{-c}$ and $1/3$ by $1/2 - (\log p)^{-c}$ with a more complicated analysis.

Case $p_0 > 2/3$: There is a useful summary from 
\begin{verbatim}
https://people.seas.harvard.edu/~salil/pseudorandomness/
\end{verbatim}
about complexity classes.

\begin{definition}
{\bf BPP} is the class of languages $L$ for which there exist a probabilistic polynomial-time algorithm $A$ such that 
\begin{itemize}
  \item $x \in L \implies \text{Pr}[A(x) \text{ accepts}] \ge 2/3$
  \item $x \notin L \implies \text{Pr}[A(x) \text{ accepts}] \le 1/3$.
\end{itemize}
\end{definition}
The second case is the error probability which is bounded. The error probability of ${\bf BPP}$ algorithms can be reduced from $1/3$ to exponentially small by repetitions, taking a majority vote of outcomes. That is, we run $A(x)$ say $n$ times and the new $A'(x)$ accepts if a majority of the $A(x)$ accept. The error bound analysis uses the Chernoff bound and is explained in the reference above (let $L$ be the language consisting of the set of the elliptic curves $E$ which ascend to $E_0$; $q(n) = 1/6$ corresponds to error probability $\le 1/3$; take $p(n) = n$ for instance).

Case $p_0 \le 2/3$: Note $1/2 \le p_0 \le 2/3$ and $1/3 \le p_1 \le 1/2$.

After $n$ repetitions, the probability that $E$ ascends to $E_0$ every time is $< (2/3)^n = e^{- \log(3/2) n}$ and the probability that $E$ ascends to $E_1$ every time is $\le 1/2^n = e^{- \log(2) n}$. Hence, with high probability we ascend to both $E_0$ and $E_1$ after $n$ repetitions. 
\end{tbox}
\end{details}

\todo[inline]{Explain how the algorithm which works on the quaternion algebra side translates into navigating on the isogeny graph side.}

\section{Classical Path Finding Algorithms}

\todo[inline]{
 As discussed in Remark \ref{suitable-primitive}, we assume that isogenies and orientations have efficient representations. We also assume that the factorization of the discriminant of a given endomorphism as a maximal order in the endomorphism algebra is known. $\theta_i, \theta_j$ are primitive.

}

In this section, we discuss path-finding algorithms with two orientations.

\begin{algorithm}
\label{path to local roots}
\SetAlgoLined
\SetKwInOut{Input}{input}
\SetKwInOut{Output}{output}
\Input{A supersingular elliptic curves $E/\Fbar_{p^2}$ and a pair of endomorphisms $\theta_i, \theta_j$ of $E$ such that the associated fields $K_i \cong \Q(\theta_i)$ and $K_j \cong \Q(\theta_j)$ satisfy the condition \ref{condition1}.} 
\Output{A path from $E$ to a local root in the $K_i, K_j$-oriented supersingular $\ell$-isogeny graph $G_{K_1, K_2}(p, \ell)$.} 
Compute the conductor $f_i, f_j$ of $\Z[\theta_i], \Z[\theta_j]$ respectively.\\
Compute $s=\nu_\ell(f_i), t=\nu_\ell(f_j)$.\\
\eIf{$s\geq t$}{
$u\leftarrow i$.}{
$u \leftarrow j$.}
Find an ascending $\ell$-isogeny path $H=(j_Ej_1\cdots j_m)$ of length $m=\max\{s, t\}$ to the rim of the $K_u$-oriented $\ell$-isogeny volcano from $E$ with the $K_u$-orientation $\iota$ on $E$ induced by $\theta_u$.\\
\KwRet{$H$}
\caption{Ascending to the local roots}
\end{algorithm}

\begin{proposition} \label{path to local roots complexity}
    Let $E/\Fbar_{p^2}$ be a supersingular elliptic curve and $\ell \neq p$ be a prime. Suppose that we are given a pair of endomorphisms  $\theta_i, \theta_j \in \End(E)$ such that the associated fields $K_i \cong \Q(\theta_i)$ and $K_j\cong \Q(\theta_j)$ satisfy the condition \ref{condition1}. Let $\iota_i, \iota_j$ be the orientations on $E$ induced by $\theta_i, \theta_j$ respectively and let $\jmath=(\iota_i, \iota_j)$ be a $K_i, K_j$-orientation on $E$. Let $f_i, f_j$ be the conductors of the discriminants of $\theta_i, \theta_j$ respectively with $m=\max\{\nu_\ell(f_i), \nu_\ell(f_j)\}$. Algorithm \ref{path to local roots} produces a path of length $m$ from $(E, \jmath)$ to a local root in the $K_i, K_j$-oriented supersingular $\ell$-isogeny graph $G_{K_1, K_2}(p, \ell)$ and runs in time polynomial in $\ell, m, \log p$ and in the length of the representations of the orientations $\iota_i, \iota_j$.
\end{proposition}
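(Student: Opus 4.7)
The plan is to prove correctness and then the complexity bound separately, relying on Proposition~\ref{joint-ascend-general} to lift the single-oriented ascension to the double-oriented graph.

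For correctness, I will show that the $K_u$-ascending path produced in step~5 of the algorithm lifts canonically to the unique upward path in $G_{K_i,K_j}(p,\ell)$, terminating at a local root after exactly $m$ steps. Without loss of generality assume $s\geq t$, so $u=i$ and $m=s$. At any vertex $(E',\jmath')$ encountered along the path, the order $\calO_i$ attached to the $K_i$-coordinate of $\jmath'$ has a positive $\ell$-adic conductor valuation until the final vertex; by Proposition~\ref{joint-ascend-general}(c), from every such intermediate vertex there is a \emph{unique} $K_i,K_j$-oriented $\ell$-isogeny to a vertex primitively oriented by $(\tilde\calO_i,\tilde\calO_j)$. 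This upward edge is $K_i$-ascending, so its underlying $K_i$-oriented isogeny is the unique $K_i$-ascending edge provided by Proposition~\ref{Onu21 Proposition 4.1}. Consequently the $K_i$-ascending path computed in $G_{K_i}(p,\ell)$ is precisely the lift of the upward path in $G_{K_i,K_j}(p,\ell)$. Along this path the $\ell$-adic valuation of the $K_i$-conductor drops by one at every step, while that of the $K_j$-conductor drops at the first $t$ steps and then stabilizes. After $m$ steps both orientations are by $\ell$-fundamental orders, so the endpoint is a local root.

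For the complexity bound, I will show that each ascending step runs in time polynomial in $\ell$, $\log p$, and the representation size of the current orientation, so iterating $m$ times yields the stated bound. At each vertex with $j$-invariant $j$, the $\ell+1$ candidate $\ell$-isogenies are recovered from the roots of $\Phi_\ell(j,Y)\in\F_{p^2}[Y]$, which can be factored in time polynomial in $\ell$ and $\log p$; for each candidate $\varphi: E'\to E''$ the induced orientation $\varphi_\ast(\iota_i)$ on $E''$ is computed by evaluating $(1/\deg\varphi)\varphi\,\iota_i(\alpha_i)\hat\varphi$. By Lemma~\ref{Constructing ascending/horizontal isogeny}(a), the $K_i$-ascending candidate is the unique one whose kernel equals $E'[I(\alpha_i,0)]$; equivalently, the one for which $\varphi_\ast(\iota_i)(\alpha_i)/\ell \in \End(E'')$. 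Testing this $\ell$-divisibility and updating the representation of the orientation together take polynomial time in the stated parameters.

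The main obstacle will be controlling the growth of the orientation data across $m$ iterations, since a naive pushforward formula for $\varphi_\ast(\iota_i)$ involves $\hat\varphi$ and can cause a blow-up in representation size. The remedy is to exploit $\ell$-divisibility at every step: since each iteration is $K_i$-ascending, the pushed-forward generator is divisible by $\ell$ in $\End(E'')$, and dividing through produces a representative whose reduced norm has decreased by a factor of $\ell^2$. Hence the representation size grows by only $O(\log\ell)$ per step, giving a total running time polynomial in $\ell$, $m$, $\log p$, and the lengths of the input orientations $\iota_i,\iota_j$.
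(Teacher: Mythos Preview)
Your proof is correct and follows essentially the same approach as the paper's. Both arguments use Proposition~\ref{joint-ascend-general} to justify that ascending in the single-oriented graph $G_{K_u}(p,\ell)$ with the deeper orientation (the one with larger $\nu_\ell(f_u)$) automatically lifts to the unique upward path in $G_{K_i,K_j}(p,\ell)$, terminating at a local root after $m$ steps.

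The only notable difference is in the complexity analysis: the paper simply invokes \cite[Lemma~7.10]{MW23} to bound the cost of computing the ascending path in the single-oriented volcano, whereas you give a self-contained sketch (factoring $\Phi_\ell$, testing $\ell$-divisibility of the pushed-forward generator, controlling representation growth via the $\ell^2$ drop in reduced norm). Your sketch is plausible, but the division-by-$\ell$ step and the bookkeeping of orientation representations are exactly the subtleties that the cited external lemma is meant to handle rigorously; if you want a fully self-contained argument you should be more precise about how $\ell$-divisibility in $\End(E'')$ is tested algorithmically.
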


\begin{proof}
   Note that $s, t$ represent the distance of $E$ from the rim of $K_i$-oriented and the rim of $K_j$-oriented $\ell$-isogeny volcanoes respectively. By Proposition \ref{joint-ascend}, we only need one of the endomorphisms $\theta_i, \theta_j$ to ascend to a local root; the one farther from the rim. Given a primitively oriented curve $(E,\iota)$, we can compute the unique ascending isogeny representing the path to the rim in time polynomial in $\ell, m, \log p$ and in the length of the representation of $\iota_i, \iota_j$ by \cite[Lemma 7.10]{MW23}.
\end{proof}

\begin{algorithm}
\label{path to global roots}
\SetAlgoLined
\SetKwInOut{Input}{input}
\SetKwInOut{Output}{output}
\Input{A supersingular elliptic curvs $E/\Fbar_{p^2}$ and a pair of endomorphisms $\theta_i, \theta_j$ of $E$ such that the associated fields $K_i \cong \Q(\theta_i)$ and $K_j \cong \Q(\theta_j)$ satisfy the condition \ref{condition1}.} 
\Output{Paths from $E$ to all global roots.} 
Compute the discriminants $\Delta_i=f_iD_{K_i}, \Delta_j=f_jD_{K_j}$ of $\theta_i, \theta_j$.\\
$\{\ell_k\}_{k=1}^n \leftarrow$ the list of distinct prime factors of $\Delta_i\Delta_j$.\\
$L \leftarrow \{(j_E)\}$.\\
$L' \leftarrow \{\}$.\\
\For{$1\leq k \leq n$}{
    \For{$H=(j_E\cdots j_{H}) \in L$}{
      Find an $\ell_k$-isogeny path $H'=(j_H\cdots j_{H'})$ of length $m_k=\max\{v_{\ell_k}(f_i), v_{\ell_{k}}(f_j)\}$ to a local root from $E(j_H)$.\\
      Store the path $HH'=(j_E \cdots j_H \cdots j_{H'})$ in $L'$.\\
      $\theta_i, \theta_j \leftarrow \theta_i/\ell_k^{v_{\ell_k}(f_i)}, \theta_j/\ell_k^{v_{\ell_k}(f_j)}$.\\
    \If{\text{$j_{H''}$ is $\ell_k$-isogeneous to $j_{H'}$ and it is a local root}}{Store the path $H''=HH'(j_{H''})=(j_E\cdots j_H \cdots j_{H'}j_{H''}) \in L'$.}
     }
     $L \leftarrow L'$\\
     $L' \leftarrow \{\}$
     }
     \KwRet{$L'$}
\caption{Ascending to the global roots}
\end{algorithm}

\begin{proposition} Let $E/\Fbar_{p^2}$ be a supersingular elliptic curve and let $\ell \neq p$ be a prime. Suppose that we are given a pair of endomorphisms $\theta_i, \theta_j \in \End(E)$ such that the associated fields $K_i \cong \Q(\theta_i)$ and $K_j\cong \Q(\theta_j)$ satisfy the condition \ref{condition1}. Let $\iota_i, \iota_j$ be the orientations on $E$ induced by $\theta_i, \theta_j$ respectively and let $\jmath=(\iota_i, \iota_j)$ be a $K_i, K_j$-orientation on $E$. Let $\Delta_i=f_iD_{K_i}, \Delta_{j}=f_j D_{K_j}$ be the discriminants of $\theta_i, \theta_j$ respectively where $f_i, f_j$ are the conductors. Let $(\ell_k)_{k=1}^n$ be the list of distinct prime factors of $\Delta_i, \Delta_j$ with $L = \max_{k=1}^n\{\ell_k\}$, $m_k=\max\{v_{\ell_k}(f_i), v_{\ell_k}(f_j)\}$, and $M=\max_{k=1}^n\{m_k\}$. Algorithm \ref{path to global roots} produces $e(\mathfrak{O}_{i,j})$ paths of length at most $n+\sum_{k=1}^n m_k$ from $(E,\jmath)$ to all global roots and runs in time polynomial in $L, M, \log p, \Delta_i\Delta_j$ and in the length of the representation of $\iota_i, \iota_j$. 
\end{proposition}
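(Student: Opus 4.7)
The plan is to prove the result in three layers: first correctness of the length bound, then correctness of the count and of the endpoints (showing they are exactly the $e(\mathfrak{O}_{i,j})$ global roots), and finally the complexity bound. The two structural inputs I would rely on are Proposition~\ref{joint-ascend-general} (one can always ascend simultaneously in both $K_i$- and $K_j$-orientations toward a local root, and when two local roots coexist in a component they are neighbors) together with Corollary~\ref{local roots are neigbors} (a move in $G_{K_i,K_j}(p,\ell_k)$ does not change the connected component of the endpoint in $G_{K_i,K_j}(p,\ell_{k'})$ for $k'\neq k$). The counting of global roots uses the local embedding numbers $e_{\ell_k}(\mathfrak{O}_{i,j})$ from Section~\ref{embedding numbers of bass orders and counting roots} and the local-to-global factorization $e(\mathfrak{O}_{i,j})=\prod_{k} e_{\ell_k}(\mathfrak{O}_{i,j})$.

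For the length bound, I would argue inductively over the outer loop. At the beginning of the $k$-th iteration each stored path ends at some $(E',\jmath')$; Algorithm~\ref{path to local roots} together with Proposition~\ref{path to local roots complexity} produces from $(E',\jmath')$ an ascending path of length exactly $m_k$ to \emph{some} local root of $G_{K_i,K_j}(p,\ell_k)$. If $e_{\ell_k}(\mathfrak{O}_{i,j})=2$, the second local root is a neighbor of the first by Proposition~\ref{joint-ascend-general}(c), so the inner test that appends a single $\ell_k$-isogeny reaches it; if $e_{\ell_k}(\mathfrak{O}_{i,j})=1$ the test finds no valid extension. Hence each iteration adds at most $m_k+1$ edges, and after $n$ iterations every stored path has length at most $n+\sum_{k=1}^{n} m_k$.

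For the count, I would show inductively that after the $k$-th iteration the stored paths end at precisely the vertices whose orientations are primitively by $\ell_{k'}$-fundamental orders for all $k'\le k$, with each such vertex appearing exactly once. The branching factor at iteration $k$ is $e_{\ell_k}(\mathfrak{O}_{i,j})$, which by Corollary~\ref{local roots are neigbors} does not interact with the $\ell_{k'}$-ascension for $k'<k$ (moves inside $G_{K_i,K_j}(p,\ell_k)$ cannot disturb the previously achieved $\ell_{k'}$-fundamentality of the orientations). Multiplicativity then gives $\prod_{k=1}^{n} e_{\ell_k}(\mathfrak{O}_{i,j})=e(\mathfrak{O}_{i,j})$ endpoints, and since primes outside $\{\ell_1,\dots,\ell_n\}$ do not divide $f_if_j$ the final endpoints are primitively $\mathcal{O}_{K_i},\mathcal{O}_{K_j}$-oriented, i.e.\ global roots. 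Distinctness follows from the bijection in Corollary~\ref{double oriented curves to maximal orders} between isomorphism classes of $K_i,K_j$-oriented curves and maximal orders in $B_0$.

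For the complexity, note $n\le \log_2(\Delta_i\Delta_j)$, the prime factorization of $\Delta_i\Delta_j$ can be produced in time polynomial in $\Delta_i\Delta_j$, and the list size is at most $\prod_{k'<k} e_{\ell_{k'}}(\mathfrak{O}_{i,j}) \le 2^n\le \Delta_i\Delta_j$. Each inner call to Algorithm~\ref{path to local roots} costs polynomial in $\ell_k,m_k,\log p$ and the orientation-representation length by Proposition~\ref{path to local roots complexity}, hence polynomial in $L,M,\log p$ and that length; the extra single $\ell_k$-isogeny test is dominated by the same bound. The main obstacle I anticipate is technical rather than conceptual: verifying that the bookkeeping step $\theta_i\leftarrow \theta_i/\ell_k^{\nu_{\ell_k}(f_i)}$, $\theta_j\leftarrow \theta_j/\ell_k^{\nu_{\ell_k}(f_j)}$ really produces endomorphisms of the ascended curve representing the new primitive $\tilde{\mathcal{O}}_i,\tilde{\mathcal{O}}_j$-orientations, and that the representation sizes do not blow up across iterations. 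This reduces, via $\kappa_{\varphi}$, to the explicit description of the ascending isogeny and its kernel (Lemma~\ref{Constructing ascending/horizontal isogeny}) together with the compatibility $\iota'=\varphi_*(\iota)$, so the update is a controlled one-step operation; carefully tracking this is the one place where the proof needs to be more than a bookkeeping exercise.
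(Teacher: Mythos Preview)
Your proposal is correct and follows essentially the same approach as the paper: induct over the outer loop, use Proposition~\ref{path to local roots complexity} for each ascent, branch by the local embedding number $e_{\ell_k}(\mathfrak{O}_{i,j})\in\{1,2\}$ (using that the two local roots, when they exist, are neighbors), invoke the fact that $\ell_k$-moves do not change localizations at other primes so the count multiplies to $e(\mathfrak{O}_{i,j})$, and bound $n$ and $2^n$ by quantities polynomial in $\Delta_i\Delta_j$. The only notable differences are cosmetic: the paper cites \cite[Proposition~4.7]{ACL+23} and \cite[Algorithm~1]{MW23} for the concrete test that a neighbor is a local root (which you leave implicit), and conversely you are more explicit than the paper about the potential subtlety in the update $\theta_u\leftarrow \theta_u/\ell_k^{\nu_{\ell_k}(f_u)}$, which the paper's proof does not discuss.
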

\begin{proof}

In each iteration of \textbf{for} loop, for all paths $H$ in $L$, we ascend from $E(j_H)$ to a local root in the $\ell_k$-connected component of $E(j_H)$ using Algorithm \ref{path to local roots}. Note that the ascending step only occurs if $\ell_k \mid f_if_j$. The number of local roots is given by the local embedding number $e_{\ell_k}(\mathfrak{O}_{i,j})$, which is at most $2$ by Lemma \ref{number of local roots}. If $\ell_k \mid \frac{D_{K_i}D_{K_j}}{4}$, then there are two local roots and they must be $\ell_k$-isogenous to each other. Otherwise, there is a unique local root. When we check if a neighborhood $j_{H''}$ of $j_{H'}$ is a local root as well, we use the property that an $\ell$-isogeny $\varphi: E(j_{H'}) \rightarrow E(j_{H''})$ is ascending if and only if $[\ell]^2 \mid \varphi \circ \theta \circ \hat{\varphi}$ in $\End(E(j_{H''}))$ \cite[Proposition 4.7]{ACL+23} using the division algorithm \cite[Algorithm 1]{MW23}, which takes time in polynomial in $\log p$. The runtime of each iteration is polynomial in $\ell_k, m_k, \log p$ and in the length of the representation of $\iota_i, \iota_j$ by Proposition \ref{path to local roots complexity}.

Recall that if two quaternion maximal orders have a connecting ideal of $\ell$-power reduced norm, then their localizations at any prime $\ell'\neq \ell$ are identical. Hence, $\left(\mathfrak{O}_{E({j_H})}\right)_{\ell'}=\left(\mathfrak{O}_{E(j_{H'})}\right)_{\ell'}$ for primes $\ell \neq \ell_{k}$. This implies they are not connected by an ideal of $\ell'$-power reduced norm for any $\ell' \neq \ell_k$. Hence, at the end of the $k$-th iternation, there are $e_{\ell_1}(\mathfrak{O}_{i,j})\cdots e_{\ell_k}(\mathfrak{O}_{i,j})$ elements in $L$. The global embedding number is given by
$$e(\mathfrak{O}_{i,j})=\prod_{\ell} e_\ell(\mathfrak{O}_{i,j})=\prod_{k=1}^n e_{\ell_k}(\mathfrak{O}_{i,j}).$$ Hence, Algorithm \ref{path to global roots} detects all global roots and returns paths to them.

   It's a standard result that the number of prime factors of an integer $m>2$ is bounded above by$$\sigma(m)=\frac{\log(m)}{\log\log(m)} \big(1 + O\big(1/\log\log(m)\big)\big).$$
\href{https://mathoverflow.net/questions/23867/bound-on-the-number-of-prime-factors-of-logarithmically-rough-numbers?_gl=1*1ksogtu*_ga*NDM3MDE2MDUyLjE3MjY1NTAyNDY.*_ga_S812YQPLT2*MTczMTYxNDQzNS4xMjUuMS4xNzMxNjE0NDM3LjAuMC4w}{link}
It follows that the number of iterations is $n< \sigma(\Delta_i \Delta_j)$ and $e(\mathfrak{O}_{i,j})<2^{\sigma(D_{K_i}D_{K_j}/4)} \cong D_{K_i}D_{K_j}$. Hence, the overall runtime of Algorithm \ref{path to global roots} is polynomial in $L, M, \log p, \Delta_i\Delta_j$ and in the length of the representation of $\iota_i, \iota_j$. 
\end{proof}

\begin{details}
    \begin{tbox}
        $e(\mathfrak{O}_{i,j})<2^{\sigma(D_{K_i} D_{K_j})}<10^{\sigma(D_{K_i} D_{K_j})}\cong D_{K_i}D_{K_j}$
    \end{tbox}
\end{details}

\begin{details}
    \begin{tbox}
        Notations for \cite{BE92}:
        \begin{enumerate}
            \item $\Phi_L:$ A definite quaternion algebra over $\Q$ with discriminant $L$.
            \item $o(d)$ is the maximal order of $\Q(\sqrt{-d})$.
            \item $h(d)$ is the class number of $\Q(\sqrt{-d}).$
            \item $e(o(d), \calO)$ is the number of (optimal) embeddings of $o(d)$ into $\calO$. Not given the definition (double check). Proposition 8 states that $e(o(d),\calO) \leq a(\calO)h(o(d))$.
            \item For an order $\calO \subseteq \Phi_L$, $a(\calO):=\#\aut(\calO)$.
        \end{enumerate}

        \href{https://math.stackexchange.com/questions/4291124/an-upper-bound-for-class-numbers-of-imaginary-quadratic-fields}{An upper bound for the class number:} $h_k<\frac{d}{2}$.

        Analytic class number formula: $h_D= O(\left|D\right|^{1/2}\log \left|D\right|)$ \href{Computing Hilbert Class Polynomials}{Lemma 1}.

        Also, see \href{https://studenttheses.uu.nl/bitstream/handle/20.500.12932/41776/Master%27s%20thesis%20Corijn%20Rudrum.pdf?sequence=1&isAllowed=y}{Elliptic curves and lower bounds for class numbers}.ch
    \end{tbox}
\end{details}

\begin{details}
    \begin{tbox}
            What if we go over prime factors of $\disc(\mathfrak{O})$ in size $O(\log p)$? Choose $B = O(\log p)$ and runs over $\ell_k < B$. For primes bigger than $B$, they are big so there can't be many of them and they are very close to the rim (assume $\Delta_i, \Delta_j< p$ or something. So $\disc(\mathfrak{O})< p^2$)
    \end{tbox}
\end{details}
\begin{details}
\begin{tbox}
    \begin{remark}
    Consider the case where ideal class group is cylic. \href{https://math.stackexchange.com/questions/75762/on-the-class-group-of-an-imaginary-quadratic-number-field}{link}
\end{remark}
\end{tbox}

\end{details}

\section{follow-up study}
\begin{enumerate}
    \item Show that local roots, when $ij+ji = s\neq 0 \in \Z$, form a cycle.
    \item Implement the algorithms for the probabilistic polynomial time reductions between endomorphism ring problem and one endomorphism problem (single-orientation).
\end{enumerate}

\end{extra}

\bibliography{lattices}{}

\begin{thebibliography}{10}

\bibitem{AIL+21}
Laia Amor{\'o}s, Annamaria Iezzi, Kristin Lauter, Chloe Martindale, and Jana
  Sot{\'a}kov{\'a}.
\newblock Explicit connections between supersingular isogeny graphs and
  {B}ruhat--{T}its trees.
\newblock In {\em Women in Numbers Europe III: Research Directions in Number
  Theory}, pages 39--73. Springer, 2021.

\bibitem{Arp24}
Sarah Arpin.
\newblock Adding level structure to supersingular elliptic curve isogeny
  graphs.
\newblock {\em Journal de Th{\'e}orie des Nombres de Bordeaux}, 36(2):405--443,
  2024.

\bibitem{ACL+23}
Sarah Arpin, Mingjie Chen, Kristin Lauter, Renate Scheidler, Katherine Stange,
  and Ha~Tran.
\newblock Orienteering with one endomorphism.
\newblock {\em La Matematica}, pages 1--60, 2023.

\bibitem{BCE+19}
Efrat Bank, Catalina Camacho-Navarro, Kirsten Eisentr{\"a}ger, Travis Morrison,
  and Jennifer Park.
\newblock Cycles in the supersingular $\ell$-isogeny graph and corresponding
  endomorphisms.
\newblock In {\em Research Directions in Number Theory}, pages 41--66.
  Springer, 2019.

\bibitem{Bre82}
Juliusz Brzezinski.
\newblock A characterization of gorenstein orders in quaternion algebras.
\newblock {\em Mathematica Scandinavica}, 50(1):19--24, 1982.

\bibitem{Bre83}
Juliusz Brzezinski.
\newblock On orders in quaternion algebras.
\newblock {\em Communications in algebra}, 11(5):501--522, 1983.

\bibitem{Brz90}
Juliusz Brzezinski.
\newblock On automorphisms of quaternion orders.
\newblock {\em Journal für die reine und angewandte Mathematik}, 403:166--186,
  1990.

\bibitem{BE92}
Juliusz Brzezinski and Martin Eichler.
\newblock On the imbeddings of imaginary quadratic orders in definite
  quaternion orders.
\newblock {\em Journal für die reine und angewandte Mathematik}, 426:91--106,
  1992.

\bibitem{SIKE-1}
Wouter Castryck and Thomas Decru.
\newblock An efficient key recovery attack on {SIDH}.
\newblock In {\em Annual International Conference on the Theory and
  Applications of Cryptographic Techniques}, pages 423--447. Springer, 2023.

\bibitem{CSV21}
Sara Chari, Daniel Smertnig, and John Voight.
\newblock On basic and bass quaternion orders.
\newblock {\em Proceedings of the American Mathematical Society, Series B},
  8(2):11--26, 2021.

\bibitem{Cle25}
James Clements.
\newblock Structural results for maximal quaternion orders and connecting
  ideals of prime power norm in ${B}_{p,\infty}$.
\newblock {\em Cryptology ePrint Archive}, 2025.

\bibitem{CK20}
Leonardo Col\`o and David Kohel.
\newblock Orienting supersingular isogeny graphs.
\newblock {\em J. Math. Cryptol.}, 14(1):414--437, 2020.

\bibitem{Cox}
David Cox.
\newblock {\em Primes of the form {$x^2 + ny^2$}}.
\newblock Pure and Applied Mathematics (Hoboken). John Wiley \& Sons, Inc.,
  Hoboken, NJ, second edition, 2013.
\newblock Fermat, class field theory, and complex multiplication.

\bibitem{Eic36}
Martin Eichler.
\newblock Untersuchungen in der zahlentheorie der rationalen
  quaternionenalgebren.
\newblock {\em Journal für die reine und angewandte Mathematik}, 174:129--159,
  1936.

\bibitem{EHL+18}
Kirsten Eisentr\"{a}ger, Sean Hallgren, Kristin Lauter, Travis Morrison, and
  Christophe Petit.
\newblock Supersingular isogeny graphs and endomorphism rings: reductions and
  solutions.
\newblock In {\em Advances in cryptology-{EUROCRYPT} 2018. {P}art {III}},
  volume 10822 of {\em Lecture Notes in Comput. Sci.}, pages 329--368.
  Springer, Cham, 2018.

\bibitem{EHL+20}
Kirsten Eisentr{\"a}ger, Sean Hallgren, Chris Leonardi, Travis Morrison, and
  Jennifer Park.
\newblock Computing endomorphism rings of supersingular elliptic curves and
  connections to path-finding in isogeny graphs.
\newblock {\em Open Book Series}, 4(1):215--232, 2020.

\bibitem{Gal12}
Steven Galbraith.
\newblock {\em Mathematics of public key cryptography}.
\newblock Cambridge University Press, 2012.

\bibitem{Gro87}
Benedict Gross.
\newblock Heights and the special values of {L}-series.
\newblock In {\em Conference Proceedings of the CMS}, volume~7, 1987.

\bibitem{Koh96}
David Kohel.
\newblock {\em Endomorphism rings of elliptic curves over finite fields}.
\newblock ProQuest LLC, Ann Arbor, MI, 1996.
\newblock Thesis (Ph.D.)--University of California, Berkeley.

\bibitem{KLPT14}
David Kohel, Kristin Lauter, Christophe Petit, and Jean-Pierre Tignol.
\newblock On the quaternion $\ell$-isogeny path problem.
\newblock {\em LMS Journal of Computation and Mathematics}, 17(A):418--432,
  2014.

\bibitem{lang}
Serge Lang.
\newblock {\em Elliptic functions}, volume 112 of {\em Graduate Texts in
  Mathematics}.
\newblock Springer-Verlag, New York, second edition, 1987.
\newblock With an appendix by J. Tate.

\bibitem{Ler22Thesis}
Antonin Leroux.
\newblock Quaternion algebra and isogeny-based cryptography.
\newblock {\em Theses, Ecole doctorale de l’Institut Polytechnique de Paris},
  2022.

\bibitem{Ler24}
Antonin Leroux.
\newblock An effective lower bound on the number of orientable supersingular
  elliptic curves.
\newblock In Benjamin Smith and Huapeng Wu, editors, {\em Selected Areas in
  Cryptography}, pages 263--281, Cham, 2024. Springer International Publishing.

\bibitem{LOX20}
Songsong Li, Yi~Ouyang, and Zheng Xu.
\newblock Neighborhood of the supersingular elliptic curve isogeny graph at
  $j=0$ and 1728.
\newblock {\em Finite Fields and Their Applications}, 61:101600, 2020.

\bibitem{LB20}
Jonathan Love and Dan Boneh.
\newblock Supersingular curves with small noninteger endomorphisms.
\newblock {\em Open Book Series}, 4(1):7--22, 2020.

\bibitem{SIKE-2}
Luciano Maino and Chloe Martindale.
\newblock An attack on {SIDH} with arbitrary starting curve.
\newblock Cryptology ePrint Archive,
  \href{https://eprint.iacr.org/2022/1026}{eprint.iacr.org/2022/1026}, 2022.

\bibitem{Mil15}
Piermarco Milione.
\newblock {\em Shimura curves and their p-adic uniformizations}.
\newblock PhD thesis, Universitat de Barcelona, 2015.

\bibitem{Onu21}
Hiroshi Onuki.
\newblock On oriented supersingular elliptic curves.
\newblock {\em Finite Fields Appl.}, 69:Paper No. 101777, 18, 2021.

\bibitem{one-endo}
Aurel Page and Benjamin Wesolowski.
\newblock The supersingular endomorphism ring and one endomorphism problems are
  equivalent.
\newblock Cryptology {ePrint} Archive, Paper 2023/1399, 2023.

\bibitem{Piz80}
Arnold Pizer.
\newblock An algorithm for computing modular forms on {$\Gamma_{0}(N)$}.
\newblock {\em J. Algebra}, 64(2):340--390, 1980.

\bibitem{SIKE-3}
Damien Robert.
\newblock Breaking {SIDH} in polynomial time.
\newblock In {\em Annual International Conference on the Theory and
  Applications of Cryptographic Techniques}, pages 472--503. Springer, 2023.

\bibitem{Sch23}
Sina Schaeffler.
\newblock Algorithms for quaternion algebras in {SQI}sign.
\newblock Master's thesis, Department of Computer Science, ETH Zürich, 2023.

\bibitem{Voi21}
John Voight.
\newblock {\em Quaternion algebras}.
\newblock Springer Nature, 2021.

\bibitem{Ben22}
Benjamin Wesolowski.
\newblock Orientations and the supersingular endomorphism ring problem.
\newblock In {\em Advances in cryptology---{EUROCRYPT} 2022. {P}art {III}},
  volume 13277 of {\em Lecture Notes in Comput. Sci.}, pages 345--371.
  Springer, Cham, 2022.

\bibitem{Ben-equivalent}
Benjamin Wesolowski.
\newblock The supersingular isogeny path and endomorphism ring problems are
  equivalent.
\newblock In {\em 2021 {IEEE} 62nd {A}nnual {S}ymposium on {F}oundations of
  {C}omputer {S}cience---{FOCS} 2021}, pages 1100--1111. IEEE Computer Soc.,
  Los Alamitos, CA, 2022.

\end{thebibliography}
\bibliographystyle{plain}

\end{document}